\theoremstyle{plain}
\newtheorem{theorem}{Theorem}[section]
\newtheorem{lemma}[theorem]{Lemma}
\newtheorem{proposition}[theorem]{Proposition}
\newtheorem{corollary}[theorem]{Corollary}
\newtheorem{MainThm}{Theorem}
\newtheorem{definition}[theorem]{Definition}
\newtheorem{remark}[theorem]{Remark}
\title{Rank $2$ Affine Manifolds in Genus $3$}
\author{David Aulicino\thanks{This material is based upon work supported by the National Science Foundation under Award Nos. DMS - 1204414, DMS - 1600360, DMS - 1738381 and PSC-CUNY Grant B $\sharp$~60571-00 48.} $\,$ and Duc-Manh Nguyen}
\date{}
\begin{document}

\newcommand{\splin}{$\text{SL}_2(\mathbb{R})$}
\newcommand{\spolin}{$\text{SO}_2(\mathbb{R})$}
\newcommand{\nc}{\newcommand}
\newcommand{\ann}[1]{\marginpar{\small{#1}}}

\def\Re{\operatorname{Re}}
\def\Im{\operatorname{Im}}

\def\R{\mathbb{R}}
\def\Z{\mathbb{Z}}
\def\Q{\mathbb{Q}}
\def\C{\mathbb{C}}

\def\Gal{\operatorname{Gal}}
\nc\Mod{\mathfrak M}

\nc\bB{\mathbb{B}}
\nc\bC{\mathbb{C}}
\nc\bD{\mathbb{D}}
\nc\bE{\mathbb{E}}
\nc\bF{\mathbb{F}}
\nc\bG{\mathbb{G}}
\nc\bH{\mathbb{H}}
\nc\bI{\mathbb{I}}
\nc{\bJ}{\mathbb{J}}
\nc\bK{\mathbb{K}}
\nc\bL{\mathbb{L}}
\nc\bM{\mathbb{M}}
\nc\bN{\mathbb{N}}
\nc\bO{\mathbb{O}}
\nc\bP{\mathbb{P}}
\nc\bQ{\mathbb{Q}}
\nc\bR{\mathbb{R}}
\nc\bS{\mathbb{S}}
\nc\bT{\mathbb{T}}
\nc\bU{\mathbb{U}}
\nc\bV{\mathbb{V}}
\nc\bW{\mathbb{W}}
\nc\bY{\mathbb{Y}}
\nc\bX{\mathbb{X}}
\nc\bZ{\mathbb{Z}}

\nc\cA{\mathcal{A}}
\nc\cB{\mathcal{B}}
\nc\cC{\mathcal{C}}
\nc\cD{\mathcal{D}}
\nc\cE{\mathcal{E}}
\nc\cF{\mathcal{F}}
\nc\cG{\mathcal{G}}
\nc\cH{\mathcal{H}}
\nc\cI{\mathcal{I}}
\nc{\cJ}{\mathcal{J}}
\nc\cK{\mathcal{K}}
\nc\cL{\mathcal{L}}
\nc\cM{\mathcal{M}}
\nc\cN{\mathcal{N}}
\nc\cO{\mathcal{O}}
\nc\cP{\mathcal{P}}
\nc\cQ{\mathcal{Q}}
\nc\cS{\mathcal{S}}
\nc\cT{\mathcal{T}}
\nc\cU{\mathcal{U}}
\nc\cV{\mathcal{V}}
\nc\cW{\mathcal{W}}
\nc\cY{\mathcal{Y}}
\nc\cX{\mathcal{X}}
\nc\cZ{\mathcal{Z}}

\newcommand{\inv}{\tau}
\nc\ol{\overline}
\nc\ul{\underline}
\nc\id{\mathrm{id}}
\nc\sig{\sigma}
\nc\smin{\setminus}
\nc\eps{\epsilon}
\nc\Sig{\Sigma}
\nc\SL{\mathrm{SL}}
\nc\GL{\mathrm{GL}}
\nc\ra{\rightarrow}
\nc\Gr{\mathrm{G}}
\nc\Pres{{\rm Pres}}
\nc\Twist{{\rm Twist}}
\nc\del{\delta}
\nc\veps{\varepsilon}

\nc\dcoverodd{\tilde{\mathcal H}^{\rm odd}_{(2,2)}(2)}
\nc\dcoverhyp{\tilde{\mathcal H}^{\rm hyp}_{(2,2)}(2)}
\nc\dcoverprinc{\tilde{\cH}(1,1)}
\nc\prym{\tilde{\cQ}(4,-1^4)}
\nc\prymprinc{\tilde{\cQ}(2^2,-1^4)}
\nc\prymmin{\tilde{\mathcal{Q}}(3,-1^3)}
\nc\prymthreezero{\tilde{\mathcal{Q}}(2,1,-1^3)}
\nc\onedouble{\mathcal{H}(2,1^2)}
\nc\principal{\mathcal{H}(1^4)}
\nc\bk{\mathbf{k}}

\maketitle

\begin{abstract}
We complete the classification of rank two affine manifolds in the moduli space of translation surfaces in genus three.  Combined with a recent result of Mirzakhani and Wright, this completes the classification of higher rank affine manifolds in genus three.
\end{abstract}

\tableofcontents

\section{Introduction} \label{sec:intro}


A translation surface is a Riemann surface with a flat geometry given by a holomorphic $1$-form on the surface.  It is natural to consider the moduli space of translation surfaces, which is the moduli space of Riemann surfaces carrying the bundle of holomorphic $1$-forms, also called Abelian differentials.  This moduli space admits an action by \splin.
The works of \cite{EskinMirzakhaniInvariantMeas, EskinMirzakhaniMohammadiOrbitClosures, Filip2} prove that \splin ~orbit closures are {\em affine submanifolds}  admitting  a finite ergodic \splin-invariant measure, and are also quasi-projective subvarieties of the moduli space of Abelian differentials.  However, a complete classification of all quasi-projective subvarieties of moduli space that are \splin-invariant is beyond the scope of current techniques.  Nevertheless, such a classification was obtained in genus two prior to the aforementioned results \cite{McMullenGenus2}.

The purpose of this paper is to contribute to the classification of the orbit closures in higher genus. Specifically, we complete the classification of rank two  affine submanifolds in genus three (see below for a brief introduction to the notion of cylinder rank).  Combined with the recent result \cite[Th. 1.1]{MirzakhaniWrightFullRank}, a consequence of our result is the following

\begin{MainThm}
\label{thm:classify:orbits:in:g3}
Let $M=(X,\omega)$ be a translation surface in a stratum $\cH(\kappa)$ in genus three. Then either the closure of  the $\GL^+(2,\R)$-orbit of $M$ is one of the following: the  component of $\cH(\kappa)$ that contains $M$, the intersection of this component with the hyperelliptic locus, with the Prym locus, or with the intersection of these two loci, or $M$ is completely periodic in the sense of Calta, and the ratio of the circumferences of any pair of parallel cylinders belongs to a finite set.
\end{MainThm}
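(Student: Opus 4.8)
The plan is to argue according to the rank of $\cM$, the closure of the $\GL^+(2,\R)$-orbit of $M$. First I would note that, by \cite{EskinMirzakhaniInvariantMeas,EskinMirzakhaniMohammadiOrbitClosures}, $\cM$ is an affine invariant submanifold of $\cH(\kappa)$, so it carries a well-defined (cylinder) rank $r=\mathrm{rank}(\cM)$; since $X$ has genus three, $1\le r\le 3$, and I would treat the cases $r=3$, $r=2$, $r=1$ in turn.

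If $r=3$, then $\cM$ has full rank and I would invoke \cite[Th.~1.1]{MirzakhaniWrightFullRank}: a full-rank affine invariant submanifold of a stratum is either a connected component of that stratum or one of an explicit short list of further loci. It then remains to check, entry by entry, that in genus three each locus on that list is the connected component of $\cH(\kappa)$ containing $M$, or its intersection with the hyperelliptic locus, with the Prym locus, or with both.

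If $r=2$, then I would appeal to the main classification theorem of the present paper, which classifies rank-two affine invariant submanifolds in genus three explicitly; each of them is the relevant component of $\cH(\kappa)$, or its intersection with the hyperelliptic and/or Prym locus, which is the conclusion wanted in this case. Proving that classification is the substance of the paper and the main obstacle. The plan there is: first produce a horizontally periodic surface in $\cM$ (a standard fact about affine invariant submanifolds); then degenerate cylinders to pass to smaller strata, where the affine invariant submanifolds that occur are controlled inductively, the base cases being the classified rank-two loci in genus at most two; and finally run a case analysis over the finitely many cylinder diagrams that a genus-three translation surface can carry, using the restrictions that $r=2$ places on the number of cylinder equivalence classes, on the proportions of circumferences within a class, and on the homology classes of core curves, so as to force a Prym or hyperelliptic affine automorphism of $\cM$ and identify $\cM$ accordingly. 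The hard part will be keeping this case analysis under control: bounding the list of diagrams, excluding sporadic rank-two examples, and untangling the interaction between the hyperelliptic and Prym loci.

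If $r=1$, then $\cM$ is none of the loci above, each of which has rank at least two, so $M$ must satisfy the last alternative of the statement. Here I would appeal to the structure theory of rank-one affine invariant submanifolds: by this theory every surface in a rank-one submanifold is completely periodic in the sense of Calta --- indeed, in such a submanifold there is room for only one cylinder equivalence class in any periodic direction, which is the mechanism behind the complete periodicity --- so $M$ is completely periodic; and, since the field of definition of $\cM$ has degree at most $g=3$ and the absolute periods generate a module of bounded rank over it, the ratio of the circumferences of any two parallel cylinders of $M$ is confined to a finite set. Assembling the three cases completes the proof.
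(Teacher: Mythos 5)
Your proposal follows essentially the same route as the paper: a trichotomy on the rank of the orbit closure, with the full-rank case handled by \cite[Th.~1.1]{MirzakhaniWrightFullRank}, the rank-two case by the paper's main classification (Theorem~\ref{thm:main:rk2:g3}), and the rank-one case by complete periodicity from \cite[Th.~1.5]{WrightCylDef} together with the finiteness of circumference ratios, which the paper gets directly from \cite[Th.~1.4]{MirzakhaniWrightBoundary} rather than from your field-of-definition heuristic. The structure and conclusions match, so the proposal is correct.
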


The (cylinder) rank of an orbit closure was introduced in \cite{WrightCylDef}, and it counts half the degrees of freedom in absolute periods of points (or translation surfaces) in the orbit closure.  By definition, the cylinder rank of an orbit closure of surfaces in genus $g$ cannot be greater than $g$. Any stratum of translation surfaces in genus $g$ is of rank $g$.
On the other hand, closed \splin -orbits are examples of rank one affine submanifolds as well as the Prym eigenform loci discovered by \cite{McMullenPrym}. Following  a result of \cite{WrightCylDef}, every surface in a rank one orbit closure is {\em completely periodic} (in the sense of Calta), meaning that if the surface has a regular closed geodesic in some direction, then any other trajectory in the same direction is either a saddle connection or a closed (regular) geodesic.  Orbit closures of rank at least two are said to be of {\em higher rank}.

The works of \cite{NguyenWright, AulicinoNguyenWright, AulicinoNguyenGen3TwoZeros} established the classification of rank two orbit closures in strata in genus three with at most two zeros.  This paper exclusively concerns rank two orbit closures in $\cH(2,1,1)$ and $\cH(1,1,1,1)$.


All of the previous works heavily relied on ``cylinder proportions'' to establish the symmetry required to prove that a translation surface admitted an involution.  However, this approach seems to be unrealistic for the last two strata because of the large number of cylinder diagrams that must be analyzed.  (There are 190 3-cylinder diagrams, 92 4-cylinder diagrams, and 26 5-cylinder diagrams to consider.\footnote{Computed in Sage using the surface\_dynamics package.  The results in this paper do not rely on any Sage computations.}) On the other hand, for translation surfaces satisfying most cylinder diagrams in a stratum with several zeros, it is possible to deform the surface by collapsing some cylinders  to get a translation surface in
a lower stratum.  We developed new tools based on this observation that rely on \cite{MirzakhaniWrightBoundary}.



While it will be necessary to compute a few cylinder proportions, it is degeneration techniques that will take center stage in the proofs in this paper. A posteriori, all rank two affine manifolds in these two strata contain rank two affine manifolds in lower strata of genus three in their boundary.
Eventually, we will show that every surface in any rank two affine manifold in genus three admits a Prym involution (see the definitions below).
Some affine manifolds consist exclusively of hyperelliptic Riemann surfaces, that is, they have a hyperelliptic involution in addition to the Prym involution. The existence of those  involutions will be established by observing that they exist on the surfaces in the boundary, and with the appropriate assumptions, they can be extended to surfaces in the interior of the affine manifold (see Proposition \ref{DblCovExtSimpCyl}).  Combined with a dimension count, this allows us to get the complete list of all rank two affine manifolds in the remaining strata.

Another key ingredient is Proposition \ref{CylStDens}, which may be interesting in its own right.  This proposition generalizes the results of Masur and Kontsevich-Zorich on the density of the set of Jenkins-Strebel differentials with a single cylinder in any  stratum of translation surfaces (see also \cite{LanneauComponents} for related results in the space of quadratic differentials).
The essential observation in its proof is the flat surface implication of the result of \cite{EskinMirzakhaniMohammadiOrbitClosures} that the upper triangular orbit closure is equal to the \splin ~orbit closure.

Mirzakhani conjectured that if the rank is at least two, then the orbit closure covers a stratum of Abelian or quadratic differentials.
The result of this  paper thus  confirms the conjecture in genus three. It is also verified in other contexts.  In \cite{MirzakhaniWrightFullRank}, Mirzakhani and Wright prove that the only orbit closures of maximal rank are hyperelliptic loci and connected components of the moduli spaces of translation surfaces with specified orders of zeros, known as strata.
In~\cite{ApisaHypAISClass}, it is proven that all higher rank orbit closures in hyperelliptic connected components of strata arise from covering
constructions.
Though this conjecture is not true in full generality by \cite{McMullenMukamelWrightGothicLocus} and forthcoming work of Eskin, McMullen, Mukamel, and Wright, the exceptions appear to be extremely rare.

Together with the result of \cite{MirzakhaniWrightFullRank}, our results complete the classification of higher rank orbit closures in genus three.  We hope that this classification facilitates results in genus three concerning higher rank affine manifolds, e.g. \cite[Thm. 2.8]{AulicinoZeroExpGen3} follows easily from the main result of this paper and the Forni Geometric Criterion \cite{ForniCriterion}.  Furthermore, we hope that it inspires ideas that lead to classifications in higher genus.

Finally, we remark that we believe that a classification of rank three affine manifolds in genus three should be relatively easy to accomplish using our techniques.  However, given the general nature of the result announced in \cite{MirzakhaniWrightBoundary}, we refrain from attempting such a classification with our methods.

\subsection{Statement of the Main Result}

Let $M=(X,\omega)$ be a translation surface in genus three. Throughout this paper, by a {\em Prym involution} of $M$, we will mean an automorphism $\inv$ of the Riemann surface $X$ such that
\begin{itemize}
 \item[a)] $\inv^2=\id_X$,

 \item[b)] $\inv^*\omega=-\omega$,

 \item[c)] $\inv$ has exactly four fixed points in $X$.
\end{itemize}
Remark that condition b) means that $\inv$ is isometric for the flat metric structure whose derivative is given by $-\id$ at regular points.

Let $Y:=X/\langle \inv \rangle$ be the quotient of $X$ by the action of a Prym involution $\inv$. By definition, there exists a double cover $\pi: X \ra Y$ ramified at four points (the fixed points of $\inv$). It follows from the  Riemann-Hurwitz formula that $Y$ is a Riemann surface of genus one. Condition b) implies that there exists a meromorphic quadratic differential $\eta$ on $Y$ such that $\pi^*\eta=\omega^2$.

We will call the subset of $\cH_3=\Omega\cM_3$ consisting of surfaces admitting a Prym involution the {\em Prym locus} and denote it by $\cP$. As usual, the subset of $\cH_3$ consisting of pairs $(X,\omega)$ where $X$ is a hyperelliptic surface is called the {\em hyperelliptic locus}, and we denote it by $\cL$.

Naturally, the intersection of $\cP$ with each connected component  $\cH^{*}(\kappa)$ of a stratum $\cH(\kappa)$ (here $*$ is either ``hyp'' or ``odd'')  consists of standard double covers of quadratic  differentials in some stratum in genus one.

It follows from Lemma~\ref{lm:2inv:dblcover} below that the intersection $\cP\cap\cL\subset \cH_3$ consists of unramified double covers of translation surfaces in genus two.  Actually, it is not difficult to show that any unramified double cover of a surface in $\cH_2$ must be contained in $\cP\cap\cL$. Our main result can be stated as follows

\begin{theorem}\label{thm:main:rk2:g3}
Let $\cM$ be a rank two affine submanifold of a connected component of a stratum $\cH(\kappa)$ in genus three. Then either $\cM$ is a component of $\cP\cap\cH^*(\kappa)$, or $\cM$ is a component of $\cM=\cP\cap\cL\cap\cH^*(\kappa)$. In the latter case $\cM$ is a locus consisting of unramified double covers of surfaces in a stratum of $\cH_2$.
\end{theorem}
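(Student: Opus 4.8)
The plan is to reduce to the two strata $\onedouble$ and $\principal$ --- the only connected components not covered by \cite{NguyenWright, AulicinoNguyenWright, AulicinoNguyenGen3TwoZeros} --- and, for a rank two affine submanifold $\cM$ inside one of them, to prove that every surface of $\cM$ admits a Prym involution. Once $\cM \subseteq \cP$ (and, in the hyperelliptic case, $\cM \subseteq \cP \cap \cL$) is established, Theorem \ref{thm:main:rk2:g3} follows from a dimension count against the explicit loci of double covers of genus-one quadratic differentials (resp.\ unramified double covers of genus-two translation surfaces).

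First I would set up the degeneration input. Combining Proposition \ref{CylStDens} with the fact from \cite{EskinMirzakhaniMohammadiOrbitClosures} that the orbit closure under the upper triangular subgroup coincides with the \splin-orbit closure, I would show that $\cM$ contains, in a dense set of directions, surfaces decomposing into cylinders at least one of which is simple, i.e.\ can be collapsed without lowering the genus. Collapsing such a cylinder yields a surface in the boundary $\partial\cM$, which by \cite{MirzakhaniWrightBoundary} is again a finite union of affine submanifolds; the collapse lands it in a lower stratum of genus three, such as $\cH(3,1)$, $\cH(2,2)$, or $\cH(4)$. One then checks that the rank is not decreased, so the relevant boundary component is a rank two affine submanifold there. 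By the classification already known in strata with at most two zeros, this component is a component of $\cP$ (intersected with the appropriate locus), and hence its surfaces carry a Prym involution $\inv$; in the relevant cases they also carry a hyperelliptic involution.

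Next I would transfer the involution back up. Since the collapsed cylinder sits inside the degenerate surface compatibly with $\inv$, Proposition \ref{DblCovExtSimpCyl} extends $\inv$ over the reinserted simple cylinder to an automorphism of the nearby surface in $\cM$ satisfying $\inv^2 = \id$, $\inv^*\omega = -\omega$, and having exactly four fixed points --- that is, a Prym involution. Since the set of surfaces admitting a Prym involution is closed and \splin-invariant, and we have arranged it to meet $\cM$ densely, we conclude $\cM \subseteq \cP$. When the boundary surfaces are additionally hyperelliptic, the same extension argument applied to the hyperelliptic involution, together with Lemma \ref{lm:2inv:dblcover}, gives $\cM \subseteq \cP \cap \cL$, so that $\cM$ consists of unramified double covers of surfaces in $\cH_2$.

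The main obstacle, which I expect to occupy the bulk of the work, is the combinatorial step ensuring that the collapsing operation is always available with the needed properties: for a dense --- ideally, every --- surface of $\cM$ one must produce a direction and a simple cylinder whose collapse lands in a fixed rank two component of $\partial\cM$, and one must rule out that this component is a closed \splin-orbit (excluded by the dimension of $\cM$) or a whole stratum component. The degeneration viewpoint replaces the $190/92/26$-diagram brute force of the earlier papers by a short list of recalcitrant configurations --- for example, surfaces whose cylinders in every periodic direction are all non-simple --- which must be handled separately, usually by exhibiting an explicit cylinder deformation within $\cM$ or by a local dimension estimate. Once this is done, the final dimension count is routine, since each component of $\cP \cap \cH^*(\kappa)$ and of $\cP \cap \cL \cap \cH^*(\kappa)$ is an affine submanifold of rank two whose dimension is immediate, and a rank two affine submanifold contained in it of equal dimension is a whole component.
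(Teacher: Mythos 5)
Your outline does follow the paper's strategy (collapse cylinders into lower strata, import the Prym/hyperelliptic involution from the known classification there, extend it back via Proposition~\ref{DblCovExtSimpCyl}, then count dimensions), but as written it has a genuine gap: the step you defer as ``the combinatorial obstacle'' is not a residual list of recalcitrant configurations --- it is the proof. Nothing in Proposition~\ref{CylStDens} or in \cite{EskinMirzakhaniMohammadiOrbitClosures} produces, on an arbitrary rank two $\cM\subset\cH(2,1^2)\cup\cH(1^4)$, a periodic direction containing a cylinder that can legitimately be collapsed inside $\cM$. A simple cylinder can only be collapsed while staying in $\cM$ if it is \emph{free} (Proposition~\ref{prop:collapse:free:sim:cyl}) or if its whole equivalence class of $\cM$-parallel cylinders is collapsed, and then one needs the cylinders to be $\cM$-\emph{similar} (indeed isometric) with distinct zeros on their boundaries to get $\dim\cM'=\dim\cM-1$ and to match the extension hypotheses of Proposition~\ref{DblCovExtSimpCyl} (Proposition~\ref{prop:collapse:similar:cyl}); establishing freeness or similarity is itself nontrivial and in the paper is done case by case, typically by ruling out a putative rank two boundary component in $\cH(3,1)$ or by counting fixed points of the induced involution. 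Producing the right configurations is exactly the content of Sections~\ref{sec:get4cyl}--\ref{sec:6cyl}: one must first force horizontally periodic surfaces with $4$, then $5$ or $6$ cylinders (via $\cM$-cylindrical stability and Lemma~\ref{lm:WrightTwistPresLem}), enumerate the topological types and diagrams, and determine the equivalence classes through the homological relations and the Cylinder Proportion Lemma~\ref{CylinderPropProp}; several cases (e.g.\ Case 5.II in $\cH(2,1^2)$) require extended deformations collapsing non-simple cylinders rather than the single simple-cylinder collapse your sketch relies on.

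Two further points would need repair even at the level of the outline. First, the argument is necessarily a bootstrap: collapses from $\cH(1^4)$ land in $\cH(2,1^2)$, so you must complete the classification $\cM=\prymthreezero$ in $\cH(2,1^2)$ before touching the principal stratum (this is why Proposition~\ref{Min4CylProp}(2) and Proposition~\ref{prop:4cyl}(b) carry the hypothesis that $\prymthreezero$ is the unique rank two submanifold of $\cH(2,1^2)$); your plan treats the two strata symmetrically. Second, to conclude that $\cM$ equals a specific known locus (rather than merely some component of $\cP\cap\cH^*(\kappa)$ or $\cP\cap\cL\cap\cH^*(\kappa)$ of the right dimension) one needs the connectedness of $\dcoverprinc$ (Proposition~\ref{H11CoverConnProp}) and of the Prym loci (Lanneau), which your dimension count silently assumes.
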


Theorem~\ref{thm:main:rk2:g3} was proved for strata  $\cH(\kappa)$ such that $|\kappa|\leq 2$ by our  previous classifications (see \cite{NguyenWright, AulicinoNguyenWright, AulicinoNguyenGen3TwoZeros}). Namely, in $\cH(4)$ we have two components $\cH^{\rm odd}(4)$ and $\cH^{\rm hyp}(4)$, the Prym locus does not intersect $\cH^{\rm hyp}(4)$, and $\cP\cap\cH^{\rm odd}(4)=\tilde{\cQ}(3,-1^3)$. The stratum $\cH(3,1)$ does not intersect $\cP$, hence there are no rank two affine submanifolds in $\cH(3,1)$.
The stratum $\cH(2,2)$ has two components $\cH^{\rm odd}(2,2)$ and $\cH^{\rm hyp}(2,2) \subset \cL$. We have

\begin{eqnarray*}
\cP\cap\cH^{\rm hyp}(2,2) & = & \dcoverhyp=\tilde{\cQ}(1^2,-1^2),\\
\cP \cap \cH^{\rm odd}(2,2) & = & \prym,\\
\cP \cap \cL \cap \cH^{\rm odd}(2,2) & = & \dcoverodd.
\end{eqnarray*}

\begin{remark}
Let $M$ be a surface in $\cH(2,2)\cap\cP$. If $M\in \cH^{\rm odd}(2,2)$, then the Prym involution exchanges the zeros (cone points) of $M$, but if $M \in \cH^{\rm hyp}(2,2)$, then the Prym involution fixes each of the zeros of $M$.
\end{remark}

Let  $M=(X,\omega)$ be a translation surface that admits a Prym involution  $\inv$.  Let $M \in  \cH(2,1^2)$.  Since $\inv^*\omega=-\omega$, the double zero of $\omega$ must be fixed, and the two simple zeros must be exchanged by $\inv$.  By assumption, $\inv$ has three  regular fixed points. Therefore,  $\cP \cap \cH(2,1^2)=\tilde{\cQ}(2,1,-1^3)$. If $M\in \cH(1^4)$, then $\inv$ must exchange two pairs of simple zeros and has four regular fixed points. Therefore, $\cP\cap \cH(1^4)=\prymprinc$.

Assume in addition that $M$ admits a hyperelliptic involution.  Then $M$ is an unramified  double cover of a translation surface in genus two by Lemma~\ref{lm:2inv:dblcover}. It follows in particular that $M\not\in \cH(2,1^2)$. If $M\in \cH(1^4)$, then $M$ is an unramified double cover of a surface in $\cH(1,1)$.  Denote the locus of such surfaces by $\dcoverprinc$.   Then, $\allowbreak \cP\cap\cL\cap\cH(1^4)=\dcoverprinc$. By Proposition~\ref{H11CoverConnProp}, this locus is a connected affine submanifold of $\cH(1^4)$.

Note that the loci $\prymthreezero$ and $\prymprinc$ are connected by a result of Lanneau~\cite[Th. 1.2]{LanneauComponents}.  From the observations above, to prove Theorem~\ref{thm:main:rk2:g3}, it suffices to show

\begin{theorem}\label{thm:rk2:H211:H1111}
 Let $\cM$ be a rank two affine submanifold in $\cH_3=\Omega\cM_3$.
 \begin{itemize}
  \item[$\bullet$] If $\cM\subset \cH(2,1^2)$, then $\cM=\tilde{\cQ}(2,1,-1^3)$,

  \item[$\bullet$] If $\cM\subset \cH(1^4)$, then either $\cM=\prymprinc$, or $\cM=\dcoverprinc$.
 \end{itemize}
\end{theorem}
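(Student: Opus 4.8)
The plan is to induct on the combinatorial complexity of the stratum by passing to the boundary of $\cM$ in the sense of \cite{MirzakhaniWrightBoundary}, and to prove the two bullets in the stated order so that the first can feed the second. Fix a rank two affine submanifold $\cM$ contained in $\cH(2,1^2)$ or in $\cH(1^4)$. The argument has three parts: a \emph{degeneration step} that produces inside $\partial\cM$ a rank two affine submanifold $\cM'$ of a stratum of $\cH_3$ with fewer zeros; an \emph{identification step} that recognizes $\cM'$ from the already-established cases of Theorem~\ref{thm:main:rk2:g3}; and an \emph{extension step} that propagates the symmetries of $\cM'$ back into $\cM$ and concludes with a dimension count.

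For the degeneration step I would combine the cylinder deformation theorem of \cite{WrightCylDef} with Proposition~\ref{CylStDens}, which provides single-cylinder (and, more generally, suitably simple) horizontally periodic surfaces densely in $\cM$. Starting from such a surface one drives the width of an appropriate horizontal cylinder, or of a Wright equivalence class of parallel cylinders, to zero along a path in $\cM$; the limit lies in a boundary affine submanifold $\cM'\subseteq\partial\cM$ sitting in a strictly smaller stratum. Running through the cylinder diagrams of $\cH(2,1^2)$ and of $\cH(1^4)$, one shows that for all but a short list of diagrams this collapse can be chosen to land in a lower stratum still in genus three and with $\cM'$ again of rank two (the rank cannot increase). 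The residual diagrams --- those for which every available collapse drops the genus or yields a degenerate surface --- are treated individually, either by showing that the period relations forced by the rank two hypothesis are incompatible with the diagram, or by a direct ``cylinder proportion'' argument as in \cite{NguyenWright,AulicinoNguyenWright}. \emph{I expect this step to be the main obstacle}: carrying out the collapse uniformly, guaranteeing that the boundary piece remains of rank two, and disposing of the non-collapsible diagrams.

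The identification step is then essentially bookkeeping. By the cases $|\kappa|\le 2$ of Theorem~\ref{thm:main:rk2:g3} --- and since $\cH(3,1)\cap\cP=\emptyset$, so that no rank two affine submanifold is contained in $\cH(3,1)$ --- any rank two $\cM'\subseteq\partial\cM$ is one of $\prymmin$, $\prym$, $\dcoverodd$, $\dcoverhyp=\tilde{\cQ}(1^2,-1^2)$, or, once the first bullet is known, $\prymthreezero$. Each of these is a locus of double covers, hence every surface in $\cM'$ admits a Prym involution; moreover $\dcoverodd$ and $\dcoverhyp$ lie in $\cL$ (Lemma~\ref{lm:2inv:dblcover}), so their surfaces admit a hyperelliptic involution as well.

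For the extension step, Proposition~\ref{DblCovExtSimpCyl} lets one extend the degree two cover structure --- equivalently the Prym involution, together with the hyperelliptic involution when it is present on $\cM'$ --- across the collapsed simple cylinder, and hence to a neighborhood of a surface of $\cM'$ inside $\cM$. Since $\cM$ is connected and the set of surfaces carrying a given such involution is closed and $\GL^+(2,\R)$-invariant, the involution exists on all of $\cM$; thus $\cM\subseteq\cP$, with $\cM\subseteq\cP\cap\cL$ precisely when the boundary piece was hyperelliptic. By the identifications recalled before the statement, $\cP\cap\cH(2,1^2)=\prymthreezero$, $\cP\cap\cH(1^4)=\prymprinc$, and $\cP\cap\cL\cap\cH(1^4)=\dcoverprinc$, so $\cM$ is contained in $\prymthreezero$, or in $\prymprinc$, or in $\dcoverprinc$. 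Each of these ambient loci is a connected rank two affine submanifold --- $\prymthreezero$ and $\prymprinc$ by Lanneau~\cite[Th.~1.2]{LanneauComponents} and $\dcoverprinc$ by Proposition~\ref{H11CoverConnProp} --- so a comparison of dimensions (rank two forces the image of $T\cM$ in absolute cohomology to be the full anti-invariant subspace, and the relative directions are also tangent to $\cM$) shows that $\cM$ coincides with the ambient locus. This is the assertion of the theorem.
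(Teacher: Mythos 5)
Your outline reproduces the paper's overall architecture (collapse cylinders to reach a rank two boundary manifold $\cM'$ in a smaller stratum of genus three, identify $\cM'$ from the known classifications, extend the involutions back via Proposition~\ref{DblCovExtSimpCyl}, and finish with a dimension comparison), but the place where you write ``I expect this step to be the main obstacle'' is precisely where the theorem lives, and you have not supplied it. To collapse and stay in genus three one needs a \emph{free simple} cylinder with distinct zeros on its boundary, or an equivalence class of $\cM$-\emph{similar} simple cylinders with the right zero configuration (Propositions~\ref{prop:collapse:free:sim:cyl} and \ref{prop:collapse:similar:cyl}); producing such configurations is not a matter of ``running through the cylinder diagrams'' abstractly. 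It requires first showing that $\cM$ contains horizontally periodic surfaces with four, then five, then six horizontal cylinders (Smillie--Weiss, the twist/preserving space criterion, and Lemma~\ref{kCylsInBdLem}/Proposition~\ref{CylStDens} used to transport diagrams from the boundary), and then a case-by-case analysis of the topological types 3.I)--3.III), 4.I)--4.IV), 5.I)--5.II), 6.a)--6.d), in which the rank two hypothesis, the homological relations among core curves, and cylinder proportion arguments determine the equivalence classes and rule out the bad diagrams. None of this is sketched, and your appeal to Proposition~\ref{CylStDens} to get ``single-cylinder'' surfaces misreads that statement (it only propagates a diagram that $\cM$ already admits; moreover a one-cylinder surface is useless here, since collapsing its unique cylinder drops the genus).

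Two further steps are asserted where they in fact need proof. First, Proposition~\ref{DblCovExtSimpCyl} only applies if the involution of $M'$ preserves the distinguished saddle connections $\Pi$ coming from the degeneration, and the collapsed cylinders must be shown to be isometric (not merely $\cM$-parallel); in the paper this is established case by case, typically by arguing that otherwise a nearby surface in $\cM'$ would contradict the similarity of the collapsed cylinders, or by fixed-point counts for the candidate involutions. Second, your dimension count is wrong as stated: rank two does \emph{not} force all relative (REL) directions to be tangent to $\cM$ --- compare $\dcoverprinc$ (dimension $5$) sitting inside $\prymprinc$ (dimension $6$), both rank two in $\cH(1^4)$. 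The correct source of the dimension identity is the collapse itself, namely $\dim\cM=\dim\cM'+1$ from Propositions~\ref{prop:collapse:free:sim:cyl}/\ref{prop:collapse:similar:cyl}, combined with the known dimensions of $\prymmin$, $\prym$, $\dcoverodd$, and $\prymthreezero$; only then does the containment $\cM\subseteq\prymthreezero$, $\prymprinc$, or $\dcoverprinc$ upgrade to equality. So the proposal is a correct high-level plan coinciding with the paper's, but it omits the combinatorial core and rests the final step on an incorrect justification.
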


Figure~\ref{fig:rk2:mnfd:gen3} gives the list of all rank two affine manifolds in genus three and the relations between them.

\begin{figure}[htb!]
\centering
 \begin{tikzpicture}[scale=0.7]
\node (Q1) at (0,2) {$\widetilde{\mathcal{Q}}(2^2,-1^4)$};
 \node (Q2) at (-2,4) {$\widetilde{\mathcal{Q}}(2,1,-1^3)$};
 \node (Q3) at (2,4) { $\widetilde{\mathcal{Q}}(4,-1^4)$};
 \node (Q4) at (0,6)  { $\widetilde{\mathcal{Q}}(3,-1^3)$};
 \node (D1) at (0,0.5) { $\widetilde{\cH}(1,1)$};
 \node (D2h) at (-7,4) {$\dcoverhyp=\tilde{\cQ}(1^2,-1^2)$};
 \node (D2o) at (5.5,4) {$\dcoverodd$};
 \node (Sub1) at (3.8,4) {$\supset$};
 \node (Sub2) at (0,1.3) {$\cup$};

\path (Q4)  edge[->, >=angle 60] (Q2)
         (Q4)  edge[->, >=angle 60] (Q3)
         (Q2) edge[->, >=angle 60] (Q1)
         (Q3) edge[->, >=angle 60] (Q1)
         (D2h) edge[->, >=angle 60] (Q2)
         (D2h) edge[->, >=angle 60] (D1)
         (D2o) edge[->, >=angle 60] (D1);
  \end{tikzpicture}
\caption{Rank two affine submanifolds of $\cH_3$: $X \ra Y$ means that $X \subset \partial Y$, and $X$ has codimension $1$  in $\ol{Y}$.}
\label{fig:rk2:mnfd:gen3}
\end{figure}
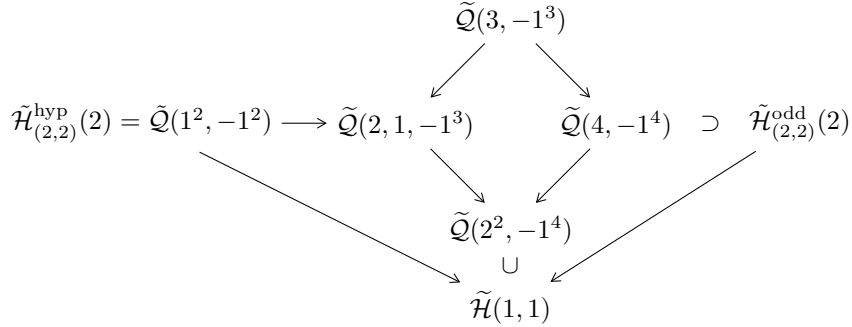

We close this section by indicating how Theorem~\ref{thm:main:rk2:g3} and \cite[Th. 1.1]{MirzakhaniWrightFullRank} imply Theorem~\ref{thm:classify:orbits:in:g3}. Let $\cM$ be the closure of $\GL^+(2,\R)\cdot M$ in $\cH(\kappa)$. If $\cM$ is of rank three (that is of full rank), then by \cite[Th. 1.1]{MirzakhaniWrightFullRank},  $\cM$ is  a component of  $\cH(\kappa)$ or a component of $\cH(\kappa)\cap\cL$.
If $\cM$ is of rank two, then by Theorem~\ref{thm:main:rk2:g3}, $\cM$ is a component of $\cH(\kappa)\cap\cP$ or a component of $\cH(\kappa)\cap\cP\cap\cL$. 
Finally, if $\cM$ is of rank one, then $M$ must be completely periodic by\cite[Th. 1.5]{WrightCylDef}, and the ratio of the circumferences of any pair of parallel cylinders belongs to a finite set by~\cite[Th. 1.4]{MirzakhaniWrightBoundary}.

\subsection{Outline}

The paper is organized as follows: in Section~\ref{sec:preliminaries} we recall essential definitions and important results needed for our proofs. Our strategy is to degenerate surfaces in a given rank two affine manifold $\cM\subset \cH(2,1^2)\cup\cH(1^4)$ by collapsing a family of $\cM$-parallel cylinders, to get surfaces in another rank two affine manifold $\cM'$ contained in some lower stratum.
The key point is that in some situations, we have $\dim \cM=\dim\cM'+1$ (see Propositions~\ref{prop:collapse:free:sim:cyl} and \ref{prop:collapse:similar:cyl}).
Moreover, we can derive some important properties of surfaces in $\cM$, namely the existence of involutory automorphisms, from the properties of surfaces in $\cM'$ (see Proposition~\ref{DblCovExtSimpCyl}).
We will also prove that the intersection $\cP\cap \cL$ in $\cH_3$ is precisely the locus of unramified double covers of translation surfaces of genus two (see Lemma~\ref{lm:2inv:dblcover}). In~\cite{AulicinoNguyenGen3TwoZeros}, we showed that $\cH(2)$ gives rise to two loci of unramified double covers in $\cH_3$, namely $\dcoverodd$ and $\dcoverhyp$. Interestingly, we will show that the locus of unramified double covers of surfaces in $\cH(1,1)$ is connected (see Proposition~\ref{H11CoverConnProp}). This follows from the fact that the mapping class group acts transitively on the set of non-zero cohomologies with coefficients in $\Z/(2\Z)$.

Section~\ref{sec:C3I:collapse} implements our strategy in a special situation, where $\cM$ contains a horizontally periodic surface with three horizontal cylinders, whose core curves span a Lagrangian in homology.

In Section~\ref{sec:get4cyl}, we show that $\cM$ must contain a horizontally periodic surface with at least four cylinders. For this, we  improve some technical lemmas in~\cite{AulicinoNguyenGen3TwoZeros} and use the results of \cite{NguyenWright,AulicinoNguyenWright, AulicinoNguyenGen3TwoZeros}.

Section~\ref{sec:4cyl} addresses the case in which $\cM$ contains a horizontally periodic surface with four cylinders. This case turns out to be the most involved in our analysis due to the various situations that may occur. Our main result in this section is Proposition~\ref{prop:4cyl}. For the proof, we split this case into four subcases following the topological type of the cylinder decomposition (see Lemma~\ref{lm:4CylDeg}), and each subcase is handled differently. In order to keep the focus on the main ideas of the proofs, we defer some technical lemmas to the appendix.

In Section~\ref{sec:5cyl}, we address the case in which $\cM$ contains a horizontally periodic surface with five cylinders. Employing essentially the strategy of collapsing, we come to the conclusion that if $\cM \subset \cH(2,1^2)$, then $\cM=\prymthreezero$, and if $\cM \subset \cH(1^4)$, then either $\cM=\prymprinc$, or $\cM$ contains a horizontally periodic surface with six cylinders (see Propositions~\ref{Case5IH211}, \ref{Case5IH1111}, \ref{prop:C5II}). This allows us to conclude the first part of Theorem~\ref{thm:rk2:H211:H1111}.

Finally, in Section~\ref{sec:6cyl} we consider the case in which $\cM$ contains a horizontally periodic surface with six cylinders.  Necessarily $\cM \subset \cH(1^4)$. By some elementary combinatorial arguments, we see that in this case there are only four possible cylinder diagrams (see Proposition~\ref{prop:6CylDiags}). Each cylinder diagram will be handled independently to show that either $\cM=\prymprinc$ or $\cM=\dcoverprinc$. This allows us to complete the proof of Theorem~\ref{thm:rk2:H211:H1111}.

\medskip

\noindent \textbf{Acknowledgements:} The authors warmly thank Alex Wright for helpful discussions and for suggesting the formulation of Theorem~\ref{thm:classify:orbits:in:g3}. They are also grateful to the Centre International de Rencontres  Math\'ematiques in Marseille for its hospitality and to Vincent Delecroix for providing the list of cylinder diagrams that inspired this work.

\section{Preliminaries}\label{sec:preliminaries}

We give a brief summary of the essential definitions and important results needed for this paper.  Since this paper is very much a sequel to \cite{AulicinoNguyenGen3TwoZeros}, all of the notation is consistent between the two papers, and we encourage the reader to refer to \cite[Sect. 2]{AulicinoNguyenGen3TwoZeros} for more detailed definitions and background.

\

\noindent \textbf{Strata and Their Structure}: A \emph{translation surface} $M = (X, \omega)$ is a pair of a Riemann surface of genus $g \geq 2$ carrying a non-zero Abelian differential $\omega$.  The set $\cH(\kappa)$ is the moduli space of translations surface where $\kappa$ specifies the orders of the zeros of the differential.  Strata admit an action by $\text{GL}_2(\bR)$ given by multiplying the real and imaginary foliations of $\omega$ by elements of the group.  There is a natural local system of coordinates on $\cH(\kappa)$ given by integrating $\omega$ over a basis of $H_1(X, \Sigma, \bZ)$, where $\Sigma \subset X$ is the set of zeros of $\omega$.  These are called \emph{period coordinates}.

\

\noindent \textbf{Orbit Closures and Their Structure}:  It was proven in \cite{EskinMirzakhaniMohammadiOrbitClosures}, that the $\text{GL}_2(\bR)$ orbit closure of a translation surface is an (immersed) affine manifold $\cM$ (after passing to a suitable finite cover) and that locally $\cM$ is a linear subspace of $H^1(X, \Sigma, \bC)$ in period coordinates.  The \emph{field of (affine) definition}, denoted by $\bk(\cM)$, is the smallest subfield of $\bR$ containing the coefficients of the linear equations defining $\cM$.  It is shown in \cite{WrightFieldofDef} that this field is of degree at most $g$ over $\Q$, where $g$ is the genus of a surface in $\cM$.

The \emph{rank of an affine manifold} $\cM$ is half the dimension of $\cM$ after applying the projection $H^1(X, \Sigma) \rightarrow H^1(X)$. We denote this invariant by ${\rm rk}(\cM)$.

\begin{theorem}[\cite{WrightFieldofDef}]\label{thm:Wright:def:field:deg:rk}
We have
$$
{\rm rk}(\cM)\,{\rm deg}_\Q\bk(\cM) \leq g.
$$
In particular, if $\cM$ is a rank two affine submanifold in $\cH_3$, then $\bk(\cM)=\Q$.
\end{theorem}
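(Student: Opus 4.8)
The plan is to deduce the inequality from the fact that the Galois conjugates of the tangent space of $\cM$ occupy independent positions inside $H^1(X;\C)$. First I would fix a surface $M=(X,\omega)\in\cM$, let $\Sigma\subset X$ be the set of zeros of $\omega$, and let $p\colon H^1(X,\Sigma)\to H^1(X)$ be the natural map. In period coordinates the tangent space $T:=T_{[M]}\cM\subset H^1(X,\Sigma;\C)$ is cut out by $\R$-linear equations with coefficients in $\bk:=\bk(\cM)$ applied to real and imaginary parts separately; hence $T=W\otimes_{\bk}\C$ for a $\bk$-rational subspace $W\subset H^1(X,\Sigma;\Q)\otimes_\Q\bk$, where $\bk$ is viewed inside $\C$ via $\bk\subset\R\subset\C$. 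Set $U:=p(W)\subset H^1(X;\Q)\otimes_\Q\bk$; by the definition of rank, $\dim_\bk U=\dim_\C p(T)=2\,{\rm rk}(\cM)$. The statement we want then reduces to: the subspaces $U^\sigma:=W\otimes_{\bk,\sigma}\C$, obtained by pushing forward and conjugating through the various embeddings $\sigma\colon\bk\hookrightarrow\C$, are in direct sum inside $H^1(X;\C)$.

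The two ingredients I would use to see this are Hodge-theoretic and dynamical. On the one hand, by \cite{Filip2} the subspace $U$ underlies a polarizable sub-variation of Hodge structure of $H^1$, and the same holds for each Galois conjugate $U^\sigma$; since the polarization is a fixed integral antisymmetric form $\langle\cdot,\cdot\rangle$ on $H^1(X;\Q)$, its complexification restricts non-degenerately to each $U^\sigma$. On the other hand, $\cM$ is invariant under the standard cylinder deformations of \cite{WrightCylDef}, and these force, for distinct embeddings $\sigma_1\neq\sigma_2$, the orthogonality $\langle U^{\sigma_1},U^{\sigma_2}\rangle=0$: one pairs an integral core-curve class coming from such a deformation against elements of $U$, and, because the pairing of rational classes is Galois-invariant while the relative-period relations cutting out $\cM$ are genuinely defined over $\bk$ rather than over $\Q$, applying a field automorphism forces all cross terms between different conjugates to vanish. (This is the content of Wright's argument in \cite{WrightFieldofDef}.)

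Granting the two ingredients, the conclusion is a linear-algebra observation: mutually orthogonal, $\langle\cdot,\cdot\rangle$-non-degenerate subspaces of the $2g$-dimensional symplectic space $H^1(X;\C)$ must be in direct sum, so
\[
\sum_{\sigma\colon\bk\hookrightarrow\C}\dim_\C U^\sigma\;\leq\;\dim_\C H^1(X;\C)=2g.
\]
There are $\deg_\Q\bk(\cM)$ embeddings and each $U^\sigma$ has dimension $2\,{\rm rk}(\cM)$, so $2\,{\rm rk}(\cM)\deg_\Q\bk(\cM)\leq 2g$, i.e.\ ${\rm rk}(\cM)\deg_\Q\bk(\cM)\leq g$. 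For the last sentence, if $g=3$ and ${\rm rk}(\cM)=2$ then $\deg_\Q\bk(\cM)\leq\tfrac32$, and since a field degree is a positive integer this forces $\deg_\Q\bk(\cM)=1$, that is $\bk(\cM)=\Q$.

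I expect the main obstacle to be the symplectic orthogonality of the Galois conjugates together with the non-degeneracy of $\langle\cdot,\cdot\rangle$ on each conjugate; this is precisely where the cylinder deformation machinery (or, via the Hodge-theoretic route, the results of \cite{Filip2}) must be brought to bear, and it is the heart of \cite{WrightFieldofDef}. A secondary technical point is that one must already know $\bk(\cM)$ is a number field for ``the embeddings $\bk\hookrightarrow\C$'' to be a finite set; this, however, falls out of the same argument, since infinitely many pairwise orthogonal conjugates of a nonzero subspace cannot be accommodated in the finite-dimensional space $H^1(X;\C)$.
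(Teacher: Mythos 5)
The paper does not prove the displayed inequality at all: it is quoted verbatim from \cite{WrightFieldofDef}, and the only part actually argued in the text is the final clause, namely that ${\rm rk}(\cM)=2$ and $g=3$ force ${\rm deg}_\Q\bk(\cM)\leq 3/2$, hence $\bk(\cM)=\Q$. Your treatment of that clause is correct and identical to the paper's. The rest of your proposal is an attempted reconstruction of Wright's theorem itself, which goes beyond what the paper does, so the relevant question is whether your sketch of that external argument stands on its own.

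It does not, and the gap is exactly where you suspect it is. The claim that the conjugate subspaces $U^{\sigma_1}$ and $U^{\sigma_2}$ are symplectically orthogonal for $\sigma_1\neq\sigma_2$ is asserted rather than proved: Galois invariance of the rational intersection form only gives $\langle v^{\sigma},w^{\sigma}\rangle=\langle v,w\rangle^{\sigma}$ and says nothing about cross terms $\langle v^{\sigma_1},w^{\sigma_2}\rangle$, and the fact that the defining relations of $\cM$ are ``genuinely over $\bk$ rather than $\Q$'' does not by itself make these vanish. In Wright's proof this orthogonality (together with the nondegeneracy of the symplectic form on $p(T_M\cM)$, which is itself a nontrivial theorem of Avila--Eskin--M\"oller rather than an automatic consequence of having a polarization) is the substantive content, obtained from the $\GL^+(2,\R)$-invariance of the tangent bundle, the tautological subspace spanned by $\Re\,\omega$ and $\Im\,\omega$, and semisimplicity of the Hodge bundle; the route through \cite{Filip2} is legitimate but likewise has to be carried out, not just invoked. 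Also note a small anachronism: if one is willing to cite \cite{Filip2} or \cite{WrightCylDef} at that step, one may as well cite \cite{WrightFieldofDef} directly, which is what the paper does. So your proposal is fine as a roadmap to Wright's argument plus the correct arithmetic deduction, but as a self-contained proof it leaves the central step unestablished.
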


\begin{remark}\label{rk:sq:tiled:dense}
If $\bk(\cM)=\Q$, then the subset of square-tiled surfaces is dense in $\cM$.
\end{remark}

\noindent \textbf{Flat Structure}: A \emph{cylinder} on a translation surface is a maximal set of closed trajectories on $M$ that are pairwise homotopic and do not pass through singularities.  A \emph{saddle connection} is a flat trajectory that emanates from a zero and terminates at a not necessarily distinct zero.  A cylinder is \emph{simple} if each of its boundaries consist of exactly one saddle connection, and it is \emph{semi-simple} if at least one of its boundaries consists of exactly one saddle connection.

\

\noindent \textbf{Cylinder Decompositions:}  We say that a translation surface $M$ is {\em periodic in a direction $\theta \in \mathbb{RP}^1$}, if every geodesic in this direction is either  periodic, or  a saddle connection. Equivalently, $M$  decomposes into a union of open cylinders and saddle connections in this direction. Therefore, we also say that $M$ admits a cylinder decomposition in direction $\theta$. It follows from a result of Smillie-Weiss~\cite{SmillieWeissMinSets} that every $\GL^+_2(\R)$-orbit closure contains a horizontally  periodic surface.

\

\noindent \textbf{Cylinder Deformations}: If two parallel cylinders on $M$ remain parallel on all translation surfaces in a local neighborhood of $M \in \cM$, then we say that they are $\cM$-parallel.  A cylinder is called \emph{free} if it does not share this property with any other cylinder on $M$.  The relation of being $\cM$-parallel is an equivalence relation.

Let $\cC=\{C_1,\dots,C_k\}$ be a family of horizontal cylinders on  a surface $M \in \cM$. For any $t,s \in \R$, let $a_s=\left(\begin{smallmatrix} 1 & 0 \\ 0 & e^s \end{smallmatrix}\right)$, and $u_t:=\left(\begin{smallmatrix} 1 & t \\ 0 & 1 \end{smallmatrix}\right)$.  We denote by $a_s^\cC(M)$ (resp. $u_t^\cC(M)$) the surface obtained by applying $a_s$ (resp. $u_t$) to every cylinder in $\cC$, while the rest of $M$ remains unchanged.  Applying $a_s^\cC$ is called  {\em stretching}, and applying $u_t^\cC$ is called {\em shearing} the cylinders in $\cC$.

\begin{theorem}[\cite{WrightCylDef}, Thm. 5.1]
\label{thm:Wright:Cyl:Def}
Let $\cM$ be an affine manifold.    If $\cC$ is an equivalence class of $\cM$-parallel horizontal cylinders on $M \in \cM$, then for all $s,t \in \mathbb{R}$, $a_s^{\cC}(u_t^{\cC}(M)) \in \cM$.
\end{theorem}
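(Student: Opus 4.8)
The plan is to reduce the statement to a single tangency claim, and then prove that claim using the linear structure of $\cM$ in period coordinates together with the results of Eskin--Mirzakhani--Mohammadi.

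\textbf{Step 1: period coordinates and reduction.} First I would record what the cylinder deformation does in period coordinates. For a horizontal cylinder $C_j$ of $M$, with core curve $\gamma_j$, circumference $c_j$ and height $h_j$, let $\sigma_j\in H^1(X,\Sigma;\R)$ be the class sending a relative cycle $\del$ to its algebraic intersection number with $\gamma_j$. Applying $a_s$ after $u_t$ to $C_j$ alone and regluing along its (horizontal, hence unchanged) boundary moves the period class of $M$ by $\bigl(t+\sqrt{-1}\,(e^{s}-1)\bigr)h_j\,\sigma_j$; performing this on the cylinders of $\cC$ in turn changes the period class additively, so, writing $\sigma_\cC:=\sum_{C_j\in\cC}h_j\sigma_j$, the whole two-parameter family $a^\cC_s\bigl(u^\cC_t(M)\bigr)$ lies in period coordinates on the complex line $M+\C\,\sigma_\cC$, with coefficient ranging exactly over $\{z:\Im z>-1\}$ (the region where the sheared and stretched cylinders still have positive height). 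Since $\cM$ is $\GL^+(2,\R)$-invariant, $T_M\cM$ is a complex subspace of $H^1(X,\Sigma;\C)$, so it suffices to prove $\sigma_\cC\in T_M\cM$: then $\sqrt{-1}\,\sigma_\cC\in T_M\cM$ as well, and by Eskin--Mirzakhani--Mohammadi $\cM$ contains a neighborhood of $M$ in $M+\C\,\sigma_\cC$. I would then finish the reduction with an open--closed--connectedness argument along this line: at each point of the family already known to lie in $\cM$ the cylinders of $\cC$ persist, remain $\cM$-parallel, and have their heights rescaled by one common positive factor, so $\sigma_\cC$ merely rescales and the computation can be rerun; moreover $\cM$ is closed in the stratum while $\{z:\Im z>-1\}$ is connected. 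The theorem is thereby reduced to the assertion that $\sigma_\cC\in T_M\cM$ for every $M\in\cM$ and every equivalence class $\cC$ of $\cM$-parallel horizontal cylinders.

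\textbf{Step 2: two ingredients for the tangency claim.} It is enough to treat horizontally periodic $M$. The first ingredient is a cohomological reading of ``$\cM$-parallel'': differentiating along $\cM$ the relation that the core holonomies of two $\cM$-parallel cylinders $C_i,C_j$ stay proportional gives $c_j\,v(\gamma_i)=c_i\,v(\gamma_j)$ for every $v\in T_M\cM$; hence, writing $W:=p(T_M\cM)\subseteq H^1(X;\R)$ for the image under the natural map $p\colon H^1(X,\Sigma)\to H^1(X)$ (a symplectic subspace), the classes $\tfrac{1}{c_i}p(\sigma_i)$ with $C_i\in\cC$ are all congruent modulo the symplectic complement $W^{\perp}$. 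The second ingredient is dynamical: the full horocycle flow preserves $\cM$, so its infinitesimal generator, the real class $\Im[\omega]$, lies in $T_M\cM$, and a direct computation over the horizontal cylinder decomposition identifies $\Im[\omega]$ with $\sum_m h_m\sigma_m$, the sum running over \emph{all} horizontal cylinders of $M$. In particular the total twist $\sum_m h_m\sigma_m$ lies in $T_M\cM$, and $\sum_m h_m\,p(\sigma_m)\in W$.

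\textbf{Step 3: splitting off one equivalence class --- the main obstacle.} What remains is to pass from the total twist to the single class $\cC$, and this is the step I expect to be hardest. The plan is to use the $\GL^+(2,\R)$-action to degenerate $M$ by collapsing the horizontal cylinders outside $\cC$ while stretching those of $\cC$, and then to invoke the theorem of Eskin--Mirzakhani--Mohammadi that the upper-triangular orbit closure coincides with the $\SL(2,\R)$-orbit closure --- together with the local linearity of $\cM$ --- to see that the limiting deformation, now concentrated on $\cC$, is still realized inside $\cM$. It is precisely this limiting/compactness argument that calls on the deep orbit-closure classification. Granting it, I would combine the resulting tangency with the congruences $\tfrac{1}{c_i}p(\sigma_i)\equiv\tfrac{1}{c_j}p(\sigma_j)\pmod{W^{\perp}}$ and the symplectic splitting $H^1(X;\R)=W\oplus W^{\perp}$ to pin the contribution of $\cC$ down to $\sigma_\cC=\sum_{C_j\in\cC}h_j\sigma_j$ itself, obtaining $\sigma_\cC\in T_M\cM$. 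By Step~1 this yields $a^\cC_s\bigl(u^\cC_t(M)\bigr)\in\cM$ for all $s,t$; the stretching direction needed nothing beyond $T_M\cM$ being a complex subspace.
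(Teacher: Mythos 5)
First, note that the paper does not prove this statement at all: it is quoted verbatim from Wright's cylinder deformation paper, so the only question is whether your sketch actually amounts to a proof of Wright's theorem. Your Step 1 is correct and is the standard reduction (the family $a_s^{\cC}(u_t^{\cC}(M))$ traces out $M+\bigl(t+\sqrt{-1}(e^s-1)\bigr)\sigma_{\cC}$, and complex linearity of $T_M\cM$ plus an open--closed argument reduces everything to $\sigma_{\cC}\in T_M\cM$), and the two ingredients in Step 2 (the congruences $c_j\,v(\gamma_i)=c_i\,v(\gamma_j)$ for $\cM$-parallel cylinders, and the fact that the horocycle derivative puts the \emph{total} twist in $T_M\cM$) are true and are indeed part of Wright's toolkit.

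The genuine gap is Step 3, which is not a peripheral technicality but the entire content of the theorem, and the plan you sketch for it cannot work as stated. You propose to ``use the $\GL^+(2,\R)$-action to degenerate $M$ by collapsing the horizontal cylinders outside $\cC$ while stretching those of $\cC$''; but every cylinder involved is horizontal, and the linear action treats all horizontal cylinders identically -- $\mathrm{diag}(a,b)$ scales all their heights by the same factor $b$ and $u_t$ shears each one by an amount proportional to its height -- so no element of $\GL^+(2,\R)$ (nor of the upper-triangular subgroup) can separate $\cC$ from the other horizontal equivalence classes. To make the classes behave differently you would need deformations inside $\cM$ that move the cylinders of $\cC$ differently from the rest, which is precisely the statement being proved; the appeal to ``$P$-orbit closure equals $\SL(2,\R)$-orbit closure'' does not help, since that concerns closures of a single orbit and again cannot distinguish parallel cylinders. (Wright's actual argument gets the separation by perturbing \emph{within} $\cM$, using the definition of $\cM$-parallelism to destroy the horizontality of the cylinders not in $\cC$, and then running a delicate limiting/dynamical argument; nothing in your sketch substitutes for this.) Two further points need repair even granting a separation mechanism: the opening claim of Step 2 that ``it is enough to treat horizontally periodic $M$'' is unjustified, and for non-periodic $M$ the identity $\Im[\omega]=\sum_m h_m\sigma_m$ fails because of minimal components; and the final sentence, asserting that the congruences $\tfrac{1}{c_i}p(\sigma_i)\equiv\tfrac{1}{c_j}p(\sigma_j)$ together with $H^1(X;\R)=W\oplus W^{\perp}$ ``pin down'' the specific combination $\sum h_j\sigma_j$, is not an argument -- those relations live in absolute cohomology, involve the circumferences rather than the heights, and by themselves single out no element of relative cohomology. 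As it stands the proposal establishes the easy reduction but not the theorem.
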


\noindent \textbf{Twist and Preserving Space}: Let $M$ be a horizontally periodic translation surface in an affine manifold $\cM$.  The \emph{cylinder preserving space} $\Pres(M, \cM)$ is the largest subspace of the real tangent space to $\cM$ at $M$ whose elements evaluate to zero on all core curves of the horizontal cylinders of $M$.  The \emph{twist space} $\Twist(M, \cM) \subseteq \Pres(M, \cM)$ consists of all elements that evaluate to zero on all horizontal saddle connections of $M$.  The following definition is motivated by the lemma below.  In the case of rank one affine manifolds it aligns with the definition of $\cM$-stably periodic from \cite{LanneauNguyenWrightFinInNonArithRkOne}.  See \cite[Rmk. 2.8]{LanneauNguyenWrightFinInNonArithRkOne}.

\begin{definition}
Given a horizontally periodic translation surface $M \in \cM$, we say $M$ is \emph{$\cM$-cylindrically stable} if $\Twist(M, \cM) = \Pres(M, \cM)$.
\end{definition}

\begin{lemma}[\cite{WrightCylDef}, Lem. 8.6]
\label{lm:WrightTwistPresLem}
Let $M$ be a horizontally periodic translation surface in an affine manifold $\cM$.  If $M$ is \emph{not} $\cM$-cylindrically stable, then there exists a horizontally periodic translation surface in $\cM$ with more horizontal cylinders than $M$.
\end{lemma}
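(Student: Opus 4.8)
The plan is to exploit a deformation direction that lies in $\Pres(M,\cM)$ but not in $\Twist(M,\cM)$: such a direction keeps every horizontal cylinder horizontal (with fixed circumference) while genuinely moving the horizontal saddle connections, and pushing $M$ along it forces the horizontal separatrix diagram to degenerate in a way that exposes an extra cylinder.

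First I would choose $v\in\Pres(M,\cM)\smin\Twist(M,\cM)$ and reduce to the case that $v$ is real. This is legitimate because $\cM$ is $\GL^+(2,\R)$-invariant, so $T_M\cM$ is stable under the rotation $\left(\begin{smallmatrix}0&-1\\1&0\end{smallmatrix}\right)$, i.e.\ is a $\C$-linear subspace of $H^1(X,\Sigma;\C)$; the subspaces $\Pres(M,\cM)$ and $\Twist(M,\cM)$ inherit this, being cut out of $T_M\cM$ by vanishing of the evaluation against integral homology classes (the horizontal core curves, resp.\ these together with the horizontal saddle connections). Hence $v$ may be replaced by $\Re v$ or $\Im v$, at least one of which still lies outside $\Twist(M,\cM)$; and since $\Twist(M,\cM)$ is precisely the kernel of the restriction of $\Pres(M,\cM)$ to horizontal saddle connections, the chosen real $v$ evaluates nontrivially on some horizontal saddle connection.

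Next I would follow the straight path $M_t$ obtained by adding $tv$ to the period coordinates of $M$, which stays in $\cM$ for small $|t|$. Because $v$ is real, every horizontal saddle connection of $M$ stays horizontal along $M_t$. Because $v$ evaluates to $0$ on each horizontal core curve $\gamma_i$, and the bottom (and top) boundary of each horizontal cylinder is homologous to $\gamma_i$, the circumference of every horizontal cylinder is unchanged along $M_t$, so the cylinders persist; meanwhile the lengths of the horizontal saddle connections vary affinely in $t$ and, by the circumference constraint around each cylinder, some of them must decrease for one sign of $t$. Replacing $v$ by $-v$ if necessary, let $t_0>0$ be the first parameter at which the length of some horizontal saddle connection $\sigma_1$ reaches $0$; then $t_0$ is finite, and on $[0,t_0)$ the surfaces $M_t$ are horizontally periodic with the same cylinders and the same combinatorial horizontal diagram as $M$.

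The crux is the analysis at $t=t_0$. As $t\nearrow t_0$ the surfaces $M_t$ have $\sigma_1$ shrinking to a point while the rest of the horizontal data stays in a compact set, so $M_{t_0}$ is a boundary degeneration of surfaces in $\cM$ obtained by collapsing $\sigma_1$ (pinching the homology class it carries). I would then re-inflate this degeneration in a direction transverse to the collapse---equivalently, open the resulting node by gluing in a thin annulus whose core is horizontal---to produce a surface $M'\in\cH(\kappa)$ that is horizontally periodic, that still contains (slightly deformed) copies of all the horizontal cylinders of $M$, and that has one extra horizontal cylinder, namely the glued annulus; and $M'$ lies in $\cM$ because $\cM$ is closed and $\GL^+(2,\R)$-invariant and $M'$ can be realized as a limit of surfaces in $\cM$ suitably renormalized by $\GL^+(2,\R)$. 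I expect this last step to be the main obstacle: one must choose the renormalization/re-inflation so that the limit genuinely lands in the given stratum $\cH(\kappa)$ rather than escaping further into the boundary, verify that the old cylinders survive and that a bona fide new horizontal cylinder appears, and dispose of the combinatorial case distinctions according to which horizontal saddle connection collapses and how it is situated in the cylinder decomposition of $M$.
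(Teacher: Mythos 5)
Your first two steps are fine (indeed, since $\Pres(M,\cM)$ and $\Twist(M,\cM)$ are by definition subspaces of the \emph{real} tangent space, the reduction to real $v$ is vacuous), and it is true that along $M_t=M+tv$ the cylinders persist with constant heights and circumferences while some horizontal saddle connection shrinks linearly. But the step you yourself flag as the main obstacle is a genuine gap, and it is essentially the whole content of the lemma. Two concrete problems: (a) the limit at $t_0$ need not lie in a lower stratum of genus-$g$ surfaces at all --- if the shrinking saddle connection joins a zero to itself, it is an essential loop (a geodesic loop cannot bound a disc even with cone points of angle $>2\pi$ inside), so its collapse pinches the surface and $M_{t_0}$ is a nodal limit, not a translation surface with collided zeros; (b) even in the good case, ``glue in a thin horizontal annulus'' and ``$M'$ lies in $\cM$ because $\cM$ is closed and $\GL^+(2,\R)$-invariant'' is an assertion, not an argument: closedness only certifies limits of surfaces already known to be in $\cM$, $\GL^+(2,\R)$-renormalization does not reopen a collapsed saddle connection into a cylinder, and whether a surgered surface belongs to $\cM$ is exactly what has to be proved. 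Making this route work would require the boundary theory of Mirzakhani--Wright (anachronistic for the cited result, and even then one must still verify that the reopened surface is horizontally periodic with strictly more cylinders), so as written the proof is incomplete.

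Note also that the paper does not prove this lemma; it quotes it from Wright, whose argument avoids any degeneration. The idea there is to deform in the \emph{purely imaginary} direction $i\,v$: for small $t\neq 0$ the horizontal cylinders of $M$ persist and stay horizontal (their core curves keep real holonomy), but the saddle connection $\sigma$ with $v(\sigma)\neq 0$ acquires a small nonzero vertical holonomy; since any non-horizontal saddle connection crossing one of the persistent cylinders would have vertical holonomy at least the minimal cylinder height, these cylinders can no longer fill the surface. One then applies the Smillie--Weiss theorem to the horocycle orbit closure of the deformed surface inside $\cM$: the horocycle flow preserves the persistent cylinders (heights, circumferences, and the positive area of their complement), and the horizontally periodic surface produced in the closure therefore carries those cylinders plus at least one more. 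If you want a complete proof along your lines you would have to supply the missing reopening argument; otherwise the imaginary-deformation-plus-Smillie--Weiss route is the one that actually closes.
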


\noindent \textbf{Cylinder Proportions}:  Let $\cC$ be an equivalence class of $\cM$-parallel cylinders on a translation surface $M \in \cM$.  Let $X \subset M$ be any cylinder in another direction on $M$.  The \emph{cylinder proportion} of $C$ in $\cC$ is given by
$$P(X, \cC) = \frac{\text{Area}(X \cap (\cup_{C \in \cC} C))}{\text{Area}(X)}.$$

\begin{proposition}[Cylinder Proportion Lemma~\cite{NguyenWright}]
\label{CylinderPropProp}
Let $X$ and $Y$ be $\cM$-parallel cylinders on a translation surface $M \in \cM$.  Let $\cC$ be an equivalence class of $\cM$-parallel cylinders on $M$.  Then $P(X, \cC) = P(Y, \cC)$.
\end{proposition}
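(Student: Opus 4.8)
The plan is to extract the equality $P(X,\cC) = P(Y,\cC)$ from a linear algebraic identity forced on the tangent space to $\cM$ by the cylinder deformation theorem (Theorem~\ref{thm:Wright:Cyl:Def}). First I would set up coordinates: suppose $X$ and $Y$ are $\cM$-parallel cylinders, say both horizontal after applying a suitable element of $\GL^+(2,\R)$, and let $\cC = \{C_1,\dots,C_k\}$ be an equivalence class of $\cM$-parallel cylinders in some other direction, which I may also normalize to be vertical. The key observation is that shearing and stretching the cylinders of $\cC$ stays in $\cM$, so the corresponding deformation vector lies in the tangent space $T_M\cM$, which is a linear subspace of $H^1(X,\Sigma;\bC)$ in period coordinates. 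The deformation supported on $\cC$ changes the holonomy of any curve crossing the $C_i$'s; in particular it changes $\int_{\gamma_X}\omega$ and $\int_{\gamma_Y}\omega$ where $\gamma_X,\gamma_Y$ are core curves of $X$ and $Y$, and the amount of change is governed precisely by how much of $X$ (resp.\ $Y$) is occupied by the cylinders of $\cC$, i.e.\ by $\mathrm{Area}(X\cap\bigcup C_i)$ and $\mathrm{Area}(Y\cap\bigcup C_i)$.

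The main step is then to exploit that $X$ and $Y$ are $\cM$-parallel: their core curves $\gamma_X,\gamma_Y$ stay parallel throughout a neighborhood in $\cM$, so on $T_M\cM$ the ratio $\int_{\gamma_X}\omega : \int_{\gamma_Y}\omega$ is constant, and in particular the differential of this ratio vanishes on every tangent vector — including the cylinder-shear/stretch vector $v_\cC$ associated with $\cC$. Evaluating $v_\cC$ on $\gamma_X$ and $\gamma_Y$: a unit shear of the vertical cylinders in $\cC$ adds to $\int_{\gamma_X}\omega$ a quantity proportional to the signed horizontal width swept out, which after normalizing is $\mathrm{Area}(X\cap\bigcup C_i)/(\text{circumference of }X)$, and similarly for $Y$. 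Writing out that the derivative of the holonomy ratio vanishes yields
\[
\frac{\mathrm{Area}(X\cap\bigcup_i C_i)}{\mathrm{Area}(X)} = \frac{\mathrm{Area}(Y\cap\bigcup_i C_i)}{\mathrm{Area}(Y)},
\]
which is exactly $P(X,\cC)=P(Y,\cC)$. I would carry this out by using the standard intersection-pairing description of how a cylinder deformation acts on periods: the shear of $\cC$ equals $\sum_i t\, (\text{circumference of }C_i)\cdot (\text{Poincaré dual of }\gamma_{C_i})$ as an element of $H^1$, and pairing with $\gamma_X$, $\gamma_Y$ recovers the areas of the overlaps.

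The main obstacle I expect is bookkeeping the geometric meaning of the pairing $\langle v_\cC, \gamma_X\rangle$ carefully, in particular getting the area interpretation right when $X$ meets several cylinders of $\cC$ and when the core curve $\gamma_X$ is not primitive or crosses a $C_i$ multiple times; one has to argue that the contributions add up with the correct signs (they do, because all cylinders of $\cC$ are coherently oriented and $X$ crosses each in a consistent direction) so that the total really is the unsigned overlap area divided by the circumference. A secondary subtlety is the reduction to the case where $\cC$ and the pair $X,Y$ are in two fixed transverse directions: one should note that $X$ and $Y$ being $\cM$-parallel forces their common direction, and that $\cC$ is by hypothesis a single equivalence class so all its cylinders are genuinely parallel, after which applying a rotation reduces to the horizontal/vertical normalization. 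Once these geometric identifications are pinned down, the proof is a one-line consequence of $v_\cC\in T_M\cM$ and the constancy of the holonomy ratio on $\cM$.
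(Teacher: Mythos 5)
The paper itself gives no proof of this proposition --- it is imported from \cite{NguyenWright} --- so the comparison is with the standard argument there, and your route (a cylinder deformation supported on $\cC$ stays in $\cM$ by Theorem~\ref{thm:Wright:Cyl:Def}, while $\cM$-parallelism of $X$ and $Y$ forces $\mathrm{hol}(\gamma_X)=c\,\mathrm{hol}(\gamma_Y)$ for a fixed real $c$ on a neighborhood of $M$ in $\cM$, and evaluating this identity along the deformation path yields the proportion equality) is exactly that argument. Structurally your proposal is sound, including the sign bookkeeping: since $\gamma_X$ has a fixed direction and the cylinders of $\cC$ are parallel, all crossings have the same intersection sign.

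The genuine problem is that both quantitative identifications you state are wrong, and with them as written your displayed equality does not follow. First, the tangent vector to the twist path of $\cC$ is $\sum_i h_i\,\eta_i$, where $h_i$ is the \emph{height} (transverse width) of $C_i$ and $\eta_i$ is Poincar\'e dual to its core curve --- not the circumference, as you wrote; this is the normalization the paper itself uses in the proof of Proposition~\ref{prop:C5II}. Second, pairing this vector with $\gamma_X$ gives $A_X=\sum_i n_i h_i$ ($n_i$ the number of crossings), and since each crossing contributes to $X\cap C_i$ a parallelogram of area $h_i$ times the transverse width of $X$, one has $A_X=\mathrm{Area}\bigl(X\cap\bigcup_i C_i\bigr)/h_X$, i.e.\ area divided by the \emph{height} of $X$, not by its circumference. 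These corrections matter: in your normalization ($X,Y$ horizontal, $\cC$ transverse) constancy of the ratio $(\ell_X+\imath t A_X)/(\ell_Y+\imath t A_Y)$ forces $A_X/\ell_X=A_Y/\ell_Y$, and this equals $P(X,\cC)=P(Y,\cC)$ precisely because $A_X=\mathrm{Area}(X\cap\bigcup_i C_i)/h_X$ and $\mathrm{Area}(X)=h_X\ell_X$; if $A_X$ were instead area over circumference, you would be comparing areas divided by squared circumferences and the lemma would not result. A minor further point: a rotation alone cannot make $X,Y$ horizontal and $\cC$ vertical simultaneously; either use a shear in $\GL^+(2,\R)$ fixing the horizontal (harmless, since $\cM$ is $\GL^+(2,\R)$-invariant and area proportions are affinely invariant) or simply leave $\cC$ unnormalized --- the computation above never needs $\cC$ to be vertical.
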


\noindent {\bf Cylinder collapsing:} We recall that by ``collapsing'' a cylinder we mean deforming the translation surface by decreasing the height of the cylinder to zero while keeping the rest of the surface unchanged. For a more precise description of this operation, we refer to \cite[Sect. 2.4]{AulicinoNguyenGen3TwoZeros}. We first notice

\begin{lemma}[\cite{AulicinoZeroExpGen3}, Lem. 5.4]
\label{lm:s:cyl:dist:sim:zeros}
Let $C$ be a simple cylinder on a translation surface $M$.  If the zeros (of the holomorphic $1$-form) contained in the boundary of $C$ are simple, then they must be distinct.
\end{lemma}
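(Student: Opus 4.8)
The plan is to argue by contradiction: suppose $C$ is a simple cylinder whose two boundary saddle connections both abut a single simple zero $P$ of $\omega$, so that the zeros in $\partial C$ coincide. Recall that by definition a simple cylinder has each boundary component consisting of exactly one saddle connection, so $\partial C$ consists of two saddle connections $\gamma_{\rm top}$ and $\gamma_{\rm bot}$, each a loop based at $P$ (since we are assuming the only zero involved is $P$, and each saddle connection joins a zero to a not necessarily distinct zero). The key local observation is about the cone angle at $P$: since $P$ is a simple zero, the total cone angle at $P$ is $4\pi$, i.e.\ there are exactly two ``horizontal prongs'' of each sign in the relevant direction, and this sharply limits how many saddle connection endpoints can be attached at $P$.

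First I would set up the local picture at $P$. Orient $C$ so that its core curves are horizontal, with the cylinder lying, say, above $\gamma_{\rm bot}$ and below $\gamma_{\rm top}$. Each of $\gamma_{\rm top}$ and $\gamma_{\rm bot}$ is a horizontal saddle connection, hence contributes two endpoints at the cone point $P$ (one at each of its two ends — or, in the degenerate case where a saddle connection is a loop with both ends at $P$, two endpoints from that single loop). Counting the ends of $\gamma_{\rm top}$ and $\gamma_{\rm bot}$ that are attached at $P$ together with the local angular sectors they cut out, I would show that the cone angle forced at $P$ is at least $6\pi$, contradicting the cone angle $4\pi$ of a simple zero. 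Concretely: on the cylinder side, the two saddle connections $\gamma_{\rm bot}$ and $\gamma_{\rm top}$ bound a sector of angle $\pi$ at $P$ coming from the interior of $C$ (the cylinder occupies a half-disk's worth of angle at each of its corners when the boundary is a single saddle connection meeting itself); on the non-cylinder side, the fact that both $\gamma_{\rm top}$ and $\gamma_{\rm bot}$ are \emph{loops} at $P$ means each of them contributes additional angular sectors outside $C$, and adding these up exceeds $4\pi$. This is essentially a Gauss--Bonnet / angle-counting argument at a single cone point, and it is the heart of the proof.

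The step I expect to be the main obstacle is making the angle bookkeeping at $P$ fully rigorous and exhaustive, i.e.\ checking that in \emph{every} configuration in which both boundary saddle connections of a simple cylinder are loops at the same simple zero, the cone angle is forced to exceed $4\pi$. One has to be careful about how the two saddle connections $\gamma_{\rm top}$, $\gamma_{\rm bot}$ can be glued relative to each other around $P$ (the cyclic order of their four endpoint-germs in the angular coordinate at $P$), and about the degenerate subcase where $\gamma_{\rm top}$ and $\gamma_{\rm bot}$ might a priori be identified. A clean way to organize this is to pass to the flat cone surface and note that a neighborhood of $P$ is a cone of angle $4\pi$ into which two disjoint embedded horizontal segments (the two boundary saddle connections, each a loop) must be inscribed with $P$ as the common endpoint of all four of their sub-rays; a simple count of the horizontal separatrices emanating from a $4\pi$ cone point (there are exactly four: two pointing in the positive horizontal direction and two in the negative) shows there is not enough room, since each loop at $P$ consumes two separatrix germs and must moreover be traversable along a single horizontal direction, forcing $\gamma_{\rm top}=\gamma_{\rm bot}$, which is impossible for a cylinder with nonzero height. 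Once the separatrix count is phrased correctly the contradiction is immediate, so the write-up reduces to stating this count precisely and dispatching the one degenerate identification.
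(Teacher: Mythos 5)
Your overall strategy (contradiction via an angle count at the simple zero $P$, aiming at the bound $6\pi>4\pi$) is the right one; note that the paper itself does not prove Lemma~\ref{lm:s:cyl:dist:sim:zeros} but cites \cite{AulicinoZeroExpGen3}, and the standard argument is exactly such a local count. However, the argument you finally settle on --- the ``simple count of horizontal separatrices'' --- does not work, and this is a genuine gap. A simple zero has exactly four horizontal separatrix germs at $P$ (two pointing east, two pointing west), and each of the two boundary loops $\gamma_{\mathrm{top}}$, $\gamma_{\mathrm{bot}}$ consumes one east germ and one west germ; so two distinct loops use exactly the four available germs, there is no overcount, and nothing in this count forces $\gamma_{\mathrm{top}}=\gamma_{\mathrm{bot}}$. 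Likewise, your earlier assertion that the sectors outside $C$ ``add up to more than $4\pi$'' is stated but not derived.

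The missing idea is an adjacency (cyclic-order) constraint, which is what actually produces the $6\pi$ bound you announce. Because $C$ is simple, its corner at the bottom boundary subtends interior angle exactly $\pi$ at $P$, and this $\pi$-sector is bounded by the outgoing (east) and incoming (west) germs of $\gamma_{\mathrm{bot}}$; since no horizontal germ can lie in the interior of a $\pi$-sector, these two germs are consecutive in the cyclic order of horizontal germs at $P$. The same holds for the germs of $\gamma_{\mathrm{top}}$ bounding the top corner (a ``lower'' $\pi$-sector). The forced cyclic order is then $E_{\mathrm{bot}},W_{\mathrm{bot}},W_{\mathrm{top}},E_{\mathrm{top}}$, so the two remaining sectors are each bounded by two germs pointing in the \emph{same} horizontal direction; the angle between two distinct like-pointing germs at a cone point is a positive multiple of $2\pi$, so each of these sectors has angle at least $2\pi$. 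Hence the cone angle at $P$ is at least $\pi+\pi+2\pi+2\pi=6\pi$, contradicting $4\pi$. (For the degenerate identification $\gamma_{\mathrm{top}}=\gamma_{\mathrm{bot}}$, the right reason to exclude it is not ``nonzero height'': if the two boundary components were the same saddle connection, the closure of $C$ would be a closed torus inside $M$, hence all of $M$, and $\omega$ would have no zero at all.) With this adjacency step supplied, your first paragraph becomes a complete proof; without it, the separatrix count alone proves nothing.
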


The following proposition can be proven without too much effort using the results from \cite{AulicinoNguyenGen3TwoZeros}, and in particular, Proposition 2.16 contained therein.  However, it is much quicker to use the more general and developed machinery of \cite{MirzakhaniWrightBoundary}.  Since all degenerations in this paper will occur over a compact subset of the moduli space of Riemann surfaces of fixed genus, the results will \emph{not} rely on the multicomponent EMM conjecture.

\begin{proposition}
\label{prop:collapse:free:sim:cyl}
Let $\cM$ be an affine manifold, and let $M \in \cM$.   Suppose  that $M$ has a free simple cylinder $C$ with two distinct zeros on its boundary components. Let $M'$ be the surface obtained by collapsing $C$ so that the two zeros collide, and let $\cH(\kappa')$ where $|\kappa'|=|\kappa|-1$ be the stratum of $M'$.
 Then $M' $ is contained in an affine submanifold  $\cM' \subset \cH(\kappa')$ such that ${\rm rk}(\cM')={\rm rk}(\cM)$ and  $\dim\cM'=\dim\cM-1$.

 Moreover, let $U$ be a neighborhood of $M$ in $\cM$ such that for any surface in $U$, $C$ persists and remains simple. Let $\varphi: U \ra \cH(\kappa')$ be the map consisting of collapsing $C$ such that the two zeros in its boundary are identified. Then $\varphi(U)$ is a neighborhood of $M'$ in $\cM'$.
\end{proposition}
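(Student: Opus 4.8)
The plan is to invoke the boundary structure results of Mirzakhani–Wright \cite{MirzakhaniWrightBoundary} for the cylinder-collapse degeneration, and then to verify the two numerical claims (the rank is preserved, the dimension drops by exactly one) by a careful bookkeeping in period coordinates. First I would set up the degeneration: collapsing the free simple cylinder $C$ gives a path in $\cH(\kappa)$ degenerating to the surface $M' \in \cH(\kappa')$, where the two distinct simple zeros on $\partial C$ collide into a single zero of order one greater. By \cite{MirzakhaniWrightBoundary}, the limit $M'$ lies in an affine submanifold $\cM' \subset \cH(\kappa')$ which is the image of $\cM$ under the boundary map associated to this collapse; crucially, because the degeneration stays in a compact part of $\cM_3$ (only a saddle connection, i.e.\ a relative period, is shrinking to zero while absolute periods stay bounded away from both $0$ and $\infty$), the general multi-component EMM machinery is not needed — this is exactly the ``over a compact subset'' remark in the statement.

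The heart of the argument is the dimension and rank count. Since $C$ is \emph{free}, its core curve $\gamma$ is not $\cM$-parallel to any other cylinder, so by Wright's cylinder deformation theorem the matrices $a_s^{\{C\}}$ and $u_t^{\{C\}}$ act on $\cM$; the saddle connection $\delta$ crossing $C$ (the relative homology class dual to the height of $C$) therefore contributes a genuine coordinate direction to $T\cM$ that is \emph{not} constrained by the linear defining equations of $\cM$ and is \emph{not} an absolute class. The collapse is precisely the degeneration $\operatorname{Re}(\delta), \operatorname{Im}(\delta) \to 0$. On period coordinates, $H^1(M',\Sigma',\bC)$ is obtained from $H^1(M,\Sigma,\bC)$ by quotienting out exactly the one relative class $\delta$ (equivalently, $\dim_\bC H^1(M,\Sigma) = \dim_\bC H^1(M',\Sigma') + 1$ since we lose one zero but homology of the closed surface is unchanged). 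I would argue that the linear subspace cutting out $\cM$ descends to the linear subspace cutting out $\cM'$ and that this descent drops the dimension by exactly one: the ``$-1$'' is the freeness of $C$ (the $\delta$-direction is in $T\cM$ and is killed), and nothing else is lost because freeness means no further linear relation involving $\delta$ forces extra collapse. For the rank: projecting to absolute cohomology $H^1(M)$, the class $\delta$ dies (it is purely relative), so $T\cM$ and $T\cM'$ have the same image under $H^1 \to H^1_{\mathrm{abs}}$, whence ${\rm rk}(\cM') = {\rm rk}(\cM)$.

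For the ``moreover'' part, I would choose $U$ small enough that $C$ persists as a simple cylinder (an open condition), so $\varphi$ is defined and is exactly the restriction to $\cM$ of the linear collapse map on period coordinates; it is continuous and, by the dimension count, $\varphi$ is a submersion from the $(\dim\cM)$-dimensional $U$ onto a $(\dim\cM - 1)$-dimensional image. Then $\varphi(U)$ is open in $\cM'$ and contains $M'$, i.e.\ it is a neighborhood of $M'$ in $\cM'$ — here I would cite the local description of the boundary map in \cite{MirzakhaniWrightBoundary} to identify $\varphi(U)$ with an open subset of the affine manifold $\cM'$ rather than merely with a constructible set, and use that $\cM'$ is locally linear to conclude openness. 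The main obstacle I anticipate is making the ``dimension drops by exactly one'' rigorous: a priori collapsing a cylinder in a non-free equivalence class could force the collapse of other cylinders and a larger codimension drop, so the entire weight of the argument rests on correctly exploiting freeness to show the only degenerating period is $\delta$, and on matching the linear equations of $\cM$ with those of $\cM'$ under the period-coordinate quotient. I would handle this by a direct comparison of the defining linear systems, using freeness to produce the surjection $T\cM \to T\cM'$ with one-dimensional kernel spanned by the $\delta$-direction.
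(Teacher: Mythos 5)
Your overall route is the same as the paper's: invoke the Mirzakhani--Wright boundary theorem to identify $T_{M'}(\cM')$ with $T_M(\cM)\cap{\rm Ann}(\C\cdot\sig)$, where $\sig$ is the unique saddle connection of $M$ crossing $C$ that vanishes in the limit; use freeness of $C$ (via the cylinder deformation theorem) to see that the functional $\xi\mapsto\xi(\sig)$ is nonzero on $T_M(\cM)$, so the intersection has codimension exactly one; and observe that in suitable period coordinates the collapse map $\varphi$ is just the linear projection of $T_M(\cM)$ onto $T_M(\cM)\cap{\rm Ann}(\C\cdot\sig)$, which gives the ``moreover'' statement. All of that matches the paper (which cites \cite[Thm.~2.7]{MirzakhaniWrightBoundary} and handles the last claim exactly as you do).

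The one step where your justification does not hold up as written is the rank equality. You argue that since $\sig$ is a purely relative class, $T\cM$ and $T\cM'$ have the same image under $p:H^1(X,\Sigma)\to H^1(X)$. That inference is not valid: $T_{M'}(\cM')$ is the intersection of $T_M(\cM)$ with the hyperplane ${\rm Ann}(\sig)$, and a priori this intersection could have image of codimension one in $p(T_M(\cM))$ --- this happens precisely when every purely relative vector in $T_M(\cM)$ already annihilates $\sig$. The fact that $\sig$ dies in absolute cohomology says nothing about whether the one lost tangent direction can be chosen inside $\ker p$; note for instance that the natural candidate, the stretch/shear deformation of the free cylinder $C$, is generally \emph{not} purely relative, since its absolute part is dual to the core curve of $C$. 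To close the gap you either need to produce a purely relative element of $T_M(\cM)$ with nonzero $\sig$-period, or argue indirectly: both $p(T_M(\cM))$ and $p(T_{M'}(\cM'))$ are symplectic (Avila--Eskin--M\"oller), hence of even complex dimension, and they differ in dimension by at most one, so they coincide. The paper sidesteps this by citing \cite[Prop.~2.16]{AulicinoNguyenGen3TwoZeros} for the rank claim; either that citation or the parity argument above should replace your ``purely relative'' sentence. The rest of your write-up, including the use of freeness for the exact codimension-one drop and the local openness of $\varphi(U)$ in $\cM'$, is in line with the paper's proof.
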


\begin{proof}
Let $\sig$ be the saddle connection in $C$ that is reduced to a point in $M'$, and $V=\C\cdot\sig \subset H_1(X,\Sigma,\C)$.  By \cite[Thm. 2.7]{MirzakhaniWrightBoundary}, the tangent space $T_{M'}(\cM')$ is isomorphic to $T_M(\cM) \cap \text{Ann}(V)$, where $\allowbreak {\rm Ann}(V)= \{\xi \in H^1(X,\Sig,\C)\ | \ \xi(\sig)=0\}$.  By assumption, there is a single saddle connection that vanishes at the boundary, so $\text{Ann}(V)$ has codimension one in $T_M(\cM)$.  It follows that $\dim\cM'=\dim\cM-1$. The claim about the equality of the ranks follows from \cite[Prop. 2.16]{AulicinoNguyenGen3TwoZeros}.

For the final claim, it is enough to remark that in some appropriate period coordinates of  $\cH(\kappa)$ and $\cH(\kappa')$, $\varphi$ is just the projection from $T_M(\cM)$ onto $T_M(\cM)\cap{\rm Ann}(V)$.
\end{proof}

\noindent {\bf Similar cylinders:} Let $C_1$ and $C_2$ be two simple cylinders in $M$. Recall that $C_i, \,i=1,2$, is the quotient of an infinite horizontal strip $\tilde{C}_i:=\R\times[0,h_i]$ by a $\Z$-action generated by $(x,y) \mapsto (x+\ell_i,y)$, where $h_i$ and $\ell_i$ are respectively the height and the circumference of $C_i$. Note that the lines $\R\times\{0\}$ and $\R\times\{h_i\}$ are mapped to the boundary components of $C_i$. We can always assume that $(0,0)$ is mapped to the zero in a boundary component of $C_i$. The inverse image of the zero in the other component is given by $(a_i,h_i)+\Z(\ell_i,0)$.

We will call a parallelogram in $\tilde{C}_i$ whose set of vertices is $\{(0,0), (\ell_i,0), (a_i+m\ell_i,h_i), (a_i+(m+1)\ell_i,h_i)\}, \, m\in \Z$, a {\em normalized fundamental domain} of $C_i$. We will say that $C_1$ and $C_2$ are {\em similar} or {\em proportional}, if there exist two normalized fundamental domains $P_1, P_2$ of $C_1, C_2$ respectively such that $P_2$ is the image of $P_1$ by a homothety $z \mapsto \lambda z$, with $\lambda > 0$.  In particular, if $\lambda=1$, then $C_1$ and $C_2$ are said to be \emph{isometric}.  This agrees with the definition of isometric in \cite{AulicinoNguyenWright, AulicinoNguyenGen3TwoZeros}.

Since $C_1$ and $C_2$ are simple, they persist and remain simple on every surface in a sufficiently small neighborhood of $M$ in its stratum. The cylinders $C_1$ and $C_2$ are said to be {\em $\cM$-similar}, if they are $\cM$-parallel and remain similar on every surface in a neighborhood of $M \in \cM$. Note that in this case, there exists a constant $\lambda$ and normalized fundamental domains such that for any surface $M'$ in a neighborhood of $M$ in $\cM$, we have $P^{M'}_1=\lambda P^{M'}_2$, where $P^{M'}_i$ is a normalized fundamental domain of the cylinder corresponding to  $C_i$ in $M'$. By a slight abuse of notation, we will write $C_1=\lambda C_2$.

\begin{proposition}\label{prop:collapse:similar:cyl}
Assume that $\{C_1,C_2\}$ is an equivalence class of $\cM$-similar simple cylinders on $M$. Assume that the boundary of $C_1$ (resp. $C_2$) contains two distinct zeros, and the pairs of zeros contained in $\partial C_1$ and $\partial C_2$ are not the same.
Let $M'$ be the  surface obtained by twisting and collapsing  $C_1, C_2$ simultaneously such that the zeros in the boundary of $C_1$ (resp. $C_2$) collide.   Then $M'$ is contained in an affine submanifold $\cM'$ of a stratum $\cH(\kappa')$, where $|\kappa'|=|\kappa|-2$, such that $\dim \cM' = \dim \cM-1$, and $\mathrm{rk}(\cM')=\mathrm{rk}(\cM)$.

Moreover, let $U$ be a neighborhood of $M \in \cM$ such that for any surface in $U$, $C_1$ and $C_2$ persist and remain simple. Let $\varphi: U \ra \cH(\kappa')$ be the map consisting of  collapsing  $C_1$ and $C_2$ such that the two zeros in the boundary of each cylinder are identified. Then $\varphi(U)$ is a neighborhood of $M'$ in $\cM'$.
\end{proposition}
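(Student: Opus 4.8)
The plan is to follow the proof of Proposition~\ref{prop:collapse:free:sim:cyl}, replacing the single collapsed saddle connection by the pair $\sigma_1,\sigma_2$ associated to $C_1,C_2$, and then to exploit $\cM$-similarity to see that imposing \emph{both} vanishing conditions only lowers the dimension by one.

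First I would set things up. Write $p_i,q_i$ for the two distinct zeros in $\partial C_i$, and let $\sigma_i$ ($i=1,2$) be the saddle connection crossing $C_i$ from $p_i$ to $q_i$, i.e.\ the class that is reduced to a point when $C_i$ is collapsed so that $p_i,q_i$ collide. Put $V=\C\cdot\sigma_1+\C\cdot\sigma_2\subset H_1(X,\Sigma,\C)$. The hypothesis that the pairs $\{p_1,q_1\}$ and $\{p_2,q_2\}$ are not the same forces $\partial\sigma_1=q_1-p_1$ and $\partial\sigma_2=q_2-p_2$ to be linearly independent in $H_0(\Sigma,\C)$ modulo the image of $H_0(X,\C)$ (checking separately the cases where the two pairs are disjoint and where they share one zero); hence $\dim V=2$ and, moreover, $V$ meets the image of $H_1(X,\C)\to H_1(X,\Sigma,\C)$ trivially. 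By \cite[Thm. 2.7]{MirzakhaniWrightBoundary}, $M'$ lies in an affine submanifold $\cM'\subset\cH(\kappa')$ with $|\kappa'|=|\kappa|-2$ and $T_{M'}(\cM')\cong T_M(\cM)\cap\mathrm{Ann}(V)$, where $\mathrm{Ann}(V)=\{\xi\in H^1(X,\Sigma,\C)\ :\ \xi(\sigma_1)=\xi(\sigma_2)=0\}$.

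The key step is the dimension count. Since $\{C_1,C_2\}$ is a class of $\cM$-similar cylinders with $C_1=\lambda C_2$, the function $N\mapsto \mathrm{hol}_N(\sigma_1)-\lambda\,\mathrm{hol}_N(\sigma_2)$ vanishes identically on a neighborhood of $M$ in $\cM$ (the $\sigma_i$ are diagonals of normalized fundamental domains of $C_i$, which stay proportional with ratio $\lambda$); differentiating gives $\xi(\sigma_1)=\lambda\,\xi(\sigma_2)$ for every $\xi\in T_M(\cM)$. Therefore $T_M(\cM)\cap\mathrm{Ann}(V)=\{\xi\in T_M(\cM)\ :\ \xi(\sigma_2)=0\}$, which has codimension at most $1$ in $T_M(\cM)$; and codimension exactly $1$, because $\tfrac{d}{ds}a^{\{C_1,C_2\}}_s(M)\big|_{s=0}\in T_M(\cM)$ (Theorem~\ref{thm:Wright:Cyl:Def}) and $\xi\mapsto\xi(\sigma_2)$ is nonzero on it. Hence $\dim\cM'=\dim\cM-1$. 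The equality $\mathrm{rk}(\cM')=\mathrm{rk}(\cM)$ follows from \cite[Prop. 2.16]{AulicinoNguyenGen3TwoZeros} exactly as in Proposition~\ref{prop:collapse:free:sim:cyl}: since $\sigma_1,\sigma_2$ join distinct zeros, $V$ meets the image of $H_1(X,\C)$ trivially, so collapsing does not affect the absolute cohomology. For the last assertion, observe that a single shear $u^{\{C_1,C_2\}}_t$ with $t=-a_1/h_1=-a_2/h_2$ (the two ratios agree because $C_1=\lambda C_2$) makes both $\sigma_1$ and $\sigma_2$ vertical and keeps the surface in $\cM$ by Theorem~\ref{thm:Wright:Cyl:Def}, and on a small $U$ it is a diffeomorphism onto a neighborhood in $\cM$; after post-composing $\varphi$ with this shear, and in appropriate period coordinates of $\cH(\kappa)$ and $\cH(\kappa')$, $\varphi$ becomes the linear projection from $T_M(\cM)$ onto $T_M(\cM)\cap\mathrm{Ann}(V)$, which is open, so $\varphi(U)$ is a neighborhood of $M'$ in $\cM'$.

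The main obstacle is precisely this dimension count: a priori, annihilating the two independent classes $\sigma_1,\sigma_2$ should cut two dimensions, and it is exactly $\cM$-similarity (and the hypothesis that the two pairs of zeros differ) that collapses the conditions $\xi(\sigma_1)=0$ and $\xi(\sigma_2)=0$ into the single condition $\xi(\sigma_2)=0$. A secondary point to be careful about is that the twist needed to collapse both cylinders with colliding zeros can be realized by one parameter (again using similarity), and that the rank statement of \cite[Prop. 2.16]{AulicinoNguyenGen3TwoZeros} still applies here even though $C_1$ is not free — only the dimension count uses the similarity of $C_1$ and $C_2$, not the rank preservation.
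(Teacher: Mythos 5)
Your proof is correct and follows essentially the same route as the paper: apply \cite[Thm. 2.7]{MirzakhaniWrightBoundary} to $V=\C\sigma_1\oplus\C\sigma_2$ and use $\cM$-similarity to show $\xi(\sigma_1)=\lambda\xi(\sigma_2)$ on $T_M(\cM)$, so the two annihilation conditions collapse to one, with the final claim handled exactly as in Proposition~\ref{prop:collapse:free:sim:cyl}. You in fact supply details the paper leaves implicit (the codimension being exactly one via the stretch vector, and the boundary-map argument for $\dim V=2$ and rank preservation), which is fine.
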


\begin{proof}
For $i = 1,2$, let $\sigma_i$ be the unique saddle connection in $C_i$ that is collapsed to a point under the degeneration in the assumption of the proposition, and $\allowbreak V:=\C\cdot\sig_1\oplus\C\cdot\sig_2 \subset H_1(X,\Sigma,\C)$. By \cite{MirzakhaniWrightBoundary}, we have $T_{M'}(\cM') \simeq  T_M(\cM) \cap \text{Ann}(V)$.
By definition,
$$T_M(\cM)\cap{\rm Ann}(V)=\{\xi \in T_M(\cM) \ | \ \xi(\sig_1)=\xi(\sig_2)=0\}.$$
But by the similarity assumption, there exists a constant $\lambda \in \R_{>0}$ such that  for every $\xi \in T_M(\cM), \xi(\sig_1)=\lambda\xi(\sig_2)$. Thus $\allowbreak T_M(\cM)\cap{\rm Ann}(V)=\{\xi \in T_M(\cM) \ | \ \xi(\sig_1)=0\}$. It follows that $\dim T_{M'}(\cM')=\dim T_M(\cM)-1$. The final claim follows as in Proposition \ref{prop:collapse:free:sim:cyl}.
\end{proof}

The following proposition shows that under some assumptions, an involution on the surface obtained from a cylinder collapsing does extend to an involution on the original surface with the same number of fixed points.

\begin{proposition}
\label{DblCovExtSimpCyl}
Let $M$ be a translation surface and $\cC=\{C_1,\dots,C_k\}$ a family of pairwise $\cM$-similar simple cylinders on $M$ in the horizontal direction. We assume that in each $C_i$ there exists a vertical saddle connection $\del_i$ joining the singularities in its boundary, and the graph $\Gr:=\bigcup_{i=1}^k\del_i$  contains no loops (that is $\Gr$ is a disjoint union of topological trees).

Collapse the cylinders in $\cC$ simultaneously so that the saddle connections $\del_i$ are all reduced to points, and let $M'$ be the resulting surface. Let $\Pi$ be the union of distinguished saddle connections resulting from the degeneration of $\cC$ on $M'$. If  $M'$ admits an involution whose derivative is $-\id$ that preserves  $\Pi$, then this involution of $M'$ extends to an involution of $M$ which has the same number of fixed points.

In particular, if $M'$ is contained in the Prym locus or in the hyperelliptic locus  of $\cH_3$, then so is $M$.
\end{proposition}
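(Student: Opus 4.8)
The plan is to realise $M$ as the result of the surgery inverse to the collapse and to transport the given involution $\tau'$ of $M'$ along it. Since $\Gr$ has no loops, the two singularities on the boundary of each $C_i$ are distinct, so the collapse contracts each connected component (a tree) $T$ of $\Gr$ to a single point $r_T$ of $M'$, and for every edge $\del_i$ of $T$ the two boundary saddle connections of $C_i$ become one geodesic loop $\pi_i$ based at $r_T$; thus $\Pi=\bigcup_i\pi_i$, and $M$ is recovered from $M'$ by cutting along each $\pi_i$ and re-inserting $C_i$ in its place. Write $c\colon M\to M'$ for the collapse map, so $c$ restricts to a homeomorphism $M\smin\bigcup_i\overline{C_i}\to M'\smin\Pi$ and crushes each $\overline{C_i}$ onto $\pi_i$. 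Since $d\tau'=-\id$, the map $\tau'$ is a flat isometry of $M'$; it preserves $\Pi$, hence permutes the loops $\pi_i$, and if $\tau'(\pi_i)=\pi_j$ then $\pi_i$ and $\pi_j$ have equal length, so $C_i$ and $C_j$ have equal circumference and, being proportional by hypothesis, are isometric.

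Next I would define the extension $\tau\colon M\to M$ by requiring $c\circ\tau=\tau'\circ c$. Off $\bigcup_i\overline{C_i}$ this forces $\tau=c^{-1}\circ\tau'\circ c$; and if $\tau'(\pi_i)=\pi_j$ it forces $\tau(\overline{C_i})=\overline{C_j}$, where I take $\tau|_{C_i}\colon C_i\to C_j$ to be the affine isometry of derivative $-\id$ extending $\tau'$ across the re-inserted cylinders — because $-\id$ reverses the transverse orientation, it interchanges the top and bottom boundaries of $C_i$ and $C_j$, so it is compatible with the gluings and $\tau$ is a well-defined homeomorphism of $M$. Its square is affine and restricts to the identity on $M\smin\bigcup_i\overline{C_i}$ and on every $\partial C_i$, hence $\tau^2=\id$; and since $d\tau=-\id$ everywhere, $\tau$ is holomorphic (locally $z\mapsto -z$) with $\tau^*\omega=-\omega$. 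So $\tau$ is an involutory automorphism of $X$ with $\tau^*\omega=-\omega$.

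The heart of the proof is that $\tau$ and $\tau'$ have the same number of fixed points. By construction $\tau$ permutes the trees of $\Gr$; a pair of distinct trees interchanged by $\tau$ contributes no fixed point to either surface, so fix a tree $T$ with $\tau(T)=T$ and set $m_T=\#\{\del_i\in E(T):\tau(C_i)=C_i\}$ and $f_T=\#\{\text{vertices of }T\text{ fixed by }\tau\}$. Among $r_T$ and the loops $\pi_i$ with $\del_i\in E(T)$, the fixed points of $\tau'$ are $r_T$ together with the midpoint of each $\pi_i$ satisfying $\tau'(\pi_i)=\pi_i$ (on such a loop $\tau'$ reverses orientation, so has exactly one fixed point besides $r_T$): that is $1+m_T$ points. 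Among the vertices of $T$ and the cylinders $C_i$ with $\del_i\in E(T)$, the fixed points of $\tau$ are the $f_T$ fixed vertices together with the two interior fixed points of each $C_i$ satisfying $\tau(C_i)=C_i$ (an affine involution of a cylinder of derivative $-\id$ has exactly two fixed points): that is $f_T+2m_T$ points. It remains to show $f_T=1-m_T$, and here the no-loop hypothesis is decisive: $\tau|_T$ is an automorphism of the finite metric tree $T$, hence fixes a point of its geometric realisation — a vertex or an edge-midpoint — and it cannot fix two distinct points, since it would then fix pointwise the arc joining them, in particular a sub-arc of some $\del_i$, contradicting $d\tau=-\id$ there. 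So the fixed set of $\tau|_T$ is a single point: a vertex, so $(f_T,m_T)=(1,0)$, or an edge-midpoint, so $(f_T,m_T)=(0,1)$; either way $f_T+2m_T=1+m_T$. Since the fixed points outside $\bigcup_i\overline{C_i}$ are matched by $c$ with those of $\tau'$ outside $\Pi$, summing over all trees gives the first assertion. For the last sentence, contracting the trees of $\Gr$ leaves the genus unchanged, so $M$ has genus three whenever $M'$ does; if $\tau'$ is a Prym involution it has four fixed points, hence so does $\tau$ and $M\in\cP$; and if $\tau'$ is the hyperelliptic involution it has eight fixed points, hence $\tau$ has eight, so $X/\langle\tau\rangle\cong\bP^1$ by Riemann--Hurwitz and $M\in\cL$.

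I expect the main obstacle to be the fixed-point count near a tree with more than one edge, where several distinguished saddle connections share the basepoint $r_T$: one must keep track simultaneously of how a fixed singularity of $\tau'$ is redistributed among the singularities of $M$ and of how the midpoint of a $\tau'$-invariant loop is exchanged for the two interior fixed points of the re-inserted cylinder, while ruling out the configurations forbidden by $d\tau=-\id$. The fixed-point property of finite trees, together with the fact that $\tau$ fixes pointwise no sub-arc of any $\del_i$, is precisely what makes the two tallies agree.
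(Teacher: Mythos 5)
Your proof is correct, and while the surgery itself (reverse the collapse, extend $\tau'$ over each re-inserted cylinder by an affine map of derivative $-\id$, using similarity plus equal circumference to get isometric exchanged cylinders) is the same as in the paper, your fixed-point bookkeeping takes a genuinely different route. The paper argues in two sequential stages: it first re-inserts the $\tau'$-invariant cylinders one at a time via the $k=1$ case (trading the two fixed points on each invariant distinguished saddle connection for the two interior fixed points of the cylinder), and then re-inserts the exchanged pairs, asserting that these contribute no new fixed points. You instead build the extension globally and prove a per-component identity: for each $\tau$-invariant tree $T$ of $\Gr$ you show $(f_T,m_T)\in\{(1,0),(0,1)\}$, using the fixed-point theorem for automorphisms of a finite metric tree together with the observation that $d\tau=-\id$ forces every invariant edge to be inverted (so at most one edge is invariant, and then no vertex is fixed), whence $f_T+2m_T=1+m_T$. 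This buys a uniform and more complete treatment of components of $\Gr$ with several edges: it makes explicit why, when no cylinder of an invariant tree is fixed, exactly one of the several zeros of $M$ lying over the fixed zero $r_T$ of $M'$ remains fixed by $\tau$ -- a point the paper's "same number of fixed points" assertion after re-inserting exchanged pairs leaves implicit (and which is exactly the configuration arising in the paper's applications, e.g.\ the path $x_1$--$x_0$--$x_2$ in $\cH(2,1^2)$). The price is that you need $\tau$ to preserve the geometric graph $\Gr$, i.e.\ $\tau(\delta_i)=\delta_{\sigma(i)}$, which you use but do not justify; it follows at once from the fact that $\delta_i$ is the \emph{unique} vertical saddle connection of the simple cylinder $C_i$ joining its two boundary zeros and that $d\tau=-\id$ preserves the vertical direction, so you should state this. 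Likewise, "compatible with the gluings" for the exchanged pairs really uses that $\cM$-similarity with equal circumference gives isometry of the \emph{marked} cylinders (equal twist of the normalized fundamental domains), which is how the paper's definition of similar is set up -- worth one explicit sentence, but not a gap.
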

\begin{proof}
We first notice that $M'$ is a surface of the same genus as $M$. To see this, remark that the collapsing of a simple cylinder with distinct zeros (singularities) in its boundary does not change the topology of the surface. Using the assumption that $\Gr$ contains no loops, by induction, we derive that $M'$ has the same genus as $M$.

Let $\inv'$ be the involution of $M'$. Consider the case $k=1$, that is $\cC$ consists of a single simple cylinder $C$. In this case $\Pi$ is a saddle connection $\sig$ joining a zero $x'_0$ of $M'$ to itself where $x'_0$ is the collision of two zeros in $M$.

By assumption $\inv'$ preserves $\sig$, hence  $\sig$ contains two fixed points of $\inv'$, one of which is $x'_0$ the other one is the midpoint of $\sig$.
By construction, $M'\setminus\sig$ is identified with $M\setminus\ol{C}$. Since $\inv'$ maps $M'\setminus \sig$ to itself, we can consider $\inv'$ as an involution of $M\setminus\ol{C}$. Note that every simple cylinder admits an involution that exchanges its boundary components and fixes two points in its interior. Therefore, the involution $\inv'$  extends to an involution $\inv$ of $M$ which fixes the cylinder $C$. Clearly, $\inv'$ and $\inv$ have the same number of fixed points.

For the general case, let $\sig_i$ denote the degeneration of $C_i$ on $M'$. Since $\Pi$ is preserved by $\inv'$, each $\sig_i$ is either invariant or permuted with another $\sig_j$.
Let $I \subset \{1,\dots,k\}$ be the subset of indices defined by the condition: $i\in I$ if and only if  $\sig_i$ is invariant by $\inv'$.

Let $\tilde{M}'$ be the surface obtained by reinserting the family of cylinders $\{C_i, | i \in I\}$ to $M'$.  By the argument of the previous case, we conclude that $\inv'$ extends to an involution $\tilde{\inv}'$ of $\tilde{M}'$ with the same number of fixed points.
By construction, the family $\{\sig_i,  | i \in I^c\}$ persists on $\tilde{M}'$, and any saddle connection in this family is exchanged with another one by $\tilde{\inv}'$. Since $\tilde{\inv}'$ is an isometry for the flat metric, if $\sig_i$ and $\sig_j$ are exchanged, then they have the same length. By the assumption of similarity, this means that $C_i$ and $C_j$ are isometric. Thus $\tilde{\inv}'$ extends to an involution  $\inv$ of $M$ that exchanges $C_i$ and $C_j$. Clearly, $C_i$ and $C_j$ do not contain any fixed point of $\inv$ in their interior. Thus $\inv$ and $\tilde{\inv}'$ have the same number of fixed points as do $\tau$ and $\tau'$.
\end{proof}

\begin{remark}
 If $M$ is a genus three Riemann surface, an involution of $M$ is hyperelliptic if and only if it has $8$ fixed points.
\end{remark}

\medskip
\noindent {\bf Topological type of cylinder decompositions:} Let $M$ be horizontally periodic, and let $\cC$ be the family of all horizontal cylinders of $M$.  By {\em topological type} of the cylinder decomposition of $M$, we will  mean the topological surface underlying the stable holomorphic $1$-form that is the limit $a_t^{\cC}(M)$ as $t\rightarrow +\infty$. Equivalently, this is also the surface one obtains after ``pinching'' all of the core curves of the horizontal cylinders. Note that all of the topological types of cylinder decompositions with three or four cylinders of surfaces in genus three are given in \cite[Lem. 3.1]{AulicinoNguyenGen3TwoZeros} and \cite[Lem. 6.1]{AulicinoNguyenGen3TwoZeros}. The topological types of the $5$-cylinder diagrams are given in Lemma~\ref{5CylDeg}.

\begin{proposition}\label{prop:many:cyl:rktwo}
Let $\cM$ be one of the following loci
$$
\{ \tilde{\cQ}(3,-1^3), \tilde{\cQ}(1^2,-1^2), \tilde{\cQ}(4,-1^4), \allowbreak
\tilde{\cH}^{\rm odd}_{(2,2)}(2), \tilde{\cQ}(2,1,-1^3)\}.
$$

\begin{itemize}
 \item[(a)] If $\cM=\tilde{\cQ}(3,-1^3)$, then there exists a surface admitting a cylinder decomposition with three cylinders of topological type given by Case 3.I).

 \item[(b1)] If $\cM\in \{\cQ(1^2,-1^2)\simeq \tilde{\cH}^{\rm hyp}_{(2,2)}(2)\}$, then there exists a surface admitting a cylinder decomposition with four cylinders of topological type given by Case 4.I).

 \item[(b2)] If $\cM \in \{\tilde{\cH}^{\rm odd}_{(2,2)}(2), \tilde{\cQ}(4,-1^4)\}$, then there exists a surface admitting a cylinder decomposition with four cylinders of topological type given by Case 4.II).

  \item[(c)] If $\cM=\tilde{\cQ}(2,1,-1^3)$, then there exists a surface admitting a cylinder decomposition with five cylinders of topological type given by Case 5.I).
\end{itemize}
\end{proposition}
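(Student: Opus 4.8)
The plan is to exhibit, for each of the five loci $\cM$, one explicit horizontally periodic translation surface lying in $\cM$ whose horizontal cylinder decomposition has the asserted topological type; since only existence is claimed, this suffices. Recall that each $\cM$ is precisely the set of surfaces in a fixed connected component $\cH^*(\kappa)$ in genus three that admit a Prym involution (conditions a)--c)), and that for the two loci $\cM=\dcoverhyp$ and $\cM=\dcoverodd$ --- which lie in $\cL$ --- this is, by Lemma~\ref{lm:2inv:dblcover}, the same as being an unramified double cover of a surface in $\cH(2)$, landing respectively in the $\cH^{\rm hyp}(2,2)$ and the $\cH^{\rm odd}(2,2)$ component. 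Recall also that the admissible topological types of horizontal cylinder decompositions into three, four and five cylinders of a genus-three translation surface are listed in \cite[Lem. 3.1 and Lem. 6.1]{AulicinoNguyenGen3TwoZeros} and in Lemma~\ref{5CylDeg}: Case 3.I) is the one in which the three core curves are linearly independent in $H_1(X;\Z)$, hence span a Lagrangian, while Cases 4.I), 4.II) and 5.I) refer to the corresponding items there.

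For the three loci $\tilde{\cQ}(3,-1^3)$, $\tilde{\cQ}(4,-1^4)$ and $\tilde{\cQ}(2,1,-1^3)$, which are not contained in $\cL$, the example is built from the quotient datum and then lifted. One first writes down, via an explicit polygonal model, a horizontally periodic meromorphic quadratic differential $(Y,\eta)$ on a genus-one Riemann surface in the stratum $\cQ(3,-1^3)$, $\cQ(4,-1^4)$, $\cQ(2,1,-1^3)$ respectively; its standard orientation double cover $\pi\colon(X,\omega)\to(Y,\eta)$ is then a genus-three translation surface on which the nontrivial deck transformation is a Prym involution (with four fixed points, lying over the odd-order singularities of $\eta$), and $(X,\omega)$ lies in the expected stratum $\cH(\kappa)$ by a direct computation of the order of vanishing of $\omega$ above each singularity of $\eta$. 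The downstairs model is arranged so that, after lifting, $(X,\omega)$ has exactly $3$, $4$ and $5$ horizontal cylinders respectively, using that a horizontal cylinder of $Y$ lifts to one or to two horizontal cylinders of $X$ according to the linear holonomy along its core curve. For the two loci $\dcoverhyp$ and $\dcoverodd$ one instead takes an explicit unramified double cover $(X,\omega)\to(N,\alpha)$ of a suitable square-tiled surface $(N,\alpha)\in\cH(2)$, choosing the defining class in $H^1(N;\Z/2\Z)$ --- as in \cite{AulicinoNguyenGen3TwoZeros} --- so that $(X,\omega)$ lands in the desired component and has exactly four horizontal cylinders; here $(X,\omega)$ carries both the lift of the hyperelliptic involution of $N$ and the composition of that lift with the deck transformation, one of which is a Prym involution with four fixed points and the other the hyperelliptic involution of $X$ with eight fixed points.

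It then remains, in each of the five cases, to carry out two verifications on the constructed surface: first, that it lies in the prescribed connected component of $\cH(\kappa)$ --- by exhibiting the hyperelliptic involution in the cases $\dcoverhyp$ and $\dcoverodd$, and by computing the parity of the spin structure directly from the polygonal presentation in the other three cases --- and second, that the topological type of its horizontal cylinder decomposition is the asserted one, by computing the homology classes of the core curves in $H_1(X;\Z)$ and comparing with the lists, i.e., by checking that pinching the core curves produces the stable curve of Case 3.I), 4.I), 4.II) or 5.I). The only real difficulty is this bookkeeping --- assembling polygonal and covering models whose lifts simultaneously realize the prescribed component and the prescribed topological type, and confirming the latter by a homology computation --- and it can be done by hand for each locus. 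Alternatively and equivalently, one may simply read the required horizontally periodic representatives off the analyses of these same loci carried out in \cite{NguyenWright, AulicinoNguyenWright, AulicinoNguyenGen3TwoZeros}, where surfaces realizing all of the relevant cylinder-decomposition types already appear in the course of establishing Theorem~\ref{thm:main:rk2:g3} for $|\kappa|\le 2$.
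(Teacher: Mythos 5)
Your proposal takes essentially the same approach as the paper: the proposition is proved simply by exhibiting horizontally periodic surfaces in each locus, and your closing alternative---reading the representatives off the earlier analyses---is precisely what the paper does, citing \cite[Fig. 7.1]{AulicinoNguyenWright} for (a), \cite[Fig. 18]{AulicinoNguyenGen3TwoZeros} and \cite[Fig. 21]{AulicinoNguyenGen3TwoZeros} for (b1) and (b2), and its own Figure~\ref{fig:C5II:H211:cyl:diagram} for (c). Be aware, though, that your primary ``constructive'' route is only a plan (no explicit polygonal or covering model is actually written down, and neither the component nor the topological type is verified for any concrete surface), so on its own it would still owe the reader the examples; it is the citation of the explicit figures that makes the argument complete.
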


\begin{proof}
Claims (a), (b1), and (b2) follow from \cite[Fig. 7.1]{AulicinoNguyenWright}, \cite[Fig. 18]{AulicinoNguyenGen3TwoZeros}, and \cite[Fig. 21]{AulicinoNguyenGen3TwoZeros}, respectively.  For Claim (c), see Figure~\ref{fig:C5II:H211:cyl:diagram}.
\end{proof}

\medskip

We now show
\begin{lemma}
\label{kCylsInBdLem}
Let $M$ be a surface in a rank $k$ affine manifold $\cM$ such that $M$ contains a free simple cylinder $C$ with distinct zeros on its boundary.  Let $M'$ be the surface obtained from $M$ by collapsing $C$ so that the two zeros in its boundary collide.  Then $M'$ is contained in an affine manifold $\cM'$ in the same genus such that $\text{rank}(\cM') = \text{rank}(\cM) = k$, and $\dim \cM' = \dim \cM - 1$.  Moreover, if $\cM'$ contains a dense subset $S$ such that every surface in $S$ admits a cylinder decomposition of the same topological type, then $\cM$ also contains a surface admitting a cylinder decomposition of this topological type.
\end{lemma}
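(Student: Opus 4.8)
The plan is to combine Proposition~\ref{prop:collapse:free:sim:cyl} with the openness of the collapsing map. The first two assertions are immediate from Proposition~\ref{prop:collapse:free:sim:cyl}: collapsing the free simple cylinder $C$ with two distinct zeros on its boundary produces $M'$ inside an affine submanifold $\cM'$ of a stratum of genus equal to that of $M$ (the collapse of a simple cylinder with distinct boundary zeros does not change the topology of the surface), with $\mathrm{rk}(\cM')=\mathrm{rk}(\cM)=k$ and $\dim\cM'=\dim\cM-1$. So only the ``moreover'' clause requires an argument.

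For that clause, let $U$ be a neighborhood of $M$ in $\cM$ on which $C$ persists and remains simple, and let $\varphi\colon U\ra\cH(\kappa')$ be the collapsing map from Proposition~\ref{prop:collapse:free:sim:cyl}, which identifies $\varphi(U)$ with a neighborhood of $M'$ in $\cM'$; in particular $\varphi(U)$ is open in $\cM'$. Since $S$ is dense in $\cM'$, the open set $\varphi(U)$ meets $S$, so there is a surface $N'\in S\cap\varphi(U)$. By surjectivity of $\varphi$ onto $\varphi(U)$, choose $N\in U\subset\cM$ with $\varphi(N)=N'$.

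It remains to transport the cylinder decomposition of $N'$ back to $N$. The surface $N$ is obtained from $N'$ by re-inserting the simple cylinder $C$ (of some positive height) along the distinguished saddle connection that was the image of $C$'s core curve region; equivalently, $N'$ is obtained from $N$ by collapsing $C$. A cylinder decomposition of $N'$ in some direction $\theta$ lifts to a decomposition of $N$ in the direction $\theta$: re-inserting a simple cylinder splits along a single saddle connection and so the complement of that saddle connection, with its $\theta$-foliation, is unchanged; the re-inserted simple cylinder $C$ is itself a union of parallel $\theta$-trajectories together with a saddle connection once $\theta$ is generic with respect to $C$ — and by shearing $C$ (which stays inside $\cM$ by Theorem~\ref{thm:Wright:Cyl:Def}, or simply by choosing $\theta$ appropriately) we may assume $C$ contributes only closed $\theta$-geodesics and saddle connections. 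Hence $N$ is periodic in direction $\theta$ with a cylinder decomposition whose topological type is obtained from that of $N'$ by adding one more cylinder in a controlled way. In the applications of this lemma, the relevant statement is that $\cM$ contains a horizontally periodic surface realizing the prescribed topological type, and since all surfaces in $S$ share one topological type, the above construction yields a surface in $\cM$ realizing it (after rotating so that $\theta$ becomes horizontal).

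The main obstacle is the bookkeeping in the last step: one must check that re-inserting $C$ does not merge the decomposition of $N'$ in an uncontrolled way and that $\theta$ can be chosen simultaneously generic for the decomposition of $N'$ and for $C$. This is handled by noting that the set of bad directions (those in which $C$ fails to decompose into cylinders and saddle connections, or in which the decomposition of $N'$ degenerates) is countable, while genericity is an open dense condition, so a good $\theta$ near the periodic direction of $N'$ exists; then rotate.
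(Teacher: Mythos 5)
Your reduction to Proposition~\ref{prop:collapse:free:sim:cyl} for the rank and dimension statements, and your use of the openness of $\varphi(U)$ in $\cM'$ together with density of $S$ to find $N'\in S\cap\varphi(U)$ and a preimage $N\in\cM$, is exactly how the argument begins. The gap is in the transport step, which is the actual content of the ``moreover'' clause. You arrange (by shearing, or choosing $\theta$) that the re-inserted cylinder $C$ ``contributes only closed $\theta$-geodesics and saddle connections''; for a simple cylinder this forces $\theta$ to be the direction of the core of $C$, i.e.\ of the degenerate saddle connection $c_1$, and then $C$ is an \emph{extra} cylinder in direction $\theta$. You acknowledge this (``adding one more cylinder''), but the lemma asserts that $\cM$ contains a surface with a decomposition of \emph{the same} topological type as the surfaces in $S$, where the type is determined by pinching \emph{all} core curves in that direction; adding a cylinder changes it. Your closing sentence simply asserts the prescribed type is realized, contradicting the previous sentence, and the countable-bad-directions remark does not repair this: for a generic $\theta$ transverse to $c_1$ the $\theta$-leaves of $N'$ that cross $c_1$ get re-routed through $C$ by an amount depending on the twist of $C$, so $N$ need not even be $\theta$-periodic unless the twist is chosen correctly.

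What is missing is the specific geometric construction: when $c_1$ is transverse to $\theta$, insert $C$ and twist it so that the interior saddle connection $\sigma$ of $C$ joining the two boundary zeros is \emph{parallel to} $\theta$. Then the $\theta$-foliation of $N$ is obtained from that of $N'$ by inserting a segment of length $|\sigma|$ into every leaf crossing $c_1$, so $N$ is $\theta$-periodic with the same cylinders, and one must still check that the homotopy classes of the core curves are unchanged (in the paper this rests on the observation that every closed $\theta$-leaf has zero intersection with the relative class of $\sigma$, equivalently no leaf passes through one endpoint of $\sigma$ without passing through the other). A separate argument is needed when $c_1$ itself lies in direction $\theta$, since then any inserted cylinder of positive height is a new $\theta$-cylinder; your proposal does not address this case at all. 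Without these two ingredients the stated conclusion — same topological type — is not proved.
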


\begin{proof}
The first claims concerning the rank and dimension follow from \cite[Thm. 2.7]{MirzakhaniWrightBoundary} or \cite[Prop. 2.16]{AulicinoNguyenGen3TwoZeros}.

Next, we claim that there is an open neighborhood $W$ of $M' \in \cM'$ such that every surface in $W$ is obtained from a surface in $\cM$ by collapsing a free simple cylinder.  To see this we observe that the tangent space $T_{M'}(\cM')$ is isomorphic to $\text{Ann}(V) \cap T_M(\cM)$ by \cite{MirzakhaniWrightBoundary}, where $V$ is the vanishing space, which in this case is generated by the saddle connection $\sigma \subset C$ that collapses to a point.  Since each deformation of $M$ that fixes $C$ corresponds to a deformation of $M'$ and vice versa, we see that $W$ has positive measure in $\cM'$.

Let $M_1' \in S \cap W$.  Let $c_1$ be the saddle connection on $M_1'$ that is the degeneration of a simple cylinder in a surface $M_1 \in \cM$.  Observe $c_1$ is a saddle connection from a zero to itself.  By assumption, $M_1'$ admits a cylinder decomposition of the given topological type in some direction $\theta$.  We split the remainder of the argument into two cases.

First, assume that $c_1$ does \emph{not} lie in direction $\theta$.  Cut $M_1'$ along the saddle connection $c_1$, and insert a simple cylinder $C$.  Since every saddle connection between the two zeros in the boundaries of $C$ differs by a Dehn twist, it suffices to choose a shortest and denote it by $\sigma$.  The foliation of $M_1'$ in direction $\theta$ naturally extends into a neighborhood of the boundary of $C$.  If necessary, twist $C$ so that $\sigma$ lies in direction $\theta$.  This can be accomplished because the directions of $\sigma$ and $c_1$ are transverse.
We claim that this surface, which we call $M_1 \in \cM$ has the same topological type as $M_1'$.  First we observe that $M_1$ is periodic in direction $\theta$ by construction.  Indeed, this construction added a line segment of length equal to that of $\sigma$ to every leaf of the foliation passing through $c_1$.  Secondly, if we consider the homotopy classes of the core curves of the cylinders in direction $\theta$, then these are preserved on $M$.
This follows from the observation that there is no leaf of the foliation in direction $\theta$ that passes through one zero on the boundary of $\sigma$ without passing through the zero on the other boundary of $\sigma$.\footnote{We remark that the intersection of the homology class of any closed leaf of the foliation in direction $\theta$ on $M_1$ with the relative homology class represented by $\sigma$ is zero.  This is a defining property of the construction we used to add a simple cylinder to $M_1'$.}

Second, assume that $c_1$ does lie in direction $\theta$.  In this case, we once again cut $M_1'$ along $c_1$, glue in a simple cylinder $C$, twist $C$ if necessary so that it does not admit a vertical saddle connection, and collapse the cylinder.  The resulting surface will have the same topological type as $M_1'$ for the same reason as above.
\end{proof}

\begin{definition}
Let $\mathfrak{c}$ be a cylinder diagram.  We say that an affine manifold $\cM$ \emph{admits a cylinder diagram} $\mathfrak{c}$ if there exists a periodic translation surface $M \in \cM$ such that $M$ has cylinder diagram $\mathfrak{c}$.
\end{definition}

\begin{proposition}
\label{CylStDens}
If an affine manifold $\cM$ admits a cylinder diagram $\mathfrak{c}$, then there exists a dense subset $S \subset \cM$ of periodic translation surfaces admitting $\mathfrak{c}$.
\end{proposition}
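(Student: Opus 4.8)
The plan is to prove the stronger statement that the set $\cZ\subset\cM$ of \emph{horizontally periodic} translation surfaces in $\cM$ admitting $\mathfrak{c}$ is dense, and then take $S:=\cZ$. (Throughout $\cM$ is connected, as affine manifolds are here.) After applying a rotation, which preserves $\cM$, the hypothesis gives $\cZ\neq\emptyset$. The endgame will be: once we locate a surface $M_1\in\cZ$ whose $\SL(2,\R)$-orbit is dense in $\cM$, the result of \cite{EskinMirzakhaniMohammadiOrbitClosures} gives $\cM=\overline{\SL(2,\R)\cdot M_1}=\overline{P\cdot M_1}$, where $P\subset\SL(2,\R)$ is the upper triangular subgroup. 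Since $P$ is generated by the matrices $a_s$ and $u_t$, each of which preserves the horizontal direction and hence carries a horizontally periodic surface with diagram $\mathfrak{c}$ to another horizontally periodic surface with diagram $\mathfrak{c}$, and since $\cM$ is $\GL^+(2,\R)$-invariant, we get $P\cdot M_1\subset\cZ$; therefore $\cZ\supset P\cdot M_1$ is dense in $\cM$. So everything reduces to producing $M_1$.

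I would first record the local structure of $\cZ$. Fix $M_0=(X,\omega)\in\cZ$ and let $\sigma_1,\dots,\sigma_s$ be the horizontal saddle connections of $M_0$. In period coordinates near $M_0$, $\cM$ is the trace of a complex linear subspace of $H^1(X,\Sigma;\C)$, and a nearby surface in $\cM$ is again horizontally periodic with diagram $\mathfrak{c}$ precisely when every $\int_{\sigma_j}\omega$ stays real and every horizontal cylinder keeps positive width---for surfaces near $M_0$ no new horizontal saddle connection can appear and no cylinder can collapse, both being open conditions. Hence, in a neighborhood of $M_0$, $\cZ$ is a nonempty relatively open subset of the affine subspace $M_0+L$, where $L:=\{\xi\in T_{M_0}\cM:\xi(\sigma_j)\in\R\text{ for all }j\}$; in particular it is homeomorphic to an open subset of a Euclidean space.

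The key observation is a point of linear algebra. Because $\cM$ is an affine submanifold, $T_{M_0}\cM$ is defined by linear equations with \emph{real} coefficients, so $T_{M_0}\cM=V_{\R}\otimes_{\R}\C$ with $V_{\R}:=T_{M_0}\cM\cap H^1(X,\Sigma;\R)$. Every class in $V_{\R}$ pairs to a real number with the integral relative cycles $\sigma_j$, so $V_{\R}\subset L$, and therefore $L$ spans $T_{M_0}\cM$ over $\C$. Consequently, if $\cN\subsetneq\cM$ is any affine submanifold through $M_0$, its tangent space $T_{M_0}\cN$---again a complex subspace, of strictly smaller dimension---cannot contain $L$, so $\cZ\cap\cN$ lies near $M_0$ in the proper affine slice $M_0+(L\cap T_{M_0}\cN)$ of $M_0+L$ and is therefore nowhere dense in $\cZ$; the same holds trivially when $M_0\notin\cN$, since $\cN$ is closed in $\cM$. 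Recalling that a stratum contains only countably many affine invariant submanifolds, suppose toward a contradiction that every surface in $\cZ$ had non-dense orbit. Then $\cZ$ would be contained in the union of countably many proper affine submanifolds $\cN_1,\cN_2,\dots$ of $\cM$, so the nonempty open-in-Euclidean-space set ``$\cZ$ near $M_0$'' would be a countable union of nowhere-dense sets (namely the $\cZ\cap\cN_i$), contradicting the Baire category theorem. Thus some $M_1\in\cZ$ lies in no proper affine submanifold of $\cM$; by \cite{EskinMirzakhaniMohammadiOrbitClosures} its $\SL(2,\R)$-orbit closure is an affine submanifold of $\cM$ containing $M_1$, hence is all of $\cM$, as required.

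The step I expect to be the main obstacle is the local-structure claim of the second paragraph: one must check carefully that moving $M_0$ inside $M_0+L$ while keeping all cylinder widths positive genuinely preserves the \emph{whole} diagram $\mathfrak{c}$, so that $\cZ$ really is, near $M_0$, a relatively open subset of $M_0+L$ with $T_{M_0}\cZ=L$---in particular that no collision of zeros and no new horizontal saddle connection sneaks in. The remaining ingredients (the real-coefficient argument, the countability of affine submanifolds, and the Baire argument) are then routine.
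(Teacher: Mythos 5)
Your proposal is correct and follows essentially the same route as the paper: both reduce, via $\overline{P\cdot M}=\overline{\SL_2(\R)\cdot M}$ from Eskin--Mirzakhani--Mohammadi together with the countability of affine submanifolds, to producing a single surface admitting $\mathfrak{c}$ whose $\SL_2(\R)$-orbit is dense, by perturbing along deformations that keep the horizontal saddle connections horizontal. The only (cosmetic) difference is that the paper perturbs within the real tangent space $T^{\R}_{M'}\cM$ and leaves the Baire/linear-algebra step implicit, whereas you work in the slightly larger slice $L=\{\xi\in T_{M_0}\cM:\xi(\sigma_j)\in\R\}$ and spell that step out.
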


\begin{proof}
Let $M' \in \cM$ admit cylinder diagram $\mathfrak{c}$, and without loss of generality, assume that $M'$ admits $\mathfrak{c}$ in the horizontal direction.  Let $P$ denote the subgroup of upper triangular matrices in \splin .  By \cite[Thm. 2.1]{EskinMirzakhaniMohammadiOrbitClosures}, $\overline{P \cdot M'} = \overline{\text{SL}_2(\bR) \cdot M'}$.  Observe that every translation surface in $P \cdot M'$ is horizontally periodic admitting cylinder diagram $\mathfrak{c}$.  Hence, it suffices to produce $M \in \cM$ such that $M$ admits cylinder diagram $\mathfrak{c}$ and $\overline{\text{SL}_2(\bR) \cdot M} = \cM$.

To produce such an $M$, consider $T_{M'}^{\bR}(\cM)$.  All deformations in this space preserve all horizontal saddle connections, whence they preserve $\mathfrak{c}$.  Since there are at most countably many affine manifolds in $\cM$, there exists a real tangent vector $v \in T_{M'}^{\bR}(\cM)$ such that $\overline{\text{SL}_2(\bR) \cdot (M' + v)} = \cM$.  Let $M = M' + v$.
\end{proof}

\subsection{Unramified Double Covers}
\begin{lemma}\label{lm:2inv:dblcover}
Let  $M=(X,\omega)$ be a translation surface in genus three. The surface $M$ admits a Prym involution and a hyperelliptic involution if and only if there exists a translation surface $M'=(X',\omega')$ in genus two, and an unramified double cover $p: X \ra X'$ such that $p^*\omega'=\omega$.
\end{lemma}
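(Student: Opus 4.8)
The statement is an "if and only if," so I would prove the two implications separately, and the heart of the matter is a dimension/fixed-point count via Riemann–Hurwitz combined with careful bookkeeping of how the two commuting involutions interact.

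For the backward direction, suppose $p\colon X\to X'$ is an unramified double cover with $p^*\omega'=\omega$, where $(X',\omega')$ has genus two. The deck transformation $\tau$ of $p$ is a free involution of $X$ with $\tau^*\omega=\omega$ (not $-\omega$), so $\tau$ itself is \emph{not} a Prym involution in the sense of the paper. The plan is instead to pull back the hyperelliptic involution $\iota'$ of $X'$ (every genus two surface is hyperelliptic, and $(\iota')^*\omega'=-\omega'$ since $H^0(X',\Omega)$ is the $(-1)$-eigenspace). Since $\iota'$ has $6$ Weierstrass points and $p$ is unramified, $\iota'$ lifts to two involutions $\iota_1,\iota_2=\tau\circ\iota_1$ of $X$, each satisfying $\iota_j^*\omega=-\omega$; I would choose the lift that is a genuine involution (one must check this using the obstruction living in the $\mathbb Z/2$-cohomology defining the cover, i.e. that one of the two lifts squares to the identity — this is a standard fact, but worth stating). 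One of $\iota_1,\iota_2$ has $8$ fixed points (hence is hyperelliptic on the genus three surface $X$, consistent with the Remark in the excerpt) and the other has $4$ fixed points, because the $12$ points over the $6$ fixed points of $\iota'$ get distributed between the two lifts as fixed points; Riemann–Hurwitz forces the $8+4$ split. Thus $X$ carries a hyperelliptic involution and a Prym involution, and these commute with $\tau$. I would double-check condition (c) of the Prym involution definition (exactly four fixed points) is exactly what this count yields for $\iota_2$.

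For the forward direction, suppose $X$ has genus three and admits a Prym involution $\tau$ (with $4$ fixed points, $\tau^*\omega=-\omega$) and a hyperelliptic involution $\iota$ (with $8$ fixed points, $\iota^*\omega=-\omega$). The key point is that $\iota$ is central in $\mathrm{Aut}(X)$ whenever $X$ is hyperelliptic of genus $\geq 2$, so $\tau$ and $\iota$ commute; hence $\sigma:=\tau\circ\iota$ is again an involution with $\sigma^*\omega=\omega$. I would then argue $\sigma$ is fixed-point free: a fixed point of $\sigma$ would be a point where $\tau$ and $\iota$ agree, and I would rule this out by a Lefschetz-type count — on the one-dimensional $(+1)$-eigenspace considerations, or more concretely, $\tau(x)=\iota(x)$ at a point $x$ forces, after a local analysis, that $x$ is simultaneously constrained; the clean way is to note $\mathrm{Fix}(\sigma)$ would have to be $\tau$-invariant and to compute $\#\mathrm{Fix}(\sigma)$ from the action on $H^*(X)$: since $\tau^*=-1$ and $\iota^*=-1$ on $H^0(\Omega)$, we get $\sigma^*=+1$ there, and pushing this through the holomorphic Lefschetz fixed point formula (or equivalently computing the genus of $X/\langle\sigma\rangle$ via the eigenspace decomposition of $H^1$) gives $\#\mathrm{Fix}(\sigma)=0$. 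Then $X':=X/\langle\sigma\rangle$ is a smooth surface, $p\colon X\to X'$ is an unramified double cover, and since $\omega$ is $\sigma$-invariant it descends to $\omega'$ on $X'$ with $p^*\omega'=\omega$; Riemann–Hurwitz with no ramification gives $2(\text{genus}(X')-1)=2\cdot 2$, so $X'$ has genus two, as required.

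The main obstacle is the bookkeeping of fixed points and the verification that the relevant lift in the backward direction is an honest involution rather than an order-four element; dually, in the forward direction, proving $\sigma=\tau\iota$ is fixed-point free. Both reduce to the same arithmetic: a free involution contributes Euler characteristic $\chi(X)=2\chi(X')$, a hyperelliptic involution on genus three has $\chi$-count giving $8$ fixed points, a Prym involution gives $4$, and $8=4$ in the "agreement locus" is impossible unless that locus is empty. I would organize the proof so this count is done once and invoked for both directions. The centrality of the hyperelliptic involution is the one external fact I will cite without proof; everything else is Riemann–Hurwitz and eigenspace accounting on $H^0(X,\Omega)$.
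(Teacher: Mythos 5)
Your forward direction contains a step that fails as stated. You assert that ``$\tau^*=-1$ and $\iota^*=-1$ on $H^0(\Omega)$, so $\sigma^*=+1$ there'' and then feed this into the holomorphic Lefschetz formula to get $\#\mathrm{Fix}(\sigma)=0$. But a Prym involution does \emph{not} act by $-\id$ on all of $\Omega(X)$: condition (b) only says $\tau^*\omega=-\omega$ for the given form, and condition (c) (four fixed points) forces, via Riemann--Hurwitz, that $X/\langle\tau\rangle$ has genus one, so $\dim\Omega^+(X,\tau)=1$ and $\dim\Omega^-(X,\tau)=2$. (An involution acting as $-\id$ on all of $\Omega(X)$ would be hyperelliptic, with eight fixed points.) Consequently $\sigma^*=-\tau^*$ on $\Omega(X)$ has eigenvalues $(+1,+1,-1)$, not $(+1,+1,+1)$; with your premise the Lefschetz count would come out negative, an absurdity, so the computation as written does not produce the claimed conclusion. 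The repair is exactly the eigenspace bookkeeping the paper uses: since $\iota^*=-\id$ on $\Omega(X)$, one has $\Omega^+(X,\sigma)=\Omega^-(X,\tau)$, which is $2$-dimensional, so $X'=X/\langle\sigma\rangle$ has genus two, and then Riemann--Hurwitz ($4=2\cdot 2+r$) forces $r=0$, i.e.\ $\sigma$ is fixed-point free and the cover is unramified. (With the corrected trace $+1$ your Lefschetz route also gives $\#\mathrm{Fix}(\sigma)=0$, so the strategy is salvageable, but it needs the genus-one quotient of $\tau$ as input.) Your closing heuristic that ``$8=4$ on the agreement locus is impossible'' is not an argument: a fixed point of $\sigma$ is a point where $\tau(x)=\iota(x)$, which need not be fixed by either involution, so comparing the sizes of $\mathrm{Fix}(\tau)$ and $\mathrm{Fix}(\iota)$ proves nothing.

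The backward direction is essentially correct and matches the paper's sketch (lift the genus-two hyperelliptic involution, compose with the deck transformation), and your $8+4$ fixed-point split via Riemann--Hurwitz is the right way to see that one lift is hyperelliptic and the other is Prym. One small imprecision: you cannot ``choose the lift that is an involution,'' since the two lifts $\iota_1$ and $\tau\iota_1$ have the same square; the correct observation is that each of the six Weierstrass points of $X'$ contributes two fixed points to one of the two lifts, so some lift has a fixed point, hence its square is a deck transformation with a fixed point and therefore the identity -- so both lifts are honest involutions, and the $12=8+4$ distribution then follows as you say.
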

\begin{proof}
First assume $X$ admits a Prym involution $\inv$ and a hyperelliptic involution $\iota$. Since $\iota$ commutes with all automorphisms of $X$, $\rho=\inv\circ\iota$ is also an involution of $X$ which satisfies $\rho^*\omega=\omega$. Let $X':=X/\langle \rho \rangle$ be the quotient of $X$ by $\rho$.

 For any involution $f$ of $X$, let
$$
\Omega^+(X,f):=\{\xi \in \Omega(X)| f^*\xi=\xi\}, \text{ and } \Omega^-(X,f):=\{\xi \in \Omega(X)| f^*\xi=-\xi\}.
$$
By definition, $\dim \Omega(X')=\dim \Omega^+(X,\rho)$.  Since  $\iota$ acts by $-\id$ on $\Omega(X)$, we have $\dim \Omega^+(X,\rho)=\dim \Omega^-(X,\inv)=2$. Thus $X'$ is a surface of genus two. The Riemann-Hurwitz formula then implies that the  double cover $p :X \ra X'$ is unramified. Since $\omega \in \Omega^+(X,\rho)$, there exists a holomorphic $1$-form $\omega'$ on $X'$ such that $\omega=p^*\omega'$.

Conversely, if there exists an unramified double cover of translation surface $p: M \ra M'$, then   $M'$ must be a surface of genus two, and  $M$ admits an automorphism $\rho$ such that $p\circ \rho=p$ and $\rho^2=\id$. The automorphism $\rho$ is induced by any element of $\pi_1(M')$ that is not contained in $p_*\pi_1(M)$. Since $M'$ is of genus two, it has a hyperelliptic involution which lifts to a hyperelliptic involution $\iota$ of $M$. The composition $\iota\circ\rho$ is then a Prym involution. The details are left to the reader.
\end{proof}

The following proposition shows that the locus of unramified double covers of translation surfaces of genus two in $\principal$ is connected,  thus it consists of a single  rank two affine submanifold of $\principal$. Note that this locus is also the intersection $\allowbreak \cP\cap\cL\cap\principal$  by Lemma~\ref{lm:2inv:dblcover}.
\begin{proposition}
\label{H11CoverConnProp}
The  locus  $\dcoverprinc$ of pairs $(X,\omega) \in \principal$ such that there exist a pair $(X',\omega') \in \cH(1,1)$ and an unramified double cover $\pi :X \ra X'$ satisfying $\pi^*\omega'=\omega$ is connected.
\end{proposition}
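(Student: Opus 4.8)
The plan is to parametrize the locus $\dcoverprinc$ by pairs $(M', \text{cover data})$ and show that the moduli of this data is connected. Fix a translation surface $M' = (X', \omega') \in \cH(1,1)$. An unramified connected double cover $\pi: X \ra X'$ is classified by a nonzero element of $H^1(X', \Z/2\Z) \cong \Hom(\pi_1(X'), \Z/2\Z)$; once such a cover is chosen, the form $\omega := \pi^*\omega'$ is automatically an Abelian differential on $X$, and by Riemann--Hurwitz $X$ has genus three with $\omega$ having four simple zeros (each of the two simple zeros of $\omega'$ has two preimages). So there is a surjective map from the space of such pairs onto $\dcoverprinc$, and it suffices to prove the total space of these pairs is connected. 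Since $\cH(1,1)$ is connected (it is a single stratum with one component in genus two), I would run a monodromy argument: as $M'$ varies over $\cH(1,1)$, the fundamental group $\pi_1(\cH(1,1), M')$ acts on the finite set $H^1(X', \Z/2\Z) \smin \{0\}$ of nonzero cohomology classes through the (symplectic) action of the mapping class group on $H_1(X'; \Z/2\Z)$, and I want this action to be transitive. Transitivity then identifies the total space with a connected cover of $\cH(1,1)$, hence $\dcoverprinc$ is connected as its continuous image.

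The key steps, in order, are: (1) Set up the correspondence: a point of $\dcoverprinc$ is the same as a point of $\cH(1,1)$ together with a connected unramified double cover, i.e.\ a nonzero class in $H^1(X'; \Z/2\Z) \cong (\Z/2\Z)^4$; note the six nonzero classes are partitioned by the symplectic form into classes with $v\cdot v$ taking both values, but over $\Z/2\Z$ with the standard symplectic form all nonzero vectors lie in a single $\Sp_4(\Z/2\Z)$-orbit, so there is really just one relevant ``stratum'' of covers. (2) Identify the image of $\pi_1(\cH(1,1))$ in $\mathrm{Aut}(H_1(X';\Z/2\Z))$. The Torelli-type statement I need is that this image is all of $\Sp_4(\Z/2\Z)$; this follows because the image of the mapping class group of a genus-two surface in $\Sp_4(\Z/2\Z)$ is surjective, and the forgetful map $\cH(1,1) \to \cM_2$ induces a surjection on $\pi_1$ up to finite index issues that do not affect the mod-$2$ action (alternatively one invokes that $\cH(1,1)$ is connected and the relevant monodromy has been computed, e.g.\ via explicit square-tiled surfaces, as is standard in this subject). (3) Transitivity of $\Sp_4(\Z/2\Z)$ on nonzero vectors of $(\Z/2\Z)^4$ is Witt's theorem (or a one-line direct check). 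Combining (2) and (3), $\pi_1(\cH(1,1))$ acts transitively on the set of connected double covers, so the total space is connected; pushing forward gives that $\dcoverprinc$ is connected. Finally (4) record that $\dcoverprinc$ is indeed an affine submanifold of rank two: it is $\cP \cap \cL \cap \principal$ by Lemma~\ref{lm:2inv:dblcover}, hence a locus cut out by the (linear) conditions of admitting the two commuting involutions, and the rank-two claim is part of the hypothesis we are in or follows from $\dim\cH(1,1) = \dim\dcoverprinc + (\text{codimension of the pullback condition})$.

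The main obstacle I expect is step (2): justifying that the monodromy of the family $\cH(1,1) \to \cM_2$ (or directly $\cH(1,1)$) surjects onto $\Sp_4(\Z/2\Z)$ acting on $H_1(\cdot; \Z/2\Z)$. One must be careful that $\cH(1,1)$ is a stratum of \emph{pointed} surfaces (two marked zeros) and that its $\pi_1$ maps onto a subgroup of the genus-two mapping class group that still acts irreducibly/surjectively mod $2$ on absolute homology --- this is where the phrase in the text ``the mapping class group acts transitively on the set of non-zero cohomologies with coefficients in $\Z/(2\Z)$'' does the work, and I would cite the standard fact that the image of any stratum's $\pi_1$ in $\Sp_{2g}(\Z/2\Z)$ is everything (provable via explicit cylinder/shearing loops, consistent with the paper's philosophy). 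Everything else --- the Riemann--Hurwitz count, Witt transitivity, and the covering-space packaging --- is routine.
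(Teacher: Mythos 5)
Your strategy is, at bottom, the same as the paper's: both arguments parametrize points of $\dcoverprinc$ by a genus-two datum together with a nonzero class in $H^1(\,\cdot\,,\Z/2\Z)$, and both reduce connectedness to the transitivity of the genus-two mapping class group on $H^1(S,\Z/2\Z)\setminus\{0\}$ (you obtain this from surjectivity onto $\mathrm{Sp}_4(\Z/2\Z)$ plus Witt transitivity; the paper cites it directly from Farb--Margalit). The difference is packaging: the paper first shows that the locus $\widetilde{\Mod}_2\subset \Mod_3$ of unramified double covers is connected, using that transitivity together with the connectedness of Teichm\"uller space $\cT_2$, and then realizes $\dcoverprinc$ as the pullback to $\widetilde{\Mod}_2$ of the bundle $\cH(1,1)\ra\Mod_2$ whose fibers ($\C^2$ minus six lines) are connected; you instead run a covering-space/monodromy argument directly over $\cH(1,1)$.

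The one genuinely shaky point is your step (2). A minor slip first: $H^1(X',\Z/2\Z)\cong(\Z/2\Z)^4$ has fifteen nonzero classes, not six, and $v\cdot v$ vanishes identically for a symplectic pairing; this does not affect the argument. More seriously, ``surjection on $\pi_1$ up to finite index issues that do not affect the mod-$2$ action'' is not an argument: if the monodromy image were a proper subgroup of $\mathrm{Sp}_4(\Z/2\Z)$ it could certainly fail to act transitively (the orthogonal group of a quadratic form refining the symplectic pairing has orbits of sizes $6$ and $9$ on the nonzero vectors), and the blanket ``standard fact'' that any stratum's monodromy surjects onto $\mathrm{Sp}_{2g}(\Z/2\Z)$ is false --- for strata in which every zero has even order the monodromy preserves the mod-$2$ quadratic form attached to the induced spin structure, hence lands in a proper orthogonal subgroup. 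For $\cH(1,1)$ the surjectivity you need is true, and the clean justification is exactly the paper's bundle observation: $\cH(1,1)$ fibers over $\Mod_2$ (its marked version over $\cT_2$) with connected fibers, so its monodromy contains the full image of the genus-two mapping class group, which is all of $\mathrm{Sp}_4(\Z/2\Z)$. With that substitution in place of your ``finite index'' remark, your proof closes up and coincides in substance with the one in the paper.
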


\begin{proof}
Let $\Mod_g$ be the moduli space of Riemann surfaces of genus $g$. Let $\widetilde{\Mod}_2 \subset \Mod_3$ denote the locus of Riemann surfaces of genus three that are unramified double covers of some surface of genus two.  We first show that $\widetilde{\Mod}_2$ is connected.

Let us fix a topological closed surface of genus two  $S$. Assume that we have a topological covering of degree two $p: \hat{S} \ra S$.  We then have $\chi(\hat{S})=2\chi(S)=-4$, hence $\hat{S}$ must be a surface of genus three.

By definition $p_*(\pi_1\hat{S})$ is a subgroup of index two of $\pi_1S$.  Thus there exists a group homomorphism $\varepsilon: \pi_1S \ra \Z/(2\Z)$ such that $p_*(\pi_1\hat{S})=\ker \varepsilon$. Since $\Z/(2\Z)$ is abelian,  $\varepsilon$ can be written as $h\circ {\bf p}$, where ${\bf p}: \pi_1S \ra H_1(S,\Z)$ is the natural projection, and $h: H_1(S,\Z) \ra \Z/(2\Z)$ is a homomorphism of abelian groups. Note that we can consider $h$ as an element of $H^1(S,\Z/(2\Z)) \setminus \{0\}$.

Conversely, given an element $h\in H^1(S,\Z/(2\Z)) \setminus \{0\}$, then $\Gamma={\bf p}^{-1}(\ker h)$ is a (normal)  subgroup of index two in $\pi_1S$. Thus $p: \widetilde{S}/\Gamma \ra S$ is a (topological) double cover, where $\widetilde{S}$ is the universal cover of $S$. In particular, $\widetilde{S}/\Gamma$  is a closed surface of genus three.
From classical results on covering spaces, we know that if $p_1: \hat{S}_1 \ra S$ and $p_2: \hat{S}_2 \ra S$ are two double covers which correspond to the same element of $H^1(S,\Z/(2\Z))$, then $p_1$ and $p_2$ are isomorphic, that is there exists a homeomorphism $f : \hat{S}_1 \ra \hat{S}_2$ such that $p_1=p_2\circ f$.
Thus we have shown the following

\

\noindent \underline{\em Claim 1:} There is a bijection between the set of topological double covers of $S$ up to isomorphism and the set $H^1(X,\Z/(2\Z))\setminus\{0\}$.

\medskip

Let us now fix a topological double covering $p: \hat{S} \ra S$ and denote by $h$ the element of $H^1(X,\Z/(2\Z))$ associated to $p$.  Let $r_0: X_0 \ra X'_0$ and $r_1: X_1 \ra X'_1$ be two unramified double covers of (compact) Riemann surfaces, where $X_i$ is of genus three and $X'_i$ is of genus two. Our goal is to show that there is a path in $\widetilde{\Mod}_2$ from $X_0$ to $X_1$. We first show

\

\noindent \ul{\em Claim 2:} There are two homeomorphisms $\phi_i: S \ra X'_i, \, i=0,1,$ such that the topological covering  $\phi_i^{-1}\circ r_i$ is isomorphic to $p$.
\begin{proof}
It is enough to show the existence of $\phi_0$.  Let $f_0: S \ra X'_0$ be any homeomorphism and consider the double cover $p_0=f_0^{-1}\circ r_0: X_0 \ra S$.  Let $h_0$ be the element of $H^1(S,\Z/(2\Z))$ associated to $p_0$. Since the action of the Mapping Class Group $MCG(S)$ on $H^1(S,\Z/(2\Z))\setminus\{0\}$ is transitive (see \cite[Chap. 6]{FarbMargalitMCG}), there exists a homeomorphism  $\varphi_0: S \ra S$ such that $(\varphi_0^{-1})^*h_0=h$. Setting  $\phi_0:=f_0\circ\varphi_0: S \ra X'_0$, from Claim 1, we see that the covers $p$ and $\phi_0^{-1}\circ r_0$ are isomorphic.
\end{proof}

Since $p$ and $\phi_i^{-1}\circ r_i$ are isomorphic, there exists a homeomorphism $\widetilde{\phi}_i: \hat{S} \ra X_i$ that satisfies $p=(\phi_i^{-1}\circ r_i)\circ\widetilde{\phi}_i$, or equivalently $\phi_i\circ p = r_i \circ\widetilde{\phi}_i$. Remark that if we equip $S$ with the conformal structure of $X'_i$ via $\phi_i$, we then get an induced conformal structure on $\hat{S}$ and $\widetilde{\phi}_i: \hat{S} \ra X_i$ becomes an isomorphism of Riemann surfaces.

\begin{center}
 \begin{tikzpicture}[scale=0.3]
    \node (A) at (0,4) {$\hat{S}$};
    \node (B) at (6,4) {$X_i$};
    \node (C) at (0,0) {$S$};
    \node (D) at (6,0) {$X'_i$};

    \path[->, font=\scriptsize, >= angle 60, dashed] (A) edge node[above]{$\widetilde{\phi}_i$} (B);
    \path[->, font=\scriptsize, >= angle 60]
    (A) edge node[left]{$p$} (C)
    (C) edge node[above]{$\phi_i$} (D)
    (B) edge node[right]{$r_i$} (D);
 \end{tikzpicture}
\end{center}

We now notice that the pairs $(X'_i,\phi_i), \, i=0,1,$ represent two points in the Teichm\"uller space $\cT_2$. Since $\cT_2$ is connected, there exists a path $[X'_t,\phi_t], \, t \in [0,1]$ connecting those two points (here $X'_t$ is a Riemann surface of genus two, $\phi_i: S \ra X_t$ is a homeomorphism, and $[X'_t,\phi_t]$ is the equivalence  class of $(X'_t,\phi_t)$).  Since $\phi_t\circ p: \hat{S} \ra X'_t$ is a double cover, the conformal structure of $X'_t$ induces a conformal structure on $\hat{S}$. Let $X_t$ denote the corresponding Riemann surface. By construction $X_t$ is an unramified double cover of $X'_t$, which means that $X_t \in \widetilde{\Mod}_2$. Thus we have found a path in $\widetilde{\Mod}_2$ from $X_0$ to  $X_1$, which shows that $\widetilde{\Mod}_2$ is connected.

\medskip

Recall that the stratum $\cH(1,1)$ is a subset of the Abelian differential bundle  $\Omega\Mod_2$ over $\Mod_2$. Each fiber of $\Omega \Mod_2$ is the space of holomorphic $1$-forms on a Riemann surface $X$ of genus two, thus  can be identified with $\C^2$. The intersection of this fiber with $\cH(1,1)$ is the set of holomorphic $1$-forms on $X$ with two simple zeros.  Remark that the double zero of a holomorphic $1$-form on $X$ must be a Weierstrass point, and every genus two Riemann surface has exactly $6$ Weierstrass points. Therefore, $\cH(1,1)\cap \Omega(X)$ can be identified with $\C^2$ minus $6$ complex lines. Hence we can realize $\cH(1,1)$ as a bundle over $\Mod_2$ whose fibers are $\C^2$ minus $6$ complex lines.

\begin{center}
\begin{tikzpicture}[scale=0.4]

 \node (A) at (0,3) {$\dcoverprinc$};

 \node (B) at (6,3) {$\cH(1,1)$};

 \node (C) at (0,0) {$\widetilde{\Mod}_2$};

 \node (D) at (6,0) {$\Mod_2$};

 \path[->, font=\scriptsize, >=angle 90, dashed] (A) edge (B);
 \path[->, font=\scriptsize, >=angle 90]
 (A) edge (C)
 (C) edge (D)
 (B) edge (D);
\end{tikzpicture}
\end{center}

By definition, $\dcoverprinc$ is the pullback of this bundle to $\widetilde{\Mod}_2$. Since $\widetilde{\Mod}_2$ is connected and the fibers of this bundle are connected, we conclude that $\dcoverprinc$ is connected.
\end{proof}

\section{A Special Case of Cylinder Collapsing}\label{sec:C3I:collapse}
Throughout this section, $\cM$ will be a rank two affine submanifold of either $\cH(2,1^2)$ or $\cH(1^4)$. Using the tools provided in Section~\ref{sec:preliminaries}  and the classification of rank two affine submanifolds in the strata  $\cH(\kappa) \subset \cH_3$ where $|\kappa| \leq 2$, we will show that in a special case one can get immediately the desired conclusions about $\cM$. Recall that a cylinder decomposition in Case 3.I) means that the cylinder decomposition consists of three cylinders such that the three core curves span a Lagrangian in homology.

\begin{proposition}\label{prop:C3I:2sim:cyl:classify}
Assume that $\cM$ contains a horizontally periodic surface $M$ satisfying Case 3.I) such that two of the cylinders are simple and there are at least two equivalence classes of cylinders. Then
 \begin{itemize}
  \item[$(a)$] If $\cM \subset \cH(2,1^2)$, then $\cM=\tilde{\cQ}(2,1,-1^3)$.

  \item[$(b)$] If $\cM \subset \cH(1^4)$, then $\cM=\tilde{\cH}(1,1)$ or $\cM=\tilde{\cQ}(2^2, -1^4)$.
 \end{itemize}
\end{proposition}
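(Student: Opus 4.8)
The plan is to exploit the hypothesis that $M$ admits a Case 3.I) decomposition with two simple cylinders and at least two equivalence classes of $\cM$-parallel cylinders, in order to collapse cylinders and land in a rank two affine manifold in a lower stratum, which is already classified. First I would fix notation: write the three horizontal cylinders as $C_1, C_2, C_3$ with $C_1, C_2$ simple, and recall that in Case 3.I) the three core curves span a Lagrangian, so in particular the cylinders are ``generic'' in the sense that collapsing behaves well. Since there are at least two equivalence classes, at least one of the simple cylinders, say $C_1$, is either free or $\cM$-similar only to $C_2$ (the remaining cylinder $C_3$, which is not simple, cannot be $\cM$-similar to a simple cylinder). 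I would split into the two cases accordingly.

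In the first case, suppose one of the two simple cylinders $C_1$ is free. By Lemma~\ref{lm:s:cyl:dist:sim:zeros}, the two zeros on $\partial C_1$ are either distinct (if simple) or one is a double zero — but in $\cH(2,1^2)$ a simple cylinder's boundary can contain the double zero, and in $\cH(1^4)$ all zeros are simple hence distinct on $\partial C_1$. When the two zeros on $\partial C_1$ are distinct, I would apply Proposition~\ref{prop:collapse:free:sim:cyl}: collapsing $C_1$ so the two zeros collide produces $M' \in \cM'$ with ${\rm rk}(\cM') = {\rm rk}(\cM) = 2$ and $\dim\cM' = \dim\cM - 1$, landing in $\cH(2,2)$ (if $\cM \subset \cH(1^4)$) or $\cH(4)$ (if $\cM \subset \cH(2,1^2)$, collapsing the double zero with a simple zero gives a zero of order $3$, i.e. $\cH(3,1)$). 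But $\cH(3,1)\cap\cP=\emptyset$, so that sub-possibility is actually excluded, meaning in $\cH(2,1^2)$ the free simple cylinder must have the double zero on its boundary; I would then need to argue it can be collapsed the other way, or rather that a simple cylinder cannot have a double zero on one side and a simple zero on the other with both distinct — this requires care. In the surviving cases, $\cM'$ is a rank two affine manifold in $\cH(2,2)$ or $\cH(4)$, and by the classification (recalled in the excerpt) $\cM'$ is one of $\tilde{\cQ}(3,-1^3)$, $\prym$, $\dcoverodd$, or $\dcoverhyp$. Then I would use Proposition~\ref{prop:many:cyl:rktwo} and Lemma~\ref{kCylsInBdLem}: the collapsed manifold $\cM'$ admits surfaces with a specified number of cylinders of a specified topological type, and dimension/rank bookkeeping pins down which $\cM'$ occurs, hence which $\cM$ we started from. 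The key dimension identity $\dim\cM = \dim\cM' + 1$ together with $\dim\tilde{\cQ}(2,1,-1^3) = \dim\tilde{\cQ}(3,-1^3)+1$ (resp.\ the analogous identities for $\tilde{\cH}(1,1)$ and $\tilde{\cQ}(2^2,-1^4)$) forces the stated conclusions, once we know $\cM$ is contained in the Prym (or Prym$\cap$hyperelliptic) locus via Proposition~\ref{DblCovExtSimpCyl}.

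In the second case, the two simple cylinders $C_1, C_2$ form a single equivalence class of $\cM$-similar cylinders (and $C_3$ is in another class, so there are exactly the required $\geq 2$ classes). Here I would want to apply Proposition~\ref{prop:collapse:similar:cyl}: if the pairs of zeros on $\partial C_1$ and $\partial C_2$ are distinct pairs (which holds in $\cH(1^4)$ when the four endpoints are four distinct simple zeros, and which in $\cH(2,1^2)$ has to be checked — one of them would carry the double zero), collapsing both simultaneously gives $M' \in \cM'$ with $\dim\cM' = \dim\cM - 1$ and equal rank, landing two strata down: into $\cH(1,1)$-double-cover territory or into $\cH(2,2)$. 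Then Proposition~\ref{DblCovExtSimpCyl} applies directly — the collapsed saddle connections form a graph with no loops (since $C_1, C_2$ are disjoint simple cylinders, the two vertical saddle connections $\delta_1, \delta_2$ form two disjoint arcs), so if $M'$ lies in the Prym or hyperelliptic locus, the involution extends to $M$. Combined with the classification of $\cM'$ and the dimension count, this yields that $\cM$ is one of the claimed loci. The main obstacle I anticipate is the bookkeeping in $\cH(2,1^2)$: I must carefully track which zero (double or simple) sits on the boundary of each simple cylinder in a Case 3.I) diagram, rule out configurations where the needed ``distinct zeros'' or ``distinct pairs'' hypothesis of Propositions~\ref{prop:collapse:free:sim:cyl}/\ref{prop:collapse:similar:cyl} fails, and verify the no-loop hypothesis of Proposition~\ref{DblCovExtSimpCyl} in every surviving subcase — this likely requires examining the finitely many Case 3.I) cylinder diagrams explicitly, or a short combinatorial argument using that the core curves span a Lagrangian.
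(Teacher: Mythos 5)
Your second case follows the paper's outline (collapse the pair of $\cM$-similar simple cylinders, identify the lower-dimensional rank two manifold, extend the involution via Proposition~\ref{DblCovExtSimpCyl}, and finish with a dimension count), but the case division and the free-cylinder branch are wrong, and several steps you defer are exactly the substance of the proof. First, the free case cannot occur and must be ruled out at the start: since neither simple cylinder can be $\cM$-parallel to the big cylinder $C_3$ (boundary-containment, \cite[Lem.~2.11]{AulicinoNguyenGen3TwoZeros}) and the three cylinders cannot all be free (\cite[Lem.~2.15]{AulicinoNguyenGen3TwoZeros}), the equivalence classes are forced to be $\{C_1,C_2\}$ and $\{C_3\}$. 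Your attempt to handle a free simple cylinder instead is both misdirected and circular: from $\cH(1^4)$, collapsing one simple cylinder lands in $\cH(2,1^2)$, not $\cH(2,2)$, and the rank two classification in $\cH(2,1^2)$ is not available here — this very proposition is an input to it; moreover Proposition~\ref{prop:many:cyl:rktwo} and Lemma~\ref{kCylsInBdLem} only produce surfaces with more cylinders, they do not identify $\cM$ with a specific locus.

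Second, in the surviving case you assume $\cM$-similarity of $C_1,C_2$ without argument; the paper upgrades $\cM$-parallel to \emph{isometric} (via the arguments of \cite[Lem.~5.3]{AulicinoNguyenGen3TwoZeros}), and this is used twice: to apply Proposition~\ref{prop:collapse:similar:cyl}, and — crucially — to show that the two degenerate saddle connections $\tilde{\sig}_1,\tilde{\sig}_2$ on $M'$ are exchanged by the involution of $M'$ (otherwise a small deformation in $\cM'$ would make their holonomies unequal, contradicting isometry). Without that exchange the hypothesis of Proposition~\ref{DblCovExtSimpCyl} that $\Pi$ is preserved is unverified, so the involution need not extend to $M$. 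You also explicitly defer the zero-configuration analysis in $\cH(2,1^2)$ — that each simple cylinder's boundary contains the double zero and a simple zero, with different simple zeros for $C_1$ and $C_2$ — which the paper settles by an angle count in the slit torus cut out by the vertical saddle connections and the uniqueness of the two-cylinder diagram in $\cH(1,1)$; this is what guarantees the collapse lands in $\cH(4)$ (resp.\ $\cH(2,2)$). Finally, in the $\cH(1^4)$ case your dimension bookkeeping cannot exclude $\cM'=\dcoverhyp$ (it has the same dimension as $\dcoverodd$); the paper eliminates it by counting fixed points of the hyperelliptic involution on the horizontal saddle connections. As written, the proposal leaves these essential steps open.
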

\begin{remark}
It can be shown that if $M$ is a horizontally periodic satisfying Case 3.I) in a rank two affine manifold, then the horizontal cylinders must fall into two equivalence classes.
\end{remark}

\begin{proof}
Let $C_1,C_2,C_3$ denote the horizontal cylinders of $M$, where $C_1,C_2$ are simple. By \cite[Lem. 2.11]{AulicinoNguyenGen3TwoZeros}, we know that none of $C_1,C_2$ is $\cM$-parallel to $C_3$. By \cite[Lem. 2.15]{AulicinoNguyenGen3TwoZeros}, $C_1,C_2,C_3$ cannot all be free. Therefore, we can conclude that $C_1,C_2$ are $\cM$-parallel, and $C_3$ is free.
The arguments in \cite[Lem. 5.3]{AulicinoNguyenGen3TwoZeros} allow us to conclude that $C_1$ and $C_2$ are actually isometric. Moreover, after twisting $C_3$, we can assume that any vertical ray exiting $C_i, \, i=1,2,$ from its top border reenters $C_i$ through the bottom border after crossing the core curves of $C_3$ once.

Let $\sig_i,\sig'_i$ be respectively the top and bottom borders of $C_i$, then the condition above means that there is a pair of homologous vertical saddle connections $\del_i,\del'_i$ contained in $C_3$ joining the left endpoint (resp. right endpoint) of $\sig_i$ to the left endpoint (reps. right endpoint) of $\sig'_i$. Let $M_i$ denote the subsurface of $M$ cut out by $\del_i,\del'_i$ that contains $C_i$. Remark that $M_i$ is a slit torus, and $M_1,M_2$ are isometric.

\medskip

\noindent \ul {\em Case $\cM \subset \cH(2,1^2)$.} Let $x_0$ denote the unique double zero of $M$, and $x_1,x_2$ the simple ones.

\

\noindent \ul{\rm Claim:} {\em The boundary of $C_i, \, i=1,2,$ must contain two distinct zeros.}
 \begin{proof}
 Without loss of generality, let us suppose on the contrary that the boundary of $C_1$ contains only one zero. By Lemma~\ref{lm:s:cyl:dist:sim:zeros},  this zero must be $x_0$. A simple computation shows that the total angle at $x_0$  inside $M_1$ is $4\pi$. Therefore, the angle at $x_0$ outside of $M_1$ is $2\pi$. If we remove $M_1$ from $M$ and glue $\del_1, \del'_1$ together such that the points corresponding to $x_0$ in $\del_1$ and $\del'_1$ are identified,  we will obtain a surface $M'_1$ in the stratum $\cH(1,1)$ which admits a cylinder decomposition with two cylinders  in the horizontal direction.

 Note that $x_0$ gives rise to a regular point in $M'_1$, and $C_2$ can be considered as a (simple) cylinder in $M'_1$. The pair $\{\del_1,\del'_1\}$ now corresponds to a vertical simple closed geodesic on $M'_1$. Remark that there is a unique diagram for 2-cylinder decompositions of surfaces in $\cH(1,1)$ such that one of the cylinders is simple. We then observe that the condition that the larger cylinder contains vertical simple closed geodesic, and  a pair of vertical saddle connections that cut out a slit torus cannot be satisfied. Therefore we get a contradiction.
 \end{proof}

 It is also easy to see that a simple zero cannot occur in the boundaries of both $C_1$ and $C_2$ by an angle count. Therefore, we can assume that the boundary of $C_1$ contains $x_0$ and $x_1$, and the boundary of $C_2$ contains $x_0$ and $x_2$. As a consequence collapsing simultaneously $C_1$ and $C_2$ so that all the zeros collide yields a surface $M'$ in $\cH(4)$. From Proposition~\ref{prop:collapse:similar:cyl}, we know that $M'$ is contained in a rank two affine submanifold $\cM'$ of $\cH(4)$  which satisfies
 $$
 \dim \cM'=\dim \cM-1
 $$
 From the results of \cite{AulicinoNguyenWright} and \cite{NguyenWright}, we must have $\cM'=\tilde{\cQ}(3,-1^3)$. By construction, $M'$ is horizontally periodic with a unique horizontal cylinder $C$. Since $M \in \tilde{\cQ}(3,-1^3)$, $M'$ admits a Prym involution $\inv$.

 Let $\tilde{\sig}_1$ (resp. $\tilde{\sig}_2$) denote the horizontal saddle connection in $M'$ that is the degeneration of $C_1$ (resp. of $C_2$). We claim that $\tilde{\sig}_1$ and $\tilde{\sig}_2$ are exchanged by $\inv$. If they are not exchanged by $\inv$, then in any neighborhood of $M'$ in $\tilde{\cQ}(3,-1^3)$ we can find a surface on which $\tilde{\sig}_1$ and $\tilde{\sig}_2$ remain but the corresponding holonomy vectors are not equal. Since such a surface is obtained from a surface in $\cM$ by collapsing $\{C_1,C_2\}$,  this contradicts the condition that $C_1$ and $C_2$ are isometric.

 Since $\inv$ exchanges $\tilde{\sig}_1$ and $\tilde{\sig}_2$, by Proposition~\ref{DblCovExtSimpCyl}, we see that $\inv$ extends to a Prym involution on $M$. As a consequence, $\allowbreak M\in \cH(2,1,1)\cap\cP= \tilde{\cQ}(2,1,-1^3)$. Since the same is true for all surfaces in $\cM$ close to $M$ (see Proposition~\ref{prop:collapse:similar:cyl}), we draw that $\cM \subset \tilde{\cQ}(2,1,-1^3)$.  Notice that we have
 $$
 \dim \cM =\dim\tilde{\cQ}(3,-1^3)+1=\dim\tilde{\cQ}(2,1,-1^3)=5.
 $$
 Using the ergodicity of the action of $\SL(2,\R)$ on $\cM$, we conclude that $\cM= \tilde{\cQ}(2,1,-1^3)$.

 \medskip

 \noindent \ul{\em  Case $\cM \subset \cH(1^4)$.} By Lemma~\ref{lm:s:cyl:dist:sim:zeros}, we know that the boundary of $C_i, \, i=1,2,$ must contain two distinct zeros. By computing the angles at the zeros, it is also easy to check that a simple zero cannot be contained in the boundaries of both $C_1$ and $C_2$. Therefore, we can conclude that the boundaries of $C_1$ and $C_2$ contain two different pairs of simple zero. Thus collapsing simultaneously $C_1,C_2$ so that the zeros in each pair collide, we obtain a surface $M'$ in $\cH(2,2)$. Let $\tilde{\sig}_1$ and $\tilde{\sig}_2$ be the horizontal saddle connections in $M'$ that are the degenerations of $C_1$ and $C_2$ respectively.

 By Proposition~\ref{prop:collapse:similar:cyl}, we know that $M'$ is contained in some rank two affine submanifold $\cM'$ of $\cH(2,2)$ such that $\allowbreak \dim \cM=\dim \cM'+1$. By the results of \cite{AulicinoNguyenGen3TwoZeros}, we must have
 $$
 \cM' \in \{\dcoverhyp,\dcoverodd,\prym\}.
 $$
 In all cases, let $\inv$ be the Prym involution of $M'$.
 \begin{enumerate}
 \item  Assume that $\cM'=\dcoverhyp$.  In this case, $\inv$ fixes each of the zeros of  $M'$,  and there is a hyperelliptic involution $\iota$ which exchanges the two zeros of $M'$. By definition, $\iota$ has $8$ fixed points. Note that  two fixed points of $\iota$ are contained in the interior of $C_3$ (which is the unique horizontal cylinder in $M'$).

 The hyperelliptic involution $\iota$  induces a permutation on the set of horizontal saddle connections of $M'$. Since $\iota$ permutes the zeros of $M'$, a saddle connection fixed by $\iota$ must join one zero to the other one. In particular, each saddle connection fixed by $\iota$ contains exactly one fixed point.  We now remark that each $\tilde{\sig}_i$ is a saddle connection joining a zero of $M'$ to itself (this zero is the collision of two simple zeros in $M$). In particular, $\tilde{\sig}_i$ is not invariant by $\iota$.  Since $M'$ has  $6$ horizontal saddle connections,  this implies that $\iota$ has at most $4$ fixed points in the union of the horizontal saddle connections.  Thus $\iota$ has at most $6$ fixed points, which is a contradiction, and we can conclude that $\cM'\neq \dcoverhyp$.

 \item Assume now that $\cM'=\dcoverodd$. In this case, $\inv$ exchanges the zeros of $M'$, and there is a hyperelliptic involution $\iota$ that fixes each of the zeros of $M'$. It follows that $\iota$ has $6$ regular fixed points in $M'$. Recall that two fixed points of $\iota$ are contained in the interior of $C_3$. Hence, $\iota$ has $4$ regular fixed points in the union of the horizontal saddle connections. Remark that each fixed point must be contained in a saddle connection which joins a zero of $M'$ to itself.
 Since there are $6$ horizontal saddle connections, and at least two of them have distinct endpoints, it follows that every  saddle connection that joins a zero of $M'$ to itself is invariant by $\iota$. In particular, each of $\tilde{\sig}_1,\tilde{\sig}_2$ is invariant by $\iota$.

 We claim that  $\inv$ exchanges $\tilde{\sig}_1$ and $\tilde{\sig}_2$. This is because otherwise we can deform $M'$ slightly in $\dcoverodd$ such that the holonomy vectors associated to $\tilde{\sig}_1$ and $\tilde{\sig}_2$ are not equal, which would contradict the condition that $C_1$ and $C_2$ are isometric.

 Now, the observations above mean that the set $\tilde{\sig}_1\cup\tilde{\sig}_2$ is preserved by both $\inv$ and $\iota$. We can now use Proposition~\ref{DblCovExtSimpCyl} to conclude that $\iota$ and $\inv$ extend to two  involutions $\hat{\iota}$ and $\hat{\inv}$  of $M$ with the same number of fixed points respectively. In particular, $\hat{\iota}$ must be a (the) hyperelliptic involution, and $\hat{\inv}$ a Prym involution of $M$. We  thus have $\allowbreak M \in \cH(1^4)\cap\cP\cap\cL=\dcoverprinc$.
 Since the same is true for any surface in $\cM$ close enough to $M$ (see Proposition~\ref{prop:collapse:similar:cyl}), we derive that $\cM \subseteq \dcoverprinc$.
Since  we have
 $$
 \dim\cM=\dim \dcoverodd +1=5=\dim \dcoverprinc,
 $$
it follows that $\cM=\dcoverprinc$.

 \item Consider finally the case $\cM'=\prym$.  By the same argument as the previous case, we see that $\inv$ must permute $\tilde{\sig}_1$ and $\tilde{\sig}_2$.
     Thus, $\inv$ gives rise to a Prym involution of $M$ by Proposition~\ref{DblCovExtSimpCyl}, which means  that $M \in \prymprinc$. Since the same is true for any surface in $\cM$ close enough to $M$, we derive that $\cM \subseteq \prymprinc$. Finally, since we have
 $$
 \dim \cM=\dim \prym+1=6=\dim \prymprinc,
 $$
 it follows that $\cM=\prymprinc$.
 \end{enumerate}
\end{proof}

\section{Getting Four Cylinders}\label{sec:get4cyl}

The goal of this section is to prove that every rank two affine manifold in the strata $\cH(2,1,1)$ and $\cH(1^4)$ contain a translation surface with at least four cylinders.
However, this cannot be done all at once.  Due to our argument below, we can only prove this result for $\cH(2,1,1)$.
Once the classification of rank two affine manifolds in $\cH(2,1,1)$ is established, the desired result for the principal stratum will follow automatically.
We state the main result of the section here.

\begin{proposition}
\label{Min4CylProp}
Let $\cM$ be a rank two affine manifold in genus three.
\begin{itemize}
 \item[(1)] If $\cM \subset \cH(2,1,1)$,  then $\cM$ contains a horizontally periodic surface with at least four horizontal cylinders.

 \item[(2)] Assume that $\tilde \cQ(2,1,-1^3)$ is the only rank two affine manifold in $\cH(2,1,1)$. If $\cM \subset \cH(1^4)$,  $\cM$ contains a horizontally periodic surface with at least four horizontal cylinders.
\end{itemize}
\end{proposition}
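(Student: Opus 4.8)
The plan is to argue by contradiction: suppose $\cM \subset \cH(2,1^2)$ (resp. $\cM\subset\cH(1^4)$, under the stated hypothesis) contains \emph{no} horizontally periodic surface with four or more cylinders. By Smillie--Weiss, $\cM$ contains a horizontally periodic surface $M$, and by Lemma~\ref{lm:WrightTwistPresLem} we may choose $M$ with the maximal number of horizontal cylinders, so $M$ is $\cM$-cylindrically stable. By assumption $M$ has at most three horizontal cylinders. Since $\cM$ has rank two and sits in genus three, a horizontally periodic surface has at least two cylinders (a one-cylinder surface would force rank one by the Lagrangian span of core curves), so $M$ has exactly two or three horizontal cylinders. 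The strategy is then to rule out each configuration by the collapsing technology of Section~\ref{sec:preliminaries}: either $M$ contains a free simple cylinder $C$ with two distinct zeros on its boundary, in which case Proposition~\ref{prop:collapse:free:sim:cyl} and Lemma~\ref{kCylsInBdLem} let us collapse $C$ and descend to a rank two affine manifold $\cM'$ in a lower stratum whose classification is already known (one of the loci in Proposition~\ref{prop:many:cyl:rktwo}), and then pull back a four-cylinder decomposition from $\cM'$ to $\cM$ using the topological-type transfer in Lemma~\ref{kCylsInBdLem}; or $M$ contains a pair of $\cM$-similar simple cylinders and we apply Proposition~\ref{prop:collapse:similar:cyl} instead; or $M$ falls into Case 3.I) with two simple cylinders, which is precisely the situation resolved in Proposition~\ref{prop:C3I:2sim:cyl:classify}, contradicting the assumption that $M$ is maximal with three cylinders (since those target loci $\tilde\cQ(2,1,-1^3)$, $\tilde\cH(1,1)$, $\tilde\cQ(2^2,-1^4)$ all admit decompositions with four or more cylinders by Proposition~\ref{prop:many:cyl:rktwo}).

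The concrete steps I would carry out, in order, are: (i) enumerate the possible cylinder decompositions of $M$ with two or three cylinders, using the topological-type lists of \cite[Lem. 3.1]{AulicinoNguyenGen3TwoZeros} together with the constraint that the core curves span an isotropic subspace of $H_1$ of rank equal to ${\rm rk}(\cM)=2$ --- this forces the three-cylinder cases into Case 3.I) (Lagrangian) or a ``fewer-than-Lagrangian'' case, and forces the two-cylinder case to have both core curves independent in homology; (ii) in each case, locate either a free simple cylinder with distinct zeros on its boundary, or an $\cM$-similar pair of simple cylinders with distinct zeros --- this is where the improved technical lemmas from \cite{AulicinoNguyenGen3TwoZeros} (on existence of simple/semi-simple cylinders and on $\cM$-parallel classes) are invoked; (iii) collapse and identify $\cM'$ among the finitely many known rank two loci in $\cH(4)$, $\cH(3,1)$, $\cH(2,2)$ (for $\cM\subset\cH(2,1^2)$) or in $\cH(2,1^2)$, $\cH(2,2)$, $\cH(3,1)$ (for $\cM\subset\cH(1^4)$, where in the latter range we use the hypothesis that $\tilde\cQ(2,1,-1^3)$ is the only rank two manifold in $\cH(2,1^2)$); (iv) since each such $\cM'$ is known to admit a cylinder decomposition with at least four cylinders of some topological type (Proposition~\ref{prop:many:cyl:rktwo}), and since by Proposition~\ref{CylStDens} the surfaces realizing that diagram are dense in $\cM'$, Lemma~\ref{kCylsInBdLem} produces a surface in $\cM$ with a cylinder decomposition of that same topological type, hence with at least four cylinders --- contradiction; (v) the Case 3.I)-with-two-simple-cylinders subcase is dispatched by quoting Proposition~\ref{prop:C3I:2sim:cyl:classify} directly and then again invoking Proposition~\ref{prop:many:cyl:rktwo}.

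The main obstacle I anticipate is step (ii): showing that in \emph{every} admissible two- or three-cylinder configuration in these two strata one can find either a free simple cylinder with two distinct zeros on its boundary or an $\cM$-similar pair of such simple cylinders. Pure existence of a simple (or semi-simple) cylinder is not automatic, and even when a simple cylinder exists one must control (a) that it is free or comes in a similar pair, and (b) that the zeros on its boundary are distinct --- Lemma~\ref{lm:s:cyl:dist:sim:zeros} handles (b) only when the relevant zeros are simple, which in $\cH(2,1^2)$ requires separately treating the double zero. I expect this to require a somewhat delicate case analysis driven by angle/holonomy computations at the zeros and by the Cylinder Proportion Lemma (Proposition~\ref{CylinderPropProp}) to constrain the $\cM$-parallel classes, exactly as foreshadowed by the remark that Section~\ref{sec:get4cyl} ``improves some technical lemmas in \cite{AulicinoNguyenGen3TwoZeros}''. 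The part (2) for $\cH(1^4)$ is then formally easier once part (1) is in place, because the classification of rank two manifolds in $\cH(2,1^2)$ is available to pin down $\cM'$.
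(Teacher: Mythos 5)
Your plan reproduces part of the paper's machinery (collapse a free simple cylinder, identify the limit among the known rank two loci, then use Proposition~\ref{prop:many:cyl:rktwo}, Proposition~\ref{CylStDens} and Lemma~\ref{kCylsInBdLem} to pull a four-cylinder diagram back) --- this is exactly how one branch of Case 3.III) is handled. But the heart of the statement is the step you yourself flag as the ``main obstacle'' and leave unresolved: the claim that \emph{every} admissible $\cM$-cylindrically stable configuration with at most three horizontal cylinders carries either a free simple cylinder with two distinct zeros on its boundary or an $\cM$-similar pair of such cylinders. This is not how the cases actually get resolved, and as a blanket claim it is not available. In Case 3.II) all three cylinders are free and need not contain any collapsible simple cylinder; the paper disposes of this case not by collapsing but by showing $M$ is not $\cM$-cylindrically stable (via \cite[Lem.~4.3]{AulicinoNguyenGen3TwoZeros}), which immediately yields more cylinders. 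In Case 3.III) with a double zero on the boundary of the inner simple cylinder, the argument is a cylinder-proportion argument in the style of \cite[Prop.~4.8]{AulicinoNguyenGen3TwoZeros}, again with no degeneration. And Case 3.I) is the genuinely new content of the section: Proposition~\ref{NoExceptCaseProp} (a planarity argument on the separatrix diagram) guarantees a semi-simple cylinder or a saddle connection on both sides of one cylinder, and then Lemma~\ref{ANLem33Gen}, Proposition~\ref{2SimpCylImp4CylsProp} and \cite[Props.~5.6, 5.9, 5.14]{AulicinoNguyenGen3TwoZeros} produce extra \emph{transverse} (vertical) cylinders directly, with the lower-stratum classification used only to rule out freeness of certain cylinders by involution-counting, not to collapse onto a known locus. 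Without these ingredients your step (ii) is an unproven assertion, so the proposal has a genuine gap precisely where the proof has to do work.

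Two smaller points. First, your branch that collapses an $\cM$-similar pair via Proposition~\ref{prop:collapse:similar:cyl} cannot be closed by Lemma~\ref{kCylsInBdLem}, which transfers cylinder diagrams only through the collapse of a single free simple cylinder; in the paper the similar-pair collapse is used to identify $\cM$ with a known locus via extension of involutions (Proposition~\ref{DblCovExtSimpCyl}), not to pull back diagrams. Second, Proposition~\ref{prop:many:cyl:rktwo} does not cover $\tilde{\cH}(1,1)$ or $\tilde{\cQ}(2^2,-1^4)$, so your appeal to it for those loci (in the subcase dispatched by Proposition~\ref{prop:C3I:2sim:cyl:classify}) needs a separate, if easy, justification; also note that the paper starts from the stronger input that a rank two manifold always contains a horizontally periodic surface with at least \emph{three} cylinders (\cite[Lem.~3.2]{AulicinoNguyenGen3TwoZeros}), so the two-cylinder case you include never arises at the top level.
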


By \cite[Lem. 3.2]{AulicinoNguyenGen3TwoZeros}, we know that $\cM$ always contains a horizontally periodic surface with at least three cylinders.
The following lemma is a generalization of \cite[Lem. 3.3]{AulicinoNguyenGen3TwoZeros}.

\begin{lemma}
\label{ANLem33Gen}
Let $\cM$ be a rank two affine manifold in genus three in a stratum with $k \geq 2$ zeros.  Assume that every rank two affine manifold in genus three with at most $k-1$ zeros admits an involution with four fixed points whose derivative is $-\id$.\footnote{For example, this is true of all surfaces in the Prym locus.}  If $\cM$ contains a horizontally periodic translation surface with two cylinders, one of which is simple, then $\cM$ contains a horizontally periodic surface with at least {\em three} cylinders, one of which is simple and not free.
\end{lemma}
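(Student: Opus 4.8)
My strategy is to start from the two-cylinder horizontally periodic surface $M \in \cM$ and produce a third cylinder by a standard "destabilizing" argument: if $M$ is $\cM$-cylindrically stable, then $\Twist(M,\cM) = \Pres(M,\cM)$, and I would compute the dimension of $\Twist(M,\cM)$ by collapsing the free simple cylinder. Write $M$ as a union of two horizontal cylinders $C_1, C_2$, with $C_1$ simple. By Lemma~\ref{lm:s:cyl:dist:sim:zeros}, the two zeros on the boundary of $C_1$ are distinct, so the hypotheses of Proposition~\ref{prop:collapse:free:sim:cyl} (if $C_1$ is free) or a direct analysis (if $C_1$ is $\cM$-parallel to $C_2$, which for two cylinders of rank two would force proportionality and then isometry by the arguments of \cite[Lem. 5.3]{AulicinoNguyenGen3TwoZeros}) apply. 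In the free case, collapsing $C_1$ produces $M' \in \cM' \subset \cH(\kappa')$ with $|\kappa'| = |\kappa| - 1 \le k-1$, $\dim\cM' = \dim\cM - 1$, and $\mathrm{rk}(\cM') = 2$.

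The key step is then to invoke the inductive hypothesis: every rank two affine manifold in genus three with at most $k-1$ zeros carries an involution $\inv'$ with derivative $-\id$ and four fixed points. I would apply this to $\cM'$, so $M'$ admits such an involution $\inv'$. The surface $M'$ is horizontally periodic with one horizontal cylinder (the image of $C_2$), and the horizontal saddle connection $\sigma$ arising from the collapse of $C_1$ is a loop at the zero $x'_0$ obtained by colliding the two endpoints of $C_1$. The plan is to show that a horizontally periodic surface in $\cM'$ carrying $\inv'$, with $\sigma$ invariant under $\inv'$ (which can be arranged, possibly after moving inside $\cM'$ while keeping horizontal periodicity, using that $\inv'$ is an isometry and preserves the horizontal direction up to sign), forces — via the Smillie–Weiss / cylinder-deformation machinery applied inside $\cM'$ — the existence of a horizontally periodic surface in $\cM'$ with at least two horizontal cylinders of an appropriate combinatorial type. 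Then Lemma~\ref{kCylsInBdLem} lets me reinsert the simple cylinder $C_1$ and promote this two-cylinder surface in $\cM'$ to a three-cylinder surface in $\cM$. Finally, I would argue that among these three cylinders at least one is simple (the reinserted $C_1$) and that it is not free: if it were free, collapsing it again would contradict the dimension count combined with \cite[Lem. 2.15]{AulicinoNguyenGen3TwoZeros} or the proportion constraints, so it must be $\cM$-parallel to some other horizontal cylinder.

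I expect the main obstacle to be controlling \emph{which} combinatorial configuration appears after collapsing, and in particular ensuring that after reinserting $C_1$ the number of cylinders genuinely increases from two to three rather than staying at two (the reinsertion of Lemma~\ref{kCylsInBdLem} can, in the bad case where $\sigma$ lies in the periodic direction, collapse back to the same picture). To handle this I would make sure to choose the new periodic direction on $M'$ to be one in which $\sigma$ is \emph{not} a saddle connection — this is where the hypothesis that $M$ already has a simple cylinder, together with the freedom to deform inside the rank-two manifold $\cM'$ and Lemma~\ref{lm:WrightTwistPresLem}, must be combined carefully. A secondary technical point is verifying that the inherited involution on $M'$ can be taken to preserve $\sigma$ (rather than mapping it to some other horizontal saddle connection), which should follow from an angle/endpoint count at $x'_0$ analogous to the Claim in the proof of Proposition~\ref{prop:C3I:2sim:cyl:classify}, since $x'_0$ is the unique zero that arose from a collision and hence is distinguished among the fixed-point structure of $\inv'$.
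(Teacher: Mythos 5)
Your proposal diverges from the paper's argument and has two genuine gaps. The paper does not collapse the horizontal simple cylinder $C_1$ at all. Instead, in the case where $C_1$ is free, it picks a zero $x_1$ of highest order, finds a horizontal saddle connection $\sig$ from $x_1$ to another zero, and notes that $\sig$ must lie on both the top and the bottom of $C_2$; this produces a \emph{transverse} simple cylinder $D\subset \ol{C}_2$ crossing $\sig$. The inductive involution hypothesis is used only to prove that $D$ is \emph{not free}: if $D$ were free, collapsing it would land in a rank two affine manifold in $\cH(4)$ or $\cH(2,1^2)$, and the two horizontal cylinders of the degenerated surface would each be invariant under the involution, forcing at least five fixed points, a contradiction. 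Once $D$ is known to be non-free, one makes $D$ vertical and $M$ square-tiled; the $\cM$-parallel partner $D'$ of $D$ is also contained in $\ol{C}_2$, so $D\cup D'$ cannot fill $M$, and a third vertical cylinder must exist. This gives a periodic direction with at least three cylinders, one of which ($D$) is simple and not free.

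Concretely, your plan fails at the two places where the paper's ideas are needed. First, after collapsing $C_1$ you have no mechanism for producing extra horizontal cylinders in $\cM'$: the existence of the involution $\inv'$ on $M'$ does not by itself destabilize anything, and the appeal to ``Smillie--Weiss / cylinder-deformation machinery'' is not an argument. Moreover, even if you found a two-cylinder surface in $\cM'$, Lemma~\ref{kCylsInBdLem} reinserts the simple cylinder so as to preserve the \emph{topological type} of the cylinder decomposition in the chosen direction $\theta$; the inserted cylinder is transverse to $\theta$ and does not become a new cylinder in that direction, so a two-cylinder surface in $\cM'$ only yields a two-cylinder surface in $\cM$, not three. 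Second, your non-freeness argument (``if it were free, collapsing it again would contradict the dimension count'') is not valid: collapsing a free simple cylinder always drops the dimension by one and produces a legitimate rank two affine manifold in a smaller stratum, so no contradiction arises from dimensions alone. The contradiction in the paper comes from the fixed-point count of the hypothesized involution on the specific two-cylinder degenerated surface, and that is precisely the step your outline is missing.
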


\begin{proof}
Let $M\in \cM$ be a horizontally periodic surface with two horizontal cylinders $C_1$ and $C_2$, where $C_1$ is simple.  If $C_1$ and $C_2$ are $\cM$-parallel, then we are done by \cite[Lem. 2.14]{AulicinoNguyenGen3TwoZeros}. Thus let us suppose that $C_1$ is free. We claim that given any two zeros in $M$, there always exists a path between them consisting of horizontal saddle connections. This is because if we cut $M$ along a core curve of $C_1$ and a core curve of $C_2$, then the resulting surface is connected.  Otherwise, $C_1$ and $C_2$ are homologous, thus they cannot be free.

Note that each boundary component of $C_1$ contains a single zero of $M$.  Let $x_1$ be a zero of highest order in $M$. Observe that there must exist a horizontal saddle connection $\sig$ connecting $x_1$ to another zero $x_2$. Since $\sig$ is not one of the boundary components of $C_1$, it must be contained in both sides of $C_2$, thus we have a simple cylinder $D$ contained in $C_2$ whose boundary contains $x_1$ and $x_2$. We consider the following cases:

\begin{itemize}
\item[$\bullet$]  $x_1$ is of order $\geq 2$:  We claim that $D$ is not free. Indeed, if this is the case, then we can collapse $D$ to get a surface $M'$ in a stratum with $k-1$ zeros, one of the zeros of $M'$ is of order at least $3$. Thus $M'$ belongs to $\cH(3,1)$ or $\cH(4)$.    Since there is no rank two affine submanifold in $\cH(3,1)$, we only need to consider the case $M'\in \cH(4)$. In this case we must have $M'\in \tilde{\cQ}(3,-1^3)$.  In particular, $M'$ has an involution $\inv$ with four fixed points whose derivative is $-\id$. Note that the unique zero of $M$ must be a fixed point of $\inv$.  By construction, $M'$ has two horizontal cylinders, one of which is simple, the other one is not. Thus, they are both fixed by $\inv$.  But a cylinder fixed by $\inv$ must contain two fixed points of $\inv$  in its interior. Therefore, $\inv$ must have at least $5$ fixed points, which is a contradiction.

Since $\cM$ is defined over $\Q$, we can assume that $D$ is vertical and $M$ is a square-tiled surface. Since $D$ is not free, it is $\cM$-parallel to another vertical cylinder $D'$, which must be entirely contained in the closure of $C_2$. In particular, $D$ and $D'$ do not fill $M$. Thus there exists at least another vertical cylinder, which means that we have at least $3$ vertical cylinders, one of which is simple and not free.

\item[$\bullet$]  $x_1$ is a simple zero, i.e. $M\in \cH(1^4)$: if $D$ is free, then we can collapse it to get a surface  $M'\in \cH(2,1^2)$.  Since the involution of $M'$ must fix the double zero,   by the same argument as above we get a contradiction. Thus   $D$ is not free, and we also get the desired conclusion.
\end{itemize}
\end{proof}

Recall that in \cite[Lem. 4.1]{AulicinoNguyenGen3TwoZeros}, we have divided $3$-cylinder diagrams in genus three into three Cases 3.I), 3.II), 3.III). The following is a slight generalization of \cite[Prop. 5.5]{AulicinoNguyenGen3TwoZeros}.

\begin{proposition}
\label{2SimpCylImp4CylsProp}
Let $\cM$ be a rank two affine manifold in genus three with at least two zeros.  If $M \in \cM$ is a horizontally periodic translation surface satisfying Case 3.I) and two of the horizontal cylinders are simple, then there is a horizontally periodic surface in $\cM$ with at least four cylinders.
\end{proposition}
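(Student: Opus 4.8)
The idea is to first pin down the $\cM$-parallel equivalence-class structure of the horizontal cylinders of $M$, then reduce to classifications already available in lower strata, and finally check that each resulting candidate for $\cM$ contains a horizontally periodic surface with at least four cylinders.

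First, exactly as in the opening paragraph of the proof of Proposition~\ref{prop:C3I:2sim:cyl:classify}, I would note that the two simple cylinders $C_1,C_2$ are not $\cM$-parallel to the third cylinder $C_3$ (by \cite[Lem. 2.11]{AulicinoNguyenGen3TwoZeros}), and that $C_1,C_2,C_3$ cannot all be free (by \cite[Lem. 2.15]{AulicinoNguyenGen3TwoZeros}). Hence $\{C_1,C_2\}$ is an equivalence class of $\cM$-parallel cylinders, $C_3$ is free, and the horizontal cylinders of $M$ fall into at least two equivalence classes. (If $M$ were not $\cM$-cylindrically stable we could already conclude by Lemma~\ref{lm:WrightTwistPresLem}; this observation is not needed below.)

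With the two-equivalence-class hypothesis in hand: if $\cM\subset\cH(2,2)$, the statement is \cite[Prop. 5.5]{AulicinoNguyenGen3TwoZeros}; if $\cM\subset\cH(2,1^2)\cup\cH(1^4)$, then Proposition~\ref{prop:C3I:2sim:cyl:classify} applies and forces $\cM=\prymthreezero$ when $\cM\subset\cH(2,1^2)$, and $\cM=\dcoverprinc$ or $\cM=\prymprinc$ when $\cM\subset\cH(1^4)$. For $\prymthreezero$, Proposition~\ref{prop:many:cyl:rktwo} already produces a horizontally periodic surface with five cylinders. For $\dcoverprinc$, I would use the covering description underlying Proposition~\ref{H11CoverConnProp}: choose a square-tiled $M'\in\cH(1,1)$ admitting a three-cylinder horizontal decomposition with core curves $\gamma_1,\gamma_2,\gamma_3$, and a nonzero class $\varepsilon\in H^1(M';\Z/2\Z)$; the associated connected unramified double cover $M$ lies in $\dcoverprinc$, is horizontally periodic, and has exactly $6-\sum_i\varepsilon(\gamma_i)$ horizontal cylinders, since the cylinder over $C_i$ splits into $2-\varepsilon(\gamma_i)$ cylinders. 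As the $\gamma_i$ are pairwise disjoint, they span an isotropic (hence, in genus two, at most two-dimensional) subspace of $H_1(M';\Z)$, so there is a $\Z/2\Z$-relation among them, whence $\sum_i\varepsilon(\gamma_i)\in\{0,2\}$ and $M$ has at least four horizontal cylinders. For $\prymprinc$, which is connected by \cite[Th. 1.2]{LanneauComponents}, I would either exhibit a square-tiled surface with at least four horizontal cylinders directly, or verify that $\prymthreezero$ lies in its boundary as a codimension-one degeneration obtained by collapsing a free simple cylinder with distinct zeros on its boundary, and then apply Lemma~\ref{kCylsInBdLem} together with Propositions~\ref{CylStDens} and \ref{prop:many:cyl:rktwo}.

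The appeals to Propositions~\ref{prop:C3I:2sim:cyl:classify} and \ref{prop:many:cyl:rktwo} and to \cite[Prop. 5.5]{AulicinoNguyenGen3TwoZeros} are routine; the one place requiring real care is the treatment of the two Prym-type loci in $\cH(1^4)$. For $\dcoverprinc$ one must make sure a suitable three-cylinder surface in $\cH(1,1)$ exists (square-tiled examples do) and that its double cover genuinely lies in the locus (guaranteed by connectedness). For $\prymprinc$ one must either build an explicit horizontally periodic surface with four or more cylinders, or confirm that the degeneration $\prymprinc\rightsquigarrow\prymthreezero$ is effected by collapsing a \emph{free simple} cylinder, so that Lemma~\ref{kCylsInBdLem} is genuinely applicable. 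I expect this last verification to be the main obstacle.
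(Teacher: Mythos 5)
Your proposal takes a genuinely different route from the paper. The paper proves Proposition~\ref{2SimpCylImp4CylsProp} directly and flat-geometrically: by \cite[Lem. 5.3]{AulicinoNguyenGen3TwoZeros} the two simple cylinders are $\cM$-parallel and isometric, after twisting one perturbs to a square-tiled surface and finds at least three vertical cylinders; if there are only three, the one inside $\ol{C}_3$ is free by \cite[Prop. 3.3(b)]{NguyenWright}, and rotating, twisting, and applying \cite[Lem. 2.14]{AulicinoNguyenGen3TwoZeros}/Smillie--Weiss in the new vertical direction produces four or more parallel cylinders. You instead first identify the equivalence classes $\{C_1,C_2\},\{C_3\}$, invoke Proposition~\ref{prop:C3I:2sim:cyl:classify} to pin down $\cM$ (this is logically admissible: that proposition is proved in Section~\ref{sec:C3I:collapse} without any input from Section~\ref{sec:get4cyl}), handle $\cH(2,2)$ by the earlier \cite[Prop. 5.5]{AulicinoNguyenGen3TwoZeros}, and then exhibit four-cylinder surfaces in each classified locus. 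For $\tilde{\cQ}(2,1,-1^3)$ (via Proposition~\ref{prop:many:cyl:rktwo}(c)) and for $\dcoverprinc$ (via the double-cover cylinder count) this works, though for $\dcoverprinc$ your parity argument should be justified by the actual relation $\gamma_1+\gamma_2=\gamma_3$ of the unique $3$-cylinder diagram in $\cH(1,1)$ rather than by ``isotropic hence some relation'': an arbitrary relation need not involve all three curves with odd coefficients, which is what the conclusion $\sum_i\varepsilon(\gamma_i)\in\{0,2\}$ requires.

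The genuine gap is the case $\cM=\prymprinc$, which you acknowledge but do not close. Neither of your two suggested routes is carried out. The degeneration route faces a concrete obstacle: Lemma~\ref{kCylsInBdLem} starts from a surface of $\cM=\prymprinc$ carrying a \emph{free simple} cylinder with distinct zeros on its boundary, so you must produce such a surface and, moreover, identify the affine manifold $\cM'$ containing its collapse with $\prymthreezero$. At this point of the argument you only know that $\cM'$ is a rank-two, $5$-dimensional affine submanifold of $\cH(2,1^2)$; the classification of rank-two manifolds in $\cH(2,1^2)$ is exactly what Section~\ref{sec:get4cyl} is building towards, so the identification $\cM'=\prymthreezero$ needs an independent argument (e.g.\ showing via the boundary isomorphism of $T_{M'}\cM'$ with $T_M\cM\cap{\rm Ann}(V)$ that the Prym involution persists on a neighborhood of $M'$ in $\cM'$, and then using connectedness and a dimension count). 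Only after that does the dense-diagram argument of Propositions~\ref{CylStDens} and~\ref{prop:many:cyl:rktwo}(c) apply. The alternative you mention --- writing down an explicit square-tiled surface in $\prymprinc$ (a double cover of a periodic quadratic differential in $\cQ(2^2,-1^4)$) with at least four horizontal cylinders --- would close the case and is elementary, but it is not exhibited; as written, the proof of the proposition is incomplete in this case.
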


\begin{proof}
By \cite[Lem. 5.3]{AulicinoNguyenGen3TwoZeros}, the two simple cylinders in $M$ are $\cM$-parallel and isometric.  Furthermore, they can be twisted so that there is a vertical trajectory passing exactly once through each.  This yields either Case (A) or (B) in Figure \ref{fig:2SimpCylPf1}.  Next, consider the vertical direction after perturbing to a nearby square-tiled surface, we see that each of the simple cylinders must be contained in (the closure of) a vertical cylinder. Therefore, there must exist at least three vertical cylinders.

If there are four or more cylinders, then we are done.  Otherwise, there is a vertical cylinder $D$ which is contained in the closure $C_3$.  Since no cylinder parallel to $D$ is entirely contained in $\ol{C}_3$, $D$ is free by \cite[Prop. 3.3(b)]{NguyenWright}.  After rotating the surface $M$ by $\pi/2$ and redrawing, we get the horizontally periodic surfaces in Figure~\ref{fig:2SimpCylPf2}.  In both cases, we twist the horizontal cylinder $D$ so that saddle connection $c$ lies where it does in both figures.  By applying \cite[Lem. 2.14]{AulicinoNguyenGen3TwoZeros} or \cite[Cor. 6]{SmillieWeissMinSets} to the vertical direction yields a translation surface with four or more parallel cylinders.

\begin{figure}[htb]
\centering
\begin{minipage}[t]{0.49\linewidth}
\centering
\begin{tikzpicture}[scale=0.5]
\draw[thin] (0,4) -- (0,0) -- (8,0) -- (8,2) -- (6,2) -- (6,4) -- (4,4) -- (4,2) -- (2,2) -- (2,4) -- cycle;
\draw[thin] (0,2) -- (2,2) (4,2) -- (6,2);

\draw (1,4) node[above] {\tiny $a$} (1,2) node[above] {\tiny $a'$} (1,0) node[below] {\tiny $a$} (5,4) node[above] {\tiny $b$} (5,2) node[above] {\tiny $b'$} (5,0) node[below] {\tiny $b$};

\foreach \x in {(0,4), (0,2), (0,0), (2,4),(2,2), (2,0), (8,2), (8,0)} \filldraw[fill=white] \x circle (3pt);
\foreach \x in {(4,4), (4,2), (4,0), (6,4),(6,2), (6,0)} \filldraw[fill=white] \x circle (3pt);

\draw (4,-1) node {(A)};

\end{tikzpicture}
\end{minipage}
\begin{minipage}[t]{0.49\linewidth}
\begin{tikzpicture}[scale=0.5]
\draw[thin] (0,6) -- (0,2) -- (2,2) -- (2,0) -- (4,0) -- (4,2) -- (8,2) -- (8,4) -- (2,4) -- (2,6) -- cycle;
\draw[thin] (0,4) -- (2,4) (2,2) -- (4,2);

\draw (1,6) node[above] {\tiny $a$} (1,4) node[above] {\tiny $a'$} (1,2) node[below] {\tiny $a$} (3,4) node[above] {\tiny $b'$} (3,2) node[above] {\tiny $b$} (3,0) node[below] {\tiny $b'$} ;

\foreach \x in {(0,6), (0,2), (2,6), (2,2), (4,2), (8,2)} \filldraw[fill=white] \x circle (3pt);

\foreach \x in {(0,4), (2,4), (2,0), (4,4), (4,0), (8,4)} \filldraw[fill=white] \x circle (3pt);
\draw (4,-1) node {(B)};
\end{tikzpicture}
\end{minipage}
\caption{$3$-cylinder diagrams with two simple cylinders}
\label{fig:2SimpCylPf1}
\end{figure}
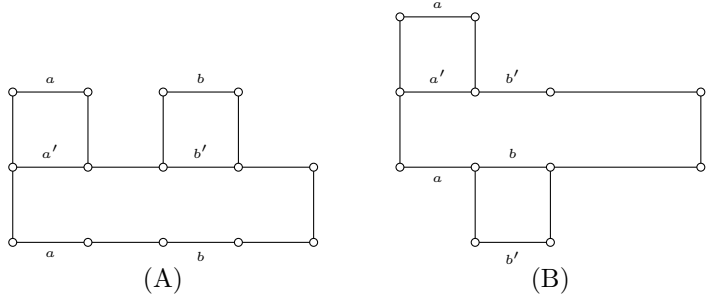

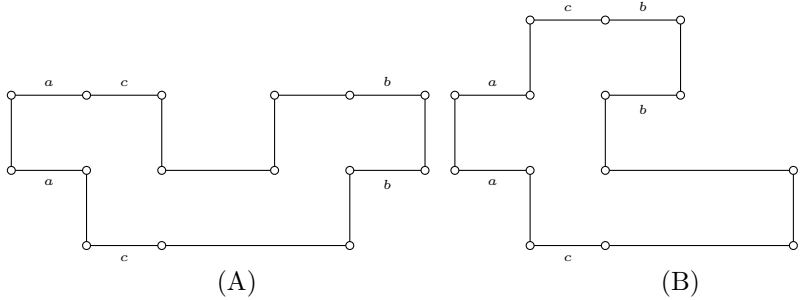
\begin{figure}[htb]
\centering
\begin{minipage}[t]{0.49\linewidth}
\centering
\begin{tikzpicture}[scale=0.5]
\draw[thin] (0,0) -- (0,2) -- (-2,2) -- (-2,4) -- (2,4) -- (2,2) -- (5,2) -- (5,4) -- (9,4) -- (9,2) -- (7,2) -- (7,0) -- cycle;
\draw (-1,4) node[above] {\tiny $a$} (-1,2) node[below] {\tiny $a$} (8,4) node[above] {\tiny $b$} (8,2) node[below] {\tiny $b$} (1,4) node[above] {\tiny $c$} (1,0) node[below] {\tiny $c$};
\foreach \x in {(-2,2), (-2,4), (0,4), (0,2), (0,0), (2,4),(2,2), (2,0), (5,2), (5,4), (7,4), (7,2), (7,0), (9,2), (9,4)} \filldraw[fill=white] \x circle (3pt);

\draw (4,-1) node {(A)};
\end{tikzpicture}
\end{minipage}
\begin{minipage}[t]{0.49\linewidth}
\begin{tikzpicture}[scale=0.5]
\draw[thin] (0,0) -- (0,2) -- (-2,2) -- (-2,4) -- (0,4) -- (0,6) -- (4,6) -- (4,4) -- (2,4) -- (2,2) -- (7,2) -- (7,0) -- cycle;
\draw (-1,4) node[above] {\tiny $a$} (-1,2) node[below] {\tiny $a$} (3,6) node[above] {\tiny $b$} (3,4) node[below] {\tiny $b$} (1,6) node[above] {\tiny $c$} (1,0) node[below] {\tiny $c$};
\foreach \x in {(-2,2), (-2,4), (0,4), (0,6), (2,6), (4,6), (4,4), (0,2), (0,0), (2,4),(2,2), (2,0), (7,2), (7,0)} \filldraw[fill=white] \x circle (3pt);

\draw (4,-1) node {(B)};
\end{tikzpicture}
\end{minipage}
\caption{$3$-cylinder diagrams with at least four vertical cylinders}
\label{fig:2SimpCylPf2}
\end{figure}
\end{proof}

Let $M$ be a horizontally periodic surface in $\cH(2,1^2)\cup\cH(1^4)$. Let  $\Gr$ be the graph which is the union of all horizontal saddle connections in $M$. This graph is called the {\em separatrix diagram} in the literature and has a {\em ribbon structure} (see \cite[Sec. 4]{KontsevichZorichConnComps}).
If $M \in \cH(2,1,1)$, then $G$ has $3$ vertices and $7$ edges.  If $M \in \cH(1^4)$, then $G$ has $4$ vertices and $8$ edges. Note that the valency of a simple zero is $4$ and of a double zero is $6$. Since each edge of $\Gr$ is a horizontal saddle connection in $M$, we can equip it with the orientation from the left to the right.

Let $U$ be a neighborhood of $\Gr$ in $M$ consisting of the points whose distance to $\Gr$ is at most $\eps$, with $\eps >0$ small enough.  Each component of $\partial U$ is a core curve of a horizontal cylinder, and also homotopic to a cycle of edges of $\Gr$. We say that two boundary components are {\em adjacent} if the corresponding cycles have a common edge.

We color a component of $\partial U$ red if its orientation (which is induced by the orientation of $U$) agrees with the orientation of the corresponding cycle in $\Gr$, otherwise we color it blue.  A red boundary component corresponds to the upper side of a cylinder, while a blue one corresponds to the lower side of a cylinder.  Clearly, we have a pairing between the set of red boundary components and the set of blue ones, two boundary components are paired if they belong to the same cylinder. Note that two adjacent boundary components must have different colors because a saddle connection cannot be contained in the tops (resp. bottoms) of two different cylinders.

\begin{proposition}
\label{NoExceptCaseProp}
Let $M \in \cH(2,1^2) \cup \cH(1^4)$ be a horizontally periodic translation surface satisfying Case 3.I).  Then at least one of the following occurs
\begin{itemize}
\item[(a)] One of the cylinders is semi-simple,
\item[(b)] There is a horizontal saddle connection contained in both the top and bottom of the same cylinder.
\end{itemize}
\end{proposition}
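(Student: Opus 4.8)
The plan is to extract combinatorial constraints from the horizontal separatrix diagram $\Gr$ of $M$ (the union of the horizontal saddle connections, with its ribbon structure) and push them through the planar dual graph $\Gr^*$.

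\emph{Setup.} In Case 3.I the three core curves span a Lagrangian, hence are linearly independent in $H_1(M;\Q)$, so cutting $M$ along them yields a \emph{connected} surface; by an Euler characteristic count this surface has genus $0$ and $6$ boundary components. As it deformation retracts onto $\Gr$, the ribbon graph $\Gr$ is planar and connected, and its faces (the boundary circles of a regular neighborhood of $\Gr$) are precisely the six sides of the three cylinders: three ``red'' faces $R_1,R_2,R_3$ (the tops) and three ``blue'' faces $B_1,B_2,B_3$ (the bottoms), with $R_i$ and $B_i$ belonging to the same cylinder $C_i$, and two \emph{distinct} faces sharing an edge necessarily of opposite colors. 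Every vertex of $\Gr$ has even valency at least $4$: the valencies are $\{6,4,4\}$ if $M\in\cH(2,1^2)$ and $\{4,4,4,4\}$ if $M\in\cH(1^4)$, and $\Gr$ has $7$ edges in the first case and $8$ in the second. Passing to the planar dual $\Gr^*$: it has $6$ vertices (the faces of $\Gr$) and $7$ or $8$ edges, and each of its faces is dual to a vertex of $\Gr$ and so has length equal to that vertex's valency — hence \emph{every face of $\Gr^*$ has length $4$ or $6$}. Finally, $\Gr$ has no bridge: were $e=uv$ a bridge, the component of $\Gr-e$ containing $u$ would have exactly one vertex of odd valency, contradicting the handshake lemma; equivalently $\Gr^*$ has no loop.

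\emph{The argument.} Assume for contradiction that neither (a) nor (b) holds. Since (b) fails, no saddle connection lies on the top and on the bottom of a single cylinder, so (as $\Gr$ has no bridges) every edge of $\Gr$ separates a red face $R_i$ from a blue face $B_j$ with $i\neq j$. Thus every edge of $\Gr^*$ joins one of the six ``legal'' pairs $\{R_i,B_j\}$ with $i\neq j$, and these pairs form a single $6$-cycle $Q$ on the vertex set of $\Gr^*$; so $\Gr^*$ is a loopless multigraph whose edges all lie on $Q$. Since $\Gr^*$ is connected, its underlying simple graph is $Q$ itself or a Hamiltonian path of $Q$, hence has at most $6$ edges, whereas $\Gr^*$ has $7$ or $8$; therefore $\Gr^*$ has two parallel edges. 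Any two parallel edges of $\Gr^*$ join a pair of vertices adjacent on $Q$, and — since every other edge of $\Gr^*$ lies on $Q$, hence entirely on one side of these two edges — they cut off a region bounded by those two edges alone, a face of length $2$, \emph{unless} the underlying graph is a Hamiltonian path and the parallel pair is an interior edge of multiplicity exactly $2$. In that remaining case I invoke the failure of (a): the two endpoints of the Hamiltonian path carry no loops, so each must be incident to at least two distinct edges, for otherwise the corresponding face of $\Gr$ consists of a single saddle connection and (a) holds; but a multiplicity-$2$ edge at an endpoint again cuts off a face of length $2$, so the two end-edges of the path have multiplicity at least $3$, forcing $\Gr^*$ to have at least $3+1+1+1+3=9$ edges — impossible. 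In every case $\Gr^*$ acquires a face of length at most $2$, contradicting that all its faces have length $4$ or $6$.

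\emph{Main obstacle.} The delicate step is verifying that two parallel edges of $\Gr^*$ really do cut off a face of length $\le 2$, i.e.\ that the remainder of the graph cannot be distributed on both of their sides; this uses both the planarity of $\Gr^*$ (equivalently, that $M$ is in Case 3.I) and the connectivity of $\Gr$, and is carried out by tracing the boundary walks of the relevant faces rather than by counting edges. Once that is in hand, the enumeration of the few admissible edge-multiplicity vectors on the hexagon $Q$ is routine.
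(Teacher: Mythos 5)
Your argument is correct, but it takes a genuinely different route from the paper's. The paper shares your setup (the separatrix diagram $\Gr$ is connected and planar, its six boundary components are colored by top/bottom and adjacent components have opposite colors), but then splits into two cases on the primal graph: if $\Gr$ has a loop, a counting-plus-planarity argument produces a loop bounding a disc, i.e.\ a semi-simple cylinder, giving (a); if $\Gr$ has no loops, the paper enumerates the three admissible configurations of $\Gr$ and observes that in each one the outer boundary component is adjacent to three others, hence (by the coloring) to all candidates for its partner, so it is adjacent to its partner and (b) holds. You instead assume both (a) and (b) fail and work in the planar dual: failure of (b) (together with bridge-freeness, which you correctly deduce from the even valencies) confines the dual edges to the hexagon $K_{3,3}$ minus the perfect matching $\{R_i,B_i\}$, the count of $7$ or $8$ edges against at most $6$ slots forces multiplicities, and then planarity plus the fact that every dual face has length $4$ or $6$ yields a contradiction, with failure of (a) (every dual vertex has degree at least $2$) invoked exactly where it is needed, at the endpoints of a Hamiltonian path. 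This buys a uniform treatment of both strata with no figure enumeration and no separate loop case, at the cost of the delicate planar step you flag: as stated, ``any two parallel edges cut off a bigon'' is slightly too strong when a slot has multiplicity at least $3$ (an intermediate parallel copy can lie between the chosen pair), so one should take two copies consecutive in the rotation, and the one-sidedness of the remainder should be argued, as you indicate, from the connectivity of the graph minus the two shared endpoints; with that routine fix, and your correct identification of the genuine exceptional configuration (an interior slot of a Hamiltonian path of multiplicity exactly two), the proof goes through.
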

\begin{proof}
Consider the separatrix diagram $\Gr$ and its neighborhood $U$ described above.  The hypothesis implies that $\Gr$ is connected and  $U$ is homeomorphic to a sphere  with six open discs removed.  As a consequence,  $G$ is a planar graph.

A loop in $\Gr$ is an edge that joins a vertex to itself. If there is a component of $\partial U$ that is homotopic to a loop in $\Gr$, then one of the cylinders is semi-simple. Since $\Gr$ is planar, and using the hypothesis on the number of edges and vertices of $\Gr$, one can easily check that if there are some loops in $\Gr$, then there must exist a loop which bounds a disc. Hence, in this case we have  a semi-simple cylinder.

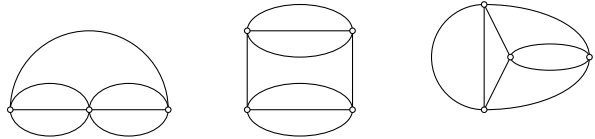
\begin{figure}[htb]
\centering
\begin{tikzpicture}[scale=0.35]
 \draw (6,0) arc (0:180:3);
 \foreach \x in {(3,0), (6,0)} \draw \x arc (0:180:1.5 and 1);
 \foreach \x in {(0,0), (3,0)} \draw \x arc (180:360:1.5 and 1);
 \draw (0,0) -- (6,0);
 \draw (9,0) -- (13,0) -- (13,3) -- (9,3) -- cycle;

 \foreach \x in {(13,3), (13,0)} \draw \x arc (0:180:2 and 1);
 \foreach \x in {(9,3), (9,0)} \draw \x arc (180:360:2 and 1);

 \draw (18,4) arc (90:270:2);
 \draw (18,0) arc (-90:90:4 and 2);
 \draw (22,2) arc (0:180:1.5 and 0.5);
 \draw (19,2) arc (180:360: 1.5 and 0.5);
 \draw (18,4)-- (18,0) -- (19,2) -- cycle;

 \foreach \x in {(0,0),(3,0), (6,0), (9,3), (9,0), (13,3), (13,0), (18,4), (18,0), (19,2), (22,2)} \filldraw[fill=white] \x circle (3pt);
\end{tikzpicture}

 \caption{Admissible configurations for the graph of saddle connections with no loops in Case 3.I)}
 \label{fig:config:graph:s:c:3I}
\end{figure}

Assume from now on that there are no loops in $\Gr$. There are three admissible configurations for $\Gr$, which are shown in Figure~\ref{fig:config:graph:s:c:3I}, one for $\cH(2,1^2)$ and two for $\cH(1^4)$. Observe that in all cases, the outer boundary component of $U$ is adjacent to three other boundary components. Therefore, the outer component must be paired with one of the adjacent ones. This implies immediately that there is an edge of $\Gr$ that is contained in both the top and the bottom sides of the corresponding cylinder. The proposition is then proved.
\end{proof}

\subsection*{Proof of Proposition \ref{Min4CylProp}}
\begin{proof}
By \cite[Lem. 3.2]{AulicinoNguyenGen3TwoZeros}, there exists a horizontally periodic surface $M \in \cM$ with at least three cylinders.  By \cite[Lem. 4.1]{AulicinoNguyenGen3TwoZeros}, $M$ satisfies one of three possible cases.

\begin{enumerate}
\item[(a)] If $M$ satisfies Case 3.II), then by the assumption and \cite[Lem. 4.3]{AulicinoNguyenGen3TwoZeros}, $M$ is $\cM$-cylindrically unstable. Thus there exists $M' \in \cM$ that is horizontally periodic with at least four horizontal cylinders.

\item[(b)] If $M$ satisfies Case 3.III), then denote the homologous cylinders by $C_1,C_2$, and the remaining one by $C_3$. If we cut $M$ along a core curve in each of $C_1,C_2$, then glue the boundary components of the new surface after exchanging the pairings, we will obtain two translation surfaces of genus two, both of which are horizontally periodic. One of the new surfaces has two horizontal cylinders one of which is $C_3$. We denote this surface  $M^1$, and the other one $M^2$. Note that since $M^1$ is a genus two translation surface, $C_3$ is either simple or contains a horizontal saddle connection in both of its sides.

We have several possibilities. Assume that $C_3$ contains a simple cylinder $C$. If the boundary of $C$ contains only simple zeros, then the simple zeros are distinct by Lemma~\ref{lm:s:cyl:dist:sim:zeros}. It is easy to check that there is no cylinder parallel to $C$ that is entirely contained in $C_3$.  Hence, $C$ is free and can be collapsed.  Note that in this case $M$ degenerates to a surface $M'$ in $\cH(2,2)$ or $\cH(2,1^2)$. By Proposition~\ref{prop:collapse:free:sim:cyl}, $M$ is contained in a rank two affine submanifold $\cM'$ in $\cH(2,2)$ or in $\cH(2,1^2)$ such that $\dim \cM'=\dim\cM-1$. By the results of \cite{AulicinoNguyenGen3TwoZeros} and the hypothesis of the proposition, $\cM'$ is one of the following loci
$$
\{\dcoverhyp, \dcoverodd, \tilde{\cQ}(4,-1^4),\tilde{\cQ}(2,1,-1^3)\}.
$$
By Proposition~\ref{prop:many:cyl:rktwo}, there exists $M' \in \cM'$ admitting a cylinder decomposition with four or more cylinders.  We conclude by Proposition~\ref{CylStDens} and  Lemma~\ref{kCylsInBdLem}.

If the boundary of $C_3$ contains a double zero, then the two zeros in its boundary are the same, and we have a cylinder diagram similar to \cite[Lem. 4.8]{AulicinoNguyenGen3TwoZeros}. But in this case it is easy to check that the proof of \cite[Prop. 4.8]{AulicinoNguyenGen3TwoZeros} goes through without any challenge even though the top of $C_1$ and the bottom of $C_2$ contain four saddle connections instead of three.

Finally, if $C_3$ is itself a simple cylinder, we  apply \cite[Lem. 4.7]{AulicinoNguyenGen3TwoZeros} to reduce to the previous cases.

\item[(c)] If $M$ satisfies Case 3.I), by Proposition~\ref{NoExceptCaseProp}, we know that either one of the horizontal cylinders is semi-simple or contains a simple cylinder. If the latter occurs, since we can always suppose that the simple cylinder is vertical and $M$ is a square-tiled surface, it follows that $\cM$ contains a vertically periodic surface with one simple vertical cylinder. Using Lemma~\ref{ANLem33Gen}, we derive that $\cM$ contains a horizontally periodic surface with at least $3$ cylinders one of which is simple.
If the cylinder diagram of this surface satisfies Case 3.II or Case  3.III, then we conclude as above. Thus, we are left to consider the case $M$ is horizontally periodic satisfying Case 3.I, and one of the horizontal cylinders is semi-simple.

We only need to consider the case $M$ is $\cM$-cylindrically stable. Since the horizontal cylinders of $M$ cannot be all free (see \cite[Lem. 2.15]{AulicinoNguyenGen3TwoZeros}), they must fall into two equivalence classes. Let us denote these cylinders by $C_1,C_2,C_3$, where $C_1$ and $C_2$ are $\cM$-parallel, while $C_3$ is free.
Let us first consider the case one of the horizontal cylinders is simple. By Proposition~\ref{2SimpCylImp4CylsProp}, we can assume that only one of $C_1,C_2,C_3$ is simple.
If one of $C_1$ and $C_2$ is simple, then the other one is not, and we conclude by \cite[Prop. 5.6]{AulicinoNguyenGen3TwoZeros}. If $C_3$ is simple, then we conclude by \cite[Prop. 5.9]{AulicinoNguyenGen3TwoZeros}, and Proposition~\ref{2SimpCylImp4CylsProp}. Finally, in the case where none of $C_1,C_2,C_3$ is simple, and one of them is strictly semi-simple, we conclude by \cite[Prop. 5.14]{AulicinoNguyenGen3TwoZeros} and  Proposition~\ref{2SimpCylImp4CylsProp}.
\end{enumerate}
\end{proof}


\section{Four Cylinders}\label{sec:4cyl}

We recall \cite[Lem 6.1]{AulicinoNguyenGen3TwoZeros} that enumerates all topological types of $4$-cylinder decompositions in genus three.

\begin{lemma}\cite{AulicinoNguyenGen3TwoZeros}
\label{lm:4CylDeg}
If a translation surface $M$ in genus three decomposes into four cylinders, then pinching the core curves of those cylinders degenerates the surface to one of four possible surfaces:
\begin{itemize}
\item 4.I) Two spheres joined by four pairs of simple poles.
\item 4.II) Two spheres joined by two pairs of simple poles such that each sphere has a pair of simple poles.
\item 4.III) Two spheres joined by three pairs of simple poles such that one sphere carries an additional pair of simple poles.
\item 4.IV) Two spheres and a torus such that the spheres have three simple poles and the torus has two simple poles.
\end{itemize}
\end{lemma}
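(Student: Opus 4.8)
The plan is to translate the degeneration into the combinatorics of the dual graph of the resulting stable curve, extract the numerical constraints this graph must satisfy, enumerate the finitely many possibilities, and then discard those that are not realized by an honest cylinder decomposition. Set‑up: let $\gamma_1,\dots,\gamma_4$ be the core curves of the four cylinders and let $N$ be the stable $1$‑form obtained by pinching them (the limit of $a_t^{\cC}(M)$ as $t\to+\infty$, where $\cC$ is the set of all four cylinders). Its normalization $\widehat M$ is a disjoint union of closed Riemann surfaces $Z_1,\dots,Z_v$ of genera $g_1,\dots,g_v$ carrying the limiting form, and the $\gamma_i$ become four nodes, each gluing two marked points of $\widehat M$. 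Since each $\gamma_i$ has positive circumference $\ell_i$, the residue of the limiting form at each of the two points over the $i$‑th node is a nonzero multiple of $\ell_i$, so all eight marked points are genuine simple poles. Let $\Gamma$ be the dual graph: one vertex per $Z_j$, one edge per node, and a loop whenever both ends of a cylinder lie on the same $Z_j$. Because $M$ is connected, $\Gamma$ is connected.

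Next I would record the numerics. Pinching a curve and separating a node each raise the Euler characteristic by $1$, so $\chi(\widehat M)=\chi(M)+8=4$; writing this as $\sum_j(2-2g_j)=4$ gives $\sum_j g_j=v-2$, consistently with $g(M)=\sum_j g_j+h^1(\Gamma)=3$ and $h^1(\Gamma)=5-v$. A nonzero meromorphic $1$‑form on a compact Riemann surface cannot have exactly one simple pole, so every vertex of $\Gamma$ has valence at least $2$ (a loop counting twice); hence $2v\le 2\cdot 4=8$ and $v\in\{2,3,4\}$, with $\sum_j g_j=v-2$. The essential geometric input is that no genus‑$0$ component of $\widehat M$ can carry exactly two marked points: by the degree count $\sum\operatorname{ord}=-2$ on $\mathbb{P}^1$ such a component would carry no zero, hence be a flat cylinder bounded by a copy of $\gamma_i$ and a copy of $\gamma_k$; then $\gamma_i$ and $\gamma_k$ cobound an embedded cylinder and are therefore homotopic, forcing the corresponding cylinders to coincide (the case $i=k$ being ruled out because it would disconnect $M$) — a contradiction. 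Thus every genus‑$0$ component has at least three marked points, and every component of positive genus at least two.

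Combining $v\le4$ with these bounds: if $v=4$ then every component has exactly two marked points, hence genus $\ge1$, contradicting $\sum_j g_j=2$; so $v\in\{2,3\}$. For $v=3$ there is one torus $T$ and two spheres; $T$ has exactly two marked points (a loop on $T$ would disconnect $\Gamma$), and since each sphere needs at least three of the remaining six marked points, each sphere has exactly three; examining how the four edges join a valence‑$2$, valence‑$3$, valence‑$3$ connected multigraph, and excluding the sphere self‑loops by the surgery argument of the previous paragraph, leaves exactly type 4.IV). For $v=2$ there are two spheres with the eight marked points split as $\{4,4\}$ or $\{3,5\}$; writing the split through the number $e$ of connecting edges and the loop counts $\ell_1,\ell_2$ with $e+\ell_1+\ell_2=4$, the admissible multigraphs are $(e,\ell_1,\ell_2)\in\{(4,0,0),(3,1,0),(2,1,1),(1,1,2)\}$, of which the first three are precisely 4.I), 4.III), 4.II). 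The configuration $(1,1,2)$ — a sphere with two self‑nodes joined by a single node to a sphere with one self‑node — must then be excluded, e.g.\ by cutting along the connecting saddle connection and the two self‑node curves on the five‑marked sphere to expose an incompatible genus‑two summand. Finally I would exhibit one explicit flat surface (equivalently, one cylinder diagram) realizing each of 4.I)--4.IV), so the list is exactly these four.

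I expect the elimination of the spurious configurations containing self‑nodes — the $v=4$ case, the $(1,1,2)$ graph, and any sphere self‑loop when $v=3$ — to be the main obstacle: the stability and Euler‑characteristic constraints alone do not kill them, and ruling them out requires genuinely using that $\Gamma$ arises from a cylinder decomposition rather than from an abstract stable curve. One route is the flat‑cylinder/surgery argument sketched above; the route taken in \cite{AulicinoNguyenGen3TwoZeros} is instead a direct enumeration of the admissible ribbon structures on the separatrix diagram, which has $3$ vertices and $7$ edges in $\cH(2,1^2)$, $4$ vertices and $8$ edges in $\cH(1^4)$, and correspondingly small graphs in $\cH(4)$, $\cH(3,1)$, $\cH(2,2)$, so that the enumeration remains short.
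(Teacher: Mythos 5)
Your overall skeleton (pass to the stable form, take the dual graph $\Gamma$ with four edges, use $\chi$ and the genus formula to bound the number of parts, show a genus-zero part cannot carry exactly two nodes, then enumerate) is sound and is close in spirit to what this paper does for the analogous $5$- and $6$-cylinder statements (the $4$-cylinder lemma itself is only cited here, from \cite{AulicinoNguyenGen3TwoZeros}). The flat-cylinder argument ruling out a two-pole sphere is correct, and so are the $v=4$ exclusion and the valence bookkeeping.

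The genuine gap is exactly where you anticipated trouble: the elimination of the remaining graphs containing self-nodes, and neither of the two arguments you offer for it works. For the two-part graph $(e,\ell_1,\ell_2)=(1,1,2)$, cutting along the separating curve does \emph{not} "expose an incompatible genus-two summand'': a genus-two piece glued to a genus-one piece along a single separating curve is a perfectly good genus-three surface, so there is no topological contradiction to be found, and your sketch gives none. Likewise, in the three-part case the connected multigraphs with valences $(2,3,3)$ include, besides the one you want, the graphs with a self-loop on one or both spheres (torus joined twice to one sphere, that sphere joined once to a sphere with a self-node; or torus joined once to each sphere, each sphere carrying a self-node); your "surgery argument of the previous paragraph'' only treats genus-zero parts with exactly \emph{two} marked points and says nothing about these three-pole spheres, so it does not exclude them. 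What actually kills all of these configurations is not topology but periods: each offending graph has a bridge, so the corresponding core curve $\gamma_i$ is separating, hence $0$ in $H_1(M,\mathbb{R})$, hence $\int_{\gamma_i}\omega=0$ --- contradicting $\int_{\gamma_i}\omega=\ell_i>0$. Equivalently, the boundary relation of each part must not force any $\gamma_i$ to vanish in homology; this is precisely the homological-relation input the paper leans on in its own proofs (the "$p-1$ homological relations'' counting in Lemma~\ref{5CylDeg}, and relations such as $\gamma_1+\gamma_2=\gamma_3=\gamma_4$ with nonzero periods in Lemma~\ref{lm:C4IV:nonfree:sim:cyl} and Appendix~\ref{sec:6cyl:diag:proof}). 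With that observation substituted for your two faulty exclusions, your enumeration closes up and yields exactly Cases 4.I)--4.IV).
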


In what follows we will individually consider each of those topological types of $4$-cylinder decomposition. The final result is the following.

\begin{proposition}\label{prop:4cyl}
Let $\cM$ be a rank two affine submanifold of either $\cH(2,1^2)$ or $\cH(1^4)$. Assume that $\cM$ contains a horizontally periodic surface with $4$ horizontal cylinders.
\begin{itemize}
\item[(a)]  If  $\cM \subset \cH(2,1^2)$, then either  $\cM$ contains a horizontally  periodic surface with $5$ horizontal cylinders or $\cM=\tilde{\cQ}(2,1,-1^3)$.

\item[(b)] If $\cH \subset \cH(1^2)$, and assume that $\tilde{\cQ}(2,1,-1^3)$ is the unique rank two affine submanifold in $\cH(2,1^2)$, then either $\cM$ contains a horizontally  periodic surface with at least $5$ horizontal cylinders or $\cM \in \{\dcoverprinc,\prymprinc\}$.
\end{itemize}
\end{proposition}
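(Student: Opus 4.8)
The plan is to split according to the four topological types of four-cylinder decomposition in Lemma~\ref{lm:4CylDeg} and treat 4.I)--4.IV) separately. Two reductions apply uniformly. First, if $M$ is not $\cM$-cylindrically stable, then Lemma~\ref{lm:WrightTwistPresLem} already produces a horizontally periodic surface in $\cM$ with strictly more than four cylinders, so we may assume $M$ is $\cM$-cylindrically stable. Second, since $\mathrm{rk}(\cM)=2$, the horizontal cylinders of $M$ fall into at most two equivalence classes of $\cM$-parallel cylinders, and they cannot all be free by \cite[Lem.~2.15]{AulicinoNguyenGen3TwoZeros}; hence the four cylinders are partitioned as $\{4\}$, $\{3,1\}$, or $\{2,2\}$. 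For each topological type I would enumerate the admissible separatrix (ribbon) graphs exactly as in the proof of Proposition~\ref{NoExceptCaseProp} --- a finite planar-graph problem fixed by the number of vertices ($3$ in $\cH(2,1^2)$, $4$ in $\cH(1^4)$), the number of edges ($7$, resp.\ $8$), and the valencies at the zeros --- and locate the simple and semi-simple cylinders in each.

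The main mechanism, which disposes of most configurations, runs as follows. In a given graph one exhibits a simple cylinder with two \emph{distinct} zeros on its boundary (possibly after shearing a free cylinder and passing to a square-tiled surface, using Remark~\ref{rk:sq:tiled:dense}); by Lemma~\ref{lm:s:cyl:dist:sim:zeros} and the equivalence-class analysis this cylinder is either free, or $\cM$-similar to a second simple cylinder carrying a different pair of zeros --- the similarity being forced by the argument of \cite[Lem.~5.3]{AulicinoNguyenGen3TwoZeros}. Collapsing via Proposition~\ref{prop:collapse:free:sim:cyl} or Proposition~\ref{prop:collapse:similar:cyl} produces a rank-two affine manifold $\cM'$ in a lower stratum --- necessarily $\cH(2,2)$, $\cH(4)$, $\cH(3,1)$, or, only when $\cM\subset\cH(1^4)$, the stratum $\cH(2,1^2)$ --- with $\dim\cM'=\dim\cM-1$ and $\mathrm{rk}(\cM')=2$. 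Since $\cH(3,1)$ carries no rank-two affine manifold, the rank-two loci in $\cH(2,2)$ and $\cH(4)$ are classified in \cite{NguyenWright,AulicinoNguyenWright,AulicinoNguyenGen3TwoZeros}, and the only rank-two locus in $\cH(2,1^2)$ is $\tilde{\cQ}(2,1,-1^3)$ --- by part (a) of the present proposition, respectively by the standing hypothesis of part (b) --- the manifold $\cM'$ is known in every case. Two outcomes can then occur. If $\cM'=\tilde{\cQ}(2,1,-1^3)$, then by Proposition~\ref{prop:many:cyl:rktwo}(c) it admits the five-cylinder diagram of Case 5.I), which is dense in $\cM'$ by Proposition~\ref{CylStDens}, so Lemma~\ref{kCylsInBdLem} gives a five-cylinder surface in $\cM$, as desired. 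Otherwise $\cM'$ lies in the Prym locus, possibly also in the hyperelliptic locus; one shows that the Prym involution $\inv'$ (and the hyperelliptic involution, when present) permutes the distinguished saddle connections produced by the collapse --- if it did not, a small deformation of $\cM'$ keeping these saddle connections of unequal length would contradict the fact that the collapsed cylinders are $\cM$-isometric --- so Proposition~\ref{DblCovExtSimpCyl} extends the involution(s) to $M$, putting $M$ in $\cP$ (resp.\ in $\cP\cap\cL$). The dimension count $\dim\cM=\dim\cM'+1$ together with ergodicity of the $\SL(2,\R)$-action then forces $\cM=\tilde{\cQ}(2,1,-1^3)$ when $\cM\subset\cH(2,1^2)$, and $\cM=\prymprinc$ or $\cM=\dcoverprinc$ when $\cM\subset\cH(1^4)$.

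The real work --- and the main obstacle --- is organizing the case division so that every pair (topological type, equivalence-class partition) is caught by one of these routes, and taming the two stubborn topological types. Type 4.II), two spheres each carrying a pair of simple poles, is precisely the one flagged in Proposition~\ref{prop:many:cyl:rktwo}(b2) for $\dcoverodd$ and $\tilde{\cQ}(4,-1^4)$; here the natural equivalence structure is $\{2,2\}$ with the two ``crossing'' cylinders $\cM$-similar and the two ``internal'' ones free, and to exclude a cylindrically stable surface that is none of the known loci one first rigidifies the relevant cylinder proportions with the Cylinder Proportion Lemma (Proposition~\ref{CylinderPropProp}) before collapsing. Type 4.IV), two spheres and a torus, is the most delicate: the torus block obstructs a clean collapse into a lower genus-three stratum, so instead one collapses sphere-side cylinders (tracking carefully which zeros collide, via Lemma~\ref{lm:s:cyl:dist:sim:zeros}), uses that in the sub-configurations where two horizontal cylinders are homologous, cutting their waist curves splits off a genus-two surface exactly as in Case 3.III) of the proof of Proposition~\ref{Min4CylProp}, and in the sub-configurations where no such reduction is available argues directly that $\Twist(M,\cM)\subsetneq\Pres(M,\cM)$, so $M$ is not cylindrically stable and the five-cylinder alternative holds. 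To keep the exposition focused I would defer the graph enumerations, the verifications that particular simple cylinders are free, and the angle counts at the zeros to the appendix.
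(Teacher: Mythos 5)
Your overall strategy (split by the topological types of Lemma~\ref{lm:4CylDeg}, collapse simple cylinders via Propositions~\ref{prop:collapse:free:sim:cyl} and \ref{prop:collapse:similar:cyl}, identify the boundary manifold, extend involutions with Proposition~\ref{DblCovExtSimpCyl}, and finish with a dimension count) is indeed the skeleton of the actual proof, but the case organization rests on a false claim and two of the four types are not actually handled. First, rank two does \emph{not} imply that the horizontal cylinders fall into at most two equivalence classes: the rank-two condition only forbids equivalence classes whose core curves span a three-dimensional isotropic subspace of $H_1(M,\R)$, and when the core curves satisfy homological relations three classes are perfectly possible. Indeed Lemma~\ref{lm:Case4IbEqClasses} lists $\{C_1,C_2\},\{C_3\},\{C_4\}$ and $\{C_1,C_3\},\{C_2\},\{C_4\}$ as admissible partitions in Case 4.I.b), and these configurations (one parallel pair plus two free cylinders) are where much of the work happens; your enumeration $\{4\},\{3,1\},\{2,2\}$ simply misses them. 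Relatedly, your description of Case 4.II) is wrong: the two ``internal'' cylinders $C_3,C_4$ are not free --- the heart of that case is proving they form an equivalence class of similar cylinders (Lemma~\ref{lm:C4II:preliminary}) and then, for $\cH(2,1^2)$, deriving a contradiction from a cylinder-proportion identity on heights (Proposition~\ref{prop:H211NoCase4II}); ``rigidify the proportions before collapsing'' does not substitute for these steps.

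Second, your treatment of Case 4.IV) would fail as written. The claim that the torus block obstructs a collapse into a lower genus-three stratum is not the issue at all: the actual argument (Lemma~\ref{lm:4CylCaseIV}, from \cite{AulicinoZeroExpGen3}) produces either a free simple cylinder with distinct zeros --- whose collapse lands in $\cH(2,2)$ or $\cH(2,1^2)$, after which Propositions~\ref{prop:many:cyl:rktwo}, \ref{CylStDens} and Lemma~\ref{kCylsInBdLem} import a Case 4.I), 4.II) or 5.I) decomposition back into $\cM$ --- or directly a surface with more cylinders. Your alternative, ``argue directly that $\Twist(M,\cM)\subsetneq\Pres(M,\cM)$,'' names the desired conclusion but supplies no mechanism for proving it, and that is precisely the hard point. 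Two further gaps: (i) in the diagrams of Case 4.I.b) without a usable collapsible configuration (diagrams (A), (C), (D)), the proof does not collapse at all but instead produces a transverse non-free simple cylinder and bootstraps through Lemma~\ref{lm:nonfree:sim:cyl} (and Lemma~\ref{lm:C4IV:nonfree:sim:cyl}), eventually falling back on the three-cylinder result Proposition~\ref{prop:C3I:2sim:cyl:classify}; nothing in your mechanism covers this recursion. (ii) Your blanket justification that the involution on $M'$ permutes the distinguished saddle connections (``otherwise a deformation would contradict $\cM$-isometry'') only makes sense when a multi-element class of similar cylinders was collapsed; when a single free cylinder is collapsed one must verify invariance of the single distinguished saddle connection by a diagram-specific argument (or derive a fixed-point-count contradiction), as is done for instance in Proposition~\ref{prop:C4Ia:H1111}. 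Finally, note that in part (a) you may not invoke uniqueness of $\tilde{\cQ}(2,1,-1^3)$ in $\cH(2,1^2)$ in any form: part (a) of the proposition is strictly weaker than that uniqueness, which is only established after the five-cylinder analysis.
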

\begin{proof}
This proposition is the consequence of Corollary~\ref{Case4IVImpDblCov} and the Propositions \ref{Case4IIIProp}, \ref{prop:H211NoCase4II}, \ref{prop:C4II:H1111}, \ref{prop:C4Ia:H211}, \ref{prop:C4Ia:H1111}, \ref{prop:C4Ib:H211}, and \ref{prop:C4Ib:H1111}.
\end{proof}

The proofs of the results mentioned above use several technical lemmas, that are essential but somewhat tedious as the ideas involved already appeared in the previous work~\cite{NguyenWright, AulicinoNguyenWright, AulicinoNguyenGen3TwoZeros}. For this reason, we defer some of their proofs to the appendix in order to keep the focus on the novelties.

\subsection{Case 4.IV)}

In this case the core curves of the cylinders cut the surface into two three-holed spheres, and a two-holed torus.

The following lemma follows from the proof of \cite[Lem. 5.6]{AulicinoZeroExpGen3}.  The two possible conclusions correspond to the possibility that $M$ is $\cM$-cylindrically stable, which was assumed in the proof of \cite[Lem. 5.6]{AulicinoZeroExpGen3}, or to the possibility that $M$ is $\cM$-cylindrically unstable, in which case we can produce more cylinders.

\begin{lemma}
\label{lm:4CylCaseIV}
Let $\cM$ be a rank two affine manifold in genus three.  If $M \in \cM$ is a horizontally periodic translation surface satisfying Case 4.IV), then either $M$ has a free simple cylinder with distinct zeros at each end, or there exists $M' \in \cM$ admitting a cylinder decomposition with at least five cylinders.
\end{lemma}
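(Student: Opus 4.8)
The plan is to dichotomize on whether $M$ is $\cM$-cylindrically stable. If $M$ is \emph{not} $\cM$-cylindrically stable, i.e.\ $\Twist(M,\cM)\subsetneq\Pres(M,\cM)$, then Lemma~\ref{lm:WrightTwistPresLem} immediately furnishes a horizontally periodic surface $M'\in\cM$ with strictly more than four horizontal cylinders, hence with at least five, which is the second alternative. So the remaining task is: assuming $M$ is $\cM$-cylindrically stable, to produce a free simple cylinder of $M$ whose two boundary saddle connections end at distinct zeros. This is precisely the content of \cite[Lem.~5.6]{AulicinoZeroExpGen3}, whose proof is carried out under exactly that cylindrical stability hypothesis, so I would simply invoke it; below I recall its shape for completeness.

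One first sorts out the topology of a Case~4.IV) decomposition. Pinching the four core curves cuts $M$ into two three-holed spheres $P_1,P_2$ and a two-holed torus $T$; since attaching a cylinder to two holes of the \emph{same} sphere would turn it into a one-holed torus, and attaching a cylinder to both holes of $T$ would produce a closed component disjoint from the spheres, the dual graph must be, after relabeling, $C_1$ joining $T$ to $P_1$, $C_2$ joining $T$ to $P_2$, and $C_3,C_4$ both joining $P_1$ to $P_2$. On the limiting stable $1$-form each $P_i$ acquires three simple poles, so, since the zero orders on a genus-$g$ component carrying $n$ simple poles sum to $2g-2+n$, it contains a single simple zero $x_i$; these $x_1,x_2$ are distinct zeros of $M$, and in the case $\cH(2,1^2)$ the double zero must lie on $T$. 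A ribbon-graph count on $P_i$ (one $4$-valent vertex, hence two edges, hence four oriented edge-sides distributed over the three boundary circles) shows that exactly two of the three boundary circles of $P_i$ are single saddle connections.

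With this in hand, one uses the Cylinder Proportion Lemma (Proposition~\ref{CylinderPropProp}), the rank-two constraint $\bk(\cM)=\Q$ (Theorem~\ref{thm:Wright:def:field:deg:rk}), and the equality $\Twist(M,\cM)=\Pres(M,\cM)$ to determine the $\cM$-parallel classes among $C_1,\dots,C_4$, and concludes that, after relabeling, one of $C_3,C_4$ is a \emph{free} simple cylinder; its two boundary saddle connections are loops at $x_1$ and at $x_2$, which are distinct (consistently with Lemma~\ref{lm:s:cyl:dist:sim:zeros}). The step I expect to be the main obstacle is this last one: excluding the sub-configuration in which neither $C_3$ nor $C_4$ is simple --- each being attached to the unique non-simple boundary circle of one of the two pairs of pants --- and instead locating a free simple cylinder among $C_1,C_2$; this cylinder-proportion and twist-space bookkeeping is exactly where cylindrical stability enters, as in \cite{AulicinoZeroExpGen3}.
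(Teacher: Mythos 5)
Your proposal is correct and follows essentially the same route as the paper: the paper likewise observes that the two alternatives correspond exactly to the dichotomy of $\cM$-cylindrical stability, handling the unstable case via Lemma~\ref{lm:WrightTwistPresLem} and the stable case by invoking the proof of \cite[Lem.~5.6]{AulicinoZeroExpGen3}, which was carried out under that stability hypothesis. Your additional sketch of the external lemma (dual graph, zero placement, ribbon-graph count) is consistent with the configuration the paper uses later (e.g.\ the relations $\gamma_1+\gamma_2=\gamma_3=\gamma_4$ in Lemma~\ref{lm:C4IV:nonfree:sim:cyl}), so nothing essential is missing.
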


\begin{corollary}
\label{Case4IVImpDblCov}
Let $\cM$ be a rank two affine manifold in $\cH(2,1^2)$ or $\cH(1^4)$.  If $\cM \subset \cH(1^4)$, then we add the additional assumption that the only rank two affine submanifold of $\cH(2,1^2)$ is $\tilde{\cQ}(2,1,-1^3)$.  If $M \in \mathcal{M}$ is a horizontally periodic translation surface satisfying Case 4.IV), then either $\cM$ contains a horizontally periodic surface in Case 4.I) or 4.II), or there exists $M' \in \cM$ horizontally periodic with at least five cylinders.
\end{corollary}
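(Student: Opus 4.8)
The plan is to combine Lemma~\ref{lm:4CylCaseIV} with the cylinder-collapsing machinery (Propositions~\ref{prop:collapse:free:sim:cyl} and~\ref{kCylsInBdLem}) and the knowledge of rank two affine manifolds in lower strata. By Lemma~\ref{lm:4CylCaseIV}, either there already exists $M' \in \cM$ horizontally periodic with at least five cylinders --- in which case we are done --- or $M$ has a free simple cylinder $C$ with two distinct zeros on its boundary components. So assume the latter. I would first collapse $C$: by Proposition~\ref{prop:collapse:free:sim:cyl} (or Lemma~\ref{kCylsInBdLem}) the resulting surface $M'$ lies in a rank two affine submanifold $\cM'$ of a stratum $\cH(\kappa')$ with $|\kappa'| = |\kappa| - 1$, and $\dim\cM' = \dim\cM - 1$, same genus, same rank.

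Next I would identify the possibilities for $\cM'$. If $\cM \subset \cH(2,1^2)$, then collapsing a simple cylinder joining the double zero to a simple zero lands in $\cH(3,1)$ (no rank two affine submanifolds there, so this subcase is vacuous) or, if it joins the two simple zeros, in $\cH(2,2)$; if $\cM \subset \cH(1^4)$, then $M'$ lies in $\cH(2,1^2)$. Using the classification inputs --- the results of \cite{AulicinoNguyenGen3TwoZeros,AulicinoNguyenWright,NguyenWright} for $\cH(2,2)$ and the standing hypothesis that $\tilde{\cQ}(2,1,-1^3)$ is the only rank two affine submanifold of $\cH(2,1^2)$ --- we conclude that $\cM'$ is one of
$$
\{\dcoverhyp,\ \dcoverodd,\ \tilde{\cQ}(4,-1^4),\ \tilde{\cQ}(2,1,-1^3)\}.
$$
For each of these, Proposition~\ref{prop:many:cyl:rktwo} guarantees a surface in $\cM'$ admitting a cylinder decomposition of topological type Case~4.I) or Case~4.II). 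By Proposition~\ref{CylStDens} that cylinder diagram is realized on a dense subset $S \subset \cM'$, so Lemma~\ref{kCylsInBdLem} applies: since $M' \in \cM'$ arose from $M \in \cM$ by collapsing a free simple cylinder with distinct zeros, and $\cM'$ contains a dense subset of surfaces with a common cylinder decomposition topological type (namely 4.I) or 4.II)), it follows that $\cM$ itself contains a horizontally periodic surface admitting a cylinder decomposition of that same topological type, i.e.\ Case~4.I) or Case~4.II).

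The main obstacle I anticipate is bookkeeping rather than a deep point: one must carefully check that in every collapsing scenario the simple cylinder $C$ genuinely has \emph{distinct} zeros on its two boundary components (so that Proposition~\ref{prop:collapse:free:sim:cyl} and Lemma~\ref{kCylsInBdLem} apply verbatim) --- but this is exactly what Lemma~\ref{lm:4CylCaseIV} provides --- and that the target stratum $\cH(\kappa')$ is correctly identified, including ruling out $\cH(3,1)$, which has no rank two affine submanifold. With those points in place the corollary follows immediately from the cited propositions.
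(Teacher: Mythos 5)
Your proposal follows essentially the same route as the paper: Lemma~\ref{lm:4CylCaseIV} to reduce to a free simple cylinder with distinct boundary zeros, collapse via Proposition~\ref{prop:collapse:free:sim:cyl}, rule out $\cH(3,1)$, identify $\cM'$ among the four known loci, and then pull a periodic surface back to $\cM$ via Proposition~\ref{CylStDens} and Lemma~\ref{kCylsInBdLem}. One inaccuracy to fix: Proposition~\ref{prop:many:cyl:rktwo} does \emph{not} give a Case 4.I) or 4.II) decomposition for $\tilde{\cQ}(2,1,-1^3)$; for that locus it only provides a decomposition of type Case 5.I). This does not break the argument, because transporting a 5.I) decomposition back to $\cM$ produces a horizontally periodic surface with five cylinders, which lands in the second alternative of the corollary --- exactly how the paper phrases its conclusion ("Case 4.I), 4.II), or 5.I)") --- but as written your claim that all four loci yield 4.I) or 4.II) is not what the cited proposition says and should be corrected.
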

\begin{proof}
By Lemma \ref{lm:4CylCaseIV}, we only need to consider the case $M$ has a free simple cylinder $C$ with different zeros on its boundary.  Collapsing $C$ results in a translation surface $M'$  which is contained in a rank two affine submanifold $\cM'$ of a lower stratum of genus three. Since there is no rank two affine submanifold in $\cH(3,1)$, we derive that either $\cM' \subset \cH(2,2)$ or $\cM'\subset \cH(2,1^2)$.  From Proposition~\ref{prop:many:cyl:rktwo} we know that either $\cM'$ contains a surface admitting a cylinder decomposition satisfying either Case 4.I), 4.II), or 5.I). We can then conclude by Proposition \ref{CylStDens} and Lemma \ref{kCylsInBdLem}.
\end{proof}

\subsection{Case 4.III)}\label{sec:4cyl:C4III}
In this case,  the core curves of the cylinders cut $M$ into a three-holed sphere and a five-holed sphere, the former contains a simple zero, while the latter contains the other zeros of $M$.
Let us denote by $x_0$ the simple zero contained in the three-holed sphere.  This zero is contained in the boundary of three cylinders, denoted by $C_1,C_2,C_3$. We number them so that $\ell(C_3)=\ell(C_1)+\ell(C_2)$. The remaining cylinder is denoted by $C_4$. Let $c_i$ be a core curve of $C_i$.

 \begin{figure}[htb]
 \centering
\begin{minipage}[t]{0.3\linewidth}
\centering
 \begin{tikzpicture}[scale=0.6]
 \draw (-1,6) -- (0,4) -- (0,2) -- (2,2) -- (2,0) -- (6,0) -- (6,2) -- (4,2) -- (4,6) -- (2,6) -- (2,4) -- (1,6) -- cycle;
 \foreach \x in {(-1,6), (0,2), (1,6),(2,6), (2,2), (2,0), (4,6),(4,2),(4,0), (4.5,2), (5.5,0), (6,2),(6,0)} \filldraw[fill=black] \x circle (3pt);
  \foreach \x in {(0,4),(2,4),(4,4)} \filldraw[fill=white] \x circle (3pt);

  \draw (0,6) node[above] {\tiny $1$} (1,2) node[below] {\tiny $1$} (3,6) node[above] {\tiny $2$} (3,0) node[below] {\tiny $2$} (5.3,2) node[above] {\tiny $3$} (4.7,0) node[below] {\tiny $3$} (4.3,2) node[above] {\tiny $4$} (5.7,0) node[below] {\tiny $4$};
  \draw (0.5,5) node {\tiny $C_1$} (3,5) node {\tiny $C_2$} (2,3) node {\tiny $C_3$} (4,1) node {\tiny $C_4$};
\end{tikzpicture}
\end{minipage}
\caption{Case 4.III) in $\cH(2,1^2)$: cylinder labels, the white vertex is $x_0$}
\label{fig:Case4IIICylLbls}
 \end{figure}
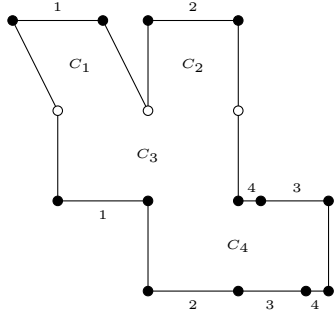
Remark that all of the horizontal saddle connections starting from $x_0$ end at $x_0$. The other horizontal saddle connections form a connected planar graph $\Gr$ with $2$ or $3$ vertices, such that a neighborhood of $\Gr$ is homeomorphic to a five-holed sphere.  We start by

\begin{lemma}
\label{4CylCaseIIIEqCls}
Let $\cM$ be a rank two affine manifold in a stratum in genus three with at least three zeros.  If $\cM$ contains a horizontally periodic surface $M$ with four cylinders satisfying Case 4.III) such that $M$ is $\cM$-cylindrically stable, then the equivalence classes are $\{C_1, C_2, C_3\}$ and $\{C_4\}$.
\end{lemma}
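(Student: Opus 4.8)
The plan is to determine the $\cM$-parallel equivalence classes from the homological structure forced by the topological type 4.III), combined with $\cM$-cylindrical stability and Wright's identities for the twist and preserving spaces. The starting point is a homological relation among the core curves: since $c_1,c_2,c_3$ are exactly the three boundary curves of the three-holed sphere carrying the simple zero $x_0$, and the flat geometry of a sphere with one simple zero and three cylinder ends forces $\ell(C_3)=\ell(C_1)+\ell(C_2)$, orienting $c_1,c_2,c_3$ compatibly yields $[c_3]=[c_1]+[c_2]$ in $H_1(X;\R)$. This is a topological relation, hence it persists on every surface in $\cM$: $\mathrm{hol}_{M'}(c_3)=\mathrm{hol}_{M'}(c_1)+\mathrm{hol}_{M'}(c_2)$ for all $M'\in\cM$ near $M$. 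Consequently, if any two of $C_1,C_2,C_3$ are $\cM$-parallel on $M$, then the third is as well (proportionality of two of the holonomy vectors propagates to the third through the linear relation, on all nearby surfaces). So the $\cM$-parallel partition restricted to $\{C_1,C_2,C_3\}$ is either a single class or three singleton classes.

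The heart of the proof is excluding the "three singletons" case. If $C_1,C_2,C_3$ are pairwise non-$\cM$-parallel, then at most one of them can be $\cM$-parallel to $C_4$ (being $\cM$-parallel is an equivalence relation), so at least two of $C_1,C_2,C_3$ are free. Each of $C_1,C_2,C_3$ is semi-simple, with its simple side a loop at $x_0$; for the two that are free I would argue, in the spirit of the Case 3.I) analysis in \cite{AulicinoNguyenGen3TwoZeros} and of Proposition~\ref{2SimpCylImp4CylsProp}, that either $M$ is $\cM$-cylindrically unstable (contradicting the hypothesis), or — after passing to a square-tiled representative and examining a transverse direction — one locates a free simple cylinder with distinct zeros on its boundary, collapses it via Proposition~\ref{prop:collapse:free:sim:cyl} to land in a rank two affine manifold in a lower stratum (necessarily inside $\cH(2,2)$ or $\cH(2,1^2)$), and derives a contradiction from the already-known classification there together with the assumed topological type of $M$. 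Some of these case distinctions would be deferred to the appendix. This forces $\{C_1,C_2,C_3\}$ to lie in a single equivalence class $\cC_0$.

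It remains to show $C_4\notin\cC_0$. Otherwise all four horizontal cylinders form a single equivalence class, so $\dim_\R\Twist(M,\cM)=1$, and $\cM$-cylindrical stability gives $\dim_\R\Pres(M,\cM)=1$. But $\Pres(M,\cM)$ is cut out inside the real tangent space $T^\R_M\cM$ by the conditions $\xi(c_1)=\dots=\xi(c_4)=0$, and since $[c_3]=[c_1]+[c_2]$ these amount to at most three independent linear equations; hence $\dim_\R\Pres(M,\cM)\ge\dim_\C\cM-3\ge 2$, using that a rank two affine manifold in a genus-three stratum with at least three zeros has complex dimension at least $5$. This contradiction shows $C_4$ is free, and since $\{C_1,C_2,C_3\}=\cC_0$, the equivalence classes are exactly $\{C_1,C_2,C_3\}$ and $\{C_4\}$.

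The step I expect to be the main obstacle is the exclusion of the "three singletons" configuration in the second paragraph: it is not a formal consequence of homology or of a dimension count, but requires the full cylinder-production and cylinder-collapsing machinery together with the inductive knowledge of rank two affine manifolds in strata with fewer zeros, exactly in the style of the most technical parts of the earlier papers.
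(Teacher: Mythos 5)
Your first paragraph matches the paper's starting point, but the proof has a genuine gap exactly at the step you yourself flag as the ``main obstacle,'' and the obstacle is in fact illusory. The exclusion of the configuration in which $C_1,C_2,C_3$ are pairwise non-$\cM$-parallel is never carried out: you only sketch an argument ``in the spirit of'' the Case 3.I) analysis (square-tiled perturbation, a transverse free simple cylinder, collapsing via Proposition~\ref{prop:collapse:free:sim:cyl}, then the classification in lower strata), with the case distinctions deferred to an unwritten appendix. Besides being incomplete, that route is heavier than needed and delicate: for $\cM\subset\cH(1^4)$ the lower strata include $\cH(2,1^2)$, whose classification is not among the hypotheses of this lemma (it is only an added assumption at the places where the lemma is later applied), so invoking it here would have to be justified separately. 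More importantly, the case you declare ``not a formal consequence of homology or of a dimension count'' is precisely such a consequence, and this is how the paper argues. If $C_1,C_2,C_3$ are pairwise non-parallel, then at most one of them is $\cM$-parallel to $C_4$ and the others are free; hence $T^\R_M\cM$ contains $\xi_2$, $\xi_3$ and either $\xi_1$ or a positive combination $a\xi_1+b\xi_4$ of twist vectors, whose projections under $p$ are dual to $[c_2],[c_3]$ and to $[c_1]$ or $a[c_1]+b[c_4]$ in $H_1(M,\R)$. Since the only relation among $[c_1],\dots,[c_4]$ is $c_1+c_2=c_3$ (the complement of the core curves has two components), these three classes are linearly independent and span a three-dimensional isotropic subspace of $p(T^\R_M\cM)$, which is symplectic of dimension four by the rank two assumption --- a contradiction. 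No collapsing, no square-tiled representative, and no lower-stratum classification are needed.

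Your final step (excluding the single equivalence class $\{C_1,C_2,C_3,C_4\}$) also rests on two claims you do not justify: that $\dim_\R\Twist(M,\cM)=1$ when all four cylinders lie in one class (only the inclusion of the span of the class twists into $\Twist(M,\cM)$ is immediate; $\Twist(M,\cM)$ is the intersection of $T^\R_M\cM$ with the span of the four core-curve duals in $H^1(M,\Sigma,\R)$, and cutting it down to the class twists requires an argument or a citation), and that $\dim_\C\cM\ge 5$, which is not available at this stage --- rank two only gives $\dim_\C\cM\ge 4$, and ruling out a rank-two, rel-zero manifold in these strata is part of what the paper is proving. The paper instead disposes of this case by the standard fact that an $\cM$-cylindrically stable horizontally periodic surface in a rank two manifold has at least two equivalence classes of horizontal cylinders (and the same isotropic-span argument as above also handles it directly). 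A minor further slip: in Case 4.III) the side of $C_3$ adjacent to the three-holed sphere consists of two saddle connections, so it is not accurate that each of $C_1,C_2,C_3$ is semi-simple with its simple side a loop at $x_0$.
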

\begin{proof}
Since $\cM$ is of rank two, the horizontal cylinders belong to at least two equivalence classes. Either $C_4$ is free, or it is not.  If $C_4$ is free, then  the relation $c_1 + c_2 = c_3$ implies that either all three cylinders $C_1,C_2,C_3$ are free, or they belong to the same equivalence class. In the former case, all four cylinders are free and we have a contradiction with the rank two assumption.  Hence, if $C_4$ is free, then we are done.

If $C_4$ is not free, then it is $\cM$-parallel to another cylinder say $C_1$. Since there are at least two equivalence classes, no other cylinder can be $\cM$-parallel to $C_4$.  Hence, $C_2$ and $C_3$ must be free.   Let $\xi_i$ denote the vector in $H^1(M,\Sig,\R)$ which is tangent to the path defined by the shearing of $C_i$.
Since $C_2$ and $C_3$ are free, it follows that $\xi_2$ and $\xi_3$ are contained in $T^\R_M\cM$. The condition that $C_1$ and $C_4$ are $\cM$-parallel implies that  $\xi_1+\xi_4 \in T^\R_M\cM$.

Note that one can identify $H^1(M,\Sig,\R)$ and $H^1(M,\R)$ with $H_1(M\smin\Sig,\R)$ and $H_1(M,\R)$ respectively by using Poincar\'e duality (see \cite[$\S$4.1]{MirzakhaniWrightBoundary} for details). Moreover, in this setting, the natural projection $p: H^1(M,\Sig,\R) \ra H^1(M,\R)$ can be identified with the projection $p': H_1(M\smin\Sig, \R) \ra H_1(M,\R)$.  Using this identification, up to a non-zero constant $\xi_i$ is equal to $[c_i] \in H_1(M\smin\Sig, \R)$, and $p(\xi_i)$ is equal to $p'([c_i])=[c_i] \in H_1(M,\R)$ (see~\cite[Rem. 2.5]{WrightCylDef}).
As a consequence,  we see that there exist $a,b \in \R_{>0}$ such that the vectors
$a[c_1]+b[c_4], [c_2], [c_3]$  all belong to $p(T^\R_M\cM)$.  But those vectors span a three dimensional isotropic subspace of $H_1(M,\R)$ which contradicts the fact that $p(T^\R_M\cM)$ is symplectic and the assumption that $\dim p(T^\R_M\cM)=4$. Therefore, $C_4$ must be free and the lemma follows.
\end{proof}

\begin{lemma}\label{lm:4cyl:C4III:C4}
Let $\cM$ be a rank two affine submanifold in a stratum with at least three zeros.  Assume that $M \in \cM$ admits a cylinder decomposition satisfying Case 4.III) in the horizontal direction. Then either $C_4$ is semi-simple or $C_4$ contains a free simple cylinder with two distinct zeros in its boundary.
\end{lemma}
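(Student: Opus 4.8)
The plan is to dispose of the semi-simple case at once, and in the complementary case to construct the required simple cylinder inside $\ol{C_4}$ from a horizontal saddle connection that bounds $C_4$ on both of its sides.

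If one of the two boundary components of $C_4$ is a single horizontal saddle connection, then $C_4$ is semi-simple and there is nothing to prove, so I would assume each boundary component of $C_4$ consists of at least two horizontal saddle connections. Recall from the discussion preceding Lemma~\ref{4CylCaseIIIEqCls} that pinching the four core curves of $M$ splits it into a three-holed sphere carrying the simple zero $x_0$ and a five-holed sphere carrying the remaining zeros, and that both boundary components of $C_4$ lie on the five-holed sphere. Let $\Gr$ be the union of the horizontal saddle connections contained in the five-holed sphere; it is the connected planar ribbon graph described there, with two vertices (of valences $4$ and $6$) when $\cM\subset\cH(2,1^2)$ and three vertices (each of valence $4$) when $\cM\subset\cH(1^4)$, and its $\eps$-neighborhood has five boundary circles, two of which, say $f^+$ and $f^-$, are the top and the bottom of $C_4$. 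By the reduction just made, each of $f^+$ and $f^-$ is incident to at least two edges of $\Gr$.

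The heart of the argument is the combinatorial claim that $f^+$ and $f^-$ share an edge $\sigma$ whose two endpoints are distinct zeros; equivalently, that some horizontal saddle connection lies on both the top and the bottom of $C_4$ and joins two different zeros. To establish it I would enumerate the admissible ribbon graphs $\Gr$ for the two strata (fixing the number of vertices, the number of edges --- five, resp.\ six --- and the number of faces, five), and for each use three facts: $\Gr$ is planar; its face-adjacency graph carries a proper $2$-coloring in which the color of a face records whether it is the top or the bottom of a cylinder (this coloring is proper since the two faces incident to any edge of $\Gr$ are the top of one cylinder and the bottom of another); and $f^+,f^-$ are each incident to at least two edges. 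Running through the admissible cases, these constraints force $f^+$ and $f^-$ to share an edge $\sigma$; moreover one checks that in a planar graph of this size the edge shared by two faces neither of which is a monogon cannot be a loop, so the endpoints $p\ne q$ of $\sigma$ are distinct zeros --- automatically simple when $\cM\subset\cH(1^4)$, and the simple zero together with the double zero when $\cM\subset\cH(2,1^2)$. Lemma~\ref{lm:s:cyl:dist:sim:zeros} would be used along the way to discard configurations in which one of $C_1,C_2,C_3$ would be a simple cylinder carrying a single simple zero. I expect this enumeration --- longer for $\cH(1^4)$ --- to be essentially the only substantive work, and it is a routine-but-lengthy verification of the sort suited to the appendix.

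Granting such a $\sigma$, I would finish as follows. Represent $C_4$ as a rectangle with its left and right sides identified; then $\sigma$ occurs once on the bottom edge and once on the top edge of the rectangle, and the translation identifying these two occurrences is horizontal. A geodesic crossing $\sigma$ exactly once and otherwise running in the interior of the rectangle closes up for exactly one direction $\theta$ transverse to the horizontal, and the maximal cylinder $C$ of direction-$\theta$ geodesics meeting $\sigma$ is contained in $\ol{C_4}$, is simple, and has its two boundary saddle connections equal to loops based at $p$ and at $q$; thus $C$ is a simple cylinder with two distinct zeros on its boundary. Finally $C$ is free: in direction $\theta$ it is the only cylinder contained in $\ol{C_4}$ (a non-horizontal cylinder inside $\ol{C_4}$ must close up through a saddle connection that bounds $C_4$ on both sides, and crossing $\sigma$ once pins it down to $C$), so no cylinder $\cM$-parallel to $C$ other than $C$ itself lies in $\ol{C_4}$, and $C$ is free by \cite[Prop.~3.3(b)]{NguyenWright}. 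This yields the free simple cylinder with two distinct zeros in its boundary, completing the proof.
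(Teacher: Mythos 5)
Your reduction to the case where both boundary components of $C_4$ carry at least two saddle connections is fine, and the final construction (a simple transverse cylinder through a saddle connection lying on both sides of $C_4$) is indeed how the paper concludes in several subcases. But the central claim you defer as a ``routine'' enumeration is false as a purely combinatorial statement: it is not true that planarity, the top/bottom $2$-coloring, and the valence constraints force the top and bottom of $C_4$ to share an edge with \emph{distinct} endpoints. The paper's Figure~\ref{fig:4III:C1sim:C2semsim} (left) exhibits an admissible Case 4.III) cylinder diagram in $\cH(2,1^2)$ in which $C_4$ is not semi-simple (its top has two and its bottom has four saddle connections, so neither face is a monogon), yet the unique saddle connection lying on both the top and bottom of $C_4$ is a \emph{loop} based at the double zero; the only simple cylinder inside $\ol{C}_4$ then has a single zero on its boundary, so your conclusion fails for this diagram, and your assertion that such a shared edge ``cannot be a loop'' is contradicted outright. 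The paper does not dispose of this configuration combinatorially: it rules it out dynamically, using that $\{C_1,C_2,C_3\}$ is an equivalence class of $\cM$-parallel cylinders (Lemma~\ref{4CylCaseIIIEqCls}), density of square-tiled surfaces, and an argument with vertical cylinders and the Cylinder Proportion Lemma showing that this diagram cannot be realized in a rank two affine manifold. Lemma~\ref{lm:s:cyl:dist:sim:zeros}, which you invoke, only concerns simple zeros and gives no help against the double-zero loop.

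Two further inputs you treat as combinatorial are in fact affine-geometric in the paper. First, the bound that the top of $C_4$ has at least two saddle connections (Lemma~\ref{lm:C4III:C4notS}), and the existence of a shared saddle connection in the case where neither $C_1$ nor $C_2$ is simple, are obtained by collapsing the free cylinder $C_4$ into lower strata and using the classification there (Prym involutions, non-existence of rank two submanifolds of $\cH(3,1)$), together with Lemma~\ref{lm:sc:topC1C2:botC3}, which itself rests on the $\cM$-parallelism of $C_1$ and $C_2$. Second, your freeness step is not justified as written: knowing that $C$ is the only cylinder in its direction contained in $\ol{C}_4$ does not preclude an $\cM$-parallel partner crossing $C_1$, $C_2$, or $C_3$; the application of \cite[Prop.~3.3(b)]{NguyenWright} requires knowing that $C_4$ itself is free, which comes from Lemma~\ref{4CylCaseIIIEqCls} (rank two plus $\cM$-cylindrical stability), not from the geometry of $\ol{C}_4$ alone. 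So the proposal has a genuine gap: the lemma cannot be reduced to ribbon-graph combinatorics, and the missing dynamical eliminations are exactly the substantive content of the paper's appendix proof.
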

\begin{proof}
 See Appendix~\ref{sec:aux:lms}.
\end{proof}

We can now show

\begin{proposition}
\label{Case4IIIProp}
Let $\cM$ be a rank two affine manifold in a stratum in genus three with at least three zeros.  If $\cM \subset \cH(1^4)$, then we add the assumption that $\tilde{\cQ}(2,1,-1^3)$ is the unique rank two affine submanifold in $\cH(2,1^2)$. If $M \in \cM$ is horizontally periodic with four cylinders and $M$ is $\cM$-cylindrically stable, then $M$ does not  satisfy Case 4.III).
\end{proposition}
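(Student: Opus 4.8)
The plan is to assume for contradiction that $M$ satisfies Case 4.III), keep the notation $C_1,\dots,C_4$, $x_0$, $\Sigma_1$ (the three-holed sphere, containing only the simple zero $x_0$) and $\Sigma_2$ (the five-holed sphere, containing the remaining zeros of $M$) from the start of Section~\ref{sec:4cyl:C4III}, and to produce the contradiction by a single cylinder collapse. First I would feed in the structural input that is already available: since $M$ is $\cM$-cylindrically stable, Lemma~\ref{4CylCaseIIIEqCls} gives the two equivalence classes $\{C_1,C_2,C_3\}$ and $\{C_4\}$, so $C_4$ is free with both of its boundary circles lying on $\Sigma_2$; and Lemma~\ref{lm:4cyl:C4III:C4} says that $C_4$ is either semi-simple or contains a free simple cylinder with two distinct zeros on its boundary. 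After perturbing $M$ to a nearby square-tiled surface (legitimate since $\bk(\cM)=\Q$) and rotating if necessary, I would reduce both alternatives to the situation where $M$ carries a free simple cylinder $D$ whose two boundary zeros are distinct and lie on $\Sigma_2$; the semi-simple case should collapse to this after a move of the type used in \cite[Lem.~4.7]{AulicinoNguyenGen3TwoZeros}.

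Next I would collapse $D$ via Proposition~\ref{prop:collapse:free:sim:cyl}, obtaining $M'$ inside a rank two affine submanifold $\cM'\subset\cH(\kappa')$ with $\dim\cM'=\dim\cM-1$. If $\cM\subset\cH(2,1^2)$, then $\Sigma_2$ carries exactly the double zero and one simple zero, these are the two zeros that $D$ collides, and they merge into a zero of order $3$; hence $M'\in\cH(3,1)$. But $\cH(3,1)$ is disjoint from the Prym locus and therefore contains no rank two affine submanifold, which already contradicts the existence of $\cM'$ and finishes this case. If instead $\cM\subset\cH(1^4)$, then $D$ collides two of the three simple zeros on $\Sigma_2$, so $M'\in\cH(2,1^2)$, and the added hypothesis of the proposition forces $\cM'=\tilde{\cQ}(2,1,-1^3)=\cP\cap\cH(2,1^2)$; in particular $M'$ admits a Prym involution $\inv'$ with $(\inv')^*\omega'=-\omega'$.

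The punchline I am aiming for is then a symmetry/topology obstruction inside $\tilde{\cQ}(2,1,-1^3)$. Collapsing $D$ alters only the part of $M$ lying over $\Sigma_2$, so $M'$ stays horizontally periodic and its horizontal cylinder decomposition still has a complementary piece equal to $\Sigma_1$, a topological sphere carrying the single, still simple, zero $x_0$: if $D$ is transverse to the horizontal foliation this is again a four-cylinder decomposition of topological type 4.III), and if $D$ is itself a horizontal cylinder it becomes a three-cylinder decomposition with $\Sigma_1$ a three-holed sphere and the other piece a three-holed torus; in either case $\Sigma_1$ is the unique complementary piece of its topological type. Since $(\inv')^*\omega'=-\omega'$, the involution $\inv'$ preserves the horizontal direction, hence permutes the horizontal cylinders of $M'$ and therefore the complementary pieces; being the only one of its type, $\Sigma_1$ is fixed by $\inv'$, so $\inv'(x_0)=x_0$. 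This contradicts the fact that the Prym involution of any surface in $\cH(2,1^2)$ fixes the double zero and exchanges the two simple zeros, whereas $x_0$ is simple. (Alternatively one can first extend $\inv'$ to a Prym involution of $M$ itself by Proposition~\ref{DblCovExtSimpCyl}, arguing as in the proof of Proposition~\ref{prop:C3I:2sim:cyl:classify} that $\inv'$ preserves the distinguished saddle connections of the degeneration, and then run the same argument on $M\in\cH(1^4)$, whose putative Prym involution would have to fix $x_0$ while in fact permuting the four simple zeros in two pairs.)

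I expect the main obstacle to be the reduction in the first paragraph: treating the semi-simple alternative of Lemma~\ref{lm:4cyl:C4III:C4} and the choice of direction of $D$ uniformly, so that Proposition~\ref{prop:collapse:free:sim:cyl} applies and the collapsed surface provably retains a cylinder decomposition with $x_0$ isolated inside a topologically distinguished spherical piece. Once that is in place, the remaining steps — identifying $\cM'$ from the known classification in the lower strata and running the topological obstruction — are routine.
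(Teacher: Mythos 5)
Your treatment of the first alternative of Lemma~\ref{lm:4cyl:C4III:C4} (the free simple cylinder $D\subset\overline{C}_4$) matches the paper's: collapse $D$ via Proposition~\ref{prop:collapse:free:sim:cyl}, kill the $\cH(3,1)$ possibility, and in the $\cH(1^4)$ case use the Prym involution of $\tilde{\cQ}(2,1,-1^3)$. Your final contradiction is a legitimate variant: the paper argues that $\inv'$ would send $x_0$ to the other simple zero $x_1$, forcing all horizontal saddle connections at $x_1$ to be loops, which contradicts the connectivity of the saddle-connection graph in the five-holed-sphere piece; you instead observe that $\inv'$ preserves the horizontal decomposition of $M'$, that the three-holed sphere containing $x_0$ is the unique complementary piece of its topological type and hence invariant, so $\inv'$ would fix the simple zero $x_0$, which is impossible since an involution with $\inv'^*\omega'=-\omega'$ can only fix zeros of even order. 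Both routes are sound, and yours is arguably cleaner once one checks (as you do) that the collapsed surface retains a four-cylinder decomposition of type 4.III) with $x_0$ isolated in the spherical piece.

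The genuine gap is the semi-simple alternative, which you defer to "a move of the type used in \cite[Lem.~4.7]{AulicinoNguyenGen3TwoZeros}" and acknowledge as the main obstacle. First, the proposed reduction need not exist: $\overline{C}_4$ contains a transverse simple cylinder only when some horizontal saddle connection lies on both the top and the bottom of $C_4$, and a strictly semi-simple $C_4$ need not have this property; an extended collapse is not shown to create such a cylinder while keeping it free with two distinct zeros on its boundary, so you cannot simply fold this case into the first one. Second, your fallback reading ("$D$ is itself a horizontal cylinder") amounts to collapsing $C_4$ directly, but $C_4$ is not simple (this is exactly Lemma~\ref{lm:C4III:C4notS}), so Proposition~\ref{prop:collapse:free:sim:cyl} does not apply; one needs the additional inputs the paper supplies: an angle count showing $\partial C_4$ contains two distinct zeros, a twist of the free cylinder $C_4$ so that exactly one vertical saddle connection joining distinct zeros is shrunk, and the more general degeneration statement (\cite[Thm.~2.7]{MirzakhaniWrightBoundary} or \cite[Prop.~2.16]{AulicinoNguyenGen3TwoZeros}) to conclude that the limit lies in a rank two affine submanifold of $\cH(3,1)$ or $\cH(2,1^2)$. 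Once that collapse is justified, your topological argument does go through in this case as well (the complement of the remaining core curves is a three-holed sphere with $x_0$ and a three-holed torus, so the spherical piece is again invariant), but as written the semi-simple case is not closed.
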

\begin{proof}
By Lemma~\ref{lm:4cyl:C4III:C4} we have to consider two cases
\begin{itemize}
 \item $C_4$ contains a free simple cylinder $D$ with two distinct zeros in its boundary. Collapsing $D$, we get a surface $M'$ in a rank two affine manifold $\cM'$ which is contained in either $\cH(3,1)$ or $\cH(2,1,1)$. The former case is ruled out since $\cH(3,1)$ contains no rank two affine submanifolds. For the latter case, by the hypothesis, we must have $\cM' = \tilde{\cQ}(2,1,-1^3)$, hence $M'$ admits a Prym involution $\tau$. This involution must send $x_0$ to another simple zero $x_1$, hence  the  saddle  connections containing $x_0$ are mapped to those that contain $x_1$. Since all the horizontal saddle connections starting from $x_0$ join $x_0$ to itself, the same is true for the saddle connections starting from $x_1$. But by assumption, $x_1$ is contained in the same component as the double zero after the pinching of the core curves of $C_i, \, i=1,\dots,4$, which means that there is a horizontal saddle connection joining $x_1$ and the double zero. Thus we have a contradiction which rules out
this case.

\item $C_4$ is semi-simple. By Lemma~\ref{lm:C4III:C4notS}, we know that $C_4$ is not simple.  Using the fact that each saddle connection in the boundary of $C_4$ must be contained in the boundary of another cylinder, by an angle count, it is not difficult to check that the boundary of $C_4$ contains at least two distinct zeros.  We can twist then collapse $C_4$ such that there is a unique (vertical) saddle connection joining two different zeros that is shrunk to a point. The resulting surface must be contained in a rank two affine submanifold of $\cH(3,1)$ or $\cH(2,1,1)$. The remainder of the proof follows from the same arguments as the previous case.
\end{itemize}
\end{proof}

\subsection{Case 4.II)}\label{sec:4cyl:C4II}
Let $M$ be a horizontally periodic surface in $\cM$ with four cylinders satisfying Case 4.II). We will denote the homologous cylinders by $C_1$ and $C_2$, and the remaining cylinders by $C_3$ and $C_4$.  In particular, $C_1$ and $C_2$ are $\cM$-parallel.

Note that if we cut $M$ along a core curve of $C_1$ and a core curve of $C_2$ and exchange the gluings, we will obtain two genus two translation surfaces containing $C_3$ and $C_4$, respectively. Thus we have

\begin{lemma}\label{lm:C4II:C3C4:likeg2}
Either $C_3$ (resp. $C_4$) is a simple cylinder, or there exist some saddle connections that are contained in both top and bottom of $C_3$ (resp. $C_4$).
\end{lemma}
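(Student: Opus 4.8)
The plan is to exploit the genus-two surfaces that appear after cutting along the core curves of the homologous pair $C_1, C_2$ and regluing with exchanged pairings. Concretely, in Case 4.II) the topological type tells us that pinching the four core curves degenerates $M$ to two spheres joined by two pairs of simple poles, with each sphere carrying an additional pair of simple poles. The two ``additional'' pairs of simple poles are exactly the core curves of $C_1$ and $C_2$, and the fact that these core curves are homologous (they bound the two-holed piece whose pinching separates the two sphere components) is what makes the surgery possible: cutting $M$ along a core curve $c_1$ of $C_1$ and a core curve $c_2$ of $C_2$ and then regluing the four resulting boundary circles after swapping the two pairings produces a (possibly disconnected) surface, and a standard Euler characteristic and connectedness count shows it is a disjoint union of two closed genus-two translation surfaces, one containing $C_3$ and the other containing $C_4$. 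This is precisely the kind of surgery already used in the treatment of Case 3.III) earlier in the paper, so the details are routine.

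First I would set up the surgery carefully: fix horizontal core curves $c_i \subset C_i$, cut along $c_1 \cup c_2$, and describe the four boundary components; then reglue so that the top of $C_1$ is attached to the bottom of $C_2$ and vice versa (the ``exchange the gluings'' operation). I would verify that the piece containing $C_3$ is connected of genus two and similarly for $C_4$; the genus computation follows from the topological-type description in Lemma~\ref{lm:4CylDeg} together with the Riemann--Hurwitz-style bookkeeping on the spheres. Crucially, this surgery does not alter a neighborhood of $C_3$ (resp. $C_4$) together with its position inside the horizontal cylinder decomposition of its piece, so $C_3$ (resp. $C_4$) persists as a horizontal cylinder of the genus-two surface.

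Next I would invoke the classical structure of horizontal cylinder decompositions on a genus-two translation surface. On any genus-two surface, a horizontal cylinder is either simple, or has a horizontal saddle connection appearing on both its top and its bottom. Indeed, if a cylinder $C$ on a genus-two surface had each of its two boundaries consisting of at least two distinct saddle connections and no saddle connection were shared between top and bottom, then cutting along a core curve of $C$ and capping off would produce a surface of positive genus on each side with too many boundary saddle connections, forcing the genus to exceed two; this is a short combinatorial/Euler-characteristic argument (and is implicit in the genus-two analyses cited, e.g. in the discussion surrounding Case 3.III)). Applying this to $C_3$ inside its genus-two piece and to $C_4$ inside its genus-two piece gives the claimed dichotomy for each of them, since the property of being simple, and the property of having a saddle connection on both top and bottom, are both intrinsic to a neighborhood of the cylinder and hence unaffected by the surgery.

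I expect the main (though still modest) obstacle to be bookkeeping rather than conceptual: one must be sure that the regluing surgery genuinely yields two connected genus-two components and not, say, one connected genus-three surface or a genus-one/genus-three split, and one must check that a saddle connection shared between the top and bottom of $C_3$ on the genus-two surface corresponds to a genuine horizontal saddle connection of the original $M$ (i.e. that the surgery has not created a spurious identification along $c_1$ or $c_2$). Both points are handled by noting that the surgery is supported away from $C_3$ and $C_4$ and only modifies how the two ``halves'' of $M$ are glued along $c_1, c_2$; the homologous condition on $C_1, C_2$ guarantees the two halves are genuinely separated by this pair of curves, so the count works out. Once these checks are in place, the lemma follows immediately.
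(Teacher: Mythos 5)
Your proposal is correct and follows essentially the same route as the paper: the paper's proof is precisely the observation that cutting $M$ along core curves of the homologous pair $C_1, C_2$ and exchanging the gluings yields two genus-two translation surfaces containing $C_3$ and $C_4$ respectively, after which the standard genus-two fact (a horizontal cylinder is either simple or has a saddle connection on both its top and bottom) gives the dichotomy. Your additional bookkeeping about connectedness and the genus-two cylinder structure is just making explicit what the paper treats as known.
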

For $i=3,4$, let $k_i$ be the number of saddle connections that are contained in both top and bottom of $C_i$. Lemma~\ref{lm:C4II:C3C4:likeg2} implies that $k_i=0$ if and only if $C_i$ is a simple cylinder.  We will need the following

\begin{lemma}\label{lm:C4II:preliminary}
Let $M$ be a horizontally periodic translation surface satisfying Case 4.II) in a rank two affine manifold $\cM \subset \cH(2,1^2) \cup \cH(1^4)$.  Assume that $M$ is $\cM$-cylindrically stable. Then $C_3$ and $C_4$ are $\cM$-parallel, and $k_3=k_4$.
\end{lemma}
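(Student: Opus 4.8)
The plan is to first pin down the equivalence classes of $\cM$-parallel horizontal cylinders on $M$, and then to read off $k_3=k_4$ from the resulting structure, in the spirit of the proof of Lemma~\ref{4CylCaseIIIEqCls}. Fix core curves $c_i\subset C_i$, let $p\colon H^1(M,\Sig;\R)\to H^1(M;\R)$ be the natural projection, which we identify with $H_1(M\smin\Sig;\R)\to H_1(M;\R)$ via Poincar\'e duality as in the proof of Lemma~\ref{4CylCaseIIIEqCls}, and set $U:=p(T^\R_M\cM)$; since ${\rm rk}(\cM)=2$, the space $U$ is symplectic of dimension $4$ for the intersection form. From the topological type of Case 4.II) (Lemma~\ref{lm:4CylDeg}) one has $[c_1]=[c_2]$ in $H_1(M;\R)$, and $[c_1],[c_3],[c_4]$ form a basis of a Lagrangian subspace $W\subset H_1(M;\R)$. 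Recall that $C_1$ and $C_2$ are $\cM$-parallel, and that, $\cM$ being of rank two, the horizontal cylinders fall into at least two equivalence classes.

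Next I would show $C_3\sim_\cM C_4$. Recall that $p(\xi_{C_i})$ is a positive multiple of $[c_i]$, where $\xi_{C_i}$ is the shear of $C_i$, and that the standard shear of any equivalence class projects into $U$. If $C_3$ and $C_4$ were both free, then $[c_3],[c_4]\in U$, and $[c_1]\in U$ via the standard shear of $\{C_1,C_2\}$; hence $W\subset U$, which is impossible since a $4$-dimensional symplectic space contains no $3$-dimensional isotropic subspace. So at least one of $C_3,C_4$, say $C_3$, is not free, and is therefore $\cM$-parallel to another horizontal cylinder. If $C_3\sim_\cM C_4$ we are done; otherwise $C_3\sim_\cM C_1$, so $\{C_1,C_2,C_3\}$ lies in one equivalence class, and then $C_4$ must be free (otherwise all four cylinders lie in one class, contradicting rank two), so the classes are exactly $\{C_1,C_2,C_3\}$ and $\{C_4\}$. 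The task is to exclude this last configuration. Since $\bk(\cM)=\Q$ (Theorem~\ref{thm:Wright:def:field:deg:rk}), Proposition~\ref{CylStDens} furnishes a square-tiled surface in $\cM$ with this horizontal cylinder diagram; the free cylinder $C_4$ is then a union of vertical cylinders, and one argues either that applying the Cylinder Proportion Lemma (Proposition~\ref{CylinderPropProp}) to the $\cM$-parallel cylinders $C_1,C_3$ with respect to the vertical equivalence classes reproduces the Lagrangian-inside-symplectic contradiction above, or, in the remaining case, that $C_4$ contains a free simple cylinder with two distinct zeros in its boundary (using Lemma~\ref{lm:s:cyl:dist:sim:zeros}), which one collapses via Proposition~\ref{prop:collapse:free:sim:cyl} to land in a rank two affine manifold in $\cH(2,2)$ or $\cH(2,1^2)$; the classification in those strata then produces a Prym involution on the collapsed surface whose behaviour on the degenerated cylinders is incompatible with the Case 4.II) decomposition, as in Section~\ref{sec:C3I:collapse} and in the proof of Proposition~\ref{Case4IIIProp}. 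This leaves the equivalence classes $\{C_1,C_2\}$ and $\{C_3,C_4\}$.

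For $k_3=k_4$, I would use the surgery along the homologous pair: cutting $M$ along $c_1$ and $c_2$ disconnects it into two pieces $P_3\supset C_3$ and $P_4\supset C_4$, each of genus one with two boundary circles; since $[c_1]=[c_2]$ the cylinders $C_1,C_2$ have equal circumference, so one can reglue each piece's two boundary circles to each other (the exchanged identifications), obtaining genus two translation surfaces $N_3\supset C_3$ and $N_4\supset C_4$, each horizontally periodic with two horizontal cylinders; this is the construction used to prove Lemma~\ref{lm:C4II:C3C4:likeg2}. The surgery is supported away from $C_3$ and $C_4$, so the number of horizontal saddle connections lying in both the top and the bottom of $C_i$ is the same in $M$ and in $N_i$, and it suffices to compare $N_3$ with $N_4$. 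Since $C_3\sim_\cM C_4$, their shapes covary under all deformations in $\cM$, and a cylinder-proportion computation in a transverse direction on a square-tiled representative, together with the short list of $2$-cylinder diagrams in genus two, forces $N_3$ and $N_4$ to have the same cylinder diagram, whence $k_3=k_4$; the routine bookkeeping in this comparison is of the sort I would defer to the appendix.

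The step I expect to be the main obstacle is the exclusion of the configuration in which $\{C_1,C_2,C_3\}$ and $\{C_4\}$ are the equivalence classes: the homological count is by itself consistent with it (it too yields two classes and the same dimension for $U\cap W$), so a genuinely finer argument is required, via cylinder proportions in the vertical direction on a square-tiled representative or via collapsing a free simple cylinder inside $C_4$ and invoking the rank two classification in the lower strata. By contrast, once $C_3\sim_\cM C_4$ is established, the equality $k_3=k_4$ is a comparatively mechanical consequence of the genus-two surgery together with the transverse cylinder-proportion relation.
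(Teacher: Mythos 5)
Your first half ($C_3$ and $C_4$ are $\cM$-parallel) follows essentially the paper's route (Lemma~\ref{Case4IIEqCls}): exclude ``both free'' by the Lagrangian-in-symplectic count, then kill the mixed configuration (one of $C_3,C_4$ free, the other $\cM$-parallel to $\{C_1,C_2\}$) either by a cylinder-proportion computation on a square-tiled representative or by collapsing a free simple cylinder inside the free one and invoking the rank-two classification in $\cH(2,2)$ and $\cH(2,1^2)$; that is exactly what the paper does, with the double-zero subcase handled by proportions and the two-simple-zeros subcase by collapsing (where, in the $\cH(2,2)$ case, the contradiction is not a direct incompatibility of the involution with the diagram but that the involution forces $C_4$ not to be $\cM'$-parallel to $C_1,C_2$, which is pulled back to $M$ through the boundary tangent-space identification of \cite{MirzakhaniWrightBoundary}). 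So far this is the same approach, sketched rather than executed.

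The genuine gap is in your proof of $k_3=k_4$. Reducing to the genus-two surgeries $N_3\supset C_3$, $N_4\supset C_4$ only gives $k_i\in\{0,1,2\}$ and (with Lemma~\ref{C3CylImpC4Cyl}-type reasoning) $k_3=0\Leftrightarrow k_4=0$; the whole content of the statement is excluding $k_3=1$, $k_4=2$, and this is not ``routine bookkeeping.'' Your justification --- ``since $C_3\sim_\cM C_4$ their shapes covary, and a cylinder-proportion computation forces $N_3$ and $N_4$ to have the same cylinder diagram'' --- overstates what $\cM$-parallelism gives: it fixes the ratio of the circumferences locally, but says nothing directly about the boundary combinatorics, and at a fixed square-tiled surface the Cylinder Proportion Lemma applied to the obvious vertical classes yields only consistent equalities, not a contradiction. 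The paper's Lemma~\ref{lm:C4II:C3C4:similar} needs a genuine deformation argument: put the vertical simple cylinder $D'\subset\ol{C}_3$ through $\sig_3$ and the vertical cylinder $D\subset\ol{C}_4$ through $\sig_4\cup\sig'_4$, observe that $\{D,D'\}$ is a full equivalence class (anything else vertical crosses $C_1$ or $C_2$), deform inside $\cM$ to make the heights of $D,D'$ very small, and then the simple cylinder $E\subset\ol{C}_4$ crossing $\sig_4$ once has direction arbitrarily close to vertical, so its required $\cM$-parallel partner crossing $C_3$ cannot lie in $\ol{C}_3$ and must cross $C_1$ or $C_2$, contradicting Proposition~\ref{CylinderPropProp}. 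Without this (or an equivalent) argument, the case $k_3\neq k_4$ is simply not ruled out by your outline.
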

\begin{proof}
See Appendix~\ref{sec:proof:lm:C4II:preliminary}.
\end{proof}

\begin{proposition}
\label{prop:H211NoCase4II}
Let $M \in \cH(2,1^2)$ be a horizontally periodic translation surface in a rank two affine manifold $\cM$.  If $M$ is $\cM$-cylindrically stable, then $M$ does not satisfy Case 4.II).
\end{proposition}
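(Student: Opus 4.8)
The goal is to show that a horizontally periodic, $\cM$-cylindrically stable surface $M\in\cH(2,1^2)$ cannot realize Case 4.II). I would argue by contradiction: assume such an $M$ exists. By Lemma~\ref{lm:C4II:C3C4:likeg2} and Lemma~\ref{lm:C4II:preliminary}, the cylinders split into exactly two equivalence classes $\{C_1,C_2\}$ (the homologous pair) and $\{C_3,C_4\}$, with $k_3=k_4=:k$, and $C_3,C_4$ are either both simple ($k=0$) or each carry $k\geq 1$ saddle connections shared between top and bottom. The first task is to enumerate the possibilities. Since $M$ has genus three and total cone angle dictated by $\kappa=(2,1,1)$, the separatrix diagram has $3$ vertices and $7$ edges; combined with the Case 4.II) topological type (two spheres joined by two pairs of simple poles, each sphere carrying an extra pair) and the constraint $k_3=k_4$, only a small number of cylinder diagrams survive. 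I would list them explicitly, tracking which zeros lie on the boundary of $C_3$ and $C_4$: the double zero $x_0$ must be fixed under any prospective Prym involution, while $x_1,x_2$ are exchanged.

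**Main argument.** For each surviving diagram I would apply the degeneration machinery. When $C_3$ (equivalently $C_4$, since they are $\cM$-parallel and $\cM$-similar by the cylinder proportion and isometry arguments of \cite{AulicinoNguyenGen3TwoZeros}) is simple with two distinct zeros on its boundary, Proposition~\ref{prop:collapse:similar:cyl} lets me collapse $\{C_3,C_4\}$ simultaneously, landing in a rank two affine submanifold $\cM'$ of a stratum with $|\kappa'|=|\kappa|-2$, hence in $\cH(2)$ (if both simple zeros are absorbed) — but $\cH(2)$ has no rank two affine submanifold, a contradiction; or the collapse lands in $\cH(3,1)$ or $\cH(4)$ depending on which zeros collide, and $\cH(3,1)$ again has no rank two locus, while $\cH(4)$ forces $\cM'=\tilde\cQ(3,-1^3)$, carrying a Prym involution $\tau$. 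Then Proposition~\ref{DblCovExtSimpCyl} (checking that $\tau$ permutes the two distinguished saddle connections $\tilde\sigma_3,\tilde\sigma_4$, which follows from the isometry of $C_3,C_4$ as in the proof of Proposition~\ref{prop:C3I:2sim:cyl:classify}) extends $\tau$ to $M$, placing $M\in\cH(2,1^2)\cap\cP=\tilde\cQ(2,1,-1^3)$; but one then checks by a dimension count that this is impossible for a surface with a $4$-cylinder decomposition of type 4.II) in $\tilde\cQ(2,1,-1^3)$, or—more directly—that $\tilde\cQ(2,1,-1^3)$ admits its $4$-cylinder surfaces only in Case 4.I) or with $C_3,C_4$ not $\cM$-parallel, contradicting Lemma~\ref{lm:C4II:preliminary}. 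When instead $k\geq 1$, I would collapse a free (or $\cM$-parallel-within-$C_3$) simple subcylinder, or use the twist-space dimension count: cylindrical stability forces $\dim\Twist(M,\cM)=\dim\Pres(M,\cM)$, and writing out the linear conditions imposed by the shared saddle connections in $C_3,C_4$ together with the $\cM$-parallel relations shows this equality fails unless $M$ is not in Case 4.II) at all.

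**Where the difficulty lies.** The hard part is the bookkeeping in the case $k\geq 1$: here $C_3$ and $C_4$ are not simple, so one cannot directly collapse them, and one must instead locate a simple subcylinder in a transverse direction (passing to a nearby square-tiled surface, legitimate since $\bk(\cM)=\Q$ by Theorem~\ref{thm:Wright:def:field:deg:rk}), verify it is free using \cite[Prop. 3.3]{NguyenWright}, collapse it, and show the resulting rank two locus in a lower stratum is incompatible with the original configuration — all while the double zero $x_0$ imposes rigid angle constraints that must be tracked through the degeneration. I expect most individual diagrams to fall quickly, but ensuring the enumeration of Case 4.II) diagrams compatible with $k_3=k_4$ is complete, and that the "absorb which zeros" analysis covers every sub-case, is the delicate point; I would relegate the most routine of these checks to the appendix, in keeping with the paper's stated strategy.
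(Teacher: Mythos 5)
Your route (collapse the $\cM$-similar pair, invoke the classification of rank two loci in the lower stratum, extend the involution) is genuinely different from the paper's proof of this proposition, which never degenerates: it reduces $k=0$ to $k>0$ by an extended cylinder collapse and then gets a contradiction purely from the Cylinder Proportion Lemma, comparing a transverse cylinder $E\subset\ol{C}_1\cup\ol{C}_2\cup\ol{C}_3$ crossing each of $C_1,C_2,C_3$ once with an $\cM$-parallel partner $E'$ that must also cross $C_4$, which forces $\frac{h_3}{h_1+h_2+h_3}=\frac{h_3+h_4}{h_1+h_2+h_3+h_4}$ and hence $h_4=0$. Measured against that, your sketch has two genuine gaps. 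First, the endgame of your "simple" case is not a contradiction. Collapsing the pair keeps the genus three (so the target cannot be $\cH(2)$); the only relevant target is $\cH(4)$, where $\cM'=\tilde\cQ(3,-1^3)$, and extending the Prym involution only yields $M\in\tilde{\cQ}(2,1,-1^3)$. Since the proposition asserts precisely that no rank two $\cM$ -- including $\tilde{\cQ}(2,1,-1^3)$ itself -- contains an $\cM$-cylindrically stable Case 4.II) surface, your appeal to "a dimension count" or to "$\tilde{\cQ}(2,1,-1^3)$ admits its 4-cylinder surfaces only in Case 4.I)" is either unsubstantiated or circular. (A fix exists -- the Prym involution would have to preserve each of $C_3$ and $C_4$ separately because their boundary zeros are of different types, giving four interior fixed points plus the fixed double zero, exceeding four -- but you did not make this argument.)

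Second, and more seriously, the case $k\geq 1$, which you yourself identify as the hard part, is where your plan breaks down. In $\cH(2,1^2)$ the cylinder among $\{C_3,C_4\}$ that is realized inside a surface of $\cH(2)$ has only the double zero on both of its boundary components, so the simple subcylinder $D\subset\ol{C}_3$ crossing $\sig_3$ has coincident boundary zeros and cannot be collapsed so as to lower the number of zeros; and $D$ is not free in any case, since $C_3$ and $C_4$ are $\cM$-parallel and $D$ is $\cM$-parallel to a simple subcylinder $D'\subset\ol{C}_4$, so your verification "it is free using \cite[Prop. 3.3]{NguyenWright}" fails at exactly the point where it is needed. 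The fallback "twist-space dimension count" is asserted rather than argued, and nothing in the sketch replaces the quantitative step (the proportion identity above) that actually produces the contradiction in the paper.
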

\begin{proof}
 Set $k=k_3=k_4$. If  $k=0$, then both $C_3$ and $C_4$ are simple.  Twist and perform an extended cylinder collapse (see \cite[Lem. 4.7]{AulicinoNguyenGen3TwoZeros}) to get a new translation surface such that both $C_3$ and $C_4$ contain simple cylinders. Therefore, we can assume that $k>0$, which means that $\ol{C}_3$ and $\ol{C}_4$ contain some simple cylinders.

Without loss of generality, let $C_3$ be the cylinder with the double zero in its boundary, and $C_4$ be the cylinder with two simple zeros in its boundary. Since $C_3$ can be realized as a cylinder in some surface in $\cH(2)$, there is  a unique saddle connection, denoted by $\sig_3$, which is contained in both top and bottom of $C_3$. By Lemma~\ref{lm:C4II:preliminary}, there is also a unique saddle connection $\sig_4$ which is contained in both top and bottom of $C_4$.

Let $D$ be a simple cylinder in $\ol{C}_3$ consisting of closed geodesics crossing  $\sig_3$ once. Let $D'$ be the cylinder in the equivalence class of $D$ which is contained in $\ol{C}_4$. Note that $D'$ is also a simple cylinder (but its core curves may cross $\sig_4$ more than once), and its complement  in $C_4$ is a rectangle that we will denote by  $R_4$.

\begin{figure}[htb]
 \centering
 \begin{tikzpicture}[scale=0.3]
  \fill[blue!30] (8,5) rectangle (12,9);
  \fill[blue!30] (10,0) -- (13,-1) -- (13,2) -- (10,3);
  \fill[green!30] (2,3) rectangle (4,11);
  \draw (0,9) -- (0,5) -- (2,3) -- (4,3) -- (4,0) -- (10,0) -- (13,-1) -- (13,2) -- (10,3) -- (8,5) -- (12,5) -- (12,9) -- (8,9) -- (10,11) -- (2,11) -- cycle;
  \draw (0,9) -- (8,9) (0,5) -- (8,5) (4,3) -- (10,3) (2,11) -- (2,3) (4,11) -- (4,3);
  \foreach \x in {(4,11), (4,3), (4,0), (13,2), (13,-1)} \filldraw[fill=white] \x circle (3pt);
  \foreach \x in {(2,11), (2,3), (10,11), (10,3), (10,0)} \filldraw[fill=black] \x circle (3pt);
  \foreach \x in {(0,9), (0,5), (8,9), (8,5), (12,9), (12,5)} \filldraw[fill=white] \x +(-3pt,-3pt) rectangle +(3pt,3pt);
     \draw (10,7) node {$D$};
     \draw (11.5,1) node {$D'$};
     \draw (3,7) node {$E$};
     \draw (6,10) node {$C_1$};
     \draw (6,7) node {$C_3$};
     \draw (6,4) node {$C_2$};
     \draw (7,1.5) node {$R_4$};
 \end{tikzpicture}
\caption{Case 4.II) for surfaces in $\cH(2,1,1)$: $C_3,C_4$ not simple}
\label{fig:H211:C4II:no:sim}
\end{figure}
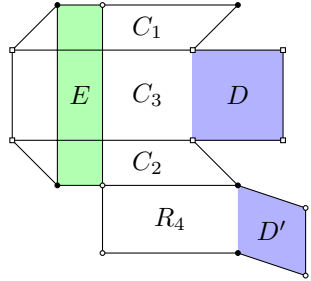

Using the arguments of \cite[Lem. 6.17]{AulicinoNguyenGen3TwoZeros}, we see that $\{C_1,C_2\}$ can be twisted simultaneously so that  there is a vertical cylinder $E$ contained in the union of $C_1,C_2,C_3$ crossing each of those cylinders once (see Figure~\ref{fig:H211:C4II:no:sim}). Since $C_4$ is $\cM$-parallel to $C_3$, it must be crossed by some cylinders in the equivalence class of $E$.
Consider a cylinder $E'$ in the equivalence class of $E$ which crosses $C_4$. Let $\gamma$ be a core curve of $E'$.
Let $n_i, \,  i=1,2,3$, be the number of intersections of $\gamma$ with a core curve of $C_i$, and $n_4$ be its number of intersections with the top side of $R_4$ ($n_4$ is not necessarily the number of intersections of $\gamma$ with a core curve of $C_4$).
Observe that we must have $n_1=\dots=n_4$. Let $h_i$ denote  the height of $C_i, \, i=1,2,3$, and $h_4$  the height of the rectangle $R_4$ (which is also the circumference of $D'$).
Denote by  $\cC$  the equivalence class $\{C_3,C_4\}$. By the Cylinder Proportion Lemma, we must have $ P(E,\cC)=P(E',\cC)$ which implies
$$
\frac{h_3}{h_1+h_2+h_3}=\frac{h_3+h_4}{h_1+h_2+h_3+h_4} \Leftrightarrow \frac{h_1+h_2}{h_3}=\frac{h_1+h_2}{h_3+h_4}.
$$
The last equation holds if and only if $h_4=0$ or $h_1 + h_2 = 0$.  In either case, we have a contradiction which proves the proposition.
\end{proof}

\begin{proposition}
\label{prop:C4II:H1111}
Let $\cM$ be  a rank two affine submanifold of $\cH(1^4)$. If $\cM$ contains a horizontally periodic surface $M$ satisfying Case 4.II) such that $M$ is $\cM$-cylindrically stable, then either $\cM=\dcoverprinc$ or $\cM=\prymprinc$.
\end{proposition}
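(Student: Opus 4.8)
The strategy is the one employed repeatedly in this section: degenerate $M$ by collapsing a pair (or equivalence class) of $\cM$-similar simple cylinders so as to land in the stratum $\cH(2,2)$, identify the resulting affine manifold from the known classification in $\cH(2,2)$, transport its Prym involution (and, when available, its hyperelliptic involution) back to $M$ via Proposition~\ref{DblCovExtSimpCyl}, and conclude with a dimension count. By Lemma~\ref{lm:C4II:preliminary} we may set $k:=k_3=k_4$, and since $\cM$ has rank two and $C_1,C_2$ are $\cM$-parallel, $\{C_3,C_4\}$ is then an equivalence class. We first arrange for $\ol{C}_3$ and $\ol{C}_4$ to contain simple cylinders: if $k=0$ this is automatic, since $C_3,C_4$ are themselves simple; if $k>0$, then $\ol{C}_3$ and $\ol{C}_4$ contain simple subcylinders (those crossing once a saddle connection lying in both the top and bottom of $C_3$, resp. $C_4$), and after rotating $M$ so that these become horizontal, the Cylinder Proportion Lemma (Proposition~\ref{CylinderPropProp}) together with the arguments of Proposition~\ref{prop:H211NoCase4II} produces an equivalence class $\{D_1,D_2\}$ of $\cM$-parallel simple horizontal cylinders with $D_i$ contained in $\ol{C}_{i+2}$. (Alternatively, one can first reduce $k=0$ to $k>0$ by an extended cylinder collapse as in Proposition~\ref{prop:H211NoCase4II}.) In all cases we are reduced to having such a pair $\{D_1,D_2\}$, and by the argument of \cite[Lem.~5.3]{AulicinoNguyenGen3TwoZeros} they are in fact $\cM$-similar, hence after normalizing isometric; after a suitable twist each $D_i$ carries a vertical saddle connection $\delta_{D_i}$ joining its two boundary zeros.

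By Lemma~\ref{lm:s:cyl:dist:sim:zeros} the two simple zeros on $\partial D_i$ are distinct, and the two pairs of zeros so exposed are disjoint. Indeed, in Case 4.II) pinching the four horizontal core curves degenerates $M$ to two rational components, each carrying exactly two of the four simple zeros, with the saddle connections bounding $C_3$ lying on one component and those bounding $C_4$ on the other; were the two pairs to share a zero, collapsing $D_1,D_2$ would produce a surface in $\cH(3,1)$, which carries no rank two affine manifold. Hence collapsing $D_1$ and $D_2$ simultaneously so that the zeros on each boundary collide yields a surface $M'\in\cH(2,2)$; let $\tilde\sigma_1,\tilde\sigma_2$ denote the distinguished horizontal saddle connections of $M'$ arising from the two collapses, each a loop based at a zero of $M'$. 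By Proposition~\ref{prop:collapse:similar:cyl}, $M'$ lies in a rank two affine manifold $\cM'\subset\cH(2,2)$ with $\dim\cM'=\dim\cM-1$, so by the classification of rank two affine submanifolds of $\cH(2,2)$ from \cite{AulicinoNguyenGen3TwoZeros} we have $\cM'\in\{\dcoverhyp,\dcoverodd,\prym\}$.

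In each case let $\inv'$ be the Prym involution of $M'$, and when $\cM'\in\{\dcoverhyp,\dcoverodd\}$ let $\iota'$ be its hyperelliptic involution. The core of the argument is that $\inv'$ (and $\iota'$, when present) preserves the set $\Pi:=\{\tilde\sigma_1,\tilde\sigma_2\}$. For $\inv'$: since $D_1$ and $D_2$ are isometric throughout $\cM$, the relation $\mathrm{hol}(\tilde\sigma_1)=\mathrm{hol}(\tilde\sigma_2)$ holds identically on $\cM'$; as $\inv'$ is an isometry with derivative $-\id$ it permutes the horizontal saddle connections of $M'$, and if it did not relate $\tilde\sigma_1$ to $\tilde\sigma_2$, this relation would be an extra linear condition cutting $\cM'$ strictly below the dimension of the corresponding known locus, contradicting $\dim\cM'=\dim\cM-1$. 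For $\iota'$ one argues exactly as in the proof of Proposition~\ref{prop:C3I:2sim:cyl:classify}: a fixed-point count, using that $\tilde\sigma_1,\tilde\sigma_2$ join a zero of $M'$ to itself, forces $\iota'$ to preserve $\Pi$ (and this same count excludes $\cM'=\dcoverhyp$, though doing so is not strictly necessary, as the extension below would then still give $\cM=\dcoverprinc$). Since $\delta_{D_1}$ and $\delta_{D_2}$ lie in distinct simple cylinders, their union is a disjoint union of non-loop edges, so Proposition~\ref{DblCovExtSimpCyl} applies: $\inv'$ (and $\iota'$) extends to an involution of $M$ with the same number of fixed points. Hence $M$ admits a Prym involution, so $M\in\cP\cap\cH(1^4)=\prymprinc$; and if $\cM'\in\{\dcoverhyp,\dcoverodd\}$ then $M$ also admits a hyperelliptic involution, so $M\in\cP\cap\cL\cap\cH(1^4)=\dcoverprinc$. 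By the last statement of Proposition~\ref{prop:collapse:similar:cyl} the same holds for every surface of $\cM$ near $M$, and a dimension count concludes: if $\cM'=\prym$ then $\dim\cM=\dim\prym+1=6=\dim\prymprinc$, so $\cM=\prymprinc$; if $\cM'\in\{\dcoverhyp,\dcoverodd\}$ then $\dim\cM=\dim\cM'+1=5=\dim\dcoverprinc$, so $\cM=\dcoverprinc$.

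The main obstacle is precisely the step where the metric coincidence $D_1\cong D_2$ must be converted into the combinatorial statement that the Prym (and hyperelliptic) involution of $M'$ preserves $\Pi$; this is where the bookkeeping $\dim\cM'=\dim\cM-1$ — that simultaneously collapsing the similar pair costs only one dimension — is essential, and it relies on knowing the exact dimensions of the loci in $\cH(2,2)$. A secondary technical point is the case $k>0$: if the simple subcylinders of $\ol{C}_3,\ol{C}_4$ turn out to be free rather than forming an equivalence class, one instead collapses a single one to reach $\tilde{\cQ}(2,1,-1^3)\subset\cH(2,1^2)$ (legitimate under the standing hypothesis, as this proposition is invoked only after the classification in $\cH(2,1^2)$ is complete), imports its Prym involution by the same mechanism with a single distinguished loop, and concludes $\cM=\prymprinc$ from $\dim\cM=\dim\tilde{\cQ}(2,1,-1^3)+1=6$.
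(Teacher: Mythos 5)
Your overall route is the paper's route: produce a pair of transverse simple cylinders $D_1\subset\ol{C}_3$, $D_2\subset\ol{C}_4$ forming an equivalence class, collapse them simultaneously via Proposition~\ref{prop:collapse:similar:cyl} to land in a rank two affine manifold $\cM'\subset\cH(2,2)$ with $\dim\cM'=\dim\cM-1$, transport the Prym (and hyperelliptic) involution back by Proposition~\ref{DblCovExtSimpCyl}, and finish with a dimension count. However, there is a genuine gap at the step where you claim $D_1$ and $D_2$ are $\cM$-\emph{similar} ``by the argument of \cite[Lem.~5.3]{AulicinoNguyenGen3TwoZeros}, hence after normalizing isometric.'' Being $\cM$-parallel only constrains the circumference ratio; similarity (proportionality of the whole normalized fundamental domains, including heights and twists) is strictly stronger and does not follow from the cylinder-proportion arguments behind that lemma, which concern two simple cylinders in a three-cylinder Case 3.I decomposition, not transverse cylinders inside $\ol{C}_3$ and $\ol{C}_4$ here. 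Moreover ``after normalizing isometric'' is not meaningful: a similarity ratio $\lambda\neq 1$ cannot be normalized away. Similarity is exactly what makes the simultaneous collapse cost only one dimension, so without it the rest of your argument (the identification of $\cM'$, and your own perturbation argument that the involution must relate $\tilde\sigma_1$ and $\tilde\sigma_2$) has no footing. The paper proves similarity by a dedicated argument you do not have: if $D_1,D_2$ were not similar, one could twist so that exactly one of them contains a collapsible saddle connection, and collapsing the class would then contract a single saddle connection, producing a rank two affine manifold in $\cH(2,1^2)$ containing a horizontally periodic surface that still satisfies Case 4.II) --- contradicting Proposition~\ref{prop:H211NoCase4II}. (Isometry, i.e.\ $\lambda=1$, is then obtained in the paper from the Prym involution of $M'$ exchanging the two simple cylinders coming from $C_3,C_4$; for the extension itself, Proposition~\ref{DblCovExtSimpCyl} only needs similarity.)

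A second, smaller gap is the case $k=2$: there the simplicity of $D_2$ is not automatic and you assert it. The paper proves it by a cylinder-proportion argument: $D_1$ is the \emph{unique} cylinder of its class crossing $C_3$ (since $\ol{C}_3$, realizable in a genus two surface, carries at most one transverse cylinder, and any other one would meet $C_1$ or $C_2$), so if $D_2$ were not simple, a simple cylinder $E'\subset\ol{D}_2$ with $P(E',\{D_1,D_2\})=1$ would be $\cM$-parallel to a cylinder $E$ crossing $C_3$ with $P(E,\{D_1,D_2\})<1$, violating Proposition~\ref{CylinderPropProp}. Your treatment of the possibility $\cM'=\dcoverhyp$ is also looser than the paper's (which excludes it outright because $M'$ retains a Case 4.II) decomposition and hence lies in $\cH^{\rm odd}(2,2)$), but that is a secondary point; the missing similarity argument is the substantive defect.
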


\begin{proof}
Recall that by Lemma~\ref{lm:C4II:preliminary}, we have $k_3=k_4=k$.  If $C_3$ is a simple cylinder, then $C_4$ is as well.  In this case, by twisting so that neither cylinder contains a vertical saddle connection and performing an extended cylinder collapse as in \cite[Pf. of Lem. 4.7]{AulicinoNguyenGen3TwoZeros}, we get a translation surface satisfying Case 4.II) such that in the new surface each of  $C_3$ and $C_4$   contains at least one cylinder. Therefore we only need to consider the case $k \in \{1,2\}$.

\medskip

\noindent \ul{Case $k=1$.} Let $\sig_3$ (resp. $\sig_4$) be the unique saddle connection contained in both top and bottom of $C_3$ (resp. $C_4$). There is simple cylinder $D$ in $\ol{C}_3$ that contains $\sig_3$. We can assume that $M$ is square-tiled, and $D$ is vertical. Let $D'$ be the cylinder in $\ol{C}_4$ which is $\cM$-parallel to $D$.  Note that $D'$ is also a simple cylinder.

 We claim that $D$ and $D'$ are similar (proportional). If $D$ and $D'$ are not similar, then we can twist them so that one of them contains a horizontal saddle connection but the other does not. As  $D$ and $D'$ are collapsed simultaneously only one saddle connection is contracted to a point. Thus the resulting surface belongs to an rank two affine submanifold in $\cH(2,1^2)$. By construction, this new surface also admits a cylinder decomposition in Case 4.II) in the horizontal direction, but this contradicts Proposition~\ref{prop:H211NoCase4II}.

Since $D$ and $D'$ are proportional, we can collapse them simultaneously so that two saddle connections joining distinct simple zeros are contracted. The resulting surface, denoted by $M'$, belongs to a rank two  affine submanifold $\cM'$ in $\cH(2,2)$. By Proposition~\ref{prop:collapse:similar:cyl}, we have $\dim \cM=\dim \cM'+1$.  Note that the cylinders in $M'$ that correspond to $C_3$ and $C_4$ are simple.  By a slight abuse of notation, we will also denote them by $C_3$ and $C_4$, respectively.

 Since $M'$ admits a cylinder decomposition in Case 4.II), we derive that $M' \in \cH^{\rm odd}(2,2)$ (see~\cite[Sec. 6.3]{AulicinoNguyenGen3TwoZeros}). By the main  result of \cite{AulicinoNguyenGen3TwoZeros}, we know that $\cM' \in \{\dcoverodd, \allowbreak \prym\}$.  In both cases $C_3$ and $C_4$ are exchanged by the Prym involution of $M'$, thus they are isometric. It follows that the circumferences of $D$ and $D'$ are equal.  Since $D$ and $D'$ are similar, they are actually isometric. By Proposition~\ref{DblCovExtSimpCyl}, the Prym involution of $M'$ extends to an involution of $M$, that also exchanges $C_3$ and $C_4$. In particular, we see that $M\in \prymprinc$. Since the same arguments apply to the surfaces in a neighborhood of $M$ in $\cM$, we conclude that $\cM \subset \prymprinc$.

 If $\cM'=\prym$, then $\dim \cM=\dim\prym+1=6$ by Proposition~\ref{prop:collapse:similar:cyl}. Since $\dim \prymprinc =6$, we conclude that $\cM=\prymprinc$.

 If $\cM'=\dcoverodd$, then  $M'$ has a hyperelliptic involution $\iota$ that fixes $C_3$ and $C_4$. It is easy to check that $\iota$ preserves the saddle connection in $C_3$ (resp. in $C_4$) which is the degeneration of $D$ (resp. of $D'$) in $M'$. Thus $\iota$ extends to a hyperelliptic involution on $M$ (see Proposition~\ref{DblCovExtSimpCyl}).  Hence, $M \in \dcoverprinc$ by Lemma~\ref{lm:2inv:dblcover} and $\cM \subset\dcoverprinc$ by Proposition~\ref{prop:collapse:similar:cyl}.  Note that in this case we have $\allowbreak \dim \cM=\dim \dcoverodd +1=5=\dim\dcoverprinc$.  Thus $\cM$ must be the locus $\dcoverprinc$.

\medskip

\noindent \ul{Case $k=2$.}  Consider a  simple cylinder $D \subset \ol{C}_3$ that crosses the core curves of $C_3$ once. Let $\cD$ denote the equivalence class of $D$. Since $C_4$ is $\cM$-parallel to $C_3$, it must be crossed by a cylinder $D' \in \cD$. Since $D$ is disjoint from $C_1$ and $C_2$, so is $D'$, which means that $D'$ is contained in $\ol{C}_4$.

We can assume that $M$ is square-tiled and $D$ and $D'$ are vertical.  Since $C_3$ can be realized as a cylinder in a two-cylinder decomposition of a surface in the stratum $\cH(1,1)$, $\ol{C}_3$ contains at most one vertical cylinder. This implies that $D$ is the unique cylinder in $\cD$ that crosses $C_3$, because any other vertical cylinder that crosses $C_3$ would also cross $C_1$ or $C_2$ while $D$ does not.

We now claim that $D'$ is simple. To see this, we first remark that $C_4$ can be realized as a cylinder in a surface in $\cH(1,1)$. Thus $D'$ can be viewed as a cylinder in a translation surface  of genus two as well. Assume that $D'$ is not simple, then its closure contains a simple cylinder $E'$.
 There must exist a cylinder $E$  which is $\cM$-parallel to $E'$ and crosses $D$ (and hence $C_3$). Since $D$ is simple, $E$ cannot be contained in $D$. Now, since $D$ is the unique cylinder in $\cD$ that crosses $C_3$, we have $P(E,\cD) <1$. But by assumption, we have $P(E',\cD)=1$, therefore we get a contradiction to the Cylinder Proportion Lemma~\ref{CylinderPropProp} which proves the claim.

The remainder of the proof follows the same lines  as the previous case.
\end{proof}

\subsection{Case 4.I)}\label{sec:4cyl:C4I}
In this case the core curves of the cylinders cut the surface into two four-holed spheres.
Denote the horizontal cylinders of $M$ by $C_1,\dots,C_4$. For $i=1,\dots,4$, let $h_i$ and $\ell_i$ denote respectively the height and the circumference of $C_i$, and $\gamma_i$ be a core curve of $C_i$.
By assumption,  the following homological relation holds:
$$
\gamma_1 + \varepsilon_2 \gamma_2 + \varepsilon_3 \gamma_3 + \varepsilon_4 \gamma_4 = 0,
$$
where $\veps_i =\pm 1$. After possibly relabeling the cylinders and multiplying $\veps_i$ by $-1$, there are two distinct equations that are possible:
\begin{itemize}
\item Case 4.I.a) $\gamma_1 - \gamma_2 - \gamma_3 - \gamma_4 = 0,$ or
\item Case 4.I.b) $\gamma_1 + \gamma_2 - \gamma_3 - \gamma_4 = 0.$
\end{itemize}
We will analyze the cylinder diagrams according to the equation they satisfy.

\medskip

Let $\Gr$ be the embedded graph in $M$ whose vertices are the zeros and edges are the horizontal saddle connections. By assumption, $\Gr$ has two connected components denoted by $\Gr_1$ and $\Gr_2$.  Cutting $M$ along $\gamma_1,\dots,\gamma_4$, we obtain two four-holed spheres, which can be considered as regular neighborhoods of $\Gr_1$ and $\Gr_2$. It follows in particular that $\Gr_1$ and $\Gr_2$ are planar graphs. Observe also that any closed curve in $M$ cannot intersect $\gamma_1\cup\dots\cup\gamma_4$ only once. Therefore, none of $C_1,\dots,C_4$ contains a saddle connection in both its top and bottom.

Using the fact that $\Gr_1$ and $\Gr_2$ are planar, one can easily produce the list of admissible configurations for $\Gr_1$ and $\Gr_2$ together with the corresponding homological relation satisfied by $\gamma_1,\dots,\gamma_4$ (see Figure~\ref{fig:RibonGraph:C4I:ab}).
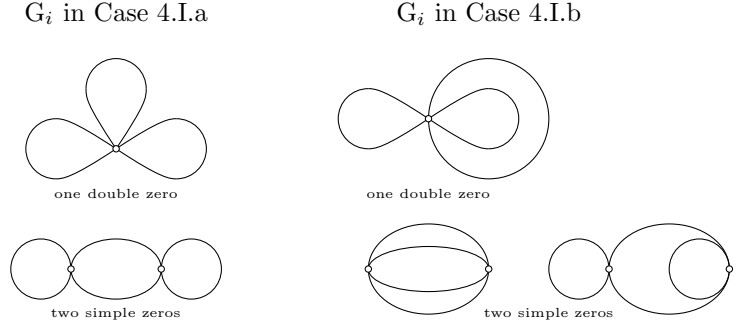
\begin{figure}[htb]
\begin{minipage}[t]{0.45\linewidth}
\centering
\begin{tikzpicture}[scale=0.4]
\draw (0,13.5) node {$\Gr_i$ in Case 4.I.a};

\draw (0,9) .. controls (-1,10.5) and (-1,10.7) .. (-1,11);
 \draw (0,9) .. controls (1,10.5) and (1,10.7) .. (1,11);
 \draw (1,11) arc (0:180:1);

 \draw (0,9) .. controls (-1.5,10) and (-1.8,10) .. (-2,10);
 \draw (0,9) .. controls (-1.5,8) and (-1.8,8) .. (-2,8);
 \draw (-2,10) arc (90:270:1);

 \draw (0,9) .. controls (1.5, 10) and (1.8,10) .. (2,10);
 \draw (0,9) .. controls (1.5,8) and (1.8,8) .. (2,8);
 \draw (2,8) arc (-90:90:1);

 \filldraw[fill=white] (0,9) circle (3pt);

 \draw (0,7.5) node {\tiny one double zero};

 \draw (0,5) ellipse (1.5 and 1);
 \draw (-2.5,5) circle (1);
 \draw (2.5,5) circle (1);
 \filldraw[fill=white] (-1.5,5) circle (3pt);
 \filldraw[fill=white] (1.5,5) circle (3pt);

\draw (0,3.5) node {\tiny two simple zeros};
\end{tikzpicture}
\end{minipage}
\begin{minipage}[t]{0.45\linewidth}
\centering
\begin{tikzpicture}[scale=0.4]
\draw (2,13.5) node {$\Gr_i$ in Case 4.I.b};

\draw (0,10) .. controls (-1.5,11) and (-1.8,11) .. (-2,11);
 \draw (0,10) .. controls (-1.5,9) and (-1.8,9) .. (-2,9);
 \draw (-2,11) arc (90:270:1);

 \draw (0,10) .. controls (1.5, 11) and (1.8,11) .. (2,11);
 \draw (0,10) .. controls (1.5,9) and (1.8,9) .. (2,9);
 \draw (2,9) arc (-90:90:1);

 \draw (2,10) circle (2);

 \filldraw[fill=white] (0,10) circle (3pt);

 \draw (0,7.5) node {\tiny one double zero};

 \draw (0,5) ellipse (2 and 0.75);
 \draw (0,5) ellipse (2 and 1.5);
 \filldraw[fill=white] (2,5) circle (3pt);
 \filldraw[fill=white] (-2,5) circle (3pt);

\draw (5,5) circle (1);
 \draw (8,5) ellipse (2 and 1.5);
 \draw (9,5) circle (1);
 \filldraw[fill=white] (6,5) circle (3pt);
 \filldraw[fill=white] (10,5) circle (3pt);

 \draw (4,3.5) node {\tiny two simple zeros};

\end{tikzpicture}
\end{minipage}
\caption{Configurations of $\Gr_i, \; i=1,2$, in Case 4.I}
\label{fig:RibonGraph:C4I:ab}
\end{figure}

\subsubsection{Case 4.I.a)}

In this case, we can assume without loss of generality that the top of $C_1$ is equal to the union of the bottoms of $C_2,C_3,C_4$.

\begin{lemma}\label{lm:C4Ia:ssim:cyl:no:par:C1}
Assume that $M$ is a horizontally periodic surface in $\cM$ satisfying Case 4.I.a) such that $M$ is $\cM$-cylindrically stable.  Then for $i=2,3,4$, if $C_i$ is semi-simple, then $C_i$ is not $\cM$-parallel to $C_1$.
\end{lemma}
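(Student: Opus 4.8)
The plan is to argue by contradiction using the cylinder proportion machinery, exactly as in the analogous results of the preceding sections. Suppose that for some $i \in \{2,3,4\}$, say $i=2$, the cylinder $C_2$ is semi-simple and $\cM$-parallel to $C_1$. Since $M$ is $\cM$-cylindrically stable and $\cM$ has rank two, the horizontal cylinders fall into exactly two equivalence classes; the homological relation $\gamma_1-\gamma_2-\gamma_3-\gamma_4=0$ together with the isotropy constraint on $p(T^\R_M\cM)$ (as in Lemma~\ref{4CylCaseIIIEqCls}) will pin down which class is which. In particular, if $C_1$ and $C_2$ are $\cM$-parallel, then $C_3$ and $C_4$ cannot both be free — otherwise the four core curves would force a three-dimensional isotropic subspace inside the symplectic space $p(T^\R_M\cM)$, which is impossible. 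So $C_3$ or $C_4$ is also $\cM$-parallel to $C_1$, and I will need to track exactly which cylinders lie in the class $\cC$ of $C_1$.

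Next I would exploit the semi-simple structure of $C_2$. Since $C_2$ is semi-simple, one of its boundary components is a single saddle connection $\sigma$. Because we are in Case 4.I.a), the top of $C_1$ is the union of the bottoms of $C_2,C_3,C_4$, so $\sigma$ appears on the top of $C_1$ as well. After perturbing to a nearby square-tiled surface (permissible since $\bk(\cM)=\Q$ by Theorem~\ref{thm:Wright:def:field:deg:rk}) and passing to the vertical direction, the simple side of $C_2$ forces the existence of a simple vertical cylinder $D$ contained in $\ol{C}_2$, i.e. a vertical cylinder crossing $C_2$ but none of the other horizontal cylinders, so that $P(D,\cC)$ is determined purely by the contribution of $C_2$. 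Meanwhile, because $C_1$ (and whichever of $C_3,C_4$ lies in $\cC$) is much "wider" than $C_2$ in the relevant sense, there must exist another vertical cylinder $D'$ in the same $\cM$-parallel class as $D$ that crosses $C_1$, and hence crosses additional horizontal cylinders not in $\cC$; this makes $P(D',\cC)$ strictly different from $P(D,\cC)$. Applying the Cylinder Proportion Lemma (Proposition~\ref{CylinderPropProp}) to $D$ and $D'$ then yields an equation of the shape $\frac{h_2}{h_2} = \frac{h_2 + (\text{something positive})}{h_2 + (\text{something positive})}$ forcing a height to vanish, which is the desired contradiction. The same argument applies verbatim to $i=3,4$ by symmetry of the labeling.

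The main obstacle I anticipate is the bookkeeping in the middle step: identifying precisely which vertical cylinder $D'$ in the class of $D$ crosses $C_1$ and confirming that it must also cross some horizontal cylinder outside $\cC$, so that the two cylinder proportions genuinely differ. This requires a careful look at the admissible configurations of the planar graphs $\Gr_1,\Gr_2$ in Figure~\ref{fig:RibonGraph:C4I:ab} for Case 4.I.a), and possibly splitting into the subcase where $\Gr_i$ has one double zero versus two simple zeros. A secondary subtlety is ensuring that the twist needed to realize $D$ as genuinely simple and vertical does not destroy the $\cM$-parallel relationships; this is handled by Theorem~\ref{thm:Wright:Cyl:Def} and the fact that twisting a full equivalence class stays in $\cM$. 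Once the combinatorial picture is fixed, the proportion computation is routine and mirrors the one in the proof of Proposition~\ref{prop:H211NoCase4II}.
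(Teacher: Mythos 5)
Your overall strategy (contradiction plus the Cylinder Proportion Lemma) is in the right family, but the core construction fails in Case 4.I.a). You propose a simple vertical cylinder $D$ contained in $\ol{C}_2$, crossing $C_2$ but no other horizontal cylinder, so that $P(D,\cC)$ is ``determined purely by $C_2$''. No such cylinder exists: in Case 4.I the core curves $\gamma_1,\dots,\gamma_4$ cut $M$ into two four-holed spheres, so no closed curve can cross $\gamma_1\cup\dots\cup\gamma_4$ only once and, as noted in Section~\ref{sec:4cyl:C4I}, none of $C_1,\dots,C_4$ has a saddle connection lying on both its top and its bottom. A transverse cylinder confined to $\ol{C}_2$ would require exactly such a saddle connection, so the object your proportion computation is built on does not exist. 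The actual argument instead produces a vertical cylinder $D_1\subset\ol{C}_1\cup\ol{C}_2$ crossing each of $\gamma_1,\gamma_2$ exactly once (using the single saddle connection side of $C_2$ sitting on the top of $C_1$ and a twist of $\{C_1,C_2\}$).

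The second problem is the mechanism of the contradiction. Even with $D_1$ corrected as above, your plan is to find another cylinder $D'$ in the class $\cD$ of $D_1$ that crosses cylinders outside $\cC=\{C_1,C_2\}$, so that $P(D',\cC)\neq P(D_1,\cC)$. But $P(D_1,\cC)=1$, and the Cylinder Proportion Lemma then forces $P(D_j,\cC)=1$ for every $D_j\in\cD$, i.e.\ \emph{every} member of $\cD$ is contained in $\ol{C}_1\cup\ol{C}_2$; the $D'$ with smaller proportion that your argument needs provably does not exist, so no contradiction comes from comparing proportions of transverse cylinders in $\cC$. The decisive step you are missing is to apply the proportion lemma in the other direction: since each $D_j\subset\ol{C}_1\cup\ol{C}_2$ crosses $\gamma_1$ and $\gamma_2$ the same number of times, $P(C_1,\cD)=P(C_2,\cD)$ reduces to $\ell_1=\ell_2$, which is impossible because $\ell_1=\ell_2+\ell_3+\ell_4$ in Case 4.I.a). (Your preliminary remark about equivalence classes is fine, and in fact the isotropy argument shows $\{C_1,C_2\}$ would be exactly the class of $C_1$, but that bookkeeping is not where the difficulty lies.)
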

\begin{proof}
Since $\cM$ is of rank two, the horizontal cylinders fall into at least two equivalence classes. Let $\cC$ denote the equivalence class of $C_1$. Observe that $\{C_2, C_3, C_4\}$ cannot be an equivalence class by the homological relation.

By contradiction, assume that $C_2$ is semi-simple and $\cM$-parallel to $C_1$. Since we have at least two equivalence classes of cylinders, neither $C_3$ nor $C_4$ is $\cM$-parallel to $C_1$, which means that $\cC=\{C_1,C_2\}$.
Since $C_2$ is semi-simple, we can assume that the bottom of $C_2$ consists of one saddle connection $\sig$. Let $\sig'$ be a saddle connection in the top of $C_2$. Note that $\sig$ and $\sig'$ are contained in the top and bottom of $C_1$ respectively. We can twist $C_1$ and $C_2$ such that any vertical ray entering $C_1$ through $\sig'$ crosses $\sig$. There exists in this case a transverse cylinder $D_1$, not necessarily  vertical, contained in $\ol{C}_1\cup\ol{C}_2$ whose core curves cross each of $\gamma_1,\gamma_2$ once. Twisting $\{C_1,C_2\}$ again,  we can assume that $D_1$ is vertical. Let $\cD$ denote the equivalence class of $D_1$, and assume that $\cD=\{D_1,\dots,D_s\}$.

We claim that $D_j$ is contained in $\ol{C}_1\cup\ol{C}_2$ for all $j=1,\dots,s$. This is a consequence of the Cylinder Proportion Lemma and the fact that
$ P(D_1,\cC)=1$. It follows that each  $D_j$ crosses $\gamma_1$ and $\gamma_2$ the same number of times $n_j$. Let $h'_j$ be the height of $D_j$, and $\ell_i$ be the circumference of $C_i$.  Applying the Cylinder Proportion Lemma, we have $P(C_1,\cD) = P(C_2,\cD)$, which is equivalent to
$$
\frac{n_1h'_1+\dots+n_sh'_s}{\ell_1}=\frac{n_1h'_1+\dots+n_sh'_s}{\ell_2} \Leftrightarrow \ell_1=\ell_2.
$$
However, this is impossible because $\ell_1=\ell_2+\ell_3+\ell_4$.
\end{proof}

\begin{proposition}\label{prop:C4Ia:H211}
 If $\cM$ is a rank two affine submanifold of $\cH(2,1^2)$, then $\cM$ does not contain an $\cM$-cylindrically stable horizontally periodic surface satisfying Case 4.I.a).
\end{proposition}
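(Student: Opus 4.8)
The plan is to assume for contradiction that $\cM$ contains an $\cM$-cylindrically stable horizontally periodic surface $M$ with four horizontal cylinders $C_1,\dots,C_4$ satisfying Case 4.I.a), and to degenerate $M$ so that the resulting surface lands in $\tilde{\cQ}(3,-1^3)$ while carrying a pair of horizontal cylinders that its Prym involution cannot accommodate.

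\textbf{Step 1 (the separatrix diagram).} First I would pin down the combinatorics. Since $M\in\cH(2,1^2)$, the separatrix graph $\Gr$ has two planar components; the double zero $x_0$ lies on one of them, which is then necessarily the ``three loops at $x_0$'' graph of Figure~\ref{fig:RibonGraph:C4I:ab}, and the two simple zeros $x_1,x_2$ lie on the other. Keeping the normalization that the top of $C_1$ is the union of the bottoms of $C_2,C_3,C_4$, this forces the bottom of $C_1$ to be the union of their tops; a direct bookkeeping of the seven horizontal saddle connections then shows that two of $C_2,C_3,C_4$ — say $C_2$ and $C_3$ — are simple cylinders whose boundary zeros are $x_0,x_1$ (for $C_2$) and $x_0,x_2$ (for $C_3$), in particular distinct and forming two different pairs, while the remaining cylinder $C_4$ is strictly semi-simple and $C_1$, having at least three saddle connections on each side, is not semi-simple. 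Moreover, since the top of $C_1$ is the union of the bottoms of $C_2,C_3,C_4$, the circumferences satisfy $\ell_1=\ell_2+\ell_3+\ell_4$, so $\ell_1>\ell_4$.

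\textbf{Step 2 (the $\cM$-parallel classes).} Each of $C_2,C_3,C_4$ is semi-simple and $M$ is $\cM$-cylindrically stable, so Lemma~\ref{lm:C4Ia:ssim:cyl:no:par:C1} shows none of them is $\cM$-parallel to $C_1$; hence $C_1$ is free. If $C_2$ (resp.\ $C_3$) were free, collapsing it by Proposition~\ref{prop:collapse:free:sim:cyl} would merge the order $2$ zero $x_0$ with a simple zero into a zero of order $3$ and produce a rank two affine submanifold inside $\cH(3,1)$, contradicting the classifications of \cite{NguyenWright,AulicinoNguyenWright,AulicinoNguyenGen3TwoZeros}. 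So $C_2$ and $C_3$ are not free. Combining this with the observation (from the proof of Lemma~\ref{lm:C4Ia:ssim:cyl:no:par:C1}) that $\{C_2,C_3,C_4\}$ cannot be an equivalence class, and with $C_2,C_3\not\sim_\cM C_1$, the equivalence classes of horizontal cylinders must be exactly $\{C_1\}$, $\{C_2,C_3\}$, and $\{C_4\}$.

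\textbf{Step 3 (collapse and contradiction).} Now $\{C_2,C_3\}$ is an equivalence class of $\cM$-parallel simple cylinders with distinct pairs of boundary zeros, and an argument as in \cite[Lem.~5.3]{AulicinoNguyenGen3TwoZeros} shows they are $\cM$-similar (alternatively one invokes \cite[Thm.~2.7]{MirzakhaniWrightBoundary} and \cite[Prop.~2.16]{AulicinoNguyenGen3TwoZeros} directly to control dimension and rank). Twisting and collapsing $\{C_2,C_3\}$ simultaneously as in Proposition~\ref{prop:collapse:similar:cyl} merges $x_0,x_1,x_2$ into a single zero of order $4$ (the genus is unchanged) and yields a surface $M'$ in a rank two affine submanifold $\cM'$ of $\cH(4)$ which is horizontally periodic with exactly two cylinders — the images of $C_1$ and $C_4$, of circumferences $\ell_1$ and $\ell_4$. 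By \cite{NguyenWright,AulicinoNguyenWright} we have $\cM'=\tilde{\cQ}(3,-1^3)$, so $M'$ carries a Prym involution $\inv'$; since $\inv'$ has derivative $-\id$, it permutes the two horizontal cylinders of $M'$. Because $\ell_1=\ell_2+\ell_3+\ell_4>\ell_4$, the isometry $\inv'$ cannot exchange them, so it fixes each of the two cylinders setwise, and each such cylinder contains two fixed points of $\inv'$ in its interior. Thus $\inv'$ has at least four regular fixed points; together with the unique (order $4$) zero of $M'$, which is fixed by $\inv'$, this gives at least five fixed points, contradicting the fact that a Prym involution in genus three has exactly four fixed points. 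Hence no such $M$ exists.

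\textbf{Main obstacle.} The conceptual heart is the last step: collapsing the two $\cM$-parallel simple cylinders necessarily lands in $\tilde{\cQ}(3,-1^3)$ with two surviving horizontal cylinders whose circumferences are forced to be unequal by the Case 4.I.a) structure — something a Prym involution cannot carry. The most delicate parts to write out carefully are the combinatorial enumeration in Step~1 (reading off, from the short list of admissible planar separatrix diagrams, which cylinders are simple and which zeros lie on their boundaries) and the verification that $C_2$ and $C_3$ are $\cM$-similar, which reuses the twisting machinery of the earlier papers; the class analysis in Step~2 and the fixed-point count in Step~3 are then routine.
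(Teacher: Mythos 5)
Your argument is correct, but it takes a genuinely different (and longer) route than the paper. The paper's proof stops much earlier: since each of $C_2,C_3,C_4$ is semi-simple with the double zero $x_0$ on one boundary, Lemma~\ref{lm:C4Ia:ssim:cyl:no:par:C1} plus the homological relation force at least one of them to be free; twisting that free semi-simple cylinder to contain a single vertical saddle connection from $x_0$ to a simple zero and collapsing lands in a rank two affine submanifold of $\cH(3,1)$, which does not exist -- done. You in fact reproduce exactly this move in your Step~2 to rule out $C_2$ and $C_3$ being free, and your own class analysis then exhibits $C_4$ as a free, strictly semi-simple cylinder with $x_0$ on its boundary -- at which point the same one-line collapse would finish the proof. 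Instead you continue: you collapse the $\cM$-parallel pair $\{C_2,C_3\}$ into $\cH(4)$, identify the limit manifold as $\tilde{\cQ}(3,-1^3)$, and get the contradiction from the Prym involution's fixed-point count using $\ell_1=\ell_2+\ell_3+\ell_4>\ell_4$. That endgame is valid (it mirrors the fixed-point bookkeeping the paper uses in other cases), but it buys nothing here beyond what the $\cH(3,1)$ collapse already gives, and it forces you to justify that $C_2$ and $C_3$ are $\cM$-similar. Your justification of similarity is the one loose point: citing \cite[Lem.~5.3]{AulicinoNguyenGen3TwoZeros} or ``invoking \cite{MirzakhaniWrightBoundary} to control dimension and rank'' does not prove similarity; the correct (and short) argument, used repeatedly in this paper (e.g.\ in Lemma~\ref{lm:C4Ib:H211:B}), is that if they were not similar one could twist so that exactly one of them carries a vertical saddle connection, and the simultaneous collapse would land in $\cH(3,1)$, a contradiction -- alternatively, note that this dichotomy already yields your contradiction without ever naming similarity. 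With that one line repaired, your proof stands, but the paper's argument is strictly shorter and avoids Step~3 entirely.
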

\begin{proof}
Assume to the contrary that $M$ is an $\cM$-cylindrically stable horizontally periodic surface in $\cM$ satisfying Case 4.I.a).  In $\cH(2,1^2)$, by inspection of the admissible configurations of the graphs $\Gr_1,\Gr_2$, we see that each of $\{C_2, C_3, C_4\}$ is semi-simple.  Lemma~\ref{lm:C4Ia:ssim:cyl:no:par:C1} establishes the existence of a free semi-simple cylinder in this case.  However, $M$ cannot have a free semi-simple cylinder because it could be twisted to contain a single vertical saddle connection between a double zero and a simple one, and hence could be collapsed to a translation surface in $\cH(3,1)$. But this contradicts the non-existence of a rank two affine manifold in that stratum.
\end{proof}

The following lemma follows from an inspection of the admissible configurations of the graphs $\Gr_1, \Gr_2$.

\begin{lemma}
\label{Case4IaPrinStrCD}
In the principal stratum in genus three, there are exactly two cylinder diagrams satisfying Case 4.I.a). They are depicted in Figure~\ref{Case4IaPrinStrCylDiagsFig}.
\begin{figure}[htb]
\begin{minipage}[t]{0.49\linewidth}
\centering
\begin{tikzpicture}[scale=0.4]
\draw (0,0)--(0,2)--(-2,2)--(-3,4)--(1,4)--(2,2)--(3,4)--(5,4)--(4,2)--(6,2)--(7,4)--(9,4)--(8,2)--(8,0) -- cycle;
\foreach \x in {(0,0),(0,2),(-2,2),(-3,4),(-1,4),(1,4),(2,2),(3,4),(5,4),(4,2),(6,2),(7,4),(9,4),(8,2),(8,0),(6,0),(4,0),(2,0)} \filldraw[fill=white] \x circle (3pt);

\draw (-2,4) node[above] {\tiny $1$} (0,4) node[above] {\tiny $2$} (4,4) node[above] {\tiny $3$} (5,2) node[above] {\tiny $4$} (8,4) node[above] {\tiny $5$} (-1,2) node[below] {\tiny $4$} (1,0) node[below] {\tiny $3$} (3,0) node[below] {\tiny $1$} (5,0) node[below] {\tiny $5$} (7,0) node[below] {\tiny $2$};

\draw (4,-2) node {(A)};

\end{tikzpicture}
\end{minipage}
\begin{minipage}[t]{0.49\linewidth}
\begin{tikzpicture}[scale=0.4]
\draw (0,0)--(0,2)--(-2,2)--(-3,4)--(1,4)--(2,2)--(3,4)--(7,4)--(6,2)--(8,2)--(9,4)--(11,4)--(10,2)--(10,0) -- cycle;
\foreach \x in {(0,0),(0,2),(-2,2),(-3,4),(1,4),(2,2),(3,4),(5,4),(7,4),(6,2),(8,2),(9,4),(11,4),(10,2),(10,0),(6,0),(4,0),(2,0)} \filldraw[fill=white] \x circle (3pt);

\draw (-1,4) node[above] {\tiny $1$} (4,4) node[above] {\tiny $2$} (6,4) node[above] {\tiny $3$} (7,2) node[above] {\tiny $4$} (10,4) node[above] {\tiny $5$} (-1,2) node[below] {\tiny $4$} (1,0) node[below] {\tiny $3$} (3,0) node[below] {\tiny $5$} (5,0) node[below] {\tiny $2$} (8,0) node[below] {\tiny $1$};

\draw (4,-2) node {(B)};
\end{tikzpicture}
\end{minipage}

\caption{The Two Cylinder Diagrams Satisfying Case 4.I.a) in $\cH(1^4)$}
\label{Case4IaPrinStrCylDiagsFig}
\end{figure}
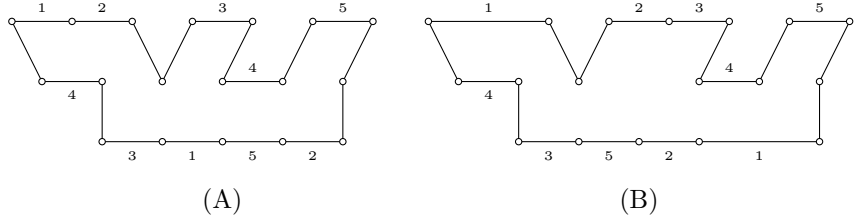
\end{lemma}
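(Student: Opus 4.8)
The plan is to carry out the enumeration of separatrix diagrams alluded to in the statement, organised around an Euler-characteristic count and the homological relation of Case 4.I.a). Recall from the beginning of Section~\ref{sec:4cyl:C4I} that for a horizontally periodic $M\in\cH(1^4)$ satisfying Case 4.I) the separatrix graph $\Gr$ is planar with two connected components $\Gr_1,\Gr_2$, each having a regular neighbourhood homeomorphic to a four-holed sphere, and that none of $C_1,\dots,C_4$ carries a saddle connection on both its top and bottom. Since all four zeros are simple, every vertex of $\Gr$ has valence $4$; and since $\chi(\Gr_i)=\chi(\text{four-holed sphere})=-2$, writing $V_i,E_i$ for the numbers of vertices and edges of $\Gr_i$ we get $E_i=V_i+2$, together with $V_1+V_2=4$ and $E_1+E_2=8$. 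A one-vertex component would force a valence-$6$ vertex, so $V_1=V_2=2$ and $E_1=E_2=4$. The connected graphs on two vertices with four edges, all vertices of valence $4$, are exactly two: the quadruple edge between the two vertices, and the graph consisting of a double edge between the two vertices plus one loop at each vertex; these are precisely the ``two simple zeros'' pictures appearing under Case 4.I.a) in Figure~\ref{fig:RibonGraph:C4I:ab}.

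\textbf{Translating the relation.} Next I would encode the homological relation combinatorially. After relabelling as in the text, the top of $C_1$ equals, as a set of saddle connections, the union of the bottoms of $C_2,C_3,C_4$; in particular these four cylinder sides all lie in one component, say $\Gr_1$, whose four boundary components are therefore $\mathrm{top}(C_1)$, $\mathrm{bottom}(C_2)$, $\mathrm{bottom}(C_3)$, $\mathrm{bottom}(C_4)$ (and symmetrically for $\Gr_2$). Since each edge of $\Gr_1$ is traversed exactly twice in total over the four boundary components, and each edge must occur somewhere in $\mathrm{bottom}(C_2)\cup\mathrm{bottom}(C_3)\cup\mathrm{bottom}(C_4)$, no edge can be traversed twice by $\mathrm{top}(C_1)$; counting sides ($2E_1=8$ in all) then shows $\mathrm{top}(C_1)$ traverses each of the four edges exactly once, and the three bottoms partition the edge set into disjoint nonempty blocks, hence of sizes $2,1,1$. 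A boundary component consisting of a single edge is a monogon, which forces that edge to be a loop whose two half-edges are cyclically adjacent at their vertex. Thus $\Gr_1$ has at least two such loops, which excludes the quadruple-edge graph; so $\Gr_1$ (and likewise $\Gr_2$) is the double-edge-plus-two-loops graph, with $\mathrm{bottom}(C_3),\mathrm{bottom}(C_4)$ the two loops, $\mathrm{bottom}(C_2)$ the double edge, and $\mathrm{top}(C_1)$ the length-four cycle through all four edges.

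\textbf{The finite check.} It then remains to list the ribbon structures on this graph compatible with the boundary pattern just described, and the ways to glue a neighbourhood of $\Gr_1$ to one of $\Gr_2$ along the four cylinders. At each vertex the loop must be a monogon, which pins down the cyclic order of the half-edges up to the obvious label symmetry, leaving only two ribbon structures for $\Gr_1$; for each one checks that the remaining sides organise into exactly two further boundary components, of lengths $4$ and $2$ (equivalently, that the neighbourhood is genus $0$ with four holes). Doing the same for $\Gr_2$, and then matching $\mathrm{top}(C_1)$ with $\mathrm{bottom}(C_1)$, the double edge of $\mathrm{bottom}(C_2)$ with that of $\mathrm{top}(C_2)$, and the two loops of $C_3,C_4$ in the unique combinatorial way, a direct inspection (discarding the non-genus-zero, disconnected, or already-seen cases) leaves exactly the two cylinder diagrams of Figure~\ref{Case4IaPrinStrCylDiagsFig}. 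Finally one verifies directly that these two diagrams indeed lie in $\cH(1^4)$ and satisfy the relation $\gamma_1-\gamma_2-\gamma_3-\gamma_4=0$, and that they are not isomorphic, for instance by comparing the cyclic word of saddle connections read along $\mathrm{top}(C_1)$.

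\textbf{Main obstacle.} The delicate part is the last enumeration. It is finite and elementary, but one must be careful that no admissible ribbon-structure/gluing combination is overlooked, that the genus-zero and connectivity conditions are actually checked on each candidate rather than assumed, and that combinations producing isomorphic cylinder diagrams are correctly identified so that the count comes out to exactly two. The earlier steps (the Euler-characteristic bookkeeping and the passage from the homological relation to the ``one long boundary component plus two loop-monogons'' picture) are routine by comparison.
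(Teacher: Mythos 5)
Your proposal is correct and follows essentially the same route as the paper, whose proof is precisely the asserted inspection of the admissible configurations of $\Gr_1,\Gr_2$ (cf.\ Figure~\ref{fig:RibonGraph:C4I:ab}) together with their ribbon structures and gluings; your Euler-characteristic bookkeeping and the monogon argument pinning each $\Gr_i$ down to the double-edge-plus-two-loops graph just make explicit what the paper leaves as "inspection." The only caveat is the one you already flag: the final finite check of ribbon structures and gluings must actually be carried out exhaustively, but that is exactly the computation the paper itself relies on.
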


\begin{proposition}\label{prop:C4Ia:H1111}
 Let $\cM$ be a rank two affine submanifold of $\cH(1^4)$.  Assume that $\tilde \cQ(2,1,-1^3)$ is the only rank two affine manifold in $\cH(2,1^2)$. If $\cM$ contains an $\cM$-cylindrically stable horizontally periodic surface $M$ satisfying Case 4.I.a), then either  $\cM=\dcoverprinc$ or $\cM=\prymprinc$.
\end{proposition}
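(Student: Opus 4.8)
The plan is to follow the degenerate-and-extend template of Propositions~\ref{prop:C3I:2sim:cyl:classify} and~\ref{prop:C4II:H1111}. By Lemma~\ref{Case4IaPrinStrCD} the surface $M$ realizes one of the two cylinder diagrams of Figure~\ref{Case4IaPrinStrCylDiagsFig}; label the horizontal cylinders $C_1,\dots,C_4$ so that the top of $C_1$ is the union of the bottoms of $C_2,C_3,C_4$, hence $\gamma_1=\gamma_2+\gamma_3+\gamma_4$ in homology. The goal is to collapse a suitable family of simple cylinders to land in a known rank two locus inside $\cH(2,2)$, and then push the relevant involutions back up to $M$ using Proposition~\ref{DblCovExtSimpCyl}.

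First I would determine the $\cM$-parallel equivalence classes. Since $\cM$ has rank two, the four cylinders fall into exactly two classes: they cannot all be free, because the core curves of four free cylinders would span, via the single homological relation, a $3$-dimensional isotropic subspace of the symplectic space $p(T^\R_M\cM)$, which has dimension $4$; and $\{C_2,C_3,C_4\}$ cannot be a single class, since the homological relation would then force $C_1$ into it. By Lemma~\ref{lm:C4Ia:ssim:cyl:no:par:C1}, no semi-simple cylinder among $C_2,C_3,C_4$ is $\cM$-parallel to $C_1$. Inspecting the admissible separatrix graphs of Figure~\ref{fig:RibonGraph:C4I:ab}, and using Lemma~\ref{lm:s:cyl:dist:sim:zeros} together with an angle count, one checks that in each of the two diagrams $C_2$ and $C_3$ are simple cylinders carrying two disjoint pairs of distinct simple zeros, while $C_1$ and $C_4$ are not semi-simple; the equivalence classes are therefore $\{C_2,C_3\}$ and $\{C_1,C_4\}$. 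By the argument of \cite[Lem. 5.3]{AulicinoNguyenGen3TwoZeros}, $C_2$ and $C_3$ are in fact $\cM$-similar, and after twisting the cylinders in $\{C_1,C_4\}$ each of $C_2,C_3$ contains a vertical saddle connection joining its two zeros, so the no-loop hypothesis of Proposition~\ref{DblCovExtSimpCyl} holds.

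Now collapse $\{C_2,C_3\}$ simultaneously as in Proposition~\ref{prop:collapse:similar:cyl}, obtaining $M'$ in a rank two affine submanifold $\cM'\subset\cH(2,2)$ with $\dim\cM'=\dim\cM-1$; by \cite{AulicinoNguyenGen3TwoZeros}, $\cM'\in\{\dcoverhyp,\dcoverodd,\prym\}$. Write $\tilde\sig_2,\tilde\sig_3$ for the distinguished saddle connections of $M'$ arising from this degeneration, each a loop at a zero of $M'$. Arguing as in the proof of Proposition~\ref{prop:C3I:2sim:cyl:classify} and using the explicit cylinder structure of $M'$: the Prym involution $\inv'$ of $M'$ preserves $\Pi=\tilde\sig_2\cup\tilde\sig_3$ (otherwise a small deformation inside $\cM'$ would break the holonomy equality $\mathrm{hol}(\tilde\sig_2)=\mathrm{hol}(\tilde\sig_3)$ forced by the $\cM$-similarity of $C_2,C_3$), and when $\cM'\in\{\dcoverhyp,\dcoverodd\}$ the hyperelliptic involution $\iota'$ also preserves $\Pi$ (it permutes the loops sitting at the zeros of $M'$; when $\cM'=\dcoverhyp$ this is either established directly or the locus is excluded by a fixed-point count as in Proposition~\ref{prop:C3I:2sim:cyl:classify}). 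By Proposition~\ref{DblCovExtSimpCyl} these involutions extend to $M$ with the same numbers of fixed points, so $M\in\cP\cap\cH(1^4)=\prymprinc$ when $\cM'=\prym$, and $M\in\cP\cap\cL\cap\cH(1^4)=\dcoverprinc$ when $\cM'\in\{\dcoverodd,\dcoverhyp\}$ (by Lemma~\ref{lm:2inv:dblcover}). The same conclusions hold on a neighborhood of $M$ in $\cM$ (Proposition~\ref{prop:collapse:similar:cyl}), so $\cM\subseteq\prymprinc$ or $\cM\subseteq\dcoverprinc$; since $\dim\cM=\dim\prym+1=6=\dim\prymprinc$ and $\dim\cM=\dim\dcoverodd+1=\dim\dcoverhyp+1=5=\dim\dcoverprinc$, ergodicity of the $\SL(2,\R)$-action on $\cM$ gives the claimed equality.

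The delicate part will be the combinatorial bookkeeping: checking, for each of the two cylinder diagrams, that the simple cylinders and equivalence classes are exactly as described (in particular that $C_2$ and $C_3$ carry disjoint pairs of zeros and are $\cM$-parallel rather than free), and then tracking fixed points carefully enough to guarantee that $\inv'$ and $\iota'$ preserve the distinguished saddle connections $\tilde\sig_2,\tilde\sig_3$ — the subtlest point being the case $\cM'=\dcoverhyp$, where the Prym involution of $M'$ fixes its two zeros instead of exchanging them.
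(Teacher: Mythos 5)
There is a genuine gap: your combinatorial description of the two cylinder diagrams of Lemma~\ref{Case4IaPrinStrCD} is wrong for diagram (B), and your uniform collapse strategy cannot be run there. In diagram (B) exactly \emph{one} of the top cylinders is simple (all three of $C_2,C_3,C_4$ are semi-simple), so there is no pair of simple cylinders carrying disjoint pairs of zeros to collapse into $\cH(2,2)$. The paper handles this diagram by a completely different argument: it first rules out that either strictly semi-simple top cylinder is free (collapsing such a cylinder would produce a surface in $\tilde{\cQ}(2,1,-1^3)$ whose Prym involution would have to fix all three horizontal cylinders, hence have at least six fixed points), concludes that the unique simple cylinder is free, collapses \emph{it} to land in $\tilde{\cQ}(2,1,-1^3)$, and extends the Prym involution via Proposition~\ref{DblCovExtSimpCyl}; only $\prymprinc$ arises from this diagram, together with the dimension count $\dim\cM=\dim\tilde{\cQ}(2,1,-1^3)+1=6$. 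Your proposal never uses the hypothesis that $\tilde{\cQ}(2,1,-1^3)$ is the only rank two affine manifold in $\cH(2,1^2)$ at the place where it is actually needed.

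Even for diagram (A) your determination of the equivalence classes is not justified. Note that the third top cylinder is not semi-simple there, so Lemma~\ref{lm:C4Ia:ssim:cyl:no:par:C1} says nothing about it, and the rank-two homology count you invoke does not exclude, for instance, that one simple cylinder is free while the other is $\cM$-parallel to the non-semi-simple cylinder; in particular ``the equivalence classes are therefore $\{C_2,C_3\}$ and $\{C_1,C_4\}$'' does not follow (and the class $\{C_1,C_4\}$ is neither true nor needed in the paper). The paper instead rules out that a simple cylinder is free, and then that the two simple cylinders fail to be similar, by collapsing into $\cH(2,1^2)$ and contradicting the fixed-point count of the Prym involution of $\tilde{\cQ}(2,1,-1^3)$ --- again the hypothesis of the proposition; citing \cite[Lem.~5.3]{AulicinoNguyenGen3TwoZeros}, which concerns Case 3.I), does not substitute for this. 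Once parallelism and similarity are established, your collapse to $\cH(2,2)$, the extension of $\inv'$ (and $\iota'$ when $\cM'=\dcoverodd$) via Proposition~\ref{DblCovExtSimpCyl}, and the dimension count do match the paper's Case (A); also note the paper excludes $\cM'=\dcoverhyp$ directly, since the Prym involution of $M'$ must fix each of the two remaining horizontal cylinders and hence exchange the zeros, forcing $M'\in\cH^{\rm odd}(2,2)$, whereas your treatment of that subcase is left vague.
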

\begin{proof}
By Lemma \ref{Case4IaPrinStrCD}, there are two cases to consider.

\medskip
\noindent \ul{Case (A):} Denote the simple cylinders by $C_3$ and $C_4$. By Lemma~\ref{lm:C4Ia:ssim:cyl:no:par:C1}, neither of them is $\cM$-parallel to $C_1$. Therefore, either one of them, say $C_4$ is free, or they are $\cM$-parallel.
 Suppose to a contradiction that $C_4$ is free.  Collapse it so that two zeros in its boundary collide. The resulting surface $M'$ belongs to a rank two affine submanifold $\cM'$ of $\cH(2, 1^2)$. By assumption, $\cM'=\tilde{\cQ}(2,1,-1^3)$. In particular, $M'$ admits an involution $\inv$ with four fixed points whose derivative is $-\id$. Note that $M'$ has three horizontal cylinders. It is easy to see that none of them can be permuted with another one by $\inv$. Thus all three cylinders are invariant by $\inv$, which implies that $\inv$ has at least six fixed points in the interior of the cylinders. This contradiction means that $C_3$ and $C_4$ must be $\cM$-parallel.

 We claim that $C_3$ and $C_4$ are $\cM$-similar. Indeed, if they are not, then twist and collapse them such that only one pair of simple zeros in their boundaries collide. The resulting surface $M'$ belongs to a rank two affine submanifold of $\cH(2,1^2)$. By assumption, $M' \in \tilde \cQ(2,1,-1^3)$, thus $M'$ has an involution $\inv$ with four fixed points. Remark that $M'$ is horizontally periodic with two horizontal cylinders that we keep denoting by $C_1$ and $C_2$. Observe that $\inv$ must fix each of $C_1$ and $C_2$, hence $\inv$ has at least four fixed points in the interiors of $C_1$ and $C_2$. But the double zero of $M'$ must also be a fixed point of $\inv$. Thus, $\inv$ has at least five fixed points, and we have a contradiction which implies that $C_3$ and $C_4$ are $\cM$-similar.

 Twist and collapse $C_3$ and $C_4$ simultaneously such that the pairs of zeros in their boundaries collide, we get a surface $M'$ which is contained in a rank two affine submanifold $\cM'$ of $\cH(2,2)$ (by Proposition~\ref{prop:collapse:similar:cyl}).  For $i=3,4$, let $\sig_i$ denote saddle connection which the degeneration of $C_i$ on $M'$.

 By the results  of \cite{AulicinoNguyenGen3TwoZeros}, $M'$ admits a Prym involution $\inv$ with four fixed points. Since  $M'$ has two horizontal cylinders which cannot be exchanged by an involution, $\inv$ must fix each of these cylinders. Consequently, $\inv$ has four fixed points in the interiors of the cylinders.  It follows that the zeros of $M'$ are exchanged by $\inv$, which means that $M' \in \cH^{\rm odd}(2,2)$, and  hence $\cM'\in \allowbreak \{\dcoverodd,\prym\}$.

 Observe also that $\inv$ must exchange $\sig_3$ and $\sig_4$, otherwise $\tau$ would have more than four fixed points. Thus, $\tau$ extends to a Prym involution on $M$ by Proposition~\ref{DblCovExtSimpCyl}. Therefore, $M\in \prymprinc$. It follows from Proposition~\ref{prop:collapse:similar:cyl} that $\prymprinc$ contains a neighborhood of $M$ in $\cM$, hence $\cM \subseteq \prymprinc$.

 If $\cM'=\prym$, then $\dim\cM \allowbreak = \dim \prym+1 = \dim\prymprinc=6$. Thus, $\cM=\prymprinc$.

 If $\cM'=\dcoverodd$, then $M'$ has a hyperelliptic involution $\iota$. One can check that $\iota$ fixes each of $\sig_3$ and $\sig_4$, thus extends to a hyperelliptic involution of $M$. Therefore $M \in \cH(1^4)\cap \cP\cap\cL=\dcoverprinc$. Since in this case $\dim\cM=\dim\dcoverprinc=5$, we must have $\cM=\dcoverprinc$.

 \medskip

 \noindent \ul{Case (B):} Let $C_4$ be the unique simple cylinder. Since in this case all of the cylinders $C_2,C_3,C_4$ are semi-simple, none of them is $\cM$-parallel to $C_1$ by Lemma~\ref{lm:C4Ia:ssim:cyl:no:par:C1}. Since they cannot belong to the same equivalence class either, at least one of them is free.

 If $C_2$ or $C_3$ is free, then collapse it to obtain a surface $M'$ in $\cH(2,1^2)$. By \cite[Prop. 2.16]{AulicinoNguyenGen3TwoZeros}, $M'$ belongs to a rank two affine submanifold $\cM'$ of $\cH(2,1^2)$. By assumption, $\cM=\tilde{\cQ}(2,1,-1^3)$, which means that $M'$ admits a Prym involution $\inv$ with four fixed points. But such an involution must fix all three cylinders, which means that $\inv$ has at least six fixed points and we get a contradiction.

 It remains to consider the case $C_4$ is free. Collapsing it, we obtain a surface $M' \in \tilde{\cQ}(2,1,-1^3)$. Note that in this case the Prym involution $\inv$ of $M'$ fixes $C_1$, and permutes $C_2$ and $C_3$. In particular, $\inv$ leaves invariant the saddle connection which is the degeneration of $C_4$. By Proposition~\ref{DblCovExtSimpCyl}, $\inv$ extends to an involution of $M$ with four fixed points. Thus we have $\cM \subset \prymprinc$. Since we have $ \dim \cM=\dim \tilde{\cQ}(2,1,-1^3)+1=6=\dim\prymprinc$, it follows $\cM=\prymprinc$.
 The proof of the proposition is now complete.
\end{proof}

\subsubsection{Case 4.I.b)}
Recall that in this case we number the horizontal cylinders such that
\begin{equation}\label{eq:C4Ib:hom:rel}
\ell_1+\ell_2=\ell_3+\ell_4.
\end{equation}

We first observe
\begin{lemma}
\label{lm:Case4IbEqClasses}
Let $\cM$ be a rank two affine manifold in genus three and $M \in \cM$ an $\cM$-cylindrically stable horizontally periodic translation surface satisfying Case 4.I.b).  Then up to a renumbering of the cylinders respecting \eqref{eq:C4Ib:hom:rel} one of the following occurs:
\begin{itemize}
\item The equivalence classes are  $\{C_1, C_2\}$, $\{C_3\}$, $\{C_4\}$,
\item The equivalence classes are $\{C_1, C_3\}$, $\{C_2\}$, $\{C_4\}$,
\item The equivalence classes are $\{C_1, C_3\}$, $\{C_2, C_4\}$ and $\ell_1 = \ell_3$ and $\ell_2 = \ell_4$.
\end{itemize}
\end{lemma}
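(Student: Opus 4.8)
The plan is to run the whole argument at the level of the homology classes of the core curves, in the spirit of the proof of Lemma~\ref{4CylCaseIIIEqCls}. Set $U:=p(T^\R_M\cM)\subset H_1(M,\R)$; since $\mathrm{rk}(\cM)=2$ this is a symplectic subspace of dimension $4$, so its symplectic orthogonal $U^{\perp}$ (with respect to the intersection form) is a symplectic subspace of dimension $2$. Let $W\subset H_1(M,\R)$ be the span of $[\gamma_1],\dots,[\gamma_4]$. In Case 4.I.b the core curves satisfy exactly one homological relation, namely $[\gamma_1]+[\gamma_2]-[\gamma_3]-[\gamma_4]=0$ (this is read off the admissible configurations of $\Gr_1,\Gr_2$ in Figure~\ref{fig:RibonGraph:C4I:ab}), so $\dim W=3$ and any two of the classes $[\gamma_i]$ are linearly independent; moreover $W$ is isotropic because the $\gamma_i$ are disjoint simple closed curves. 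The first key point I would establish is that $\dim(W\cap U^{\perp})\leq 1$: if it were $2$ then $W\cap U^{\perp}=U^{\perp}$, i.e.\ $U^{\perp}\subset W$, which is impossible since $U^{\perp}$ is symplectic of positive dimension while $W$ is isotropic.

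Next I would record, via the same identification of $H^1$ with $H_1$ through Poincar\'e duality used in the proof of Lemma~\ref{4CylCaseIIIEqCls}, the cohomological content of $\cM$-parallelism: if $C_i$ and $C_j$ are $\cM$-parallel, the linear functionals $\xi\mapsto\xi(\gamma_i)$ and $\xi\mapsto\xi(\gamma_j)$ on $T_M\cM$ are proportional, so there is $\rho_{ij}>0$ with $[\gamma_i]-\rho_{ij}[\gamma_j]\in U^{\perp}$; since $\gamma_i,\gamma_j$ are horizontal and $\rho_{ij}$ is the (constant) ratio of their holonomies, one also has $\ell_i=\rho_{ij}\ell_j$ on a neighbourhood of $M$ in $\cM$. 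As $[\gamma_i]$ and $[\gamma_j]$ are independent in $W$, the vector $[\gamma_i]-\rho_{ij}[\gamma_j]$ is nonzero, and since $W\cap U^{\perp}$ has dimension at most $1$, \emph{all} such vectors, over all $\cM$-parallel pairs occurring on $M$, are pairwise proportional.

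Then I carry out the short case analysis on the partition of $\{C_1,\dots,C_4\}$ into $\cM$-equivalence classes, working in the basis $[\gamma_1],[\gamma_2],[\gamma_3]$ of $W$ with $[\gamma_4]=[\gamma_1]+[\gamma_2]-[\gamma_3]$. First, no class has three or more members: if $C_1,C_2,C_3$ were all $\cM$-parallel then $[\gamma_1]-\rho_{12}[\gamma_2]$ and $[\gamma_1]-\rho_{13}[\gamma_3]$ are two independent vectors of $W\cap U^{\perp}$, contradicting $\dim(W\cap U^{\perp})\leq1$; in particular there cannot be a single equivalence class. By \cite[Lem.~2.15]{AulicinoNguyenGen3TwoZeros} the four cylinders are not all free, so the partition is $\{2,1,1\}$ or $\{2,2\}$. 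Renumbering within and across the blocks $\{1,2\}\,|\,\{3,4\}$ of \eqref{eq:C4Ib:hom:rel}, in the $\{2,1,1\}$ case the two-element class is either a side of the relation, taken to be $\{C_1,C_2\}$ (first listed case), or a crossing pair, taken to be $\{C_1,C_3\}$ (second listed case). In the $\{2,2\}$ case the classes are $\{C_1,C_2\},\{C_3,C_4\}$ or the crossing configuration $\{C_1,C_3\},\{C_2,C_4\}$. The aligned configuration is excluded because $[\gamma_1]-\rho[\gamma_2]=(1,-\rho,0)$ and $[\gamma_3]-\rho'[\gamma_4]=(-\rho',-\rho',1+\rho')$ are independent (the last coordinate $1+\rho'\neq0$), again contradicting $\dim(W\cap U^{\perp})\leq1$. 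In the crossing configuration $[\gamma_1]-\rho[\gamma_3]=(1,0,-\rho)$ and $[\gamma_2]-\rho'[\gamma_4]=(-\rho',1-\rho',\rho')$ must be proportional; the second coordinate forces $\rho'=1$, and then the remaining coordinates force $\rho=1$, so $\ell_1=\rho\ell_3=\ell_3$ and $\ell_2=\rho'\ell_4=\ell_4$, which is the third listed case. This exhausts all possibilities.

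All the computations are elementary $3\times 3$ linear algebra, so I do not expect a serious obstacle; the two points I would state with care are (i) that in Case 4.I.b the core curves genuinely span a $3$-dimensional space and satisfy the single relation above, which is imported from the combinatorial classification underlying \cite[Lem.~6.1]{AulicinoNguyenGen3TwoZeros} and Figure~\ref{fig:RibonGraph:C4I:ab}, and (ii) the precise meaning of ``$\cM$-parallel'' used in the second paragraph, i.e.\ that it forces the exact relation $[\gamma_i]-\rho_{ij}[\gamma_j]\in U^{\perp}$ with $\rho_{ij}$ constant (and hence $\ell_i=\rho_{ij}\ell_j$), which is the only place where one really invokes the structure of affine manifolds beyond pure symplectic linear algebra.
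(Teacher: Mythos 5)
Your proof is correct, and it is built from the same ingredients as the paper's argument: the single homological relation $\gamma_1+\gamma_2-\gamma_3-\gamma_4=0$ of Case 4.I.b, the fact that $\cM$-parallelism forces the period functionals of the core curves to be proportional on $T_M\cM$ (with the constant computing the ratio of circumferences), and the rank-two hypothesis entering through the symplecticity of $p(T^\R_M\cM)$. The organization, however, is genuinely different: the paper disposes of the bad configurations by showing each of them would force all four cylinders into a single equivalence class and then appeals to $\cM$-cylindrical stability, whereas you encode every parallelism as a defect vector $[\gamma_i]-\rho_{ij}[\gamma_j]\in W\cap U^{\perp}$ and run the entire case analysis off the single bound $\dim(W\cap U^{\perp})\le 1$, which follows from $U$ being $4$-dimensional symplectic and $W$ isotropic --- essentially the mechanism the paper deploys in its proof of Lemma~\ref{4CylCaseIIIEqCls} rather than in this lemma. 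What your packaging buys is a uniform treatment (exclusion of classes of size at least three, exclusion of the aligned pairing $\{C_1,C_2\},\{C_3,C_4\}$, and the forced $\rho=\rho'=1$, hence $\ell_1=\ell_3$, $\ell_2=\ell_4$, in the crossing pairing are all the same $3\times 3$ computation, which I checked) and the observation that cylindrical stability is never actually needed --- only rank two, plus \cite[Lem.~2.15]{AulicinoNguyenGen3TwoZeros} to rule out four free cylinders. The two inputs you flagged are indeed the only external ones, and both are justified as you state: cutting along the four core curves leaves exactly two components, so there is exactly one relation among the $[\gamma_i]$, giving $\dim W=3$ and pairwise independence; and $\cM$-parallelism does yield $[\gamma_i]-\rho_{ij}[\gamma_j]\in U^{\perp}$ with $\ell_i=\rho_{ij}\ell_j$, exactly as the paper uses when it writes $\gamma_1=\mu\gamma_3$, $\gamma_2=\lambda\gamma_4$ as functionals.
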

\begin{proof}
We first notice that the four cylinders cannot all be free since this would contradict the rank two hypothesis.
By the homological relation, there cannot be three cylinders in the same equivalence class because it would imply that all of the cylinders are $\cM$-parallel.  Similarly, if $C_1$ and $C_2$ are $\cM$-parallel, then the homological relation implies that each of $C_3$ and $C_4$ is free.

Finally, assume that the equivalence classes are $\{C_1, C_3\}$ and $\{C_2, C_4\}$.  Then there exist non-zero real numbers $\mu$ and $\lambda$ such that $\gamma_1 = \mu\gamma_3$ and $\gamma_2 = \lambda \gamma_4$.  Combining this with the homological relation yields
$$\mu\gamma_3 + \lambda\gamma_4 = \gamma_3 + \gamma_4.$$
This implies that unless $\mu = \lambda = 1$, there is only one equivalence class of cylinders, which would contradict $\cM$-cylindrical stability.  Furthermore, the relation $\mu = \lambda = 1$ implies that there are two pairs of cylinders with equal circumferences.
\end{proof}

The following lemma improves  Lemma~\ref{ANLem33Gen}.  Despite its rather technical statement, it will be useful for us in the sequel.

\begin{lemma}
 \label{lm:nonfree:sim:cyl}
 Let $\cM$ be a rank two affine manifold in genus three in a stratum with $k \geq 3$ zeros. Assume that every rank two affine manifold in genus three with at most $k-1$ zeros admits an involution with four fixed points whose derivative is $-\id$.  If $\cM$ contains a horizontally periodic surface such that one of the horizontal cylinders is simple and not free, then $\cM$ contains an $\cM$-cylindrically stable horizontally periodic surface $M$ satisfying one of the following:
 \begin{itemize}
  \item[(i)] There are three horizontal cylinders, two of which are simple and $\cM$-parallel to each other, and the cylinder decomposition satisfies Case 3.I),

  \item[(ii)] There are at least four horizontal cylinders three of which are $\cM$-parallel to one another,

  \item[(iii)] There are at least four horizontal cylinders, one of which is simple and not free.
 \end{itemize}

\end{lemma}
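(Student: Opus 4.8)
The plan is a simplification procedure: starting from a horizontally periodic surface in $\cM$ with a simple non-free cylinder, I repeatedly replace it by a ``better'' surface until reaching an $\cM$-cylindrically stable one of one of the three prescribed shapes. Termination is built in, because the core curves of a cylinder decomposition of a genus-three surface are pairwise disjoint and pairwise non-homotopic, so their number is at most $3g-3=6$, and each simplification step either stops or strictly increases the number of horizontal cylinders.

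The first move normalises the number of cylinders and the stability. Given a horizontally periodic surface in $\cM$ with a simple non-free cylinder: if it has exactly two cylinders they are $\cM$-parallel (the simple one is not free), and Lemma~\ref{ANLem33Gen} — whose hypothesis is exactly the inductive assumption of the present lemma — replaces it by a horizontally periodic surface with at least three cylinders, still carrying a simple non-free cylinder; so we may assume there are at least three. If the surface is not $\cM$-cylindrically stable, Lemma~\ref{lm:WrightTwistPresLem} yields a horizontally periodic surface in $\cM$ with strictly more cylinders; the distinguished simple cylinder persists (no zero lies in its interior, so it cannot be subdivided) and stays non-free, since $\cM$-parallelism of persisting cylinders is preserved under such deformations; we iterate. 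Eventually we obtain an $\cM$-cylindrically stable horizontally periodic $M$ with $n\ge 3$ cylinders and a simple non-free cylinder $C_1$. If $n\ge 4$ we stop, because $M$ then satisfies conclusion (iii).

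So assume $n=3$. By \cite[Lem. 4.1]{AulicinoNguyenGen3TwoZeros} the decomposition has topological type 3.I), 3.II) or 3.III); since $C_1$ is not free and $\cM$ is rank two, the $\cM$-equivalence classes are $\{C_1,C_2\}$ and $\{C_3\}$ with $C_3$ free (for type 3.I) this is the remark in Section~\ref{sec:C3I:collapse}). Type 3.II) is excluded, because by \cite[Lem. 4.3]{AulicinoNguyenGen3TwoZeros} (invoking our inductive hypothesis) such a surface is $\cM$-cylindrically unstable, contrary to our choice of $M$. If the type is 3.I) and $C_2$ is simple, we stop: $\{C_1,C_2\}$ is a pair of simple $\cM$-parallel cylinders and $M$ satisfies conclusion (i). In the two remaining cases — type 3.III), and type 3.I) with $C_2$ not simple — I would produce a horizontally periodic surface in $\cM$ with at least four cylinders and re-enter the procedure with it: for type 3.III) by the cut-and-reglue and collapsing argument of case (b) in the proof of Proposition~\ref{Min4CylProp} (using Lemma~\ref{lm:s:cyl:dist:sim:zeros} and Propositions~\ref{prop:collapse:free:sim:cyl} and \ref{prop:collapse:similar:cyl}), and for type 3.I) with $C_2$ not simple by passing to a square-tiled model ($\bk(\cM)=\Q$ by Theorem~\ref{thm:Wright:def:field:deg:rk}), working in the vertical direction, where the simple cylinder $C_1$ is contained in a vertical cylinder, and combining the transverse-cylinder and Cylinder Proportion (Proposition~\ref{CylinderPropProp}) analysis of \cite[Prop. 5.6]{AulicinoNguyenGen3TwoZeros} with Proposition~\ref{2SimpCylImp4CylsProp}. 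When the procedure terminates, the final cylindrically stable surface either has a simple non-free cylinder, giving (iii), or — this is what must be checked in these last two cases — is the four-cylinder surface just produced and carries three mutually $\cM$-parallel cylinders, giving (ii).

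The step I expect to be the main obstacle is the type 3.I) case with $C_2$ not simple (and, to a lesser degree, type 3.III)): one must enumerate the admissible cylinder/separatrix diagrams and verify that the four-or-more-cylinder surface produced there genuinely carries one of the two target structures, while keeping precise track of how the $\cM$-equivalence classes of cylinders transform under the vertical-direction manipulation and the possible collapses. More generally, the recurring technical difficulty throughout is this bookkeeping of $\cM$-parallelism — ensuring a simple cylinder never silently becomes free, and identifying which cylinders are forced to remain $\cM$-parallel after a deformation — which is where most of the care is needed.
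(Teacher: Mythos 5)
Your overall skeleton agrees with the paper's: pass to an $\cM$-cylindrically stable, square-tiled horizontally periodic surface carrying a simple non-free cylinder $C_1$ (the paper does this in one stroke via \cite[Lem.~2.14]{AulicinoNguyenGen3TwoZeros}); if there are at least four cylinders you get (iii); with exactly three cylinders, Case 3.II) is excluded (the paper simply notes all three cylinders are free in that case, your route via \cite[Lem.~4.3]{AulicinoNguyenGen3TwoZeros} is an acceptable variant), and Case 3.I) with $C_2$ simple gives (i). The genuine gap is precisely in the two branches you flag yourself: Case 3.I) with $C_2$ not simple, and Case 3.III). There you only propose to ``produce a horizontally periodic surface with at least four cylinders and re-enter the procedure,'' but the constructions you invoke (collapsing to a lower stratum and re-inserting a cylinder via Proposition~\ref{CylStDens} and Lemma~\ref{kCylsInBdLem}, or a vertical-direction manipulation) give no control on the new surface: it need not carry a simple non-free cylinder, need not contain three mutually $\cM$-parallel cylinders, and need not be $\cM$-cylindrically stable. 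So the invariant that allows re-entry is lost, and neither (ii) nor (iii) follows; since conclusion (ii) is exactly what these branches must deliver, and you explicitly defer its verification, the core of the lemma is missing.

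The paper closes these branches directly, with no iteration. Case 3.III) is ruled out by \cite[Lem.~4.6]{AulicinoNguyenGen3TwoZeros} (a stable three-cylinder decomposition of type 3.III) is incompatible with a non-free simple cylinder). In Case 3.I) with $C_2$ not simple, it runs the adjacency dichotomy from the proof of \cite[Prop.~5.6]{AulicinoNguyenGen3TwoZeros} on the square-tiled model: either $C_1$ is adjacent only to the free cylinder $C_3$, in which case one obtains an equivalence class $\cD$ of at least three vertical cylinders that do not fill $M$, so the vertical direction already exhibits at least four cylinders three of which are $\cM$-parallel, i.e.\ conclusion (ii); or $C_1$ is adjacent to both $C_2$ and $C_3$, which leads to a contradiction. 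That dichotomy is the argument you would need to execute. A secondary, repairable point: your persistence claim under Lemma~\ref{lm:WrightTwistPresLem} (``no zero lies in its interior, so it cannot be subdivided'') is not the right justification; what keeps a simple cylinder simple and non-free under the cylinder-preserving deformation is that its boundary saddle connection is absolutely homologous to the core curve, so elements of $\Pres(M,\cM)$ vanish on it, and $\cM$-parallelism propagates along a path on which both cylinders persist (this is in effect the content of \cite[Lem.~2.14]{AulicinoNguyenGen3TwoZeros}, which the paper cites instead of re-proving).
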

\begin{proof}
 Let $M$ be a horizontally periodic surface in $\cM$ with a non-free simple cylinder $C_1$. By \cite[Lem. 2.14]{AulicinoNguyenGen3TwoZeros}, we can suppose that $M$ is a square-tiled surface and $\cM$-cylindrically stable. Since  $M$ has at least two equivalence classes of horizontal cylinders, and $C_1$ is not free, we draw that $M$ has at least three horizontal cylinders.  If $M$ has four or more horizontal cylinders then we get the last  assertion.
 Assume from now on that $M$ contains exactly three horizontal cylinders.

 We first remark that the cylinder decomposition of $M$  does not satisfy Case 3.II) since in this case all three cylinders are free. It does not satisfy Case 3.III) either by
 \cite[Lem. 4.6]{AulicinoNguyenGen3TwoZeros}. Thus we have a cylinder decomposition in Case 3.I).

 Let $C_2$ be $\cM$-parallel to $C_1$. If $C_2$ is also simple,  by \cite[Lem. 2.11]{AulicinoNguyenGen3TwoZeros} and  \cite[Lem. 2.15]{AulicinoNguyenGen3TwoZeros}, we know that  $C_1$ and $C_2$ are $\cM$-parallel and the remaining cylinder is free. Therefore, we get the first assertion.

 Assume that $C_2$ is not simple. Let $C_3$ denote the remaining horizontal cylinder. Following the arguments in the proof of \cite[Prop. 5.6]{AulicinoNguyenGen3TwoZeros} we get two possibilities:
 \begin{itemize}
  \item[$\bullet$] If $C_1$ is only adjacent to $C_3$, then the conclusion is that we get an equivalence class $\cD$, with at least three vertical cylinders which do not fill $M$. Thus we have the second assertion.

  \item[$\bullet$] If $C_1$ is adjacent to both $C_2$ and $C_3$, then we have a contradiction.
 \end{itemize}
 The proof of the lemma is then complete.
\end{proof}

We also need the following

\begin{lemma}\label{lm:C4IV:nonfree:sim:cyl}
Let $M$ be an $\cM$-cylindrically stable horizontally periodic surface in $\cM$. If one of the horizontal cylinders of $M$ is simple and not free, then the cylinder decomposition of $M$ does not belong to Case 4.IV).
\end{lemma}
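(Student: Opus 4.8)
Argue by contradiction. Suppose $M$ is $\cM$-cylindrically stable, horizontally periodic with a cylinder decomposition of type 4.IV), and that one horizontal cylinder $C$ is simple and $\cM$-parallel to another cylinder $C'$. The plan is to first pin down the combinatorial shape of the separatrix diagram, then locate $C$ and $C'$ inside it, and finally eliminate the few surviving configurations using the Cylinder Proportion Lemma (Proposition~\ref{CylinderPropProp}) and cylinder collapsing. Let $\Gamma$ be the union of the horizontal saddle connections of $M$. Pinching the four core curves yields two thrice-punctured spheres and a twice-punctured torus, so $\Gamma$ has exactly three connected components $\Gamma_1,\Gamma_2,\Gamma_3$ whose metric neighbourhoods are regular neighbourhoods of these three pieces. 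An Euler-characteristic count, using that a simple zero has valence $4$ and the double zero (when $\cM\subset\cH(2,1^2)$) has valence $6$, and that $V(\Gamma_j)-E(\Gamma_j)$ equals the Euler characteristic of the corresponding piece, forces $\Gamma_1$ and $\Gamma_2$ each to be a figure eight at a simple zero, and $\Gamma_3$ to be a three-petal rose at the double zero (in $\cH(2,1^2)$) or a graph with two vertices and four edges (in $\cH(1^4)$), with a genus-one ribbon structure. Counting the faces of these graphs shows that the cylinder ends that consist of a single saddle connection, which I will call \emph{slim} ends, number exactly two at $\Gamma_1$, two at $\Gamma_2$, and at most one at $\Gamma_3$ (the possible face-length partitions being $(1,1,2)$ for the figure eights, and $(1,5),(2,4),(3,3)$ resp. $(1,7),\dots,(4,4)$ for $\Gamma_3$, only the partitions containing a $1$ producing a slim end).

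\textbf{Locating $C$.} A simple cylinder occupies exactly two slim ends, namely its top and its bottom, so $M$ has at most two simple cylinders. By Lemma~\ref{lm:s:cyl:dist:sim:zeros}, a simple cylinder whose boundary zeros are simple cannot have both of them at the same zero; hence the two ends of $C$ lie in two distinct components of $\Gamma$ and $C$ joins two of the three pieces. Now I bring in the global constraints. By \cite[Lem. 2.15]{AulicinoNguyenGen3TwoZeros} together with the rank-two hypothesis, the horizontal cylinders fall into at least two $\cM$-equivalence classes; the core curves $\gamma_1,\dots,\gamma_4$ satisfy the two independent homological relations coming from the two spherical pieces (each spherical piece forcing the signed sum of the core curves of the cylinders attached to it on one side to vanish), which also yields two relations among the circumferences $\ell_{C_i}$; and by \cite[Lem. 2.11]{AulicinoNguyenGen3TwoZeros} the cylinder $C$ is not $\cM$-parallel to either of the two cylinders adjacent to it along its boundary saddle connections. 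Putting these together, one obtains a short list of admissible pairings of cylinder ends compatible with type 4.IV), of equivalence-class partitions, and of topological positions for $C$ and its partner $C'$.

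\textbf{Eliminating the configurations.} In the generic case, where $C'$ is the second simple cylinder and $\{C,C'\}$ is an $\cM$-equivalence class, I would twist $\{C,C'\}$ so that a transverse cylinder $E$ crosses each of $C,C'$ exactly once; since the class of $C$ has cylinder proportion $1$ in $E$, every cylinder $D$ in the equivalence class $\cD$ of $E$ lies in $\overline{C}\cup\overline{C'}$ and crosses $C$ and $C'$ the same number of times, so that $P(C,\cD)=P(C',\cD)$ forces $\ell_C=\ell_{C'}$ — exactly the computation in the proof of Lemma~\ref{lm:C4Ia:ssim:cyl:no:par:C1} — and this equality is incompatible with the circumference relation read off from the homological relation among the $\gamma_i$. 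In the remaining configurations, where $C$ is $\cM$-parallel to a non-simple cylinder, I would either run the same proportion argument with respect to a transverse class, or collapse $C$ together with its $\cM$-similar partner (the similarity coming from \cite[Lem. 5.3]{AulicinoNguyenGen3TwoZeros}) via Proposition~\ref{prop:collapse:free:sim:cyl} or Proposition~\ref{prop:collapse:similar:cyl}; the image then lies in a rank-two affine manifold in $\cH(3,1)$, which is empty, or in $\cH(2,2)$, where a short analysis of the Prym involution on the resulting three-cylinder surface (as in the proof of Proposition~\ref{prop:C3I:2sim:cyl:classify}, noting also that by Lemma~\ref{lm:4CylCaseIV} the cylindrically stable $M$ already contains a \emph{free} simple cylinder with distinct zeros) produces a contradiction. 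I expect the main obstacle to be purely organizational: enumerating the end-pairings compatible with type 4.IV), tracking which transverse cylinders are available in each case, and verifying that the proportion identities genuinely contradict the circumference relations; all the geometric ingredients already appear in \cite{NguyenWright, AulicinoNguyenWright, AulicinoNguyenGen3TwoZeros}.
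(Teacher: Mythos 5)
There is a genuine gap. Your write-up defers the decisive steps (the enumeration of configurations and most of the eliminations are only announced with ``I would\dots''), and the one case you do treat concretely does not work. In Case 4.IV) the arrangement of the pieces is forced (a node cannot join two punctures of the same thrice-punctured sphere, by the residue theorem): the two cylinders bounding the two-holed torus, say $C_3,C_4$, join it to the two sphere pieces, while $C_1,C_2$ join the two sphere pieces to each other, so that $\gamma_1+\gamma_2=\gamma_3=\gamma_4$ and hence $\ell_1+\ell_2=\ell_3=\ell_4$. Consequently, if your two simple cylinders forming an equivalence class are $C_1$ and $C_2$, the equality $\ell_C=\ell_{C'}$ you want to extract reads $\ell_1=\ell_2$, which contradicts nothing. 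Worse, the transverse cylinder $E$ confined to $\ol{C}_1\cup\ol{C}_2$ that the computation of Lemma~\ref{lm:C4Ia:ssim:cyl:no:par:C1} requires does not exist here: at each figure-eight component the two length-one boundary faces carry the same color, so $C_1$ and $C_2$ sit on the same side of each sphere piece and every transverse cylinder crossing $C_1$ must also cross $C_3$ and $C_4$. Your fallback for the remaining cases is also misapplied: Proposition~\ref{prop:collapse:similar:cyl} needs both cylinders of the class to be simple (the partner here is not), and Proposition~\ref{prop:collapse:free:sim:cyl} needs the cylinder to be free, which is exactly what the hypothesis denies.

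What is missing are the two observations on which the paper's half-page proof rests. First, cutting $M$ along $\gamma_3$ and $\gamma_4$ and exchanging the gluings produces a genus-two translation surface whose unique horizontal cylinder is made of halves of $C_3$ and $C_4$; since in a one-cylinder genus-two surface each boundary of the cylinder contains several saddle connections, neither $C_3$ nor $C_4$ is simple. Second, the simple non-free cylinder is therefore $C_1$ or $C_2$, and the relation $\gamma_1+\gamma_2=\gamma_3=\gamma_4$ then forces all four cylinders into a single $\cM$-parallel class: if, say, $C_1\sim C_2$, then $\xi(\gamma_3)=\xi(\gamma_1)+\xi(\gamma_2)$ is proportional to $\xi(\gamma_1)$ for every $\xi\in T_M\cM$, and similarly in the other subcases. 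A single equivalence class contradicts the fact that a rank-two, $\cM$-cylindrically stable surface has at least two classes. Your homological relations are mentioned only as circumference bookkeeping, so this one-class contradiction never appears in your argument; with it, the separatrix-diagram enumeration, the slim-end counting, and all proportion computations become unnecessary.
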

\begin{proof}
Assume that the cylinder decomposition of $M$ satisfies Case 4.IV). We label the horizontal cylinders by $C_1,\dots,C_4$, and let $\gamma_i$ be a (geodesic) core curve of $C_i$. Recall that in this case the family $\{\gamma_1,\dots,\gamma_4\}$ cuts $M$ into two three-holed spheres and a two-holed torus. We choose the numbering such that $\gamma_3\cup \gamma_4$ is the boundary of the two-holed torus. Observe that the following homological relations hold
$$
\gamma_1+\gamma_2=\gamma_3=\gamma_4.
$$

By cutting $M$ along $\gamma_3$ and $\gamma_4$, then exchanging the gluings, we get two translation surfaces of genus two, both of which are horizontally periodic. Observe that one of the two surfaces has a single horizontal cylinder, which is formed by one half of $C_3$ and one half of $C_4$. This observation allows us to conclude that neither  $C_3$ nor $C_4$ is simple.

By assumption,  either $C_1$ or $C_2$ is simple and not free. But from the homological relation, it can be easily seen that in either case, all four cylinders belong to the same equivalence class, which contradicts the $\cM$-cylindrical stability of $M$.
\end{proof}

\subsubsection{Case 4.I.b): The Stratum $\cH(2,1,1)$}

The following lemma follows from an inspection of admissible configurations of $\Gr_1$ and $\Gr_2$.
\begin{lemma}
\label{Case4IbH211CD}
There are exactly three cylinder diagrams up to symmetry satisfying Case 4.I.b) in $\cH(2,1^2)$ and they are depicted in Figure~\ref{Case4IbH211CylDiagsFig}.

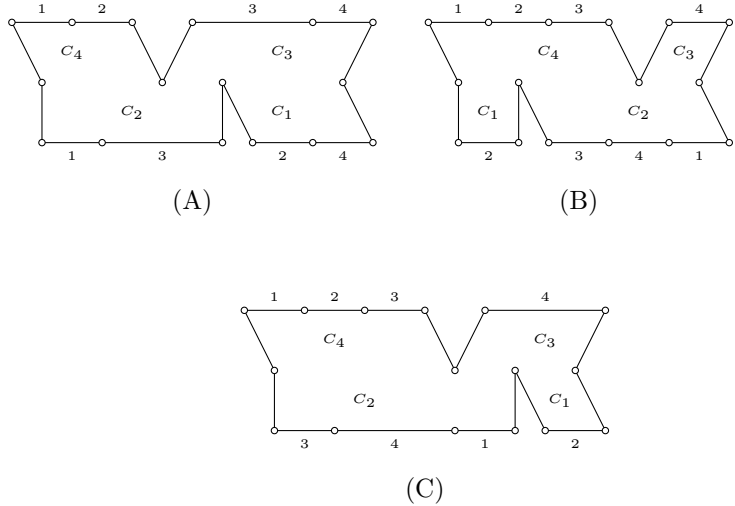
\begin{figure}[htb]
\begin{minipage}[t]{0.49\linewidth}
\centering
\begin{tikzpicture}[scale=0.4]
\draw (0,0)--(0,2)--(-1,4)--(3,4)--(4,2)--(5,4)--(11,4)--(10,2)--(11,0)--(7,0)--(6,2)--(6,0) -- cycle;
\foreach \x in {(0,0),(0,2),(-1,4),(1,4),(3,4),(4,2),(5,4),(9,4),(11,4),(10,2),(11,0),(9,0),(7,0),(6,2),(6,0),(2,0)} \filldraw[fill=white] \x circle (3pt);

\draw (0,4) node[above] {\tiny $1$} (2,4) node[above] {\tiny $2$} (7,4) node[above] {\tiny $3$} (10,4) node[above] {\tiny $4$} (1,0) node[below] {\tiny $1$} (4,0) node[below] {\tiny $3$} (8,0) node[below] {\tiny $2$} (10,0) node[below] {\tiny $4$};
\draw (1,3) node {\tiny $C_4$} (8,3) node {\tiny $C_3$} (3,1) node {\tiny $C_2$} (8,1) node {\tiny $C_1$};
\draw (5,-2) node {(A)};

\end{tikzpicture}
\end{minipage}
\begin{minipage}[t]{0.49\linewidth}
\begin{tikzpicture}[scale=0.4]
\draw (0,0)--(0,2)--(-1,4)--(5,4)--(6,2)--(7,4)--(9,4)--(8,2)--(9,0)--(3,0)--(2,2)--(2,0) -- cycle;
\foreach \x in {(0,0),(0,2),(-1,4),(1,4),(3,4),(5,4),(6,2),(7,4),(9,4),(8,2),(9,0),(7,0),(5,0),(3,0),(2,2),(2,0)} \filldraw[fill=white] \x circle (3pt);

\draw (0,4) node[above] {\tiny $1$} (2,4) node[above] {\tiny $2$} (4,4) node[above] {\tiny $3$} (8,4) node[above] {\tiny $4$} (1,0) node[below] {\tiny $2$} (4,0) node[below] {\tiny $3$} (6,0) node[below] {\tiny $4$} (8,0) node[below] {\tiny $1$};
\draw (3,3) node {\tiny $C_4$} (7.5,3) node {\tiny $C_3$} (1,1) node {\tiny $C_1$} (6,1) node {\tiny $C_2$};

\draw (4,-2) node {(B)};
\end{tikzpicture}
\end{minipage}

\vspace{0.75cm}

\begin{minipage}[t]{\linewidth}
\centering
\begin{tikzpicture}[scale=0.40]
\draw (0,0)--(0,2)--(-1,4)--(5,4)--(6,2)--(7,4)--(11,4)--(10,2)--(11,0)--(9,0)--(8,2)--(8,0) -- cycle;
\foreach \x in {(0,0),(0,2),(-1,4),(1,4),(3,4),(6,2),(5,4),(7,4),(11,4),(10,2),(11,0),(9,0),(8,2),(8,0),(6,0),(2,0)} \filldraw[fill=white] \x circle (3pt);

\draw (0,4) node[above] {\tiny $1$} (2,4) node[above] {\tiny $2$} (4,4) node[above] {\tiny $3$} (9,4) node[above] {\tiny $4$} (1,0) node[below] {\tiny $3$} (4,0) node[below] {\tiny $4$} (7,0) node[below] {\tiny $1$} (10,0) node[below] {\tiny $2$};
\draw (2,3) node {\tiny $C_4$} (9,3) node {\tiny $C_3$} (3,1) node {\tiny $C_2$} (9.5,1) node {\tiny $C_1$};
\draw (5,-2) node {(C)};
\end{tikzpicture}
\end{minipage}
\caption{The three cylinder diagrams satisfying Case 4.I.b) in $\cH(2,1,1)$}
\label{Case4IbH211CylDiagsFig}
\end{figure}
\end{lemma}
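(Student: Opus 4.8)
\emph{Proof sketch.} The statement is a finite combinatorial check, and the plan is to carry it out in four stages: first pin down the possible shapes of the two components $\Gr_1,\Gr_2$ of the horizontal separatrix diagram; then recall which planar ribbon structures on them realize Case 4.I.b); then enumerate the gluings; and finally reduce modulo symmetry. To begin, observe that in $\cH(2,1^2)$ the graph $\Gr$ has three vertices, one of valence $6$ (the double zero) and two of valence $4$ (the simple zeros), hence $7$ edges. In Case 4.I) each $\Gr_i$ has a neighborhood homeomorphic to a four-holed sphere, so $V_i-E_i=-2$ for $i=1,2$, where $V_i,E_i$ are the numbers of vertices and edges of $\Gr_i$. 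Running through the distributions of the three vertices and using that each $\Gr_i$ is connected, the only possibility is that one component is the double zero alone (one vertex, three loop-edges, $V-E=-2$) and the other consists of the two simple zeros joined by four edges ($V-E=2-4=-2$): a lone simple zero would give $V-E=1-2=-1$, and a double zero together with a simple zero would give $V-E=1-5=-4$, so no other split is possible.

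Next, by the list of planar ribbon structures compiled in the discussion preceding Figure~\ref{fig:RibonGraph:C4I:ab} (the ``Case 4.I.b'' column), the one-vertex/three-loop graph admits exactly one ribbon structure giving a planar four-holed-sphere neighborhood whose boundary cycles realize a circumference relation of type 4.I.b), and the two-vertex/four-edge graph admits exactly two. If one wishes to argue this from scratch rather than quote the figure, one enumerates the at most eight dihedral classes of cyclic orders of the half-edge multiset $\{a,a,b,b,c,c\}$ at the double zero, keeps those of genus $0$ with four boundary components and the correct boundary-homology pattern, and argues similarly for the two four-edge graphs (the ``banana'' and the ``dumbbell''). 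This leaves $1\times 2=2$ pairs of ribbon graphs to examine.

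For each such pair I would enumerate the ways to glue the four boundary circles of the double-zero neighborhood to the four boundary circles of the two-simple-zero neighborhood. A gluing is constrained as follows: each of the four cylinders has its top on one neighborhood and its bottom on the other, so a gluing amounts to a bijection between the ``upper-side'' circles of one piece and the ``lower-side'' circles of the other; the colouring rule requires that adjacent boundary circles of a single $\Gr_i$ receive opposite colours and that a circle of $\Gr_1$ be glued to a circle of $\Gr_2$ of the opposite colour; and the resulting relation $\ell_1=\pm\ell_2\pm\ell_3\pm\ell_4$ must be of type 4.I.b), which is automatic from the choice of configurations. Each admissible gluing is a cylinder diagram. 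I would then quotient by the symmetries acting on the labels: the order-eight group of relabelings preserving the partition $\{C_1,C_2\}\sqcup\{C_3,C_4\}$ up to interchange, together with the automorphisms and orientation-reversals of $\Gr_1$ and $\Gr_2$. Carrying this out leaves exactly three diagrams, which one then identifies with (A), (B), (C) of Figure~\ref{Case4IbH211CylDiagsFig}, checking in each case that the surface has one double and two simple zeros and genuinely realizes Case 4.I.b).

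The only real difficulty is the bookkeeping in the last two stages: one must keep careful track of which boundary circle of each neighborhood is ``red'' and which is ``blue'', since a single sign error there flips the homological type and hence which of Cases 4.I.a)/4.I.b) one is in, and one must make sure the quotient by the symmetry group neither merges two distinct diagrams nor silently discards one. Everything else in the argument is forced by the Euler-characteristic and colouring constraints.
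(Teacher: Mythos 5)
Your proposal is correct and takes essentially the same route as the paper, whose entire proof is the one-line remark that the lemma ``follows from an inspection of admissible configurations of $\Gr_1$ and $\Gr_2$'' (Figure~\ref{fig:RibonGraph:C4I:ab}); your Euler-characteristic count forcing the split of the zeros, followed by the enumeration of colour-compatible gluings up to symmetry, is exactly the inspection the paper leaves implicit. One trivial slip: the excluded split putting the double zero with one simple zero gives $V-E=2-5=-3$, not $1-5=-4$, but the conclusion is unaffected.
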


We will show
\begin{proposition}\label{prop:C4Ib:H211}
Let $\cM$ be a rank two affine submanifold of $\cH(2,1^2)$. If $\cM$ contains a horizontally periodic surface $M$ satisfying Case 4.I.b), then either $\cM$ contains a horizontally periodic surface with five cylinders, or $\cM=\tilde{\cQ}(2,1,-1^3)$.
\end{proposition}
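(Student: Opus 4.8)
The plan is to prove the first alternative holds unless $\cM=\tilde{\cQ}(2,1,-1^3)$, arguing by contradiction: assume $\cM$ contains no horizontally periodic surface with five horizontal cylinders. Then by Lemma~\ref{lm:WrightTwistPresLem} the surface $M$ can be taken $\cM$-cylindrically stable, since otherwise it would already yield a horizontally periodic surface with more than four cylinders. By Lemma~\ref{Case4IbH211CD} the cylinder diagram of $M$ is one of the three in Figure~\ref{Case4IbH211CylDiagsFig}, and by Lemma~\ref{lm:Case4IbEqClasses} the $\cM$-equivalence classes of its horizontal cylinders have one of the three listed shapes. I would treat the diagrams (A), (B), (C) one at a time: in each case an angle count at the cone points locates the double zero $x_0$ and identifies which of $C_1,\dots,C_4$ are simple or semi-simple, and then the homological relation $\ell_1+\ell_2=\ell_3+\ell_4$ together with Lemma~\ref{lm:Case4IbEqClasses} narrows the admissible equivalence-class pattern.

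Two collapsing mechanisms would do most of the work. First, if a simple cylinder $C$ of $M$ is free and has the double zero $x_0$ together with a simple zero on its boundary, then collapsing $C$ (Proposition~\ref{prop:collapse:free:sim:cyl}) gives a surface in a rank two affine submanifold of $\cH(3,1)$, which is impossible since that stratum carries no rank two affine manifold; hence such a $C$ must be $\cM$-parallel to another cylinder, which, via the homological relation, pins down an equivalence class. By the argument of \cite[Lem.~5.3]{AulicinoNguyenGen3TwoZeros} (or again by a collapse into $\cH(3,1)$ when that is the outcome), two $\cM$-parallel simple cylinders are in fact $\cM$-similar, so Proposition~\ref{prop:collapse:similar:cyl} applies to them. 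Second, given a pair $\{C_i,C_j\}$ of $\cM$-similar simple cylinders with distinct boundary zero pairs, I would twist the class so the distinguished saddle connections $\delta_i,\delta_j$ are vertical and then collapse $C_i,C_j$ simultaneously; since from $\cH(2,1^2)$ the two collapses merge all four zeros into a single zero of order four, Proposition~\ref{prop:collapse:similar:cyl} lands $M$ in a rank two affine submanifold $\cM'$ of $\cH(4)$ with $\dim\cM=\dim\cM'+1$. As $\tilde{\cQ}(3,-1^3)$ is the only rank two affine manifold in $\cH(4)$, we get $\cM'=\tilde{\cQ}(3,-1^3)$, and its Prym involution $\inv'$ must exchange the distinguished horizontal saddle connections $\tilde\sigma_i,\tilde\sigma_j$ of $C_i,C_j$ on $M'$ — otherwise a nearby surface of $\cM'$ would contradict the similarity of $C_i$ and $C_j$, exactly as in the proof of Proposition~\ref{prop:C3I:2sim:cyl:classify}. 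Proposition~\ref{DblCovExtSimpCyl} (its hypotheses being met because the graph $\delta_i\cup\delta_j$ is a forest and $\inv'$ preserves $\Pi=\tilde\sigma_i\cup\tilde\sigma_j$) then extends $\inv'$ to a Prym involution of $M$, so $M\in\cH(2,1^2)\cap\cP=\tilde{\cQ}(2,1,-1^3)$; the same holds on a neighborhood of $M$ in $\cM$ by Proposition~\ref{prop:collapse:similar:cyl}, so $\cM\subseteq\tilde{\cQ}(2,1,-1^3)$, and $\dim\cM=\dim\tilde{\cQ}(3,-1^3)+1=5=\dim\tilde{\cQ}(2,1,-1^3)$ forces equality.

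The remaining sub-cases are those in which the available simple cylinders carry the two simple zeros on their boundary. Collapsing such a free simple cylinder lands $M$ in a rank two affine submanifold $\cM'$ of $\cH(2,2)$, hence $\cM'\in\{\tilde{\cQ}(1^2,-1^2),\tilde{\cQ}(4,-1^4),\dcoverodd\}$ by the classification in $\cH(2,2)$. In each instance I would either rule the sub-case out by the fixed-point count for the Prym (and, where relevant, hyperelliptic) involution of $M'$ — a cylinder fixed by an involution with derivative $-\id$ carries two interior fixed points and $x_0$ is also fixed, which quickly exceeds the allowed four (or eight) fixed points, as in the proofs of Propositions~\ref{prop:C3I:2sim:cyl:classify} and \ref{prop:C4II:H1111} — or, when $\cM'$ does carry a compatible involution preserving the distinguished saddle connection, extend it to $M$ by Proposition~\ref{DblCovExtSimpCyl} and conclude $\cM=\tilde{\cQ}(2,1,-1^3)$ by the same dimension count, or, failing both, use Proposition~\ref{prop:many:cyl:rktwo}, Proposition~\ref{CylStDens} and Lemma~\ref{kCylsInBdLem} to transport a many-cylinder decomposition of $\cM'$ back into $\cM$, contradicting the standing assumption. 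When every simple cylinder in sight is already forced to be non-free, I would first perform an extended cylinder collapse (\cite[Lem.~4.7]{AulicinoNguyenGen3TwoZeros}) to pass to a surface on which the relevant cylinders again contain simple cylinders, reducing to the situations above.

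The main obstacle I expect is the diagram-by-diagram combinatorial bookkeeping: for each of (A), (B), (C) one must determine, under $\cM$-cylindrical stability, precisely which cylinders are simple or semi-simple and which equivalence-class pattern of Lemma~\ref{lm:Case4IbEqClasses} is actually realized, and then verify the hypotheses of Propositions~\ref{prop:collapse:free:sim:cyl}, \ref{prop:collapse:similar:cyl} and especially \ref{DblCovExtSimpCyl} — the no-loops condition on the collapsed vertical saddle connections and the $\inv'$-invariance of the distinguished saddle connections. I anticipate that one of the three diagrams is the one realized inside $\tilde{\cQ}(2,1,-1^3)$ and requires the full extension argument of the second paragraph, while the other two are eliminated, or reduced to it, by the collapse-into-$\cH(3,1)$ obstruction together with the fixed-point counts.
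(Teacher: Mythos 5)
Your machinery reproduces the paper's treatment of two of the three diagrams: for diagram (B) the paper does exactly what you describe (neither simple cylinder is free by the collapse-to-$\cH(3,1)$ obstruction, Lemma~\ref{lm:Case4IbEqClasses} forces $\{C_1,C_3\}$ to be a class with $\ell_1=\ell_3$, non-similarity is excluded by another collapse into $\cH(3,1)$, and the simultaneous collapse into $\cH(4)=\tilde{\cQ}(3,-1^3)$ plus Proposition~\ref{DblCovExtSimpCyl} and the dimension count gives $\cM=\tilde{\cQ}(2,1,-1^3)$), and diagram (C) is eliminated by the free-semi-simple-cylinder obstruction (Lemma~\ref{Case4IbH211FreeSSCyl}) combined with the circumference inequalities, just as you anticipate. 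Your auxiliary branch about free simple cylinders with two simple zeros collapsing to $\cH(2,2)$ is not actually needed in this stratum, but it is harmless.

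The genuine gap is diagram (A). There, none of the four horizontal cylinders is simple, and under $\cM$-cylindrical stability the classes are forced to be $\{C_1,C_4\}$ and $\{C_2,C_3\}$ (with $\ell_1=\ell_4$, $\ell_2=\ell_3$), so no cylinder is free either. Consequently every tool in your plan fails to engage: Propositions~\ref{prop:collapse:free:sim:cyl} and \ref{prop:collapse:similar:cyl} require \emph{simple} cylinders, the notion of $\cM$-similar is only defined for simple cylinders, and your fallback of an extended cylinder collapse via \cite[Lem.~4.7]{AulicinoNguyenGen3TwoZeros} also presupposes a simple cylinder to collapse, which diagram (A) does not possess. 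The paper's proof of Lemma~\ref{lm:C4Ib:H211:A} uses a different idea you have no substitute for: after twisting, it produces a \emph{transverse} (vertical) simple cylinder $D$ inside $\ol{C}_1\cup\ol{C}_3$ crossing each core curve once, observes that $D$ cannot be free (since $C_1$ is $\cM$-parallel to $C_4$), and then invokes Lemma~\ref{lm:nonfree:sim:cyl} to change directions; the resulting cases are settled either by Proposition~\ref{prop:C3I:2sim:cyl:classify} (Case 3.I with two simple parallel cylinders), or by producing five cylinders after ruling out Cases 4.II), 4.III), 4.IV), 4.I.a) via Propositions~\ref{prop:H211NoCase4II}, \ref{Case4IIIProp}, \ref{prop:C4Ia:H211} and Lemma~\ref{lm:C4IV:nonfree:sim:cyl}, or by reduction back to diagrams (B)/(C). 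Without this direction-changing step, your argument by contradiction never reaches a conclusion for diagram (A), so the proof as proposed is incomplete.
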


We first prove

\begin{lemma}
\label{Case4IbH211FreeSSCyl}
If $\cM$ is a rank two affine manifold in $\cH(2,1^2)$ and $M \in \cM$ is horizontally periodic satisfying Case 4.I.b), then $M$ does not have a free semi-simple cylinder.
\end{lemma}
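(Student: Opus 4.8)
The plan is to argue by contradiction. Suppose $M$ has a free semi-simple cylinder $C$. Since $C$ is semi-simple, one of its two boundary components — say the top — consists of a single horizontal saddle connection $\sigma$, and being an entire boundary circle, $\sigma$ is a loop based at a single zero $z$ of $M$. Recall from the discussion opening Section~\ref{sec:4cyl:C4I} that in Case 4.I) no cylinder carries a saddle connection on both its top and its bottom, so $\sigma$ lies only on the top of $C$ while the bottom of $C$ is a union of saddle connections not containing $\sigma$. The first point to establish is that the two zeros lying on $\partial C$ are \emph{distinct}, and that one of them is the double zero $x_0$. This I would read off from the explicit list of admissible cylinder diagrams in Lemma~\ref{Case4IbH211CD} (Figure~\ref{Case4IbH211CylDiagsFig}), checking in each of the three diagrams which cylinders are semi-simple and recording their boundary zeros; the relevant bookkeeping uses only the valencies of the separatrix diagram (a simple zero has valency $4$, the double zero valency $6$).

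Next I would use freeness of $C$ to position the surface conveniently. By \cite[Lem. 2.14]{AulicinoNguyenGen3TwoZeros} together with Remark~\ref{rk:sq:tiled:dense} (valid since $\bk(\cM)=\Q$), we may assume $M$ is square-tiled, and by Theorem~\ref{thm:Wright:Cyl:Def} applied to the free cylinder $C$ we may shear $C$ freely without leaving $\cM$. Since the vertex $z$ sits on the top boundary circle $\sigma$, an appropriate shear of $C$ produces a vertical saddle connection $\delta\subset C$ running from $z$ down to a prescribed vertex on the bottom of $C$; choosing that vertex to be a simple zero (possible once we know $x_0$ does not exhaust $\partial C$), we obtain $\delta$ joining the double zero $x_0$ to a simple zero of $M$.

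Finally I would collapse along $\delta$. If $C$ happens to be simple, this is exactly Proposition~\ref{prop:collapse:free:sim:cyl}; if $C$ is strictly semi-simple, it is the extended cylinder collapse of \cite[Lem. 4.7]{AulicinoNguyenGen3TwoZeros} along $\delta$. In either case the collision of a zero of order $2$ with a simple zero produces, without changing the genus, a surface $M'$ in the stratum $\cH(3,1)$, which by \cite[Thm. 2.7]{MirzakhaniWrightBoundary} (equivalently \cite[Prop. 2.16]{AulicinoNguyenGen3TwoZeros}) lies in a rank two affine submanifold of $\cH(3,1)$. But $\cH(3,1)\cap\cP=\varnothing$, so $\cH(3,1)$ contains no rank two affine submanifold; this contradiction finishes the proof.

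The main obstacle is the first paragraph: one must rule out a free semi-simple cylinder both of whose boundary zeros coincide, or both of whose boundary zeros are simple, since either would spoil the landing stratum. A semi-simple cylinder all of whose boundary saddle connections are loops based at $x_0$ would force all six separatrices of $x_0$ onto $\partial C$, which is incompatible with the planar ribbon-graph configurations underlying Case 4.I.b) in $\cH(2,1^2)$ (equivalently, with the component $\Gr_1$ carrying only the double zero), so this does not occur among the three diagrams of Figure~\ref{Case4IbH211CylDiagsFig}. If instead a semi-simple cylinder had two distinct simple zeros on its boundary, collapsing along $\delta$ as above would land in $\cH(2,2)$; but the collapse leaves the four horizontal core curves and their homology classes unchanged, so the image surface still realizes a Case 4.I) four-cylinder decomposition, and among the rank two affine submanifolds of $\cH(2,2)$ only $\tilde{\cQ}(1^2,-1^2)$ admits such a decomposition (Proposition~\ref{prop:many:cyl:rktwo}). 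A dimension comparison with $\dim\cM$, using $\dim\cM=\dim\cM'+1$ from Proposition~\ref{prop:collapse:free:sim:cyl}, then excludes this possibility as well — so once the diagram inspection is carried out, no case survives.
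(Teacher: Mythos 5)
Your main line is exactly the paper's argument: in Case 4.I.b) in $\cH(2,1^2)$ a free semi-simple cylinder has the double zero on one boundary and only simple zeros on the other, so (using freeness and Theorem~\ref{thm:Wright:Cyl:Def}) it can be twisted to contain a single vertical saddle connection from $x_0$ to a simple zero, and collapsing it produces a surface in a rank two affine submanifold of $\cH(3,1)$, which does not exist -- contradiction. That is correct and coincides with the paper's proof.

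One caveat about your final paragraph. The fallback you propose for the hypothetical case of a semi-simple cylinder with two simple zeros on its boundary is not sound: Proposition~\ref{prop:many:cyl:rktwo} does not assert that $\tilde{\cQ}(1^2,-1^2)$ is the only rank two submanifold of $\cH(2,2)$ admitting a Case 4.I) decomposition, and in fact $\prym$ and $\dcoverodd$ do admit such decompositions (the diagram Case 4.I.OB) invoked, e.g., in the proof of Lemma~\ref{lm:Case6a}), while the dimension comparison you allude to is never carried out. Fortunately that case, as well as the case of a cylinder bounded only by loops at $x_0$, never arises, and for a cleaner reason than a separatrix count: by the very definition of Case 4.I) (``two spheres joined by four pairs of simple poles''), every horizontal cylinder has one boundary circle on each of the two four-holed spheres, and in $\cH(2,1^2)$ one of these spheres is a neighborhood of $\Gr_1$, which carries only the double zero. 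Hence every cylinder, semi-simple or not, has $x_0$ on one boundary and only simple zeros on the other -- this is the fact the paper states directly, and it is also what your inspection of the three diagrams of Lemma~\ref{Case4IbH211CD} would confirm.
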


\begin{proof}
By contradiction, if $M$ has a free semi-simple cylinder $C$, then one boundary of $C$ contains a double zero and the other must contain one or more simple zeros.  Twist $C$ so that it admits a vertical saddle connection, which by necessity connects a double zero to a simple zero.  Collapsing $C$ results in a translation surface in a rank two affine manifold in $\cH(3,1)$.  Since no such affine manifold exists by \cite{AulicinoNguyenGen3TwoZeros}, we achieved the desired contradiction.
\end{proof}

\begin{lemma}
\label{Case4IbH211EqClsLem1}
Let $\cM$ be a rank two affine manifold in $\cH(2,1^2)$. If $M \in \cM$ is an $\cM$-cylindrically stable horizontally periodic satisfying Case 4.I.b), then neither  $\{C_1, C_2\}$ nor $\{C_3, C_4\}$ can be equivalence classes.
\end{lemma}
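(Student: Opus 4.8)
The plan is to rule out both equivalence–class structures uniformly, by showing that in every one of the three cylinder diagrams of Figure~\ref{Case4IbH211CylDiagsFig} \emph{each} of the two homologous pairs $\{C_1,C_2\}$ and $\{C_3,C_4\}$ contains a semi-simple cylinder, and then invoking Lemma~\ref{Case4IbH211FreeSSCyl}.

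Suppose, for contradiction, that one of the two pairs, say $\{C_1,C_2\}$, is an equivalence class (the argument for $\{C_3,C_4\}$ is identical). As was already noted in the proof of Lemma~\ref{lm:Case4IbEqClasses}, the homological relation $\gamma_1+\gamma_2-\gamma_3-\gamma_4=0$ then forces each of $C_3$ and $C_4$ to be free. By Lemma~\ref{Case4IbH211FreeSSCyl}, $M$ has no free semi-simple cylinder, so it is enough to show that in each of the three cylinder diagrams at least one of $C_3,C_4$ is semi-simple (and, symmetrically, that at least one of $C_1,C_2$ is).

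The heart of the proof is thus an inspection of the three diagrams in Figure~\ref{Case4IbH211CylDiagsFig}. For each $C_i$ one reads off the horizontal saddle connections forming its top and its bottom; the ones not visible on the outer boundary of the polygon (those lying at the intermediate level) are pinned down using that the top and bottom of a cylinder have the same total length and that $\ell_1+\ell_2=\ell_3+\ell_4$. Carrying this out, one finds that in diagram (A) the cylinders $C_4$ and $C_1$ each have a boundary component consisting of a single saddle connection, hence are semi-simple, whereas $C_2$ and $C_3$ are not; and in diagrams (B) and (C) the cylinders $C_3$ and $C_1$ are semi-simple. In every case each homologous pair contains a semi-simple cylinder. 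Consequently, if either pair were an equivalence class, then by Lemma~\ref{lm:Case4IbEqClasses} the two cylinders of the complementary pair would form two singleton equivalence classes, i.e. would both be free, one of them semi-simple --- contradicting Lemma~\ref{Case4IbH211FreeSSCyl}. This proves the lemma.

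The only real obstacle is the bookkeeping in the last step: correctly matching saddle connections to the top and bottom boundaries of each $C_i$ in Figure~\ref{Case4IbH211CylDiagsFig}, in particular those on the hidden intermediate level. One could instead avoid the explicit diagrams and argue directly from the list of admissible planar graphs $\Gr_1,\Gr_2$ for Case 4.I.b), combined with a transverse-cylinder and Cylinder Proportion argument in the spirit of Lemma~\ref{lm:C4Ia:ssim:cyl:no:par:C1}; but the direct inspection is shorter.
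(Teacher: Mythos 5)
Your proof is correct and follows essentially the same route as the paper: the paper's own argument also combines Lemma~\ref{Case4IbH211FreeSSCyl} with Lemma~\ref{lm:Case4IbEqClasses} together with the observation that each of the pairs $\{C_1,C_2\}$ and $\{C_3,C_4\}$ contains a semi-simple cylinder. The only difference is that you spell out, diagram by diagram, the semi-simplicity claim that the paper leaves as an implicit inspection of Figure~\ref{Case4IbH211CylDiagsFig}, and your identification of the semi-simple cylinders in cases (A), (B), (C) checks out.
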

\begin{proof}
By contradiction, if either of them is an equivalence class, then Lemma \ref{Case4IbH211FreeSSCyl} implies that the other one must be an equivalence class because each of $\{C_1, C_2\}$ and $\{C_3, C_4\}$ contains a semi-simple cylinder.  But this contradicts Lemma \ref{lm:Case4IbEqClasses}.
\end{proof}

\begin{lemma}\label{lm:C4Ib:H211:C}
 Let $\cM$ be a rank two affine submanifold of $\cH(2,1^2)$. Then $\cM$ does not contain an $\cM$-cylindrically stable horizontally periodic surface $M$ satisfying Case 4.I.b) with cylinder diagram (C).
\end{lemma}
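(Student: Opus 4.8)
The plan is to argue by contradiction: suppose $\cM$ contains an $\cM$-cylindrically stable horizontally periodic surface $M$ with cylinder diagram (C). First I would record the combinatorial features of (C) visible in Figure~\ref{Case4IbH211CylDiagsFig}: one boundary component of $C_1$ and one of $C_3$ each consist of a single horizontal saddle connection, so $C_1$ and $C_3$ are semi-simple; moreover the relation $\ell_1+\ell_2=\ell_3+\ell_4$ comes with strict inequalities read off from the diagram, namely $\ell_1<\ell_4$ and $\ell_3<\ell_2$. By Lemma~\ref{Case4IbH211FreeSSCyl} the surface $M$ has no free semi-simple cylinder, hence neither $C_1$ nor $C_3$ is free.

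Next I would determine the equivalence classes. By Lemma~\ref{lm:Case4IbEqClasses} the partition of $\{C_1,C_2,C_3,C_4\}$ into $\cM$-parallel classes is, up to a renumbering respecting $\ell_1+\ell_2=\ell_3+\ell_4$, one of $\{C_1,C_2\},\{C_3\},\{C_4\}$, or $\{C_1,C_3\},\{C_2\},\{C_4\}$, or $\{C_1,C_3\},\{C_2,C_4\}$ with equal circumferences within each pair. The first type is ruled out by Lemma~\ref{Case4IbH211EqClsLem1}. Undoing the renumbering to respect the fixed labels of Figure~\ref{Case4IbH211CylDiagsFig}(C) and imposing that $C_1$ and $C_3$ are not free kills every partition in which $C_1$ or $C_3$ appears as a singleton; the strict inequalities $\ell_1<\ell_4$ and $\ell_3<\ell_2$ then rule out the two-pair partition $\{C_1,C_4\},\{C_2,C_3\}$, which would force $\ell_1=\ell_4$. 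This leaves exactly $\{C_1,C_3\},\{C_2\},\{C_4\}$ and $\{C_1,C_3\},\{C_2,C_4\}$, the latter forcing $\ell_1=\ell_3$ and $\ell_2=\ell_4$.

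Finally I would eliminate these two remaining possibilities. For $\{C_1,C_3\},\{C_2\},\{C_4\}$: since $C_2$ and $C_4$ are free, $[\gamma_2]$ and $[\gamma_4]$ lie in $p(T^\R_M\cM)$, and since $\{C_1,C_3\}$ is an equivalence class the joint shearing of $C_1$ and $C_3$ gives a tangent vector projecting to $a[\gamma_1]+b[\gamma_3]\in p(T^\R_M\cM)$ with $a,b>0$; together with the homological relation $[\gamma_1]+[\gamma_2]=[\gamma_3]+[\gamma_4]$ this forces all of $[\gamma_1],[\gamma_2],[\gamma_3],[\gamma_4]$ into $p(T^\R_M\cM)$, unless $[\gamma_1]=\pm[\gamma_3]$, which one disposes of separately by a genus-two cut-and-reglue in the spirit of Case 3.III). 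But the core curves of a four-cylinder decomposition in Case 4.I) span a three-dimensional isotropic subspace of $H_1(M,\R)$, which cannot sit inside the four-dimensional symplectic space $p(T^\R_M\cM)$ --- this is exactly the contradiction exploited in Lemma~\ref{4CylCaseIIIEqCls}. For $\{C_1,C_3\},\{C_2,C_4\}$: using $\ell_1=\ell_3$ and $\ell_2=\ell_4$, I would pass to a nearby square-tiled surface, twist the class $\{C_2,C_4\}$ so that a transverse cylinder $E$ crosses the horizontal cylinders a controlled number of times, and compare $P(E,\{C_1,C_3\})$ with $P(E',\{C_1,C_3\})$ for a second transverse cylinder $E'$ via the Cylinder Proportion Lemma (Proposition~\ref{CylinderPropProp}); as in the proof of Proposition~\ref{prop:H211NoCase4II}, the resulting identity forces a circumference equality incompatible with $\ell_1<\ell_4$ and $\ell_1+\ell_2=\ell_3+\ell_4$. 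The step I expect to be the main obstacle is this last one: correctly tracking how the transverse cylinders weave through the four horizontal cylinders of (C) so that the proportion identity yields a genuinely forbidden relation rather than a vacuous one. To carry this out I would draw the full separatrix diagram of (C) --- the four labeled saddle connections together with the three ``middle'' saddle connections glued along the interior --- and perform the bookkeeping there, deferring the most tedious verifications to the appendix in the style already used for Case 4.III).
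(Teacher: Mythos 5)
Your reduction to the two partitions $\{C_1,C_3\},\{C_2\},\{C_4\}$ and $\{C_1,C_3\},\{C_2,C_4\}$ is fine, and your symplectic elimination of the first one is essentially correct: $[\gamma_2]$, $[\gamma_4]$ and $a[\gamma_1]+b[\gamma_3]$ with $a,b>0$ are linearly independent because the only relation among the $[\gamma_i]$ is $[\gamma_1]+[\gamma_2]-[\gamma_3]-[\gamma_4]=0$, whose signs are incompatible with $a,b>0$; so they span a three-dimensional isotropic subspace of the four-dimensional symplectic space $p(T^\R_M\cM)$, exactly as in Lemma~\ref{4CylCaseIIIEqCls} (your caveat ``$[\gamma_1]=\pm[\gamma_3]$'' is vacuous in Case 4.I, since no two core curves are homologous there). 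However, you are working harder than necessary because you read the diagram incompletely: in diagram (C) of Figure~\ref{Case4IbH211CylDiagsFig} the cylinder $C_4$ is \emph{also} semi-simple (its bottom is a single saddle connection), so $C_4$ free is already excluded by Lemma~\ref{Case4IbH211FreeSSCyl}, which kills the first partition in one line.

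The genuine gap is the second partition $\{C_1,C_3\},\{C_2,C_4\}$ with $\ell_1=\ell_3$, $\ell_2=\ell_4$: you do not eliminate it, you only sketch a cylinder-proportion computation that you yourself flag as the main obstacle, and the contradiction you predict cannot come out that way, since the constraint system $\ell_1=\ell_3$, $\ell_2=\ell_4$, $\ell_1<\ell_4$, $\ell_3<\ell_2$, $\ell_1+\ell_2=\ell_3+\ell_4$ is perfectly consistent; any proportion identity would have to produce a genuinely new length relation, and nothing in your sketch guarantees one. The missing observation, which is what the paper uses, is a third strict inequality forced by diagram (C): the top of $C_1$ is a single saddle connection which is a proper part of the bottom of $C_3$ (the bottom of $C_3$ consists of two saddle connections), so $\ell_1<\ell_3$. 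This contradicts the equality $\ell_1=\ell_3$ forced by Lemma~\ref{lm:Case4IbEqClasses} and closes the case immediately; the paper's whole proof is just ``$C_1,C_3,C_4$ are semi-simple, hence not free by Lemma~\ref{Case4IbH211FreeSSCyl}, and $\ell_1<\ell_3$, $\ell_1<\ell_4$ rule out the remaining options.'' So your proposal is salvageable, but as written the key case is left open.
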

\begin{proof}
Assume to a contradiction that $\cM$ contains an $\cM$-cylindrically stable horizontally periodic surface satisfying Case 4.I.b) with cylinder diagram (C).  We claim that one of $C_1,C_3,C_4$ is free. Assume that none of them is free, by Lemma \ref{Case4IbH211EqClsLem1}, $C_1$ must be $\cM$-parallel to either $C_3$ or $C_4$ and either $\ell_1=\ell_3$ or $\ell_1=\ell_4$ by Lemma \ref{lm:Case4IbEqClasses}. But clearly, in this case we always have $\ell_1 < \ell_3$ and $\ell_1 < \ell_4$. If one of $C_1,C_3,C_4$ is free, then we get a contradiction to Lemma \ref{Case4IbH211FreeSSCyl}, and  the lemma follows.
\end{proof}

\begin{lemma}\label{lm:C4Ib:H211:B}
 Let $\cM$ be a rank two affine submanifold of $\cH(2,1^2)$. If $\cM$ contains an $\cM$-cylindrically stable horizontally periodic surface $M$ satisfying Case 4.I.b) with cylinder diagram (B), then $\cM=\tilde{\cQ}(2,1,-1^3)$.
\end{lemma}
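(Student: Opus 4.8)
The plan is the standard one of this paper: degenerate $M$ by collapsing the equivalence class of simple cylinders, identify the resulting affine manifold in a lower stratum, extend its Prym involution back to $M$ via Proposition~\ref{DblCovExtSimpCyl}, and conclude with a dimension count. Write $x_0$ for the double zero of $M$ and $x_1,x_2$ for the simple ones. First I would read off diagram (B): the cylinders $C_1$ and $C_3$ are simple (each bounded by a single saddle connection on top and on bottom), while $C_2$ and $C_4$ are neither simple nor semi-simple; tracking the edge identifications, one checks that $\partial C_1$ consists of a loop at $x_0$ together with a loop at $x_1$, and $\partial C_3$ of a loop at $x_0$ together with a loop at $x_2$. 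In particular each of $C_1,C_3$ carries two distinct zeros on its boundary, one of them being $x_0$. This combinatorial bookkeeping is tedious but completely explicit.

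\textbf{Equivalence classes.} Collapsing a simple cylinder whose boundary contains $x_0$ would produce a surface in $\cH(3,1)$, and $\cH(3,1)$ admits no rank two affine submanifold; hence neither $C_1$ nor $C_3$ is free. By Lemma~\ref{Case4IbH211EqClsLem1} neither $\{C_1,C_2\}$ nor $\{C_3,C_4\}$ is an equivalence class, and an isotropic-subspace argument exactly as in the proof of Lemma~\ref{4CylCaseIIIEqCls} rules out the possibility that the equivalence classes are $\{C_1,C_3\},\{C_2\},\{C_4\}$: in that case $[\gamma_2]$, $[\gamma_4]$, and $a_1[\gamma_1]+a_3[\gamma_3]$ (for suitable $a_1,a_3>0$) would span a three-dimensional isotropic subspace of the four-dimensional symplectic space $p(T^\R_M\cM)$, which is impossible. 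Combined with Lemma~\ref{lm:Case4IbEqClasses}, this forces the equivalence classes to be $\{C_1,C_3\}$ and $\{C_2,C_4\}$, with $\ell_1=\ell_3$ and $\ell_2=\ell_4$ on all of $\cM$.

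\textbf{Similarity and degeneration.} The main technical point --- the step I expect to be the hardest, and would likely isolate as a separate lemma, arguing as for the analogous statement in Case 3.I (cf.\ \cite[Lem. 5.3]{AulicinoNguyenGen3TwoZeros} and the proof of Proposition~\ref{prop:C3I:2sim:cyl:classify}) --- is that $C_1$ and $C_3$ are $\cM$-similar, hence $\cM$-isometric since $\ell_1=\ell_3$. Granting this, I twist $\{C_1,C_3\}$ so that there are vertical saddle connections $\del_1\subset C_1$ (joining $x_0$ to $x_1$) and $\del_3\subset C_3$ (joining $x_0$ to $x_2$); the graph $\del_1\cup\del_3$ is a tree. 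Collapsing $\{C_1,C_3\}$ simultaneously so that the zeros in each boundary collide yields $M'\in\cH(4)$, contained by Proposition~\ref{prop:collapse:similar:cyl} in a rank two affine submanifold $\cM'$ with $\dim\cM'=\dim\cM-1$. By \cite{AulicinoNguyenWright, NguyenWright} (or by Theorem~\ref{thm:main:rk2:g3} for $|\kappa|\leq 2$), $\cM'=\tilde{\cQ}(3,-1^3)$, so $M'$ carries a Prym involution $\inv$ with four fixed points.

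\textbf{Extending the involution and conclusion.} Since $M'$ is horizontally periodic with exactly the two cylinders $C_2,C_4$ and $\inv$ has only four fixed points, $\inv$ must exchange $C_2$ and $C_4$ (were each fixed, each would contribute two interior fixed points, giving at least five). The degeneration $\tilde{\sig}_1$ of $C_1$ is the unique saddle connection of $M'$ appearing in both the top and the bottom of $C_4$, and likewise $\tilde{\sig}_2$, the degeneration of $C_3$, for $C_2$; therefore $\inv$ exchanges $\tilde{\sig}_1$ and $\tilde{\sig}_2$ and preserves $\Pi=\tilde{\sig}_1\cup\tilde{\sig}_2$. By Proposition~\ref{DblCovExtSimpCyl}, $\inv$ extends to an involution of $M$ with four fixed points whose derivative is $-\id$, that is, a Prym involution, so $M\in\cH(2,1^2)\cap\cP=\tilde{\cQ}(2,1,-1^3)$. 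The local part of Proposition~\ref{prop:collapse:similar:cyl} gives the same conclusion for every surface in a neighborhood of $M$ in $\cM$, whence $\cM\subseteq\tilde{\cQ}(2,1,-1^3)$. Finally $\dim\cM=\dim\tilde{\cQ}(3,-1^3)+1=5=\dim\tilde{\cQ}(2,1,-1^3)$, and since $\tilde{\cQ}(2,1,-1^3)$ is connected we conclude $\cM=\tilde{\cQ}(2,1,-1^3)$.
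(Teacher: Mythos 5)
Your route is the same as the paper's (equivalence classes $\{C_1,C_3\}$, $\{C_2,C_4\}$ with $\ell_1=\ell_3$; upgrade $\cM$-parallel to isometric; collapse $\{C_1,C_3\}$ to land in $\tilde{\cQ}(3,-1^3)\subset\cH(4)$; extend the Prym involution via Proposition~\ref{DblCovExtSimpCyl}; dimension count), and the extension and dimension-count part of your argument is correct, including the verification that the involution must swap the two cylinders of $M'$ and hence the degenerations of $C_1$ and $C_3$. But the step you explicitly ``grant'' --- that $C_1$ and $C_3$ are $\cM$-similar --- is the crux of the whole proof, and deferring it to an argument ``as in \cite[Lem.~5.3]{AulicinoNguyenGen3TwoZeros}'' is not a proof: that lemma concerns two $\cM$-parallel simple cylinders in a Case 3.I decomposition, and it is not clear its proportion/transverse-cylinder argument transfers to diagram (B). The paper closes this step with a short argument you should supply (or replace by a genuine proof of your proposed lemma): if $C_1$ and $C_3$ were not similar, one could twist the class so that $C_1$ contains a vertical saddle connection while $C_3$ does not; collapsing the class simultaneously then contracts only the saddle connection in $C_1$, producing a surface contained in a rank two affine submanifold of $\cH(3,1)$, which does not exist. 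As written, the central claim of your proof is an unproved assertion.

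There is also a smaller case omission in your determination of the equivalence classes: Lemma~\ref{lm:Case4IbEqClasses} allows, besides $\{C_1,C_3\},\{C_2,C_4\}$, the pairing $\{C_1,C_4\},\{C_2,C_3\}$ with $\ell_1=\ell_4$ and $\ell_2=\ell_3$, and neither Lemma~\ref{Case4IbH211EqClsLem1} nor your isotropic-subspace argument excludes it. In diagram (B) it is ruled out because the bottom of $C_1$ is only one of the three saddle connections making up the top of $C_4$, so $\ell_1<\ell_4$ (equivalently $\ell_3<\ell_2$); this is exactly how the paper disposes of that case, and you should add it. On the plus side, your isotropic-subspace argument excluding the configuration $\{C_1,C_3\},\{C_2\},\{C_4\}$ is a genuine improvement in care over the paper, which passes from Lemma~\ref{lm:Case4IbEqClasses} to the two-pair alternatives without comment; with these two repairs your proposal coincides with the paper's proof.
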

\begin{proof}
 We number the cylinders so that $C_1$ and $C_3$ are the simple ones. By Lemma \ref{Case4IbH211FreeSSCyl}, neither $C_1$ nor $C_3$ are free. From Lemma \ref{lm:Case4IbEqClasses}, either $C_1$ is $\cM$-parallel to $C_3$ and $\ell_1=\ell_3$, or $C_1$ is $\cM$-parallel to $C_4$ and $\ell_1=\ell_4$. Since the latter cannot happen because $\ell_1 < \ell_4$, we conclude that the equivalence classes of horizontal cylinders are $\{C_1,C_3\}$ and $\{C_2,C_4\}$, and $\ell_1=\ell_3$ and $\ell_2=\ell_4$.

 We next claim that $C_1$ and $C_3$ are similar. If they are not, then after twisting, we can assume that there is a vertical saddle connection in $C_1$, but $C_3$ contains no vertical saddle connections. Collapsing simultaneously $C_1$ and $C_3$ yields a surface in $\cH(3,1)$. Since there are no rank two affine submanifolds in $\cH(3,1)$, we get a contradiction.

 The previous claim implies that $C_3=\lambda C_1$, where $\lambda > 0$. Since $\ell_1=\ell_3$, it follows that $\lambda=1$.  Hence, $C_1$ and $C_3$ are isometric. Collapse $C_1$ and $C_3$ simultaneously to get a surface $M' \in \cH(4)$. By Proposition~\ref{prop:collapse:similar:cyl}, $M'$ is contained in a rank two affine submanifold $\cM'$ of $\cH(4)$ such that $\dim \cM \allowbreak = \dim \cM'+1$.

 By the result of \cite{AulicinoNguyenWright}, $\cM'=\tilde{\cQ}(3,-1^3)$. Hence, $M'$ admits a Prym involution $\inv$. It is easy to check that this involution permutes the two horizontal cylinders of $M'$, and exchanges the saddle connections which are the degenerations of $C_1$ and $C_3$.  By Proposition \ref{DblCovExtSimpCyl}, $\inv$ gives rise to an involution of $M$ with four fixed points, which implies that $M \in \tilde{\cQ}(2,1,-1^3)$. Remark that this also holds for all of the surfaces in $\cM$ close to $M$, therefore $\cM\subset \tilde{\cQ}(2,1,-1^3)$. Finally, from the dimension count
 $$
 \dim \cM=\dim \tilde{\cQ}(3,-1^3)+1=5=\dim \tilde{\cQ}(2,1,-1^3),
 $$
 we conclude that $\cM=\tilde{\cQ}(2,1,-1^3)$.
\end{proof}

\begin{lemma}\label{lm:C4Ib:H211:A}
 Let $\cM$ be a rank two affine submanifold of $\cH(2,1^2)$. Assume that $\cM$ contains a horizontally periodic surface $M$ satisfying Case 4.I.b) with cylinder diagram (A), then either $\cM=\tilde{\cQ}(2,1,-1^3)$ or $\cM$ contains a horizontally periodic surface with five cylinders.
\end{lemma}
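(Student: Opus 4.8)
The plan is to follow the template of Lemma~\ref{lm:C4Ib:H211:B}, but with more branching. First, if $M$ is not $\cM$-cylindrically stable, Lemma~\ref{lm:WrightTwistPresLem} immediately produces a horizontally periodic surface in $\cM$ with more than four cylinders, so throughout we may assume $M$ is $\cM$-cylindrically stable. Next I would pin down the equivalence-class structure: Lemma~\ref{Case4IbH211EqClsLem1} rules out $\{C_1,C_2\}$ and $\{C_3,C_4\}$ as classes, Lemma~\ref{Case4IbH211FreeSSCyl} forbids a free semi-simple cylinder, and Lemma~\ref{lm:Case4IbEqClasses} then leaves, after a renumbering respecting $\ell_1+\ell_2=\ell_3+\ell_4$, only the possibilities that the classes are $\{C_1,C_3\},\{C_2\},\{C_4\}$ or $\{C_1,C_3\},\{C_2,C_4\}$ with $\ell_1=\ell_3$, $\ell_2=\ell_4$. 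Inspecting cylinder diagram (A) records which of the $C_i$ are simple and which are strictly semi-simple; since a simple cylinder is in particular semi-simple, Lemma~\ref{Case4IbH211FreeSSCyl} forces every simple cylinder of $M$ to be non-free. Using $\bk(\cM)=\Q$ (Theorem~\ref{thm:Wright:def:field:deg:rk}) and Remark~\ref{rk:sq:tiled:dense}, I would also pass freely to nearby square-tiled surfaces when convenient.

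In the sub-case where diagram (A), after the normalization above, provides a pair of $\cM$-parallel simple cylinders whose boundaries carry two distinct pairs of zeros and have equal circumference, the argument mirrors Lemma~\ref{lm:C4Ib:H211:B}: first upgrade $\cM$-parallel to $\cM$-similar (otherwise twisting and collapsing the pair contracts a single saddle connection and lands in a rank two affine submanifold of $\cH(3,1)$, which is empty), and then equal circumferences force the similarity constant to be $1$, so the pair is isometric. Collapsing it via Proposition~\ref{prop:collapse:similar:cyl} yields $M'$ in a rank two affine submanifold $\cM'$ of $\cH(4)$ — whence $\cM'=\tilde{\cQ}(3,-1^3)$ by \cite{AulicinoNguyenWright} — with $\dim\cM=\dim\cM'+1$. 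The Prym involution $\inv$ of $M'$ must exchange the two saddle connections coming from the collapsed cylinders, since otherwise a surface in $\cM'$ near $M'$ would have these of unequal length, contradicting the isometry that persists by Proposition~\ref{prop:collapse:similar:cyl}; hence Proposition~\ref{DblCovExtSimpCyl} lifts $\inv$ to a Prym involution on $M$ and on all nearby surfaces, so $\cM\subseteq\cH(2,1^2)\cap\cP=\tilde{\cQ}(2,1,-1^3)$, and the count $\dim\cM=\dim\tilde{\cQ}(3,-1^3)+1=5=\dim\tilde{\cQ}(2,1,-1^3)$ forces equality. (If instead the only available collapse lands in $\cH(2,2)$, the same scheme runs with the classification of \cite{AulicinoNguyenGen3TwoZeros} in place of \cite{AulicinoNguyenWright} and the appropriate dimension count.)

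In the complementary sub-case the collapse is not directly available, and the goal becomes producing a fifth cylinder. Here I would work in a transverse (say vertical) direction on a nearby square-tiled surface: a simple non-free cylinder $C_i$ of $M$ sits, after twisting, inside a transverse cylinder, and the Cylinder Proportion Lemma~\ref{CylinderPropProp} applied to the horizontal class containing $C_i$ shows the transverse equivalence class cannot be confined to a single horizontal class, while the relation $\ell_1+\ell_2=\ell_3+\ell_4$ obstructs the circumference equalities that a three-cylinder transverse decomposition would otherwise need; one concludes there are at least three transverse cylinders not filling $M$, and then \cite[Lem. 2.14]{AulicinoNguyenGen3TwoZeros} applied in that direction yields a horizontally periodic surface in $\cM$ with at least five cylinders. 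A cleaner alternative, whenever $M$ already has a simple non-free cylinder, is to invoke Lemma~\ref{lm:nonfree:sim:cyl} directly: outcome (i) gives, via Proposition~\ref{prop:C3I:2sim:cyl:classify}, that $\cM=\tilde{\cQ}(2,1,-1^3)$, while outcomes (ii) and (iii) either already exhibit five cylinders or are pushed to five by one further application of the proportion obstruction. The main obstacle I anticipate is precisely the bookkeeping of this last paragraph: correctly enumerating the sub-cases coming from diagram (A), and in the ``no collapse'' sub-cases exhibiting the fifth cylinder explicitly by tracking how the transverse cylinders meet the horizontal $\cM$-parallel classes — routine in spirit, mirroring \cite[Props. 5.6, 5.9]{AulicinoNguyenGen3TwoZeros} and Lemma~\ref{lm:nonfree:sim:cyl}, but the step where a careful picture of the diagram is indispensable.
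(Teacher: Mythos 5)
There is a real gap: your proof never actually analyzes diagram (A), and both of your sub-cases presuppose simple \emph{horizontal} cylinders that this diagram does not have. In diagram (A) the cylinders $C_1$ and $C_4$ are semi-simple (one boundary a single saddle connection, the other two saddle connections) and $C_2,C_3$ have two saddle connections on each boundary, so no horizontal cylinder is simple. Consequently your first sub-case (collapse a pair of $\cM$-parallel, isometric simple cylinders to $\cH(4)$, as in Lemma~\ref{lm:C4Ib:H211:B}) is vacuous here, and your second sub-case begins with ``a simple non-free cylinder $C_i$ of $M$'' which does not exist in the horizontal direction; your fallback ``whenever $M$ already has a simple non-free cylinder, invoke Lemma~\ref{lm:nonfree:sim:cyl}'' is conditioned on exactly the fact that has to be produced. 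The crux of the paper's argument, which is missing from your proposal, is the explicit construction of such a cylinder in a \emph{transverse} direction: by Lemmas~\ref{Case4IbH211FreeSSCyl} and \ref{lm:Case4IbEqClasses} the equivalence classes are $\{C_1,C_4\}$ and $\{C_2,C_3\}$ with $\ell_1=\ell_4$, $\ell_2=\ell_3$; one then observes that in diagram (A) there is a saddle connection shared by the bottom of $C_1$ and the top of $C_3$ and another shared by the top of $C_1$ and the bottom of $C_3$, and since $C_1$ and $C_3$ are \emph{not} $\cM$-parallel they can be twisted independently so that a vertical simple cylinder $D\subset \ol{C}_1\cup\ol{C}_3$ crosses $\gamma_1$ and $\gamma_3$ once each. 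Because $D$ meets $C_1$ but not $C_4$, the Cylinder Proportion Lemma~\ref{CylinderPropProp} applied to the class $\{C_1,C_4\}$ shows $D$ is not free, and only then does Lemma~\ref{lm:nonfree:sim:cyl} apply. Your vague ``proportion obstruction'' paragraph does not substitute for this construction, and the claim that the relation $\ell_1+\ell_2=\ell_3+\ell_4$ ``obstructs the circumference equalities'' of a transverse three-cylinder decomposition does not correspond to a valid step.

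Your treatment of the outcomes of Lemma~\ref{lm:nonfree:sim:cyl} is also too loose. Outcome (i) is handled, as you say, by Proposition~\ref{prop:C3I:2sim:cyl:classify}; but outcomes (ii) and (iii) are not ``pushed to five cylinders by one further application of the proportion obstruction.'' They are settled by ruling out every four-cylinder topological type: Cases 4.I) and 4.IV) in outcome (ii) contradict $\cM$-cylindrical stability via the homological relations, Case 4.II) is excluded by Proposition~\ref{prop:H211NoCase4II}, Case 4.III) by Proposition~\ref{Case4IIIProp}, Case 4.IV) in outcome (iii) by Lemma~\ref{lm:C4IV:nonfree:sim:cyl}, Case 4.I.a) by Proposition~\ref{prop:C4Ia:H211}, and Case 4.I.b) with a simple cylinder forces diagrams (B) or (C), i.e. Lemmas~\ref{lm:C4Ib:H211:B} and \ref{lm:C4Ib:H211:C}, which give $\cM=\tilde{\cQ}(2,1,-1^3)$. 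So the overall skeleton you propose is compatible with the paper's, but the step you defer as ``bookkeeping'' — reading off from diagram (A) that there are no simple horizontal cylinders and building the transverse non-free simple cylinder — is precisely the content of the proof, and without it the argument does not go through.
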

\begin{proof}
If $M$ is not $\cM$-cylindrically stable, then we conclude that there is a horizontally periodic surface in $\cM$ with at least five cylinders.  Otherwise, assume $M$ is $\cM$-cylindrically stable.
By Lemma~\ref{Case4IbH211FreeSSCyl}, neither $C_1$ nor $C_4$ is free. From Lemma~\ref{lm:Case4IbEqClasses}, we must have two equivalence classes $\{C_1,C_4\}$ and $\{C_2,C_3\}$ such that $\ell_1=\ell_4$ and $\ell_2=\ell_3$.
 Observe that there is a saddle connection in the bottom of $C_1$ and the top of $C_3$, and there is another saddle connection in the top of $C_1$ and the bottom of $C_3$. Since $C_1$ and $C_3$ are not $\cM$-parallel, after some twisting, we can assume that there is a vertical simple cylinder $D$ contained in $\ol{C}_1\cup \ol{C}_3$, which crosses each of $\gamma_1$ and $\gamma_3$ once.
 Note that $D$ must be $\cM$-parallel to another vertical cylinder crossing $C_2$ and $C_4$.

Applying Lemma~\ref{lm:nonfree:sim:cyl}, we derive that $\cM$ contains a horizontally periodic surface $M_1$ that is $\cM$-cylindrically stable, and one of the following occurs
\begin{itemize}

  \item[(i)] The cylinder decomposition of $M_1$ in the horizontal direction satisfies Case 3.I), and two of the cylinders are simple. In this case, we use Proposition~\ref{prop:C3I:2sim:cyl:classify} to conclude that $\cM=\tilde{\cQ}(2,1,-1^3)$.

  \item[(ii)] $M_1$ has at least four horizontal cylinders and three of which are $\cM$-parallel. Assume that $M_1$ has exactly four horizontal cylinders. If the cylinder decomposition satisfies Case 4.I) or Case 4.IV), then we only have one equivalence class, which contradicts the $\cM$-cylindrically stable hypothesis.  Case 4.II) is ruled out by Proposition~\ref{prop:H211NoCase4II}, and Case 4.III) is also ruled out by Proposition~\ref{Case4IIIProp}. Thus in this case $M_1$ has at least five horizontal cylinders.

  \item[(iii)] $M_1$ has at least four horizontal cylinders, one of which is simple and not free. We only need to consider the case where $M_1$ has exactly four horizontal cylinders. Again Case 4.II) and Case 4.III) are ruled out by Proposition~\ref{prop:H211NoCase4II} and Proposition~\ref{Case4IIIProp}.  Case 4.IV)  is ruled out by Lemma~\ref{lm:C4IV:nonfree:sim:cyl}.
      In Case 4.I.a), we conclude by Proposition~\ref{prop:C4Ia:H211}.  Finally, in Case 4.I.b), since one of the cylinders is simple, we must have Diagram (B) or (C).  Thus by Lemma~\ref{lm:C4Ib:H211:B} or Lemma~\ref{lm:C4Ib:H211:C}, we can conclude that $\cM=\tilde{\cQ}(2,1,-1^3)$.
 \end{itemize}
\end{proof}

\subsubsection*{Proof of Proposition~\ref{prop:C4Ib:H211}}
\begin{proof}
Proposition~\ref{prop:C4Ib:H211} is a direct consequence of Lemmas~\ref{Case4IbH211CD}, \ref{lm:C4Ib:H211:C}, \ref{lm:C4Ib:H211:B}, and \ref{lm:C4Ib:H211:A}.
\end{proof}

\subsubsection{Case 4.I.b): The Principal Stratum $\cH(1,1,1,1)$}
The following lemma is obtained from a careful inspection of admissible configurations for the graphs $\Gr_1$ and $\Gr_2$.
\begin{lemma}\label{lm:C4Ib:H1111:CylDiags}
There are four diagrams for cylinder decompositions in Case 4.I.b) in the stratum $\cH(1^4)$. They are shown in Figure~\ref{fig:Case4IbH1111CylDiagsFig}.
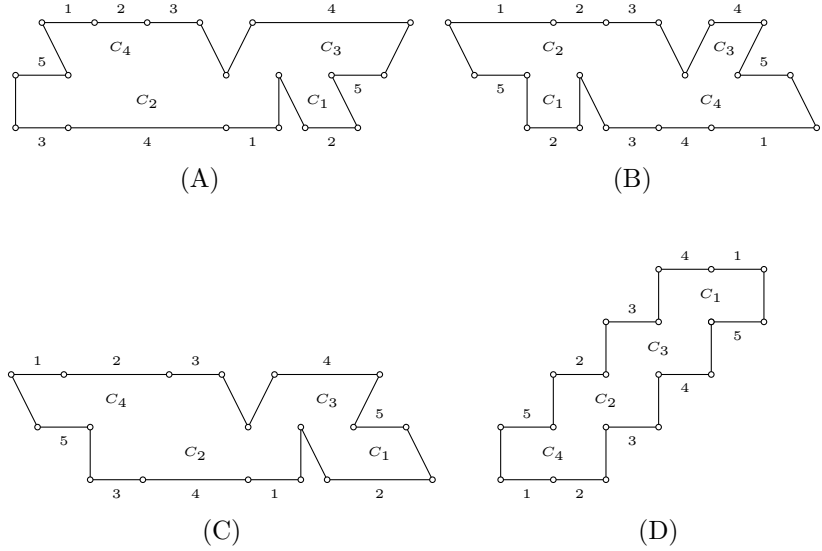
\begin{figure}[htb]
\begin{minipage}[t]{0.45\linewidth}
\centering
\begin{tikzpicture}[scale=0.35]
\draw (-2,0)--(-2,2)--(0,2)--(-1,4)--(5,4)--(6,2)--(7,4)--(13,4)--(12,2)--(10,2)--(11,0)--(9,0)--(8,2)--(8,0) -- cycle;
\foreach \x in {(-2,0),(-2,2),(0,0),(0,2),(-1,4),(1,4),(3,4),(6,2),(5,4),(7,4),(13,4),(12,2),(10,2),(11,0),(9,0),(8,2),(8,0),(6,0)} \filldraw[fill=white] \x circle (3pt);
\draw (0,4) node[above] {\tiny $1$} (2,4) node[above] {\tiny $2$} (4,4) node[above] {\tiny $3$} (10,4) node[above] {\tiny $4$} (-1,0) node[below] {\tiny $3$} (3,0) node[below] {\tiny $4$} (7,0) node[below] {\tiny $1$} (10,0) node[below] {\tiny $2$} (-1,2) node[above] {\tiny $5$} (11,2) node[below] {\tiny $5$};
\draw (2,3) node {\tiny $C_4$} (10,3) node {\tiny $C_3$} (3,1) node {\tiny $C_2$} (9.5,1) node {\tiny $C_1$};

\draw (5,-2) node {(A)};
\end{tikzpicture}
\end{minipage}
\begin{minipage}[t]{0.45\linewidth}
\centering
\begin{tikzpicture}[scale=0.35]
\draw (0,0)--(0,2)--(-2,2)--(-3,4)--(5,4)--(6,2)--(7,4)--(9,4)--(8,2)--(10,2)--(11,0)--(3,0)--(2,2)--(2,0) -- cycle;
\foreach \x in {(0,0),(0,2),(-2,2),(-3,4),(1,4),(3,4),(5,4),(6,2),(7,4),(9,4),(8,2),(10,2),(11,0),(7,0),(5,0),(3,0),(2,2),(2,0)} \filldraw[fill=white] \x circle (3pt);
\draw (-1,4) node[above] {\tiny $1$} (2,4) node[above] {\tiny $2$} (4,4) node[above] {\tiny $3$} (8,4) node[above] {\tiny $4$} (1,0) node[below] {\tiny $2$} (4,0) node[below] {\tiny $3$} (6,0) node[below] {\tiny $4$} (9,0) node[below] {\tiny $1$} (9,2) node[above] {\tiny $5$} (-1,2) node[below] {\tiny $5$};
\draw (1,3) node {\tiny $C_2$} (7.5,3) node {\tiny $C_3$} (1,1) node {\tiny $C_1$} (7,1) node {\tiny $C_4$};
\draw (4,-2) node {(B)};
\end{tikzpicture}
\end{minipage}

\vspace{0.5cm}

\begin{minipage}[t]{0.45\linewidth}
\centering
\begin{tikzpicture}[scale=0.35]
\draw (0,0)--(0,2)--(-2,2)--(-3,4)--(5,4)--(6,2)--(7,4)--(11,4)--(10,2)--(12,2)--(13,0)--(9,0)--(8,2)--(8,0) -- cycle;
\foreach \x in {(0,0),(0,2),(-2,2),(-3,4),(-1,4),(3,4),(6,2),(5,4),(7,4),(11,4),(10,2),(12,2),(13,0),(9,0),(8,2),(8,0),(6,0),(2,0)} \filldraw[fill=white] \x circle (3pt);
\draw (-2,4) node[above] {\tiny $1$} (1,4) node[above] {\tiny $2$} (4,4) node[above] {\tiny $3$} (9,4) node[above] {\tiny $4$} (1,0) node[below] {\tiny $3$} (4,0) node[below] {\tiny $4$} (7,0) node[below] {\tiny $1$} (11,0) node[below] {\tiny $2$} (11,2) node[above] {\tiny $5$} (-1,2) node[below] {\tiny $5$};
\draw (1,3) node {\tiny $C_4$} (9,3) node {\tiny $C_3$} (4,1) node {\tiny $C_2$} (11,1) node {\tiny $C_1$};

\draw (5,-2) node {(C)};
\end{tikzpicture}
\end{minipage}
\begin{minipage}[t]{0.45\linewidth}
\centering
\begin{tikzpicture}[scale=0.35]
\draw (0,0)--(0,2)--(2,2)--(2,4)--(4,4)--(4,6)--(6,6)--(6,8)--(10,8)--(10,6)--(8,6)--(8,4)--(6,4)--(6,2)--(4,2)--(4,0)--cycle;
\foreach \x in {(0,0),(0,2),(2,2),(2,4),(4,4),(4,6),(6,6),(6,8),(10,8),(10,6),(8,6),(8,8),(8,6),(8,4),(6,4),(6,2),(4,2),(4,0),(2,0)} \filldraw[fill=white] \x circle (3pt);
\draw(2,1) node {\tiny $C_4$};
\draw(4,3) node {\tiny $C_2$};
\draw(6,5) node {\tiny $C_3$};
\draw(8,7) node {\tiny $C_1$};

\draw(1,2) node[above] {\tiny 5};
\draw(3,4) node[above] {\tiny 2};
\draw(5,6) node[above] {\tiny 3};
\draw(7,8) node[above] {\tiny 4};
\draw(9,8) node[above] {\tiny 1};
\draw(1,0) node[below] {\tiny 1};
\draw(3,0) node[below] {\tiny 2};
\draw(5,2) node[below] {\tiny 3};
\draw(7,4) node[below] {\tiny 4};
\draw(9,6) node[below] {\tiny 5};

\draw (6,-2) node {(D)};
\end{tikzpicture}
\end{minipage}

\caption{Cylinder Diagrams Satisfying Case 4.I.b) in $\cH(1,1,1,1)$}
\label{fig:Case4IbH1111CylDiagsFig}
\end{figure}
\end{lemma}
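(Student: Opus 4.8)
The plan is to carry out the same finite combinatorial enumeration that produced Lemma~\ref{Case4IaPrinStrCD}, now for the relation $\gamma_1+\gamma_2-\gamma_3-\gamma_4=0$ in the principal stratum. Recall that in Case 4.I) the core curves $\gamma_1,\dots,\gamma_4$ cut $M$ into two four-holed spheres, namely regular neighborhoods $N(\Gr_1),N(\Gr_2)$ of the two connected components of the horizontal separatrix diagram $\Gr$, and by the description in Lemma~\ref{lm:4CylDeg} the four cylinders are precisely the four tubes each joining a hole of $N(\Gr_1)$ to a hole of $N(\Gr_2)$. In $\cH(1^4)$ every zero is simple, so each vertex of $\Gr_i$ has valence $4$; since moreover $\Gr_i$ is a connected spine of a four-holed sphere we have $|E(\Gr_i)|=|V(\Gr_i)|+2$, and the valence count $2|E(\Gr_i)|=4|V(\Gr_i)|$ then forces $|V(\Gr_1)|=|V(\Gr_2)|=2$ and $|E(\Gr_1)|=|E(\Gr_2)|=4$. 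Hence each $\Gr_i$ must be one of the two ribbon graphs appearing in the Case 4.I.b) column of Figure~\ref{fig:RibonGraph:C4I:ab}: the graph with two vertices joined by four edges, whose four boundary circles each contain two saddle connections, or the ``dumbbell'' graph, whose four boundary circles contain, respectively, $1$, $1$, $2$ and $4$ saddle connections.

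First I would observe that a cylinder diagram in this case is completely determined by the choice of $\Gr_1$ and $\Gr_2$ among these two graphs together with a bijection between the four boundary circles of $N(\Gr_1)$ and those of $N(\Gr_2)$, the matched pairs being the tops and bottoms of $C_1,\dots,C_4$; no further twisting data enters a cylinder diagram. Up to interchanging $\Gr_1$ and $\Gr_2$ there are three possibilities for the unordered pair $\{\Gr_1,\Gr_2\}$. For each of them I would list the bijections, use the ribbon-graph automorphisms of $\Gr_1$ and of $\Gr_2$ together with the relabellings of $C_1,\dots,C_4$ compatible with the Case 4.I.b) relation to reduce the list to a handful of candidates, and then keep only those for which the glued surface is connected, lies in $\cH(1^4)$ (genus three, four simple zeros), and realizes the Case 4.I.b) homological relation rather than the Case 4.I.a) one. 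A direct check then shows that exactly four inequivalent diagrams survive, and one matches them with (A)--(D) of Figure~\ref{fig:Case4IbH1111CylDiagsFig}; that these four are pairwise non-isomorphic can be read off from the pictures, for instance from the multiset of pairs (number of saddle connections on the top of $C_i$, on the bottom of $C_i$) and from the adjacency graph of the cylinders.

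The individual gluings are routine; the delicate point — and the place where one could easily miss or double-count a diagram — is the symmetry bookkeeping, since the automorphism groups of $\Gr_1$ and of $\Gr_2$, the swap $\Gr_1\leftrightarrow\Gr_2$ when the two graphs are isomorphic, and the cylinder relabellings compatible with \eqref{eq:C4Ib:hom:rel} all act simultaneously. A second point that must be handled with care is separating Case 4.I.b) from Case 4.I.a): the sign pattern in the relation among $\gamma_1,\dots,\gamma_4$ is dictated by the induced boundary orientations of the two four-holed spheres, so those orientations have to be propagated consistently through every gluing. I expect the pair $\{\text{dumbbell},\text{dumbbell}\}$ to be the most involved, because the boundary-length data prunes the fewest bijections there, whereas the pair $\{\text{four-banana},\text{four-banana}\}$, all of whose boundary circles have equal combinatorial length, is disposed of quickly.
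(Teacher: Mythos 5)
Your setup coincides with the paper's: the paper proves this lemma precisely by inspecting the admissible configurations of $\Gr_1$ and $\Gr_2$ (Figure~\ref{fig:RibonGraph:C4I:ab}) and the possible pairings of their boundary circles, and your count $|V(\Gr_i)|=2$, $|E(\Gr_i)|=4$ is correct. The gap is in the combinatorial data you attach to one of the two building blocks. The valence count only pins down the abstract graph (four parallel edges, or a bigon with a loop at each vertex); what separates Case 4.I.a) from Case 4.I.b) is the ribbon structure. The dumbbell embedding admissible in Case 4.I.b) is the nested one drawn in that column of Figure~\ref{fig:RibonGraph:C4I:ab} (one loop inside the bigon), and its four boundary circles consist of $1,1,3,3$ saddle connections. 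The profile $1,1,2,4$ that you quote belongs to the unnested dumbbell from the Case 4.I.a) column: there the circle of length $4$ shares an edge with each of the other three, so the top/bottom colouring of the boundary circles is forced to be one against three, i.e.\ the relation $\gamma_1=\gamma_2+\gamma_3+\gamma_4$; that ribbon graph can never occur as a component in Case 4.I.b).

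This is not a cosmetic slip, because the boundary profiles are exactly what you propose to use to prune the bijections, and three of the four diagrams of Figure~\ref{fig:Case4IbH1111CylDiagsFig} contain a component with profile $(1,1,3,3)$: in (A) and (B) both components are nested dumbbells (these two diagrams differ only in how the length-$1$ and length-$3$ circles are paired across the two sides), in (C) one component is a nested dumbbell and the other is the graph with four parallel edges, and only (D) has both components of the latter type. Carrying out your enumeration with the $1,1,2,4$ dumbbell would therefore either eliminate every dumbbell gluing at the step where you test the Case 4.I.b) relation, leaving only diagram (D), or admit spurious Case 4.I.a)-type gluings; in neither case is the stated list of four diagrams recovered. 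With the corrected embedding (boundary profile $(1,1,3,3)$, whose adjacency graph is a path and hence admits the required two-against-two colouring), your plan does go through and reproduces the paper's enumeration.
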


\begin{lemma}\label{lm:C4Ib:H1111:A}
 Assume that $\cM$ contains an $\cM$-cylindrically stable horizontally periodic surface $M$ satisfying Case 4.I.b) with cylinder diagram (A).  Then $\cM=\prymprinc$.
\end{lemma}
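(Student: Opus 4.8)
The plan is to run the collapse-and-extend argument, modeled on the treatment of the diagrams of Case 4.I.b) in $\cH(2,1^2)$ (Lemmas~\ref{lm:C4Ib:H211:B} and \ref{lm:C4Ib:H211:A}), adapted to the principal stratum. First I would read off from Figure~\ref{fig:Case4IbH1111CylDiagsFig}(A) the relevant combinatorial data: which cylinders are simple or semi-simple, which pairs are adjacent, and the circumference relation $\ell_1+\ell_2=\ell_3+\ell_4$ coming from Case 4.I.b). (By Lemma~\ref{lm:s:cyl:dist:sim:zeros} every simple cylinder, and every simple sub-cylinder of a semi-simple one, carries two distinct simple zeros on its boundary.) The first real step is to determine the $\cM$-parallel classes using Lemma~\ref{lm:Case4IbEqClasses}: of its three possibilities (up to renumbering), the ones in which a simple cylinder of diagram (A) would be free are excluded, because collapsing a free simple cylinder in $\cH(1^4)$ lands, by Proposition~\ref{prop:collapse:free:sim:cyl} and the running hypothesis that $\prymthreezero$ is the only rank two affine submanifold of $\cH(2,1^2)$, in $\prymthreezero$, and then counting the fixed points of its Prym involution against the horizontal cylinders it must fix gives a contradiction exactly as in Proposition~\ref{prop:C4Ia:H1111}. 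This should leave a configuration in which two simple (sub-)cylinders, say $C_1$ and $C_3$, are $\cM$-parallel with equal circumferences.

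The second step is to upgrade $\cM$-parallelism of $C_1$ and $C_3$ to $\cM$-similarity, hence --- since the equal circumferences force the homothety ratio to be $1$ --- isometry. If $C_1$ and $C_3$ were not $\cM$-similar, twist $C_1$ so that it carries a vertical saddle connection while $C_3$ does not, and collapse the pair simultaneously: this kills two periods and produces a surface in a rank two affine submanifold of a lower stratum in genus three (which stratum depends on the incidences of the boundary zeros and is read off from the figure), and in every case --- nonexistence of rank two loci in $\cH(3,1)$, uniqueness of $\prymthreezero$ in $\cH(2,1^2)$, uniqueness of $\prymmin$ in $\cH(4)$, each combined with a fixed-point count --- one gets a contradiction. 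Knowing $C_1$ and $C_3$ are isometric, collapse them simultaneously so that the two (disjoint) pairs of simple zeros on their boundaries collide; by Proposition~\ref{prop:collapse:similar:cyl} the resulting surface $M'$ lies in a rank two affine submanifold $\cM'$ of $\cH(2,2)$ with $\dim\cM=\dim\cM'+1$, and the $C_i$ persist as distinguished saddle connections $\sigma_1,\sigma_3$, each a loop based at one of the two double zeros of $M'$. The topological type of the resulting $2$-cylinder decomposition, read off from diagram (A), should place $M'$ in $\cH^{\rm odd}(2,2)$, so by the classification in $\cH(2,2)$ we have $\cM'\in\{\prym,\dcoverodd\}$ (were $M'$ in $\cH^{\rm hyp}(2,2)$ one would also have $\dcoverhyp$ on the list, excluded identically). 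The locus $\dcoverodd$ must now be ruled out: its surfaces carry, in addition to $\inv$, a hyperelliptic involution $\iota$ with eight fixed points that fixes each double zero, and --- mimicking the exclusion of $\dcoverhyp$ in the proof of Proposition~\ref{prop:C3I:2sim:cyl:classify} --- the two horizontal cylinders of $M'$ together with its horizontal saddle connections cannot accommodate all eight. Hence $\cM'=\prym$, which has dimension $5$, so $\dim\cM=6=\dim\prymprinc$.

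For the last step, let $\inv$ be the Prym involution of $M'$. Since $M'\in\cH^{\rm odd}(2,2)$, $\inv$ exchanges the two double zeros, hence cannot fix the loops $\sigma_1$ or $\sigma_3$; moreover $\inv$ must exchange $\sigma_1$ and $\sigma_3$ rather than send $\sigma_1$ to some other loop at the second double zero, for otherwise a small deformation in $\cM'$ would break the equality of the holonomies of $\sigma_1$ and $\sigma_3$ forced by the isometry of $C_1$ and $C_3$, as in Proposition~\ref{prop:C3I:2sim:cyl:classify}. Thus $\inv$ preserves $\Pi=\sigma_1\cup\sigma_3$, and since the graph of the two vertical saddle connections being collapsed is a disjoint union of two edges, hence loop-free, Proposition~\ref{DblCovExtSimpCyl} applies: $\inv$ extends to an involution of $M$ with four fixed points whose derivative is $-\id$, so $M\in\cH(1^4)\cap\cP=\prymprinc$. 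The final clause of Proposition~\ref{prop:collapse:similar:cyl} propagates this to all surfaces of $\cM$ near $M$, whence $\cM\subseteq\prymprinc$; combined with $\dim\cM=6=\dim\prymprinc$ and the ergodicity of the $\SL(2,\R)$-action on $\cM$, this yields $\cM=\prymprinc$. The step I expect to be the main obstacle is the exclusion of $\dcoverodd$: a priori the collapse could land in $\dcoverodd$ and give $\cM=\dcoverprinc$ instead, so one must genuinely exploit the specific $2$-cylinder structure produced by diagram (A); secondarily, one must extract the adjacency and zero-incidence data from the figure carefully enough to know which case of Lemma~\ref{lm:Case4IbEqClasses} applies and which lower stratum each collapse lands in.
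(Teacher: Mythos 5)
Your first step is where the argument breaks, and it breaks in a way that inverts the actual structure of this case. In diagram (A) there is exactly one simple cylinder, $C_1$, and its single top (resp.\ bottom) saddle connection is a proper part of the bottom of $C_3$ (resp.\ the top of $C_4$), so $\ell_1<\ell_3$ and $\ell_1<\ell_4$. You claim that the possibility ``the simple cylinder is free'' is excluded by collapsing it into $\prymthreezero$ and counting fixed points ``exactly as in Proposition~\ref{prop:C4Ia:H1111}''; but that count only yields a contradiction when the three surviving horizontal cylinders are pairwise non-exchangeable, which is false here: after collapsing $C_1$ the two strictly semi-simple cylinders $C_3$ and $C_4$ (one saddle connection on top and three on the bottom, and vice versa) \emph{can} be, and in fact are, exchanged by the Prym involution, which fixes only $C_2$. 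So no contradiction arises, and indeed ``$C_1$ free'' is precisely the configuration that occurs. The cases that the fixed-point count does eliminate are ``$C_3$ free'' and ``$C_4$ free'' (collapsing a free semi-simple cylinder lands in $\prymthreezero$ with three cylinders none of which can be permuted); combined with Lemma~\ref{lm:Case4IbEqClasses} and the strict inequalities $\ell_1<\ell_3,\ell_4$ (which kill the third alternative of that lemma), one gets that $\{C_3,C_4\}$ is an equivalence class and $C_1$ is free.

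Because of this, the configuration your second and third steps are built on --- two $\cM$-parallel simple cylinders of equal circumference, collapsed simultaneously into $\cH(2,2)$ --- cannot occur for diagram (A), and the case that does occur is left unhandled. The correct continuation is to collapse the free simple cylinder $C_1$ via Proposition~\ref{prop:collapse:free:sim:cyl}, obtaining $M'\in\prymthreezero$ whose Prym involution fixes $C_2$, exchanges $C_3$ and $C_4$, and therefore preserves the degeneration saddle connection of $C_1$ (it is the unique saddle connection lying in both the bottom of $C_3$ and the top of $C_4$); Proposition~\ref{DblCovExtSimpCyl} then extends it to a Prym involution of $M$, and $\dim\cM=\dim\prymthreezero+1=6=\dim\prymprinc$ gives $\cM=\prymprinc$. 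Note also that the obstacle you flag at the end (ruling out $\dcoverodd$, hence the outcome $\dcoverprinc$) is symptomatic of the wrong route: passing through $\cH(2,2)$ genuinely leaves both $\prym$ and $\dcoverodd$ on the table (as in the paper's diagram (B) analysis, whose conclusion is $\cM\in\{\dcoverprinc,\prymprinc\}$), whereas the collapse of $C_1$ into $\cH(2,1^2)$ never meets $\cH(2,2)$ and delivers exactly the stated conclusion.
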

\begin{proof}
 We number the cylinders so that $C_1$ is the simple one, and $C_3$ and $C_4$ are the semi-simple ones. Recall that the relation \eqref{eq:C4Ib:hom:rel} always holds. We claim that neither $C_3$ nor $C_4$ is free. Suppose that $C_3$ is free. Since it is semi-simple, we can collapse it to get a surface $M'\in \cH(2,1^2)$. By assumption, $M'$ must belong to $\tilde{\cQ}(2,1,-1^3)$, hence it admits a Prym involution $\inv$ with four fixed points. Observe that $M'$ has three horizontal cylinders, and none of them can be permuted with another one by $\inv$. Thus $\inv$ must fix all three cylinders, hence it must have at least six fixed points and we get a contradiction. The same arguments apply if $C_4$ is free.

 Using Lemma~\ref{lm:Case4IbEqClasses}, we derive that $C_1$ must be free and $\{C_3,C_4\}$ is an equivalence class. Collapsing $C_1$ so that the two zeros in its boundary collide, we obtain a surface $M' \in \tilde{\cQ}(2,1,-1^3)$. In particular, $M'$ admits a Prym involution $\inv$.
 Since $\inv$ has four fixed points, it must fix $C_2$ and exchange $C_3$ and $C_4$. In particular, it fixes the saddle connection which is the degeneration of $C_1$. Therefore, $\inv$ extends to a Prym involution  of $M$ that fixes $C_1$ by Proposition~\ref{DblCovExtSimpCyl}. It follows that $M \in \prymprinc$, and $\cM\subset \prymprinc$ by Proposition~\ref{prop:collapse:free:sim:cyl}. Since we have $ \dim \cM = \dim \tilde{\cQ}(2,1,-1^3)+1=6=\dim\prymprinc$, it follows that $\cM=\prymprinc$.
\end{proof}

\begin{lemma}\label{lm:C4Ib:H1111:B}
 Assume that $\cM$ contains an $\cM$-cylindrically stable horizontally periodic surface $M$ satisfying Case 4.I.b) with cylinder diagram (B).  Then $\cM\in\{\dcoverprinc,\prymprinc\}$.
\end{lemma}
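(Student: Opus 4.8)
The plan is to follow the template already used for cylinder diagram (A) in Lemma~\ref{lm:C4Ib:H1111:A} and for the case $k=1$ of Proposition~\ref{prop:C4II:H1111}: locate a pair of $\cM$-similar simple cylinders in $M$, collapse them simultaneously to land in a rank two affine submanifold of $\cH(2,2)$, identify that submanifold among $\{\dcoverhyp,\dcoverodd,\prym\}$ using \cite{AulicinoNguyenGen3TwoZeros}, and then push the resulting involutions back up to $M$ by Proposition~\ref{DblCovExtSimpCyl}. Reading off cylinder diagram (B), the two simple cylinders are $C_1$ and $C_3$, their boundaries containing two \emph{disjoint} pairs of simple zeros, and one has $\ell_1=\ell_3$ (either directly from the diagram, since they share a boundary saddle connection, or from Lemma~\ref{lm:Case4IbEqClasses}).

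First I would show that neither $C_1$ nor $C_3$ is free. If, say, $C_1$ were free, collapsing it so the two zeros on its boundary collide produces a surface $M'$ in a rank two affine submanifold of $\cH(2,1^2)$, which by hypothesis is $\tilde{\cQ}(2,1,-1^3)$; hence $M'$ carries a Prym involution $\tau$ with exactly four fixed points. But $M'$ is horizontally periodic with three horizontal cylinders, and one checks from the diagram that none of them can be exchanged with another by $\tau$, so $\tau$ fixes all three, forcing at least six fixed points --- a contradiction (this is exactly the bookkeeping of Lemma~\ref{lm:C4Ib:H1111:A} and Proposition~\ref{prop:C4Ia:H1111}). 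Next, since $\cM$ has rank two the horizontal cylinders fall into at least two equivalence classes, so by Lemma~\ref{lm:Case4IbEqClasses}, together with the circumferences read from the diagram, the only possibility is that the equivalence classes are $\{C_1,C_3\}$ and $\{C_2,C_4\}$ with $\ell_1=\ell_3$ and $\ell_2=\ell_4$; in particular $C_1$ and $C_3$ are $\cM$-parallel. I would then show that $C_1$ and $C_3$ are in fact $\cM$-similar, hence (since $\ell_1=\ell_3$) isometric: if not, twisting $\{C_1,C_3\}$ so that $C_1$ acquires a vertical saddle connection joining the two zeros on its boundary while $C_3$ does not, and then collapsing $C_1$, lands in a rank two affine submanifold of $\cH(2,1^2)$, i.e.\ in $\tilde{\cQ}(2,1,-1^3)$, and the same fixed-point count as above --- now with two surviving horizontal cylinders, one of them still simple, plus the double zero fixed --- yields a contradiction.

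With $C_1$ and $C_3$ isometric simple cylinders carrying disjoint pairs of zeros, I would collapse them simultaneously so that each pair of zeros collides; by Proposition~\ref{prop:collapse:similar:cyl} the resulting surface $M'$ lies in a rank two affine submanifold $\cM'\subset\cH(2,2)$ with $\dim\cM=\dim\cM'+1$, and I write $\sigma_1,\sigma_3$ for the distinguished saddle connections on $M'$ that are the degenerations of $C_1,C_3$ (each a loop at one of the two zeros of $M'$, of equal length). By the classification in $\cH(2,2)$ from \cite{AulicinoNguyenGen3TwoZeros}, $\cM'\in\{\dcoverhyp,\dcoverodd,\prym\}$. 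In each case the Prym involution $\tau$ of $M'$ must preserve $\Pi=\sigma_1\cup\sigma_3$: either it fixes the two zeros of $M'$ (the $\dcoverhyp$ case), whence it fixes each of $\sigma_1,\sigma_3$, or it exchanges the two zeros, whence it swaps $\sigma_1$ and $\sigma_3$; in all cases if $\tau$ failed to preserve $\{\sigma_1,\sigma_3\}$ one could deform $M'$ inside $\cM'$ so that the holonomies of $\sigma_1,\sigma_3$ differ, contradicting $C_1\cong C_3$ (the argument of Proposition~\ref{prop:C3I:2sim:cyl:classify}). Likewise, in the $\dcoverhyp$ and $\dcoverodd$ cases the hyperelliptic involution $\iota$ of $M'$ preserves $\Pi$. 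So by Proposition~\ref{DblCovExtSimpCyl} these involutions extend to $M$ with the same numbers of fixed points. If $\cM'=\prym$, the extension gives $M$ a Prym involution, so $M\in\prymprinc$, $\cM\subseteq\prymprinc$, and $\dim\cM=\dim\prym+1=6=\dim\prymprinc$ gives $\cM=\prymprinc$. If $\cM'\in\{\dcoverhyp,\dcoverodd\}$, $M$ inherits both a Prym and a hyperelliptic involution, so $M\in\cP\cap\cL\cap\cH(1^4)=\dcoverprinc$ by Lemma~\ref{lm:2inv:dblcover}, $\cM\subseteq\dcoverprinc$, and since $\dim\cM=\dim\cM'+1=5=\dim\dcoverprinc$ we get $\cM=\dcoverprinc$.

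The main obstacle I anticipate is the honest diagram-level bookkeeping: correctly identifying the simple cylinders and the disjointness of their zero-pairs in cylinder diagram (B), pinning down the admissible equivalence-class and circumference configurations, and making the fixed-point counts after each collapse completely airtight --- in particular verifying that no configuration of the three surviving horizontal cylinders can support a Prym involution with only four fixed points. Everything past that point is a direct transcription of arguments already carried out for diagram (A) and for Case 4.II).
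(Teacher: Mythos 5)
Your overall strategy is the paper's: show the two simple cylinders $C_1,C_3$ are a class of isometric $\cM$-similar cylinders, collapse them simultaneously into $\cH(2,2)$, identify $\cM'$ via \cite{AulicinoNguyenGen3TwoZeros}, and lift the involutions by Proposition~\ref{DblCovExtSimpCyl} with a dimension count. But there is a genuine gap at the equivalence-class step. You assert that Lemma~\ref{lm:Case4IbEqClasses} ``together with the circumferences read from the diagram'' forces the classes to be $\{C_1,C_3\}$ and $\{C_2,C_4\}$. That lemma also allows the configuration $\{C_1,C_3\},\{C_2\},\{C_4\}$ with $C_2$ and $C_4$ free, and no circumference condition is attached to that case, so nothing you have said excludes it. Moreover your fallback ``$\ell_1=\ell_3$ directly from the diagram, since they share a boundary saddle connection'' is false for diagram (B): both boundary saddle connections of $C_1$ lie on $\partial C_2$ and both boundary saddle connections of $C_3$ lie on $\partial C_4$, so $C_1$ and $C_3$ share no saddle connection and $\ell_1=\ell_3$ is not combinatorially forced. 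The paper devotes a separate argument to exactly this point: twist so that there is a vertical cylinder $D\subset\ol{C}_1\cup\ol{C}_2$; if $C_2$ (hence $C_4$) were free, $D$ would be free because any other vertical cylinder crossing $C_2$ must cross $C_4$, and then $P(C_1,\{D\})=1\neq 0=P(C_3,\{D\})$ contradicts Proposition~\ref{CylinderPropProp}. Only after that does the third bullet of Lemma~\ref{lm:Case4IbEqClasses} give $\ell_1=\ell_3$, $\ell_2=\ell_4$, which is what upgrades ``$\cM$-similar'' to ``isometric''. Without this step your source of $\ell_1=\ell_3$ (and hence the isometry you explicitly use) is missing.

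Two smaller inaccuracies in the same vein: in your non-similarity argument you must collapse the whole class $\{C_1,C_3\}$ simultaneously (neither cylinder is free, so collapsing $C_1$ alone is not a legitimate degeneration inside $\cM$); after that collapse the two surviving horizontal cylinders are $C_2$ and $C_4$, and neither is simple, so your phrase ``two surviving horizontal cylinders, one of them still simple'' does not describe the actual picture, although the intended fixed-point count (two fixed cylinders give four regular fixed points, plus the fixed double zero, exceeding four) is the paper's. Finally, in the $\cH(2,2)$ endgame the paper shows the Prym involution of $M'$ must exchange $C_2$ and $C_4$ (if it fixed $C_2$ it would fix the unique saddle connection $\sigma_1$ lying on both top and bottom of $C_2$, hence fix a zero, exceeding four fixed points), so $M'\in\cH^{\rm odd}(2,2)$ and $\cM'\in\{\dcoverodd,\prym\}$; you instead leave $\dcoverhyp$ as a live case. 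That does not endanger your final conclusion, but it is another instance of the fixed-point bookkeeping that needs to be done rather than deferred.
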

\begin{proof}
 We number the cylinders so that $C_1$ and $C_3$ are the simple ones, and $C_1$ is adjacent to $C_2$. Assume that $C_1$ is free. We can collapse it to get a surface $M'\in \tilde{\cQ}(2,1,-1^3)$ with three horizontal cylinders.  Remark that $C_1$ degenerates to a saddle connection contained in both top and bottom of $C_2$.

 Since $C_3$ is the unique horizontal simple cylinder in $M'$, it must be fixed by $\inv$. Recall that $\inv$ has four fixed points, hence it must exchange $C_2$ and $C_4$. But this is impossible since there are no saddle connections that are contained in both top and bottom of $C_4$. The same arguments apply for the case $C_3$ is free. Thus we can conclude that neither $C_1$ nor $C_3$ is free.

 By Lemma~\ref{lm:Case4IbEqClasses}, we derive that $C_1$ and $C_3$ are $\cM$-parallel. Let $\cC$ denote the equivalence class $\{C_1,C_3\}$.

 We now claim that $C_2$ and $C_4$ are not free. Assume that $C_2$ is free which means that $C_4$ is also free. Observe that we can twist $\cC$ and $C_2$ such that there is a vertical cylinder $D$  contained in $\ol{C_1}\cup\ol{C}_2$.
 Since any other vertical cylinder crossing $C_2$ must cross $C_4$, we derive that $D$ is free. But this contradicts the Cylinder Proportion Lemma, since we have $ P(C_1,\{D\})=1$ but
  $ P(C_3,\{D\})=0$. Therefore, we can conclude that $C_2$ and $C_4$ are $\cM$-parallel. Using again Lemma~\ref{lm:Case4IbEqClasses}, we draw that $\ell_1=\ell_3$ and $\ell_2=\ell_4$.

  We next claim that $C_1$ and $C_3$ are similar. If they are not, then we can twist them so that $C_1$ contains a vertical saddle connection, but $C_3$ does not. Collapsing simultaneously $C_1$ and $C_3$ we get a surface $M' \in \tilde{\cQ}(2,1 -1^3)$ with two horizontal cylinders. By counting the number of saddle connections on the borders of these two cylinders, we see that they cannot be exchanged by the Prym involution $\inv'$ of $M'$. Thus, they are both fixed by $\inv'$, which implies that $\inv'$ has four regular fixed points in $M'$. Since the double zero of $M'$ must be a fixed point of $\inv'$, we derive that $\inv'$ has at least $5$ fixed points which is a contradiction.

  Since $C_1$ and $C_2$ are similar and $\ell_1=\ell_3$, we conclude that $C_1$ and $C_3$ are isometric. Collapsing $C_1$ and $C_3$ simultaneously yields a surface $M' \in \cH(2,2)$, which is contained in a rank two affine submanifold $\cM'$. Again, let $\inv'$ be the Prym involution of $M'$, and   let $\sig_1$ and $\sig_3$ be respectively the saddle connections which are the degenerations of $C_1$ and $C_3$ in $M'$. Note that $\sig_1$ (resp. $\sig_3$) is contained in both top and bottom of $C_2$ (resp. $C_4$).

  We claim that $\inv'$ exchanges $C_2$ and $C_4$. If $\inv'$ fixes $C_2$, then it also fixes $C_4$, therefore it has four regular fixed points in $M'$. Moreover, since $\sig_1$ is the unique saddle connection contained in both top and bottom of $C_2$, it must be invariant by $\inv'$. But $\sig_1$ connects a zero of $M'$ to itself, therefore $\inv'$ fixes a zero of $M'$. This contradicts the condition that $\inv'$ has exactly four fixed points.

  Since $\inv'$ exchanges  $C_2$ and $C_4$, it must exchange $\sig_1$ and $\sig_3$ and permute the zeros of $M'$. We derive in particular that $M'\in \cH^{\rm odd}(2,2)$. Thus $\cM'=\dcoverodd$ or $\cM'=\prym$. It follows from Proposition~\ref{DblCovExtSimpCyl} that $\inv'$ extends to a Prym involution of $M$. Thus $M \in \prymprinc$ and $\cM \subseteq \prymprinc$.  If $\cM'=\dcoverodd$, then $M'$ also admits a hyperelliptic involution, which also extends to $M$. Hence in this case, we have $\cM \subseteq \dcoverprinc$.

  By Proposition~\ref{prop:collapse:similar:cyl}, we know that $\allowbreak \dim \cM  = \dim \cM'+1$. Using this dimension relation, we conclude that if $\cM'=\dcoverodd$, then $\cM=\dcoverprinc$, and if $\cM'=\prym$, then $\cM=\prymprinc$.
\end{proof}

\begin{lemma}\label{lm:C4Ib:H1111:C:D}
 Assume that $\cM$ contains a horizontally periodic surface $M$ satisfying Case 4.I.b) with cylinder diagram (C) or (D). Then either $\cM \in \{\dcoverprinc, \prymprinc\}$, or $\cM$ contains a horizontally periodic surface with at least five horizontal cylinders.
\end{lemma}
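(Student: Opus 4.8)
The plan is as follows. If $M$ is not $\cM$-cylindrically stable, then Lemma~\ref{lm:WrightTwistPresLem} already produces a horizontally periodic surface in $\cM$ with at least five horizontal cylinders, so assume throughout that $M$ is $\cM$-cylindrically stable, and treat the two cylinder diagrams separately. In both cases the strategy mirrors that of Lemmas~\ref{lm:C4Ib:H1111:A} and \ref{lm:C4Ib:H1111:B}: determine the partition of $\{C_1,\dots,C_4\}$ into $\cM$-parallel equivalence classes using Lemma~\ref{lm:Case4IbEqClasses} together with \eqref{eq:C4Ib:hom:rel}; eliminate configurations by means of the Cylinder Proportion Lemma~\ref{CylinderPropProp} and the non-existence of rank two affine manifolds in $\cH(3,1)$ (a free semi-simple cylinder could be twisted to carry a single vertical saddle connection joining distinct zeros and then collapsed into $\cH(3,1)$); and, in the surviving configurations, collapse a free simple cylinder with distinct zeros (Proposition~\ref{prop:collapse:free:sim:cyl}) or a pair of $\cM$-similar simple cylinders (Proposition~\ref{prop:collapse:similar:cyl}) to land in a rank two affine manifold $\cM'$ of a lower stratum. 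By the hypothesis that $\tilde\cQ(2,1,-1^3)$ is the only rank two affine manifold in $\cH(2,1^2)$ and by the main result of \cite{AulicinoNguyenGen3TwoZeros}, $\cM'$ is $\tilde\cQ(2,1,-1^3)$, or one of $\{\dcoverodd,\prym\}\subset\cH^{\rm odd}(2,2)$, the locus $\dcoverhyp$ being excluded by a fixed-point count on the horizontal saddle connections exactly as in case (1) of Proposition~\ref{prop:C3I:2sim:cyl:classify}.

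For diagram (C) I would first locate the simple cylinder and show it cannot be free: collapsing it would give a surface in $\tilde\cQ(2,1,-1^3)$ whose Prym involution fixes each of the three remaining cylinders and hence has at least six fixed points, a contradiction. Arguing as in Lemma~\ref{lm:C4Ib:H1111:B}, the analysis then reduces either to a simultaneous collapse of two $\cM$-similar simple cylinders, landing in $\cH^{\rm odd}(2,2)$, or to the collapse of a single free simple cylinder, landing in $\tilde\cQ(2,1,-1^3)$. In each situation Proposition~\ref{DblCovExtSimpCyl} extends the Prym involution (and, when $\cM'=\dcoverodd$, also the hyperelliptic involution) of $\cM'$ back to $M$, so that a neighborhood of $M$ in $\cM$ lies in $\prymprinc$ or in $\dcoverprinc$; the relation $\dim\cM=\dim\cM'+1$ then forces $\cM=\prymprinc$ (if $\cM'\in\{\prym,\tilde\cQ(2,1,-1^3)\}$, where $\dim\cM=6=\dim\prymprinc$) or $\cM=\dcoverprinc$ (if $\cM'=\dcoverodd$, where $\dim\cM=5=\dim\dcoverprinc$). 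If an equivalence-class configuration containing a free cylinder arises instead, one twists to expose a transverse simple cylinder and invokes Lemma~\ref{lm:nonfree:sim:cyl} as in Lemma~\ref{lm:C4Ib:H211:A}, obtaining either a Case 3.I) surface with two $\cM$-parallel simple cylinders, to which Proposition~\ref{prop:C3I:2sim:cyl:classify} applies, or a surface with at least five horizontal cylinders.

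Diagram (D) is the main obstacle, since the staircase has no simple or semi-simple horizontal cylinder and thus nothing to collapse directly; here I would exploit the cyclic structure of the staircase together with cylinder proportions. In each of the first two possibilities of Lemma~\ref{lm:Case4IbEqClasses} (those containing free cylinders), twist the nontrivial class to produce a transverse simple cylinder $D$ crossing exactly two of the $C_i$; a proportion computation against the two $\cM$-parallel horizontal cylinders, in the spirit of Lemmas~\ref{lm:C4Ia:ssim:cyl:no:par:C1} and \ref{lm:C4Ib:H1111:B}, forces an equality of circumferences incompatible with \eqref{eq:C4Ib:hom:rel} for this diagram, a contradiction; and if a transverse direction turns out $\cM$-cylindrically unstable, Lemma~\ref{lm:WrightTwistPresLem} yields at least five cylinders. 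This leaves only the configuration $\{C_1,C_3\},\{C_2,C_4\}$ with $\ell_1=\ell_3$ and $\ell_2=\ell_4$; a similarity argument, twisting one cylinder of a pair so that it acquires a transverse cylinder the other lacks and deriving a proportion contradiction, shows the paired cylinders are $\cM$-similar, whence $M$ admits the $\pi$-rotation of the staircase as an involution with derivative $-\id$. Counting its fixed points among the five boundary saddle connections and the three internal gluing edges identifies it as a Prym involution, and if the count is eight it also furnishes a hyperelliptic involution; thus $M\in\prymprinc$ or $M\in\dcoverprinc$, and the dimension count closes the argument. The hard part throughout is the bookkeeping for diagram (D): keeping precise track of the transverse cylinders and their proportions so as to eliminate every equivalence-class configuration but the symmetric one, and then determining exactly how many fixed points the resulting involution has.
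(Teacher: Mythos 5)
Your plan breaks down at the very first step for both diagrams: neither (C) nor (D) of Figure~\ref{fig:Case4IbH1111CylDiagsFig} contains a \emph{simple} horizontal cylinder (in (C) the cylinders $C_1$ and $C_3$ are only semi-simple, and in the staircase (D) every cylinder has two saddle connections on each boundary component). So there is no simple cylinder to ``locate'' in (C), and the collapsing machinery you lean on --- Proposition~\ref{prop:collapse:free:sim:cyl}, Proposition~\ref{prop:collapse:similar:cyl}, the notion of $\cM$-similar cylinders, and Proposition~\ref{DblCovExtSimpCyl} --- is stated (and used in the paper) only for simple cylinders; none of it applies here as written. This is exactly why the paper's proof of this lemma collapses nothing at all: using Lemma~\ref{lm:Case4IbEqClasses} it finds, in every admissible equivalence-class configuration for (C) and (D), two non-$\cM$-parallel cylinders $C_i,C_j$ sharing a horizontal saddle connection on both top and bottom, with $C_i$ $\cM$-parallel to a third cylinder; twisting produces a transverse simple cylinder in $\ol{C}_i\cup\ol{C}_j$ which is automatically \emph{not} free, and then Lemma~\ref{lm:nonfree:sim:cyl} reduces everything to cases already settled (Proposition~\ref{prop:C3I:2sim:cyl:classify}, Propositions~\ref{prop:C4II:H1111}, \ref{Case4IIIProp}, \ref{prop:C4Ia:H1111}, Lemmas~\ref{lm:C4IV:nonfree:sim:cyl}, \ref{lm:C4Ib:H1111:A}, \ref{lm:C4Ib:H1111:B}) or to a surface with at least five cylinders. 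You mention this mechanism only as a fallback for (C), and for (D) you replace it with an argument that does not work.

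Concretely, your endgame for diagram (D) is a genuine gap. First, the contradiction you hope to extract from the free-cylinder configurations via a cylinder-proportion computation is not substantiated (the paper does not get a contradiction there; it gets a non-free transverse simple cylinder and reduces). Second, in the remaining configuration $\{C_1,C_3\},\{C_2,C_4\}$ with $\ell_1=\ell_3$, $\ell_2=\ell_4$, you cannot conclude that $M$ admits the $\pi$-rotation of the staircase: that isometry exchanges \emph{adjacent} steps (in the labels of the figure, $C_1\leftrightarrow C_4$ and $C_2\leftrightarrow C_3$), i.e.\ it pairs cylinders lying in \emph{different} $\cM$-parallel classes, and its existence requires equalities among the individual horizontal saddle connection lengths and matching heights and twists that do not follow from any cylinder-level proportionality --- ``$\cM$-similar'' is not even defined for non-simple cylinders, and without a collapse into a classified lower stratum there is no way to force the required symmetry. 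Asserting that the staircase is symmetric at this point essentially assumes the conclusion that $M$ lies in the Prym locus. To repair the proof you would have to either carry out the paper's reduction via Lemma~\ref{lm:nonfree:sim:cyl} in all configurations for both diagrams, or find a genuinely new argument for the non-collapsible staircase; the direct symmetry claim as written would not survive.
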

\begin{proof}
It suffices to assume that $M$ is $\cM$-cylindrically stable, otherwise, we conclude that $\cM$ contains a horizontally periodic surface with at least five cylinders. Using Lemma~\ref{lm:Case4IbEqClasses}, one can check that there always exists a pair of cylinders $C_i$ and $C_j$ which are not $\cM$-parallel such that
 \begin{itemize}
  \item[$\bullet$] There is a saddle connection $\sig$ in the bottom of $C_i$ and in the top of $C_j$,

  \item[$\bullet$] There is a saddle connection $\sig'$ in the top of $C_i$ and in the bottom of $C_j$,

  \item[$\bullet$] $C_i$ is $\cM$-parallel to another cylinder.
 \end{itemize}
Since $C_i$ and $C_j$ are not $\cM$-parallel, we can twist them so that there is a vertical simple cylinder $D$ contained in $\ol{C}_i\cup \ol{C}_j$ which crosses only $\sig$ and $\sig'$. Since $C_i$ is $\cM$-parallel to another cylinder, $D$ is not free. Applying Lemma~\ref{lm:nonfree:sim:cyl}, we get a horizontally periodic $\cM$-cylindrically stable surface $M_1\in \cM$, and one of the following occurs:
\begin{itemize}
 \item[(i)] There are three horizontal cylinders, two of which are simple, and the cylinder diagram satisfies Case 3.I). In this case we conclude by Proposition~\ref{prop:C3I:2sim:cyl:classify}.

 \item[(ii)] There are at least four cylinders, and one of the equivalence classes consists of at least three cylinders. If $M_1$ has five horizontal cylinders or more, we are done. Assume that $M_1$ has exactly $4$ cylinders.  By the homological relations and $\cM$-cylindrical stability, Case 4.IV) and Case 4.I) are ruled out. If the cylinder decomposition satisfies Case 4.II) or Case 4.III), then we conclude by Proposition~\ref{prop:C4II:H1111} or Proposition~\ref{Case4IIIProp}, respectively.

 \item[(iii)] There are at least four horizontal cylinders, one of which is simple and not free. Obviously, we only need to consider the case $M_1$ has exactly $4$ cylinders. Case 4.IV) is then ruled out by Lemma~\ref{lm:C4IV:nonfree:sim:cyl}. In Case 4.II) and 4.III), we conclude by  Proposition~\ref{prop:C4II:H1111} and Proposition~\ref{Case4IIIProp}, respectively. In Case 4.I.a), we conclude by Proposition~\ref{prop:C4Ia:H1111}. Finally, in Case 4.I.b), since there exists a simple cylinder, we must have diagrams (A) or (B), and we can use Lemma~\ref{lm:C4Ib:H1111:A} or Lemma~\ref{lm:C4Ib:H1111:B} to conclude.
\end{itemize}
\end{proof}

As  a direct consequence of Lemmas~\ref{lm:C4Ib:H1111:CylDiags}, \ref{lm:C4Ib:H1111:A}, \ref{lm:C4Ib:H1111:B}, and \ref{lm:C4Ib:H1111:C:D}, we get

\begin{proposition}\label{prop:C4Ib:H1111}
 Suppose that $\tilde{\cQ}(2,1,-1^3)$ is the unique rank two affine submanifold in $\cH(2,1^2)$.
 Let $\cM$ be a rank two affine submanifold of $\cH(1^4)$. Assume that $\cM$ contains a horizontally periodic surface $M$ satisfying Case 4.I.b).
 Then either $\cM$ contains a horizontally periodic surface with at least five cylinders, or $\cM \in \{\dcoverprinc, \prymprinc\}$.
\end{proposition}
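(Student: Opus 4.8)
The plan is to reduce immediately to the $\cM$-cylindrically stable case and then run through the four possible cylinder diagrams. If $M$ is not $\cM$-cylindrically stable, then Lemma~\ref{lm:WrightTwistPresLem} produces a horizontally periodic surface in $\cM$ with strictly more than four horizontal cylinders, and we are done. So assume from now on that $M$ is $\cM$-cylindrically stable. By Lemma~\ref{lm:C4Ib:H1111:CylDiags}, the cylinder decomposition of $M$ realizes one of the four cylinder diagrams (A), (B), (C), (D) of Figure~\ref{fig:Case4IbH1111CylDiagsFig}, and it suffices to treat each one in turn.

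For diagram (A), I would invoke Lemma~\ref{lm:C4Ib:H1111:A}, which under the standing hypothesis that $\tilde{\cQ}(2,1,-1^3)$ is the only rank two affine submanifold of $\cH(2,1^2)$ forces $\cM=\prymprinc$; for diagram (B), I would invoke Lemma~\ref{lm:C4Ib:H1111:B}, which gives $\cM\in\{\dcoverprinc,\prymprinc\}$. In both cases, the underlying mechanism, carried out inside those lemmas, is to show that a suitable simple cylinder (or a pair of $\cM$-similar simple cylinders) can be collapsed to land in a rank two affine submanifold of $\cH(2,1^2)$ or $\cH(2,2)$ whose classification is already known, to use Proposition~\ref{DblCovExtSimpCyl} to lift the Prym (and, when present, hyperelliptic) involution back to $M$, and finally to pin down $\cM$ exactly by a dimension count via Propositions~\ref{prop:collapse:free:sim:cyl} and~\ref{prop:collapse:similar:cyl}.

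For diagrams (C) and (D), I would invoke Lemma~\ref{lm:C4Ib:H1111:C:D}, whose conclusion is exactly the dichotomy in the statement: either $\cM\in\{\dcoverprinc,\prymprinc\}$ or $\cM$ contains a horizontally periodic surface with at least five cylinders. Combining the four cases then completes the proof, so the present proposition is essentially the bundling together of Lemmas~\ref{lm:C4Ib:H1111:CylDiags}, \ref{lm:C4Ib:H1111:A}, \ref{lm:C4Ib:H1111:B}, and~\ref{lm:C4Ib:H1111:C:D}.

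The main subtlety, and the reason the argument is organized this way, is that Lemma~\ref{lm:C4Ib:H1111:C:D} does not resolve diagrams (C) and (D) directly: it manufactures a non-free simple cylinder and feeds it into Lemma~\ref{lm:nonfree:sim:cyl}, which branches back through Case 3.I) (handled by Proposition~\ref{prop:C3I:2sim:cyl:classify}), Case 4.II) (Proposition~\ref{prop:C4II:H1111}), Case 4.III) (Proposition~\ref{Case4IIIProp}), Case 4.IV) (Lemma~\ref{lm:C4IV:nonfree:sim:cyl}), Case 4.I.a) (Proposition~\ref{prop:C4Ia:H1111}), and back into diagrams (A), (B) of Case 4.I.b). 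Thus the real work is not in the present statement but in checking that this web of reductions is well-founded: each collapse either strictly decreases the number of zeros (so that one may invoke the classification in lower strata, which in turn uses the standing hypothesis about $\cH(2,1^2)$) or keeps us in a strictly larger cylinder count, and this is what makes the recursion terminate. With that bookkeeping already established in the preceding lemmas, the proof here is just the case analysis described above.
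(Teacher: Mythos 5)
Your proposal is correct and matches the paper's argument: the paper proves this proposition exactly by combining Lemmas~\ref{lm:C4Ib:H1111:CylDiags}, \ref{lm:C4Ib:H1111:A}, \ref{lm:C4Ib:H1111:B}, and \ref{lm:C4Ib:H1111:C:D}, with the non-cylindrically-stable case disposed of via the extra-cylinder alternative just as you do. Your added remarks about the web of reductions behind Lemma~\ref{lm:C4Ib:H1111:C:D} accurately describe where the real work lies, but the proposition itself is, as you say, just the bundling of those four lemmas.
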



\section{Five Cylinders}\label{sec:5cyl}

\begin{lemma}
\label{5CylDeg}
If a horizontally periodic genus three translation surface $M$ decomposes into exactly five cylinders, then pinching the core curves of those cylinders degenerates the surface to one of two possible surfaces:
\begin{itemize}
\item 5.I) 	Three spheres where two spheres have a pair of simple poles between them and the third sphere has two pairs of simple poles joined to each of the other two spheres.
\item 5.II) Three spheres where two spheres have three simple poles and the third sphere carries a pair of simple poles.
\end{itemize}
\end{lemma}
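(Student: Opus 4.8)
The plan is to pinch the five core curves and deduce the possible limit stable differentials from an arithmetic genus count combined with the residue theorem, which reduces the statement to a finite combinatorial check. Let $C_1,\dots,C_5$ be the horizontal cylinders of $M$, with core curves $\gamma_1,\dots,\gamma_5$ and circumferences $\ell_1,\dots,\ell_5$, set $\cC=\{C_1,\dots,C_5\}$, and let $M_0$ be the limit $a_t^{\cC}(M)$ as $t\to+\infty$, i.e. the stable differential obtained by pinching $\gamma_1,\dots,\gamma_5$. Let $\Gamma$ be its dual graph: one vertex for each connected component of $M\smin(\gamma_1\cup\dots\cup\gamma_5)$, one edge for each $C_i$. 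Since $M$ is connected, $\Gamma$ is connected, so if $\Gamma$ has $V$ vertices then $b_1(\Gamma)=5-V+1=6-V$. Each vertex $v$ corresponds to a closed Riemann surface $X_v$ of genus $g_v$ carrying a meromorphic $1$-form $\omega_v$ with at worst simple poles at its $b_v$ marked points, where $b_v$ is the valence of $v$ in $\Gamma$ (loops counted twice), so that $\sum_v b_v=2\cdot 5=10$. The arithmetic genus of $M_0$ equals that of $M$, which gives the standard identity $\sum_v g_v+b_1(\Gamma)=3$, i.e. $\sum_v g_v=V-3$; in particular $V\ge 3$.

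Next I would record two constraints at each vertex. Since $M\smin(\gamma_1\cup\dots\cup\gamma_5)$ deformation retracts onto the union $\Lambda$ of the horizontal saddle connections of $M$, each component of $M\smin(\gamma_1\cup\dots\cup\gamma_5)$ retracts onto a connected component of $\Lambda$, which contains a zero of $\omega$; and no zero is lost when only cylinder heights shrink. Hence $\omega_v$ has at least one zero, so $\deg(\operatorname{div}\omega_v)=2g_v-2+b_v\ge 1$, and in particular $b_v\ge 3$ whenever $g_v=0$. Moreover the residue of $\omega_v$ at the pole coming from $C_i$ equals $\pm\ell_i/(2\pi\sqrt{-1})\ne 0$ (because $\gamma_i$ is horizontal, so $\int_{\gamma_i}\omega=\pm\ell_i$), and on $X_v$ the residues sum to zero; hence $X_v$ cannot have exactly one pole, so $b_v\ge 2$ whenever $g_v=1$. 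Now let $m=\#\{v:g_v\ge 1\}$; then $m\le\sum_v g_v=V-3$, the $V-m$ rational vertices each contribute at least $3$ to $\sum_v b_v$, and the remaining $m$ contribute at least $1$ each, so $10=\sum_v b_v\ge 3(V-m)+m=3V-2m\ge 3V-2(V-3)=V+6$, forcing $V\le 4$. If $V=4$ then $m=1$ (as $\sum_v g_v=1$), that vertex has genus one, hence $b_v\ge 2$, and $\sum_v b_v\ge 9+2=11$, a contradiction. Therefore $V=3$, all three components are rational, $b_v\ge 3$ for each, and $\sum_v b_v=10$, so the valence multiset is $\{3,3,4\}$. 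In other words $M_0$ consists of three copies of $\mathbb{P}^1$ joined by five nodes, with dual graph of degree sequence $(3,3,4)$.

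It then remains to decide which multigraphs occur. Writing $P,Q$ for the two degree-$3$ vertices and $R$ for the degree-$4$ vertex, an elementary enumeration shows the connected multigraphs with five edges and degree sequence $(3,3,4)$ are, up to isomorphism: the loopless graph with edges $PR,PR,QR,QR,PQ$; the graph with one loop at $R$ together with edges $PQ,PQ,PR,QR$; and three further graphs carrying a loop at $P$ or at $Q$. To finish, one passes from the dual graph to the separatrix diagram, namely the ribbon graph formed by the horizontal saddle connections whose boundary circles are partitioned into the tops and bottoms of the cylinders (each saddle connection occurring once with each label) together with the matching recording the five cylinders, and checks that the three graphs with a loop at a degree-$3$ vertex admit no such realization in $\cH(2,1^2)$ or $\cH(1^4)$, whereas the other two do; equivalently one inspects the finite list of $5$-cylinder diagrams in these two strata. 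The two surviving types are precisely 5.I) (the loopless graph) and 5.II) (the one with the loop at the degree-$4$ vertex). The genus-and-residue count is what makes the problem finite; carrying out this final realizability check, i.e. ruling out the three graphs with a loop at a triple vertex, is the main obstacle.
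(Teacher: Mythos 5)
Your reduction to three rational components with valence multiset $\{3,3,4\}$ is correct, and it runs parallel to the paper's own argument: the paper reaches the same point by counting the homological relations imposed by the parts and by replacing any positive-genus part with a sphere, while you use the arithmetic genus of the stable limit together with the residue theorem and the observation that every part carries a zero; either route is fine. The problem is where you stop. The exclusion of the dual graphs having a loop at a trivalent vertex is exactly the content that separates the four combinatorial candidates from the two configurations 5.I) and 5.II), and you leave it unproved, calling it ``the main obstacle'' and proposing an inspection of separatrix diagrams or of the list of $5$-cylinder diagrams in $\cH(2,1^2)$ and $\cH(1^4)$ that you do not carry out. As written, the proposal therefore does not prove the lemma. (A small miscount as well: up to isomorphism there are two such extra graphs, not three — a loop at a trivalent vertex $P$ with edges $PR,QR,QR,QR$, and loops at $P$ and at $R$ with edges $PQ,QR,QR$.)

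In fact no realizability inspection is needed, because the tool you already invoked disposes of these graphs immediately. At a trivalent vertex carrying a loop, the two poles of the loop have opposite residues, so the residue theorem forces the residue of the single remaining pole to vanish; but that residue is $\pm\ell_i/(2\pi\sqrt{-1})$ with $\ell_i>0$. Equivalently, the boundary of that component consists of the two copies of the loop curve (with opposite signs) together with one copy of the core curve $\gamma_i$ of the remaining cylinder, so $\gamma_i$ would be null-homologous in $M$, contradicting $\int_{\gamma_i}\omega=\pm\ell_i\neq 0$. Inserting this one-line check where you defer to ``realizability'' completes the argument; note that the paper, too, leaves this final deduction to the reader after recording that each sphere needs at least three simple poles, so your enumeration plus this observation is actually a more explicit version of the intended proof.
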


\begin{proof}
Let $X'$ denote the degenerate Riemann surface.  We use the classical terminology \emph{part} to mean a connected component of a degenerate Riemann surface from which the nodes have been removed.  Observe that a degenerate Riemann surface with $p$ parts imposes $p-1$ homological relations on the core curves of parallel cylinders.  In particular, there are no homological relations among the core curves of parallel cylinders on a degenerate Riemann surface with one part.  Thus, if $X'$ has one part and $M$ consisted of five cylinders, $M$ would have to have genus at least five.  Likewise, if $X'$ has two parts and $M$ has five cylinders, then $M$ would have to have genus at least four.

In genus three the degenerate surface $X'$ can never have more than four parts, which is given by the general upper bound $2(g-1)$.

If $X'$ has four parts and at least one part has positive genus, then the original surface would have genus at least four.  This can be seen by replacing the part with positive genus with a sphere with a corresponding number of poles and observing that it arises from a surface with at least six cylinders and such a configuration can never occur in genus three.  Thus, if $X'$ has four parts, then all four parts have genus zero.  However, every sphere must carry a meromorphic differential with at least three simple poles, and this would require at least six cylinders.  Hence, $X'$ has exactly three parts.

Finally, if $X'$ has three parts, we claim that no part has positive genus because again each part of genus $g'$ can be replaced by a sphere with $g'$ pairs of simple poles.  Since each pair of poles corresponds to a pinched cylinder and every six cylinder surface in genus three degenerates to a punctured Riemann surface consisting of exactly four spheres, all three parts of $X'$ must be spheres.  Recalling that every sphere must carry a differential with at least three simple poles, we leave the reader to deduce that there are exactly two possibilities.
\end{proof}

As usual, we denote by $C_1,\dots,C_5$ the horizontal cylinders of $M$, and for $i=1,\dots,5$,  $\gamma_i$ is a core curve of $C_i$. We choose the orientation of $\gamma_i$ to be from the left to the right.

\subsection{Case 5.I)}

In Case 5.I) there is a unique cylinder between the spheres with three simple poles.  Throughout this subsection we call that cylinder $C_1$. We  choose a numbering of the cylinders such that the following homological relations hold
\begin{equation}\label{eq:C5I:homo:rel}
\gamma_1=\eps_2\gamma_2+\eps_3\gamma_3=\eps_4\gamma_4+\eps_5\gamma_5
\end{equation}
where $\eps_i \in \{\pm 1\}$.

Let us denote by $x_1$ and $x_2$ the two simple zeros in the spheres with three simple poles. If $M\in \cH(2,1^2)$, we denote by $x_0$ the double zero, and if $M\in \cH(1^4)$ we denote the two simple zeros on the sphere with four simple poles by $x'_0$ and $x''_0$. For $i=1,2$, we denote by $\Gr_i$ the graph which is the union of horizontal saddle connections containing $x_i$. We denote by $\Gr_0$ the graph consisting of horizontal saddle connections in the sphere with four simple poles. Note that by assumption,  the graphs $\Gr_i, \, i=0,1,2$, are {\em  planar}. The admissible configurations of $\Gr_i$ are shown in Figure~\ref{fig:C5I:sc:graph}.

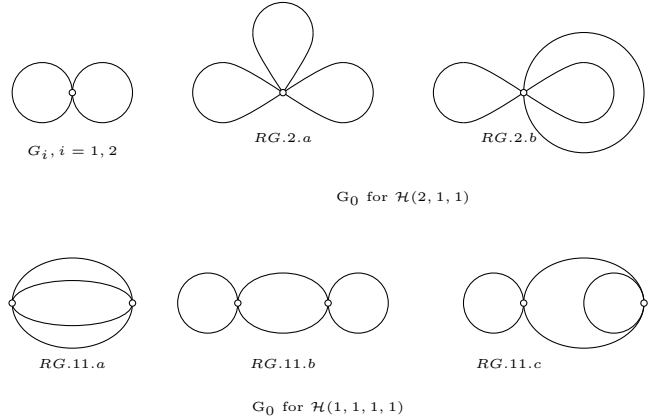
\begin{figure}[htb]
\centering
\begin{tikzpicture}[scale=0.4]
 \draw (0,12) arc (0:360:1);
 \draw (0,12) arc (180:540:1);
 \filldraw[fill=white] (0,12) circle (3pt);
 \draw (0,10) node {\tiny $G_i,i=1,2$};

 \draw (7,12) .. controls (6,13.5) and (6,13.7) .. (6,14);
 \draw (7,12) .. controls (8,13.5) and (8,13.7) .. (8,14);
 \draw (8,14) arc (0:180:1);

 \draw (7,12) .. controls (5.5,13) and (5.2,13) .. (5,13);
 \draw (7,12) .. controls (5.5,11) and (5.2,11) .. (5,11);
 \draw (5,13) arc (90:270:1);

 \draw (7,12) .. controls (8.5, 13) and (8.8,13) .. (9,13);
 \draw (7,12) .. controls (8.5,11) and (8.8,11) .. (9,11);
 \draw (9,11) arc (-90:90:1);

 \filldraw[fill=white] (7,12) circle (3pt);

 \draw (15,12) .. controls (13.5,13) and (13.2,13) .. (13,13);
 \draw (15,12) .. controls (13.5,11) and (13.2,11) .. (13,11);
 \draw (13,13) arc (90:270:1);

 \draw (15,12) .. controls (16.5, 13) and (16.8,13) .. (17,13);
 \draw (15,12) .. controls (16.5,11) and (16.8,11) .. (17,11);
 \draw (17,11) arc (-90:90:1);

 \draw (17,12) circle (2);

 \filldraw[fill=white] (15,12) circle (3pt);

 \draw (7,10.5) node {\tiny $RG.2.a$};
 \draw (14.5,10.5) node {\tiny $RG.2.b$};
 \draw (11,8.5) node {\tiny $\Gr_0$ for $\cH(2,1,1)$};

 \draw (0,5) ellipse (2 and 0.75);
 \draw (0,5) ellipse (2 and 1.5);
 \filldraw[fill=white] (2,5) circle (3pt);
 \filldraw[fill=white] (-2,5) circle (3pt);

 \draw (7,5) ellipse (1.5 and 1);
 \draw (4.5,5) circle (1);
 \draw (9.5,5) circle (1);
 \filldraw[fill=white] (5.5,5) circle (3pt);
 \filldraw[fill=white] (8.5,5) circle (3pt);

 \draw (14,5) circle (1);
 \draw (17,5) ellipse (2 and 1.5);
 \draw (18,5) circle (1);
 \filldraw[fill=white] (15,5) circle (3pt);
 \filldraw[fill=white] (19,5) circle (3pt);

 \draw (0,3) node {\tiny $RG.11.a$};
 \draw (7,3) node {\tiny $RG.11.b$};
 \draw (14.5,3) node {\tiny $RG.11.c$};
 \draw (8.5,1.5) node {\tiny $\Gr_0$ for $\cH(1,1,1,1)$};
\end{tikzpicture}
\caption{Case 5.I): admissible configurations of $\Gr_i, \, i=0,1,2$}
 \label{fig:C5I:sc:graph}
\end{figure}
Recall that in the literature the union $\Gr:=\sqcup_{i=0}^2\Gr_i$ is called the {\em separatrix diagram} of $M$, and in particular has a {\em ribbon structure}
(see \cite[Sec. 4]{KontsevichZorichConnComps}).
 Let $U_i$ be a regular neighborhood of $\Gr_i$ in the plane. We fix the orientation of every edge of $\Gr_i$ to be from left to right. Each  component of $\partial U_i$ is a core curve of a horizontal cylinder which is freely homotopic to a union of edges of $\Gr_i$.

A component of $\partial U_i$ is said to be {\em simple} if it is (freely) homotopic to a single edge of $\Gr_i$, which must be loop.  By definition, the cylinders that contain a simple component of $\partial U_i$ are semi-simple.

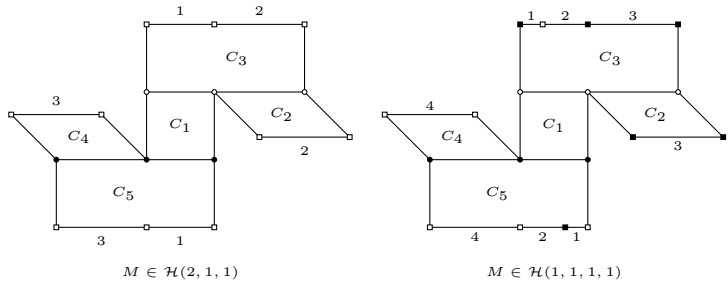
\begin{figure}[htb]
\begin{minipage}[t]{0.4\linewidth}
\centering
\begin{tikzpicture}[scale=0.3]
  \draw (-2,5) -- (0,3) -- (0,0) -- (7,0) -- (7,6) -- (9,4) -- (13,4) -- (11,6) -- (11,9) -- (4,9) -- (4,3) -- (2,5) -- cycle;
  \draw (0,3) --(7,3) (4,6) --(11,6);
   \foreach \x in {(4,6), (7,6), (11,6)} \filldraw[fill=white] \x circle (3pt);
   \foreach \x in {(0,3),(4,3),(7,3)} \filldraw[fill=black] \x circle (3pt);
   \foreach \x in {(-2,5),(0,0),(2,5),(4,9), (4,0), (7,9), (7,0), (9,4), (11,9), (13,4)} \filldraw[fill=white] \x +(-3pt,-3pt) rectangle +(3pt,3pt);
    \draw (5.5,4.5) node {\tiny $C_1$};
    \draw (10,5) node {\tiny $C_2$};
    \draw (8,7.5) node {\tiny $C_3$};
    \draw (1,4) node {\tiny $C_4$};
    \draw (3,1.5) node {\tiny $C_5$};
    \draw (5.5,9) node[above]  {\tiny $1$} (5.5,0) node[below] {\tiny $1$};
    \draw (9,9) node[above] {\tiny $2$} (11,4) node[below] {\tiny $2$};
    \draw (0,5) node[above] {\tiny $3$}  (2,0) node[below] {\tiny $3$};
    \draw (5.5,-2) node {\tiny $M \in \cH(2,1,1)$};
    \end{tikzpicture}
\end{minipage}
\begin{minipage}[t]{0.4\linewidth}
\centering
\begin{tikzpicture}[scale=0.3]
\draw (-2,5) -- (0,3) -- (0,0) -- (7,0) -- (7,6) -- (9,4) -- (13,4) -- (11,6) -- (11,9) -- (4,9) -- (4,3) -- (2,5) -- cycle;
  \draw (0,3) --(7,3) (4,6) --(11,6);
   \foreach \x in {(4,6), (7,6), (11,6)} \filldraw[fill=white] \x circle (3pt);
   \foreach \x in {(0,3),(4,3),(7,3)} \filldraw[fill=black] \x circle (3pt);
   \foreach \x in {(-2,5),(0,0),(2,5), (4,0), (5,9), (7,0)} \filldraw[fill=white] \x +(-3pt,-3pt) rectangle +(3pt,3pt);
   \foreach \x in {(4,9), (6,0), (7,9), (9,4), (11,9), (13,4)} \filldraw[fill=black] \x +(-3pt,-3pt) rectangle +(3pt,3pt);
    \draw (5.5,4.5) node {\tiny $C_1$};
    \draw (10,5) node {\tiny $C_2$};
    \draw (8,7.5) node {\tiny $C_3$};
    \draw (1,4) node {\tiny $C_4$};
    \draw (3,1.5) node {\tiny $C_5$};
    \draw (4.5,9.4) node  {\tiny $1$} (6.5,-0.4) node {\tiny $1$};
    \draw (6,9.4) node  {\tiny $2$} (5,-0.4) node {\tiny $2$};
    \draw (9,9.4) node {\tiny $3$} (11,3.6) node {\tiny $3$};
    \draw (0,5.4) node {\tiny $4$}  (2,-0.4) node {\tiny $4$};
    \draw (5.5,-2) node {\tiny $M \in \cH(1,1,1,1)$};
\end{tikzpicture}
\end{minipage}

 \caption{Some cylinder diagrams in Case 5.I)}
 \label{fig:cyl:diag:C5I}
\end{figure}

\subsubsection{The Stratum $\cH(2,1,1)$}

No horizontally periodic translation surface in $\cH(2,1^2)$ can have more than five cylinders, so throughout this subsection, $M$ is always $\cM$-cylindrically stable.  We will prove the following proposition.
\begin{proposition}
\label{Case5IH211}
If $\cM \subset \cH(2,1^2)$ is a rank two affine manifold and $M \in \cM$ admits a cylinder decomposition  satisfying Case 5.I), then $\cM=\tilde \cQ(2,1,-1^3)$.
\end{proposition}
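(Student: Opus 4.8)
The proof will follow the collapsing strategy of this section. The plan is: (1) enumerate the cylinder diagrams realizing Case 5.I) in $\cH(2,1^2)$; (2) determine the $\cM$-parallel equivalence classes of the five horizontal cylinders; (3) exhibit either a free simple cylinder or an equivalence class of $\cM$-similar simple cylinders, with distinct zeros on the relevant boundaries, and collapse it, landing in a lower stratum whose rank two affine manifolds are already classified; (4) transport the Prym involution of the degenerate surface back to $M$ via Proposition~\ref{DblCovExtSimpCyl}; (5) conclude with a dimension count. Recall that, since no horizontally periodic surface in $\cH(2,1^2)$ has more than five cylinders, $M$ is automatically $\cM$-cylindrically stable, so producing more cylinders is not an option.

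\emph{Steps 1--2.} Since $M\in\cH(2,1^2)$, the separatrix subgraph $\Gr_0$ over the sphere with four simple poles carries only the double zero $x_0$ and is $RG.2.a$ or $RG.2.b$ of Figure~\ref{fig:C5I:sc:graph}, while $\Gr_1,\Gr_2$ over the spheres with three poles each carry a single simple zero and are wedges of two loops. Pairing up the boundary components of a regular neighborhood of $\Gr=\Gr_0\sqcup\Gr_1\sqcup\Gr_2$ compatibly with the ribbon structure (adjacent components having opposite colors) produces a short list of diagrams, all containing the one in Figure~\ref{fig:cyl:diag:C5I}; in each of them the cylinder $C_1$ joining $x_1$ to $x_2$ is at least semi-simple, and after a suitable twist of its equivalence class (and, if needed, an extended cylinder collapse as in \cite[Lem.~4.7]{AulicinoNguyenGen3TwoZeros}) we may assume $C_1$ is a simple cylinder whose two boundary saddle connections join the distinct simple zeros $x_1$ and $x_2$. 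For the equivalence classes, since $\mathrm{rk}(\cM)=2$ the span of $\gamma_1,\dots,\gamma_5$ inside $p(T^\R_M\cM)$ is isotropic in a four\nobreakdash-dimensional symplectic space, hence at most two-dimensional; combining this with \eqref{eq:C5I:homo:rel}, with \cite[Lem.~2.15]{AulicinoNguyenGen3TwoZeros} (the cylinders are not all free), with \cite[Lem.~2.11]{AulicinoNguyenGen3TwoZeros} ($C_1$, being simple, is $\cM$-parallel only to a cylinder adjacent to it, i.e.\ to one of $C_2,C_4$), and with the Cylinder Proportion Lemma~\ref{CylinderPropProp} applied to a cylinder transverse to the horizontal decomposition (forcing equalities of circumferences), I expect to conclude that either $C_1$ is free with $\{C_2,C_3\},\{C_4,C_5\}$ the remaining classes, or $C_1$ is $\cM$-parallel to a single adjacent cylinder; and that within each non-free class the two cylinders are isometric, by the argument of \cite[Lem.~5.3]{AulicinoNguyenGen3TwoZeros}.

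\emph{Steps 3--5.} If $C_1$ is a free simple cylinder, collapsing it so that $x_1,x_2$ collide gives, by Proposition~\ref{prop:collapse:free:sim:cyl}, a surface $M'\in\cH(2,2)$ in a rank two affine submanifold $\cM'$ with $\dim\cM'=\dim\cM-1$; the four remaining cylinders exhibit $M'$ with a four-cylinder decomposition of topological type 4.I) (the spheres $S_1,S_2$ having been glued into one), so by the analysis of \cite[Sec.~6.3]{AulicinoNguyenGen3TwoZeros} $M'\in\cH^{\rm hyp}(2,2)$, whence $\cM'=\dcoverhyp$, the only rank two affine submanifold of $\cH(2,2)$ lying in $\cH^{\rm hyp}(2,2)$. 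Then $M'$ carries a Prym involution $\inv'$ fixing each of its double zeros; the distinguished saddle connection $\sigma$ produced by the degeneration is the loop at the new double zero $x'_{12}$ that separates the $S_1$-part from the $S_2$-part of the glued sphere, and using that $\inv'$ preserves the equivalence-class structure $\{C_2,C_3\},\{C_4,C_5\}$ while, by the fixed-point count, it cannot fix all four cylinders, one shows $\inv'$ interchanges the two halves and hence fixes $\sigma$. By Proposition~\ref{DblCovExtSimpCyl}, $\inv'$ extends to an involution of $M$ with four fixed points whose derivative is $-\id$, a Prym involution, so $M\in\cH(2,1^2)\cap\cP=\tilde\cQ(2,1,-1^3)$; since the same holds for all nearby surfaces in $\cM$, we get $\cM\subseteq\tilde\cQ(2,1,-1^3)$, and from $\dim\cM=\dim\dcoverhyp+1=5=\dim\tilde\cQ(2,1,-1^3)$ with $\tilde\cQ(2,1,-1^3)$ connected (Lanneau) and ergodicity of the $\SL(2,\R)$-action, $\cM=\tilde\cQ(2,1,-1^3)$. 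If instead $C_1$ is $\cM$-parallel to an adjacent simple cylinder, say $C_2$ (joining $x_0,x_1$), the pair is $\cM$-similar and their zero-pairs $\{x_1,x_2\},\{x_0,x_1\}$ are distinct, so Proposition~\ref{prop:collapse:similar:cyl} lets us collapse both, merging all three zeros to a single order-$4$ zero: $M'\in\cH(4)$, hence $\cM'=\tilde\cQ(3,-1^3)$; now there are two distinguished saddle connections $\sigma_1,\sigma_2$, exchanged by the Prym involution of $M'$ because $C_1,C_2$ are isometric (the argument used in Proposition~\ref{prop:C3I:2sim:cyl:classify}), Proposition~\ref{DblCovExtSimpCyl} applies, and the same dimension count ($\dim\cM=\dim\tilde\cQ(3,-1^3)+1=5$) yields $\cM=\tilde\cQ(2,1,-1^3)$.

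\emph{Main obstacle.} I expect the combinatorial core --- the ribbon-graph enumeration of Step 1 and the determination of the equivalence classes in Step 2 --- to be the most laborious part, together with the verification in Step 3 that the Prym involution of $M'$ genuinely preserves the distinguished saddle connection $\sigma$ (needed to invoke Proposition~\ref{DblCovExtSimpCyl}); once those are in place the dimension bookkeeping is routine.
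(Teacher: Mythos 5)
Your overall strategy (collapse a suitable cylinder, transfer the involution via Proposition~\ref{DblCovExtSimpCyl}, then count dimensions) is the right one, but two of your intermediate claims are false and the argument does not go through as written. First, the equivalence-class structure you propose cannot occur: if $C_2$ and $C_3$ were $\cM$-parallel, then by \eqref{eq:C5I:homo:rel} either $C_1$ would lie in the same class, or one of $\gamma_1,\gamma_3$ would vanish identically on $T_M\cM$, which is absurd since these functionals evaluate to circumferences at $M$ itself; hence $\{C_2,C_3\}$ and $\{C_4,C_5\}$ are never equivalence classes. What is actually true is that $C_1$ is always free (if not, its class is $\{C_1,C_2,C_3\}$, the remaining two cylinders are free, one of them is semi-simple, and collapsing it would produce a rank two affine manifold in $\cH(3,1)$, which does not exist), and the two remaining classes pair cylinders \emph{across} the two three-pole spheres, namely $\{C_2,C_4\}$ and $\{C_3,C_5\}$. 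Your later verification that the involution on the degenerate surface preserves the distinguished saddle connection leans on the incorrect class structure, so it falls with it; and your second branch, ``$C_1$ is $\cM$-parallel to a single adjacent cylinder,'' is impossible for the same homological reason (parallelism to $C_2$ forces parallelism to $C_3$), so in that branch $\{C_1,C_2\}$ would not be a full equivalence class and Proposition~\ref{prop:collapse:similar:cyl} could not be applied to it.

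Second, you may not ``assume $C_1$ is simple.'' The cylinder $C_1$ need not even be semi-simple (the possibility $\gamma_1=\gamma_2+\gamma_3=\gamma_4+\gamma_5$, with two saddle connections on each boundary of $C_1$, is a genuine case), and neither twisting nor an extended cylinder collapse converts a non-simple cylinder into a simple one, so your main collapsing move is simply unavailable there. Even in the favorable case where $C_1$ is simple and free, your identification $\cM'=\dcoverhyp$ rests on the unproved assertion that a Case 4.I) decomposition forces $M'\in\cH^{\rm hyp}(2,2)$, and you never treat the alternatives $\cM'\in\{\dcoverodd,\prym\}$; the latter would give $\dim\cM=6$ and break your dimension count. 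The paper's proof sidesteps all of this: it shows that one of the classes $\{C_2,C_4\}$, $\{C_3,C_5\}$ consists of two simple cylinders, that these are $\cM$-similar (otherwise a simultaneous collapse would produce a rank two manifold in $\cH(3,1)$) and hence isometric, and that their boundary zero-pairs differ; collapsing that pair simultaneously lands in $\cH(4)$, where the only rank two affine manifold is $\tilde{\cQ}(3,-1^3)$, after which the extension of the Prym involution and the dimension count you propose finish the argument. If you want to salvage your approach, you must first establish the correct equivalence classes and then collapse the cross-sphere pair of simple cylinders rather than $C_1$.
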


\begin{lemma}
\label{H211Case5IC1FreeLem}
Let $M$ be a translation surface satisfying Case 5.I) in a rank two affine manifold $\cM \subset \cH(2,1^2)$.  Then $C_1$ is free.
\end{lemma}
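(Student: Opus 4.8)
The plan is to argue by contradiction, following the pattern of the non\nobreakdash-freeness arguments already used in this paper (e.g. Lemma~\ref{lm:C4Ia:ssim:cyl:no:par:C1} and Proposition~\ref{prop:H211NoCase4II}). Assume $C_1$ is not free, so that $C_1$ is $\mathcal M$\nobreakdash-parallel to some $C_j$, $j\in\{2,3,4,5\}$. Since $\mathcal M$ has rank two and every horizontally periodic surface in $\cH(2,1^2)$ has at most five cylinders (so $M$ is $\mathcal M$\nobreakdash-cylindrically stable), the horizontal cylinders of $M$ split into at least two equivalence classes; in particular the class $\mathcal C$ of $C_1$ is a proper subset of $\{C_1,\dots,C_5\}$. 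The Case~5.I) configuration is symmetric under the involution of the degenerate surface exchanging the two three\nobreakdash-pole spheres (which swaps $\{C_2,C_3\}$ with $\{C_4,C_5\}$) and under $C_2\leftrightarrow C_3$, $C_4\leftrightarrow C_5$, so we may assume $j=2$; thus $C_1$ and $C_2$ both border the sphere carrying the simple zero $x_1$, whose separatrix diagram is a figure\nobreakdash-eight (Figure~\ref{fig:C5I:sc:graph}).

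The main tool will be the Cylinder Proportion Lemma~\ref{CylinderPropProp}. After passing to a square\nobreakdash-tiled surface in $\mathcal M$ (legitimate since $\bk(\mathcal M)=\Q$) and twisting the cylinders of $\mathcal C$, the plan is to produce a transverse cylinder $D$ whose core curve exits $C_1$ and re\nobreakdash-enters $C_2$ across a saddle connection they share on the sphere of $x_1$. As in the proof of Lemma~\ref{lm:C4Ia:ssim:cyl:no:par:C1}, the value of $P(D,\mathcal C)$ together with the Cylinder Proportion Lemma should confine the whole $\mathcal M$\nobreakdash-equivalence class $\mathcal D$ of $D$ so that each of its cylinders crosses the core curves of $C_1$ and $C_2$ an equal number of times; equating $P(C_1,\mathcal D)=P(C_2,\mathcal D)$ for the $\mathcal M$\nobreakdash-parallel pair $C_1,C_2$ then yields a linear relation among $\ell(C_1),\dots,\ell(C_5)$ (in the simplest case, $\ell(C_1)=\ell(C_2)$). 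Comparing it with the homological relations $\gamma_1=\eps_2\gamma_2+\eps_3\gamma_3=\eps_4\gamma_4+\eps_5\gamma_5$ — which after applying the holonomy homomorphism read $\ell(C_1)=\eps_2\ell(C_2)+\eps_3\ell(C_3)=\eps_4\ell(C_4)+\eps_5\ell(C_5)$ with all $\ell(C_i)>0$ — gives a contradiction; a short case analysis on the signs $\eps_i$ (invoking a second transverse cylinder in the mixed\nobreakdash-sign sub-case) is all that is needed at this point.

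The crux, and the main obstacle, is the confinement step. Unlike Case~4.I.a), where the transverse cylinder lies inside the closure of the two parallel cylinders, here $C_1$ is the unique cylinder joining the two three\nobreakdash-pole spheres, so it straddles two different pieces of the decomposition; pinning down which cylinders $\mathcal D$ meets, and with what multiplicities, forces a split into the few sub-cases dictated by which of the three loops of the figure\nobreakdash-eight at $x_1$ bounds $C_1$, and possibly by which admissible configuration ($RG.2.a$ or $RG.2.b$, Figure~\ref{fig:C5I:sc:graph}) the diagram $\Gr_0$ at the double zero realizes. As a consistency check, a purely homological route is also available: the core curves span an isotropic subspace $L\subset H_1(M,\bR)$ with $\dim L\le 3$, while $p\bigl(T_M^{\bR}(\mathcal M)\bigr)$ is $4$\nobreakdash-dimensional and symplectic, so $\dim\bigl(L\cap p(T_M^{\bR}(\mathcal M))\bigr)\le 2$; every free cylinder and every equivalence class contributes a vector to this intersection, and ``$C_1$ not free'' forces three independent such vectors unless extra homological coincidences occur among $\gamma_1,\dots,\gamma_5$, which can be ruled out from Figure~\ref{fig:C5I:sc:graph}.
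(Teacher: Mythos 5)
Your proposal has a genuine gap at exactly the point you yourself flag as ``the crux'': the confinement step is never carried out, and it does not go through by analogy with Lemma~\ref{lm:C4Ia:ssim:cyl:no:par:C1}. In that lemma the transverse cylinder $D_1$ satisfies $P(D_1,\cC)=1$, and it is this, via Proposition~\ref{CylinderPropProp}, that forces every cylinder of its equivalence class into $\ol{C}_1\cup\ol{C}_2$ and makes $P(C_1,\cD)=P(C_2,\cD)$ collapse to $\ell_1=\ell_2$. In Case 5.I) no analogous containment is available: $C_1$ is the unique cylinder joining the two three-pole spheres, a transverse cylinder through $C_1$ and $C_2$ will in general also cross $C_3$, $C_4$, $C_5$, and the other members of its class can meet cylinders outside $\cC$ with unknown multiplicities, so the proportion identity yields no usable relation among the $\ell_i$. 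You also skip the first structural consequence of \eqref{eq:C5I:homo:rel}: if $C_2$ is $\cM$-parallel to $C_1$ then $\gamma_3=\eps_3^{-1}(\gamma_1-\eps_2\gamma_2)$ forces $C_3$ into the same class (and likewise for the pair $C_4,C_5$), so after your ``wlog $j=2$'' the class is $\{C_1,C_2,C_3\}$ and $C_4,C_5$ are free; without this the bookkeeping behind your proportion computation is not even set up correctly. Your backup homological route has the same problem in sharper form: with $\cC=\{C_1,C_2,C_3\}$ the relevant twist vectors project to duals of $h_1\gamma_1+h_2\gamma_2+h_3\gamma_3$, $\gamma_4$, $\gamma_5$, and when, say, $\gamma_1=\gamma_2+\gamma_3$ and $h_2=h_3$, the first vector lies in the span of $\gamma_4,\gamma_5$, so no three-dimensional isotropic subspace of $p\bigl(T^\R_M\cM\bigr)$ is produced. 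This is a metric coincidence, not a combinatorial one, and it cannot be ``ruled out from Figure~\ref{fig:C5I:sc:graph}''.

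For comparison, the paper's proof avoids cylinder proportions entirely. The homological relation pins down the classes: up to relabelling, $\cC=\{C_1,C_2,C_3\}$ and $C_4$, $C_5$ are both free. Planarity of the separatrix graph at the simple zero shared by the boundaries of the two free cylinders shows that at least one of them is semi-simple; twisting and collapsing that free semi-simple cylinder produces a surface lying in a rank two affine submanifold of $\cH(3,1)$ (by \cite[Prop.~2.16]{AulicinoNguyenGen3TwoZeros}), and no such submanifold exists, giving the contradiction. If you wish to salvage your route you would have to actually carry out the deferred sub-case analysis of which cylinders the class $\cD$ meets, and it is not clear that it closes; the degeneration argument is both shorter and already within the toolkit you cite.
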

\begin{proof}
By contradiction, assume that $C_1$ is not free.  Let $\cC$ be the equivalence class of $C_1$. From the relation \eqref{eq:C5I:homo:rel}, we derive that if $C_2 \in \cC$, then $C_3 \in \cC$ and vice versa. The same is true for the pair $\{C_4,C_5\}$. By assumption, the cylinders must split into two or three equivalence classes. Thus, without loss of generality we can assume that $\cC=\{C_1,C_2,C_3\}$ and $C_4$ and $C_5$ are free.

We now claim that at least one of $C_4$ or $C_5$ is semi-simple. To see this, we observe that the boundaries of $C_4$ and $C_5$ contain the same simple zero. We can  assume that this simple zero is $x_1$. Since the graph $\Gr_1$ is planar, we see that among three cylinders $\{C_1,C_4,C_5\}$, there are two that are semi-simple. Thus at least one of $C_4$ and $C_5$ is semi-simple.

Collapsing the free semi-simple cylinder yields a translation surface  which is contained in a rank two affine submanifold $\cM'$ of $\cH(3,1)$ by \cite[Prop. 2.16]{AulicinoNguyenGen3TwoZeros}. But such a submanifold does not exist by \cite{AulicinoNguyenGen3TwoZeros}.  Hence, we get a contradiction.
\end{proof}

\begin{lemma}\label{lm:H211:C5I:eq:cl:cyl}
 Let $M$ be a translation surface in $\cM$ which admits a cylinder decomposition in the horizontal direction satisfying Case 5.I). Then up to a renumbering of the cylinders, the equivalence classes are $\{C_1\}, \{C_2,C_4\}, \{C_3,C_5\}$.
\end{lemma}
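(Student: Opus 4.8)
The plan is to combine three ingredients: the homological relations \eqref{eq:C5I:homo:rel} together with the symplectic structure on $p(T^\R_M\cM)$ forced by $\mathrm{rk}(\cM)=2$ (as in the proof of Lemma~\ref{4CylCaseIIIEqCls}); the non-existence of a rank two affine submanifold of $\cH(3,1)$, exploited by collapsing free semi-simple cylinders; and the Cylinder Proportion Lemma (Proposition~\ref{CylinderPropProp}) applied to transverse cylinders. By Lemma~\ref{H211Case5IC1FreeLem} the cylinder $C_1$ is free, so $\{C_1\}$ is one equivalence class and it remains to determine how $C_2,C_3,C_4,C_5$ partition into classes; since $M$ is $\cM$-cylindrically stable and $\bk(\cM)=\Q$, I may assume $M$ is square-tiled. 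I would record that $\gamma_1,\dots,\gamma_5$ span a $3$-dimensional isotropic subspace $W$ of $H_1(M,\R)\cong H^1(M,\R)$ with $\mathrm{span}(\gamma_2,\gamma_3)\cap\mathrm{span}(\gamma_4,\gamma_5)=\mathrm{span}(\gamma_1)$, and that each separatrix graph $\Gr_1,\Gr_2$ through the simple zeros $x_1,x_2$ is a planar figure eight, so that at least two of the three cylinders having $x_i$ on their boundary are semi-simple; in particular at least one of $C_2,C_3$ and at least one of $C_4,C_5$ is semi-simple.

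First I would eliminate free semi-simple cylinders among $C_2,\dots,C_5$. Such a cylinder has $x_i$ and the double zero $x_0$ as its two boundary zeros, hence (after twisting it to carry a single vertical saddle connection from $x_i$ to $x_0$ and collapsing, via Proposition~\ref{prop:collapse:free:sim:cyl}) would produce a surface in a rank two affine submanifold of $\cH(3,1)$, which does not exist \cite{AulicinoNguyenGen3TwoZeros}. Consequently the semi-simple cylinder among $\{C_2,C_3\}$, and the one among $\{C_4,C_5\}$, is not free; in particular $\{C_2,C_3\}$ cannot be a pair of free cylinders, and neither can $\{C_4,C_5\}$.

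Next I would use the symplectic count. By Theorem~\ref{thm:Wright:Cyl:Def} and the identification of \cite[\S4.1]{MirzakhaniWrightBoundary}, \cite[Rem. 2.5]{WrightCylDef}, every equivalence class $\cC$ contributes a vector $v_\cC=\sum_{C_i\in\cC}h_i[\gamma_i]$, with all $h_i>0$, to the $4$-dimensional symplectic space $p(T^\R_M\cM)$, and the $v_\cC$ span an isotropic --- hence at most $2$-dimensional --- subspace. This rules out the partition into five singletons (then $h_2\gamma_2,h_3\gamma_3,h_4\gamma_4$ would already be independent in $W$). For every other partition a finite check using the above description of $W$ shows that, whenever some class meets $\{C_2,C_3\}$ in a single cylinder and is disjoint from $\{C_4,C_5\}$ while another class symmetrically meets $\{C_4,C_5\}$, the $3\times3$ minor formed by $v_1$ and those two class vectors equals, up to sign, a nonzero product of three heights, contradicting the dimension bound; the partitions in which $\{C_2,C_3\}$ or $\{C_4,C_5\}$ is forced to contain a singleton are killed instead by the previous paragraph. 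After this bookkeeping the only surviving partitions are $\{C_1\},\{C_i,C_j\},\{C_k,C_l\}$ with $\{i,j\}$ and $\{k,l\}$ each containing one of $C_2,C_3$ and one of $C_4,C_5$, and these are all equivalent under a renumbering of the cylinders respecting the symmetry of the configuration; so the classes may be taken to be $\{C_1\},\{C_2,C_4\},\{C_3,C_5\}$.

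The step I expect to be the main obstacle is the elimination of the ``parallel'' configurations, where two cylinders having the \emph{same} simple zero on their boundary (say $C_2$ and $C_3$ on the three-holed sphere containing $x_1$, or all of $C_2,C_3,C_4,C_5$ in a single class) are $\cM$-parallel: there the symplectic count only yields $h_2=h_3$ (and $h_4=h_5$ in the single-class case) rather than an immediate contradiction. To finish those cases I would twist the class containing $\{C_2,C_3\}$ so that a transverse cylinder $E$ crosses precisely the horizontal cylinders $C_2$ and $C_3$ within the three-holed sphere containing $x_1$ --- possible because $\gamma_2$ and $\gamma_3$ are boundary curves of that sphere joined through an edge of $\Gr_1$ --- and then argue, exactly as in \cite[Prop. 5.6, Prop. 5.9, Prop. 5.14]{AulicinoNguyenGen3TwoZeros}, that the equivalence class of $E$ violates the Cylinder Proportion Lemma (a cylinder parallel to $E$ that crosses a different part of the decomposition has strictly smaller proportion in the class of $E$), or else collapse to a lower stratum whose rank two affine submanifolds are already classified and obtain a contradiction. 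Managing the combinatorics of the separatrix diagram so that the transverse cylinder can always be produced is where the care is needed.
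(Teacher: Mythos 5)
Your overall strategy overlaps substantially with the paper's: you invoke Lemma~\ref{H211Case5IC1FreeLem} to make $\{C_1\}$ a class, you rule out the possibility that both cylinders on one side are free by collapsing a free semi-simple cylinder into $\cH(3,1)$ (this is exactly the paper's second step, although for a semi-simple cylinder you should cite \cite[Prop.\ 2.16]{AulicinoNguyenGen3TwoZeros} or the Mirzakhani--Wright machinery rather than Proposition~\ref{prop:collapse:free:sim:cyl}, which is stated for simple cylinders), and your ``$3\times 3$ minor'' computation is the same Lagrangian-versus-rank-two count the paper uses when it finds three free cylinders $C_1,C_2,C_4$. These parts are fine.

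The genuine gap is exactly where you say the main obstacle lies: the ``parallel configurations,'' i.e.\ every partition in which some equivalence class contains both $C_2$ and $C_3$ (or both $C_4$ and $C_5$) --- including $\{C_2,C_3\},\{C_4,C_5\}$, classes like $\{C_3,C_4,C_5\}$, and the single class $\{C_2,\dots,C_5\}$. As you correctly observe, the symplectic count does not eliminate these (the class vectors can lie in a $2$-plane for suitable heights), and your proposed remedy --- producing a transverse cylinder crossing only $C_2$ and $C_3$ and running a Cylinder Proportion argument ``as in'' \cite[Prop.\ 5.6, 5.9, 5.14]{AulicinoNguyenGen3TwoZeros}, or collapsing to a lower stratum --- is left as a sketch whose combinatorial core you explicitly defer. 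So the decisive case of the lemma is not actually proved. What you are missing is a one-line argument: if $C_2$ and $C_3$ are $\cM$-parallel, their holonomies keep a fixed positive ratio $\lambda=\ell_3/\ell_2$ on a neighborhood of $M$ in $\cM$, and then the relation $\gamma_1=\eps_2\gamma_2+\eps_3\gamma_3$ gives $\mathrm{hol}(\gamma_1)=(\eps_2+\eps_3\lambda)\,\mathrm{hol}(\gamma_2)$ with $\eps_2+\eps_3\lambda=\ell_1/\ell_2>0$ constant, so $C_1$ would be $\cM$-parallel to $C_2$, contradicting the freeness of $C_1$; symmetrically for $C_4,C_5$. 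This is how the paper disposes of all the parallel configurations at once, after which the remaining bookkeeping (one free cylinder on each side forces three free cylinders $C_1,C_2,C_4$ spanning a Lagrangian, contradiction) is exactly your minor computation, and the conclusion $\{C_1\},\{C_2,C_4\},\{C_3,C_5\}$ follows. With that observation inserted, your argument closes; without it, it does not.
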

\begin{proof}
 By Lemma~\ref{H211Case5IC1FreeLem}, we know that one of the equivalence classes is $\{C_1\}$. If $C_2$ and $C_3$ are $\cM$-parallel, then their equivalence class would contain $C_1$, and we have a contradiction. The argument of Lemma~\ref{H211Case5IC1FreeLem} actually shows that $C_2$ and $C_3$ cannot both be free. Let us assume that $C_2$ is free and $C_3$ is $\cM$-parallel to $C_5$. Since $C_4$ cannot be $\cM$-parallel to $C_5$ (otherwise it would be $\cM$-parallel to $C_1$), $C_4$ must be free. But in this case, we would have three free cylinders $C_1,C_2,C_4$ whose core curves span a Lagrangian subspace of $H_1(M,\bR)$, which contradicts the hypothesis that $\cM$ is of rank two. Thus the only possibility remaining is that $C_2$ is $\cM$-parallel to $C_4$, and $C_3$ is $\cM$-parallel to $C_5$.
\end{proof}

\begin{lemma}\label{lm:H211:C5I:2:sim:cyl}
 Following the convention of Lemma \ref{lm:H211:C5I:eq:cl:cyl}, one of the equivalence classes $\{C_2,C_4\}$ and $\{C_3,C_5\}$ consists of two simple cylinders.
\end{lemma}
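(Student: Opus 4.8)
The plan is to extract the claim from the combinatorics of the separatrix diagram of $M$, after first pinning down the circumferences of the cylinders using $\cM$-parallelism.

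First I would record the circumferences. Following Lemma~\ref{lm:H211:C5I:eq:cl:cyl}, $C_1$ is free and joins the two genus-zero pieces $A,B$ carrying the simple zeros $x_1,x_2$, while $C_2,C_3$ join $A$ to the piece $C$ carrying the double zero $x_0$, and $C_4,C_5$ join $B$ to $C$; the equivalence classes are $\{C_1\},\{C_2,C_4\},\{C_3,C_5\}$. Exactly as in the proof of Lemma~\ref{lm:Case4IbEqClasses}, since $\cM$ has rank two and $C_2$ is $\cM$-parallel to $C_4$ (resp. $C_3$ to $C_5$), the core curves satisfy $\gamma_2=\lambda\gamma_4$ and $\gamma_3=\mu\gamma_5$ in $H_1(M,\R)$ for some $\lambda,\mu>0$; substituting into the homological relation $\gamma_1=\eps_2\gamma_2+\eps_3\gamma_3=\eps_4\gamma_4+\eps_5\gamma_5$ and using that $\gamma_2,\gamma_3$ are linearly independent forces $\lambda=\mu=1$, hence $\ell_2=\ell_4$ and $\ell_3=\ell_5$.

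Next I would analyse the separatrix graphs $\Gr_0,\Gr_1,\Gr_2$ via the admissible configurations of Figure~\ref{fig:C5I:sc:graph}. The graph $\Gr_1$ of horizontal saddle connections through $x_1$ is a single planar figure eight (it is connected, has one vertex of valency four, and its regular neighbourhood is the three-holed sphere $A$); so among the three cylinders $C_1,C_2,C_3$ meeting $x_1$, exactly one has two horizontal saddle connections on its $x_1$-side and the other two are semi-simple there, and the multiset of their circumferences is $\{a_1,a_2,a_1+a_2\}$, where $a_1,a_2$ are the edge lengths of $\Gr_1$ — so the "long" cylinder is the one of maximal circumference. The same holds for $\{C_1,C_4,C_5\}$ at $x_2$, and since $\ell_4=\ell_2$ and $\ell_5=\ell_3$ these two circumference multisets both equal $\{\ell_1,\ell_2,\ell_3\}$. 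Now $\Gr_0$ (through the double zero $x_0$) is a planar bouquet of three loops whose neighbourhood is the four-holed sphere $C$, hence it is $RG.2.a$ or $RG.2.b$. The four cylinders meeting $x_0$ are $C_2,C_3,C_4,C_5$, with circumferences $\ell_2,\ell_3,\ell_2,\ell_3$. For $RG.2.a$ the boundary cycles of $\Gr_0$ have lengths $\{c_1,c_2,c_3,c_1+c_2+c_3\}$, whose maximum is attained uniquely; this is impossible, because the multiset $\{\ell_2,\ell_2,\ell_3,\ell_3\}$ attains its maximum with multiplicity two. So $\Gr_0=RG.2.b$, whose boundary cycles have lengths $\{c_1,c_2,c_1+c_3,c_2+c_3\}$; matching this multiset with $\{\ell_2,\ell_2,\ell_3,\ell_3\}$ (comparing the two largest entries) forces $c_1=c_2$ and $\{c_1,\,c_1+c_3\}=\{\ell_2,\ell_3\}$, in particular $\ell_2\neq\ell_3$, and the two single-edge boundary cycles of $\Gr_0$ (both of length $c_1=c_2$) are precisely the $x_0$-sides of the two cylinders whose circumference equals $\min(\ell_2,\ell_3)$.

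Finally I would conclude. If $\ell_2<\ell_3$ these two cylinders are $C_2$ and $C_4$: each has a single saddle connection on its $x_0$-side; and since $\ell_2=\ell_4<\ell_3\le\max(\ell_1,\ell_2,\ell_3)$, neither is the long cylinder on its other side, so each also has a single saddle connection there — hence $C_2$ and $C_4$ are both simple. If instead $\ell_3<\ell_2$, the symmetric argument shows $C_3$ and $C_5$ are both simple. (The case $\ell_2=\ell_3$ does not occur: it would force $c_3=0$ in the analysis of $\Gr_0$.) Either way one of the equivalence classes $\{C_2,C_4\},\{C_3,C_5\}$ consists of two simple cylinders. The only genuinely delicate point is getting the boundary-cycle length data of the graphs $RG.2.a$ and $RG.2.b$ correct and tracking which boundary cycle belongs to which cylinder through its circumference; the rest is bookkeeping.
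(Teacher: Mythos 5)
Your argument is correct, and it reorganizes the proof along a genuinely different, more unified route than the paper's. The paper proves this lemma by a trichotomy on the middle cylinder $C_1$ (simple, strictly semi-simple, not semi-simple): in each case it substitutes the $\cM$-parallel proportionality into the relevant signed homological relation to obtain $\ell_2=\ell_4$, $\ell_3=\ell_5$ (or a contradiction, which kills the strictly semi-simple case), determines the configuration of $\Gr_0$, and then runs a bespoke length argument on the horizontal saddle connections — including, in the non-semi-simple case, a separate argument to rule out a wrong pairing of the two simple boundary loops of $\Gr_0$. You instead extract $\lambda=\mu=1$, hence $\ell_2=\ell_4$ and $\ell_3=\ell_5$, once and for all from $\eps_2\gamma_2+\eps_3\gamma_3=\eps_4\gamma_4+\eps_5\gamma_5$ (positivity of $\lambda,\mu$ also forces $\eps_2=\eps_4$, $\eps_3=\eps_5$, which is exactly what eliminates the strictly semi-simple configuration, though you never need to mention it), and then replace the whole case analysis by a single multiset-matching of the boundary lengths $\{c_1,c_2,c_3,c_1+c_2+c_3\}$ (RG.2.a) and $\{c_1,c_2,c_1+c_3,c_2+c_3\}$ (RG.2.b) of $\Gr_0$ against $\{\ell_2,\ell_2,\ell_3,\ell_3\}$; this simultaneously excludes RG.2.a, forces $\ell_2\neq\ell_3$, and attaches the two single-edge boundary cycles of $\Gr_0$ to the two cylinders of smaller circumference, whose other boundaries are then short sides of the figure-eight graphs $\Gr_1,\Gr_2$. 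The payoff is the same statement with the extra information that the simple pair is the equivalence class of smaller circumference, obtained with less branching; the paper's version, by contrast, keeps the geometric meaning of each sign pattern visible, which it reuses later. One imprecision to fix: $\cM$-parallelism gives $\gamma_2=\lambda\gamma_4$ and $\gamma_3=\mu\gamma_5$ as functionals on $T_M\cM$, i.e.\ in $(T_M\cM)^*$, not in $H_1(M,\R)$, and the ``linear independence'' of $\gamma_2,\gamma_3$ you invoke must likewise be understood after restriction to $T_M\cM$, where it is justified exactly as in the paper: a nontrivial vanishing combination would either make $C_2$ and $C_3$ $\cM$-parallel or force a core curve to have zero holonomy, contradicting Lemma~\ref{lm:H211:C5I:eq:cl:cyl}.
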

\begin{proof}
We first consider the case $C_1$ is simple. Up to a renumbering of the cylinders, we have
$$
\gamma_1=\gamma_2-\gamma_3=\pm(\gamma_4-\gamma_5) \Rightarrow \gamma_2-\gamma_3 +\eps(\gamma_4-\gamma_5)=0 \in H_1(M,\Z),
$$
where $\eps \in \{\pm 1\}$. By Lemma~\ref{lm:H211:C5I:eq:cl:cyl}, there exist constants $\lambda, \mu \in \R_{>0}$ such that $\gamma_4=\lambda \gamma_2$ and $\gamma_5=\mu\gamma_3$ as elements of $(T_M\cM)^*$. It follows
$$
(1+\eps\lambda)\gamma_2-(1+\eps\mu)\gamma_3=0 \in (T_M\cM)^*.
$$
If one of $\{1+\eps\lambda, 1+\eps\mu\}$ does not vanish, then $C_2$ and $C_3$ are $\cM$-parallel, and we have a contradiction. Thus we must have $\eps=-1$ and $\lambda=\mu=1$. It follows that
\begin{equation}
\label{eq:C5I:H211:hom:rel:1}
|\gamma_2|=|\gamma_4|, |\gamma_3|=|\gamma_5|, \hbox{ and } \gamma_1=\gamma_2-\gamma_3=\gamma_4-\gamma_5 \in H_1(M,\Z).
\end{equation}

Note that the relation \eqref{eq:C5I:H211:hom:rel:1} implies
\begin{equation}
\label{eq:C5I:H211:hom:rel:a}
\gamma_2-\gamma_3-\gamma_4+\gamma_5=0\in H_1(M,\Z)
\end{equation}
and it follows  that $\gamma_2-\gamma_3-\gamma_4+\gamma_5$ is homologous to $\partial U_0$. Therefore the configuration of $\Gr_0$ is given by $RG.2.b$ (see Figure~\ref{fig:C5I:sc:graph}).

We now claim that $C_3$ is a simple cylinder. Without loss of generality, we can assume that the top of  $C_3$ is contained in $\Gr_0$, while its bottom is contained in $\Gr_1$. From \eqref{eq:C5I:H211:hom:rel:1}, we draw that $|\gamma_3| < |\gamma_2|$ and the bottom of $C_3$ contains a single saddle connection. If $C_3$ is not simple then its top must contain exactly two saddle connections since it is homotopic to a component of $\partial U_0$. Note that the relation \eqref{eq:C5I:H211:hom:rel:a}  implies that the top of $C_4$ is also contained in $\Gr_0$.  Since a saddle connection cannot be contained in the top  of two cylinders, it follows that the top of $C_4$ consists of a single saddle connection. But this saddle connection is contained in the bottom of $C_2$ or $C_5$. Thus we must have either $|\gamma_4| < |\gamma_2|$, or $|\gamma_4| < |\gamma_5|$. In either case we have a  contradiction to \eqref{eq:C5I:H211:hom:rel:1}. Therefore, the top of $C_3$ must contain a single saddle connection, which means
that $C_3$ is simple.

By similar arguments,  $C_5$ is also simple, and the lemma is proved for this case.

\medskip

Let us consider the case $C_1$ is semi-simple (but not simple). In this case, we have
$$
\gamma_1=\gamma_2-\gamma_3=\gamma_4+\gamma_5 \Rightarrow \gamma_2 -\gamma_3-\gamma_4-\gamma_5=0 \in H_1(M,\Z).
$$
It follows that the configuration of $\Gr_0$ is given by $RG.2.a$ (see Figure~\ref{fig:C5I:sc:graph}). Let $\lambda,\mu$ be the constants above, we have
$$
(1-\lambda)\gamma_2=(1+\mu)\gamma_3 \in (T_M\cM)^*.
$$
Thus $C_2$ and $C_3$ are $\cM$-parallel, which contradicts Lemma~\ref{lm:H211:C5I:eq:cl:cyl}. Therefore, this case does not occur.

\medskip

Finally, consider the case $C_1$ is not semi-simple. In this case,  the relation \eqref{eq:C5I:homo:rel} gives
\begin{equation}
\label{eq:C5I:H211:hom:rel:b}
 \gamma_1=\gamma_2+\gamma_3=\gamma_4+\gamma_5, \hbox{ and } \gamma_2+\gamma_3-\gamma_4-\gamma_5=0 \in H_1(M,\Z).
\end{equation}
Hence the configuration of $\Gr_0$ is given by $RG.2.b$. In particular $\partial U_0$ has two simple components. Since these two simple components are paired with some simple components of $\partial U_1\sqcup \partial U_2$, the corresponding cylinders are simple. We will show that they must be $\cM$-parallel.  Let $\lambda,\mu$ be the constants above. We have
$$
(1-\lambda)\gamma_2+(1-\mu)\gamma_3=0 \in (T_M\cM)^*
$$
If one of $\{1-\lambda,1-\mu\}$ does not vanish, then $C_2$ and $C_3$ are parallel which contradicts Lemma~\ref{lm:H211:C5I:eq:cl:cyl}. Thus we must have $|\gamma_2|=|\gamma_4|$, and $|\gamma_3|=|\gamma_5|$.
Without loss of generality we can assume that  the top of $C_2$ is  contained in $\Gr_0$.   The relation \eqref{eq:C5I:H211:hom:rel:b} implies  that $\Gr_0$ contains the top of $C_3$ and the bottoms of $C_4$ and $C_5$.
Let $\sig_0$, $\sig_1$, and $\sig_2$ be the saddle connections in $\Gr_0$.   We are done unless, without loss of generality, the top of $C_2$ is $\sig_0 \cup \sig_1$ and the bottom of $C_5$ is $\sig_1 \cup \sig_2$. It follows that the top of $C_3$ is $\sig_2$ and the bottom of $C_4$ is $\sig_0$. However, the relations above imply $|\sig_0| + |\sig_1| = |\sig_0|$ and $|\sig_2| = |\sig_1| + |\sig_2|$, which is impossible.
\end{proof}

\bigskip

\begin{proof}[Proof of Proposition~\ref{Case5IH211}]
 By Lemma~\ref{lm:H211:C5I:2:sim:cyl}, we can assume that $C_2$ and $C_4$ are two $\cM$-parallel simple cylinders.  We claim that they are similar. If they are not, twist them so that there is a vertical saddle connection in $C_2$, but there are no vertical saddle connections in $C_4$. Collapsing the equivalence class $\{C_2,C_4\}$ yields a surface $M'$ which is contained in a rank two submanifold of $\cH(3,1)$ by \cite[Prop. 2.16]{AulicinoNguyenGen3TwoZeros}. But there are no affine submanifolds of rank two in $\cH(3,1)$, and we have a contradiction.

 Since the pairs of zeros in the boundaries of $C_2$ and $C_4$ are not the same, collapsing them simultaneously so that all of the zeros collide yields a surface $M' \in \cH(4)$. By Proposition~\ref{prop:collapse:similar:cyl}, $M'$ is contained in a rank two affine submanifold $\cM' \subset \cH(4)$ such that $\allowbreak \dim \cM'=\dim \cM-1$. By the results of \cite{NguyenWright, AulicinoNguyenWright}, we have $\cM'=\prymmin$. Hence, $M'$ admits a Prym involution $\tau'$.

 Note that $C_2$ and $C_4$ degenerate to two horizontal saddle connections $\sig'_2$ and $\sig'_4$ in $M'$. We claim that $\sig'_2$ and $\sig'_4$ are permuted by $\tau'$. If they are not, then there is a surface $M'_1 \in \prymmin$ close to $M'$ in which they are {\bf not} parallel. But from Proposition~\ref{prop:collapse:similar:cyl}, these  saddle connections are the degenerations of two parallel cylinders in a surface $M_1 \in \cM$ close to $M$, hence must be parallel. Thus we get a contradiction.

 Since $\sig'_2$ and $\sig'_4$ are permuted by $\tau'$, they must have the same length, which implies that $C_2$ and $C_4$ are isometric.  By Proposition~\ref{DblCovExtSimpCyl}, $\tau'$ extends to an involution with four fixed points on $M$. The same holds for any surface in a neighborhood of $M \in \cM$. It follows that $\cM \subseteq \cP\cap\cH(2,1,1)=\tilde{\cQ}(2,1,-1^3)$. Finally, since
 $$
 \dim\cM=\dim\prymmin+1=5=\dim\tilde{\cQ}(2,1,-1^3),
 $$
 we can conclude that $\cM=\tilde{\cQ}(2,1,-1^3)$.
\end{proof}

\subsubsection{The Principal Stratum $\cH(1,1,1,1)$}

The key to this section is studying the cylinder $C_1$.  The main result of this section is

\begin{proposition}
\label{Case5IH1111}
Let $\cM \subset \cH(1^4)$ be a rank two affine manifold.  Assume that $\tilde \cQ(2,1,-1^3)$ is the only rank two affine manifold in $\cH(2,1^2)$. If  $\cM$ contains a horizontally periodic surface $M$ satisfying Case 5.I), then either there exists $M' \in \cM$ horizontally periodic with six cylinders or $\cM =\prymprinc$.
\end{proposition}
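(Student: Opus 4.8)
The plan is to run the same degeneration strategy as in the $\cH(2,1^2)$ case, but now $\cM$ may contain surfaces with more than five cylinders, so the conclusion will be a dichotomy. First I would assume $M$ is $\cM$-cylindrically stable, since otherwise Lemma~\ref{lm:WrightTwistPresLem} immediately produces a horizontally periodic surface in $\cM$ with six cylinders and we are done. Under stability, I would analyze the equivalence classes of the five horizontal cylinders using the homological relations \eqref{eq:C5I:homo:rel} together with the rank two hypothesis (three free cylinders whose core curves span a Lagrangian is forbidden). As in Lemma~\ref{lm:H211:C5I:eq:cl:cyl}, the relation $\gamma_1 = \eps_2\gamma_2+\eps_3\gamma_3 = \eps_4\gamma_4+\eps_5\gamma_5$ forces $C_1$ to be free (a free semi-simple cylinder among $C_2,\dots,C_5$ could be collapsed to land in $\cH(2,1^2)$, hence in $\tilde{\cQ}(2,1,-1^3)$, and the resulting Prym involution with four fixed points cannot coexist with the cylinder structure one obtains — this is the analogue of Lemma~\ref{H211Case5IC1FreeLem}), and then that the remaining equivalence classes are $\{C_2,C_4\}$ and $\{C_3,C_5\}$ up to renumbering, with one of these pairs consisting of two simple cylinders (the analogue of Lemmas~\ref{lm:H211:C5I:eq:cl:cyl} and~\ref{lm:H211:C5I:2:sim:cyl}). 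The planar-graph bookkeeping on the separatrix diagram $\Gr_0\sqcup\Gr_1\sqcup\Gr_2$ using the admissible configurations $RG.11.a,b,c$ from Figure~\ref{fig:C5I:sc:graph} replaces the $RG.2.a,b$ analysis; the arguments comparing lengths of saddle connections in the tops and bottoms of the cylinders carry over with cosmetic changes since now the sphere with four simple poles carries two simple zeros instead of one double zero.

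Granting that $\{C_2,C_4\}$ is an equivalence class of two simple cylinders, I would next show $C_2$ and $C_4$ are $\cM$-similar: if not, twist so that $C_2$ has a vertical saddle connection while $C_4$ does not, collapse the class $\{C_2,C_4\}$, and land in a rank two affine submanifold of $\cH(2,1^2)$ — which by hypothesis is $\tilde{\cQ}(2,1,-1^3)$; but then the resulting Prym involution on a three-cylinder surface must fix each of the three cylinders (no two can be exchanged by an involution with only four fixed points, given the cylinder combinatorics), forcing at least six fixed points, a contradiction. Here one must verify the pairs of zeros in $\partial C_2$ and $\partial C_4$ are distinct (they contain $x_2$ from the $\Gr_2$ side but are glued to different simple zeros of $\Gr_0$), so Proposition~\ref{prop:collapse:similar:cyl} applies. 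Collapsing $\{C_2,C_4\}$ simultaneously so that each pair of zeros collides then yields $M' \in \cH(2,2)$ contained in a rank two affine submanifold $\cM'$ with $\dim\cM = \dim\cM'+1$ and $\mathrm{rk}(\cM')=2$, so $\cM' \in \{\dcoverhyp, \dcoverodd, \prym\}$ by the classification in $\cH(2,2)$.

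The endgame is then the same extension argument used throughout Section~\ref{sec:C3I:collapse}. Let $\sig_2,\sig_4$ be the horizontal saddle connections in $M'$ arising from collapsing $C_2,C_4$; they are saddle connections joining a simple zero of $M'$ to itself. I would argue the Prym involution $\inv'$ of $M'$ must exchange $\sig_2$ and $\sig_4$: otherwise one deforms $M'$ inside $\cM'$ to make the associated holonomy vectors unequal, contradicting that $C_2$ and $C_4$ are isometric (the isometry follows from similarity plus the fact that the corresponding collapsed saddle connections have equal length once $\inv'$ exchanges them). Since $\sig_2\cup\sig_4$ is preserved by $\inv'$ and the hypothesis of Proposition~\ref{DblCovExtSimpCyl} is met (the graph $\del_2\sqcup\del_4$ of vertical saddle connections inside $C_2,C_4$ is a disjoint union of two arcs, hence contains no loops), $\inv'$ extends to a Prym involution of $M$, so $M \in \cP\cap\cH(1^4) = \prymprinc$. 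If additionally $\cM' = \dcoverhyp$ or $\dcoverodd$, the hyperelliptic involution of $M'$ also fixes $\sig_2$ and $\sig_4$ (it fixes each of the corresponding cylinders in $M'$) and extends, putting $M$ in $\dcoverprinc \subset \prymprinc$. In all cases $\cM \subseteq \prymprinc$; since $\dim\cM = \dim\cM'+1$ and $\dim\prym+1 = \dim\prymprinc = 6$ while $\dim\dcoverodd+1 = \dim\dcoverhyp+1 = 5 < 6$, the case $\cM'\in\{\dcoverhyp,\dcoverodd\}$ would give $\dim\cM = 5 < \dim\prymprinc$, which is incompatible with $\cM\subseteq\prymprinc$ and $\cM$ affine of rank two unless — and this is the subtlety to watch — one double-checks there is no smaller component; in fact $\prymprinc$ is connected by Lanneau, so $\cM\subseteq\prymprinc$ of the same rank and codimension zero gives $\cM = \prymprinc$, forcing $\cM' = \prym$. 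This yields $\cM = \prymprinc$, completing the proof.

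The step I expect to be the main obstacle is the combinatorial analysis showing $\{C_2,C_4\}$ (or $\{C_3,C_5\}$) consists of two simple cylinders: in the principal stratum the separatrix diagram $\Gr_0$ has three admissible configurations $RG.11.a,b,c$ rather than two, and $\Gr_1,\Gr_2$ each carry a simple zero, so the case analysis tracking which saddle connections lie in the tops versus bottoms of the five cylinders — and deriving the length contradictions from the homological relations — is more intricate than in $\cH(2,1^2)$. It is routine in spirit but genuinely case-heavy; I would expect to defer part of it to the appendix, as the paper does with similar technical lemmas.
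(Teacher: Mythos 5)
Your plan hinges on an analogue of Lemma~\ref{lm:H211:C5I:2:sim:cyl} in the principal stratum, namely that one of the two non-trivial equivalence classes consists of two simple cylinders, and this is where the argument genuinely fails: the statement is false in $\cH(1^4)$. The paper first shows (Lemma~\ref{lm:H1111:C5I:C1:no:ssim}) that the free cylinder $C_1$ is either simple or not semi-simple, and in the latter case the only two cylinder diagrams are those of Figure~\ref{fig:Case5IC1NotSS}. In diagram 5.I.b) none of the five horizontal cylinders is simple: both boundaries of $C_1$ consist of two saddle connections, and each of $C_2,\dots,C_5$ has one boundary a single loop at $x_1$ or $x_2$ but two saddle connections on its $\Gr_0$-side (this is the configuration $RG.11.a$ for $\Gr_0$, all four components of $\partial U_0$ having two edges). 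Such surfaces do occur in $\prymprinc$ (the diagram is centrally symmetric), so a rank two $\cM$ can contain them and your argument must treat them; but there is then no pair of $\cM$-parallel simple cylinders to which Proposition~\ref{prop:collapse:similar:cyl} applies, and your whole degeneration to $\cH(2,2)$ has nothing to collapse. The paper avoids this by collapsing $C_1$ itself: directly when $C_1$ is simple, and, when $C_1$ is not semi-simple, after choosing $M$ with $|\sig_2|\neq|\sig_5|$ and twisting $C_1$ so that it contains a single vertical saddle connection between distinct zeros; in both cases the degeneration lands in $\cH(2,1^2)$, where the hypothesis forces the limit into $\prymthreezero$, and the Prym involution is pulled back via Proposition~\ref{DblCovExtSimpCyl}. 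Your proposal never considers collapsing $C_1$, so even granting your auxiliary lemmas it does not cover diagram 5.I.b), and the combinatorial lemma you expect to ``carry over with cosmetic changes'' cannot be proved because it is not true.

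A secondary error is in your endgame. If the collapse of the pair lands in $\cM'=\dcoverodd$ (or $\dcoverhyp$), the correct conclusion is not a contradiction: one gets $\cM\subseteq\dcoverprinc$ with $\dim\cM=\dim\cM'+1=5$, and $\dcoverprinc$ is itself a genuine rank two affine submanifold of dimension $5$ properly contained in $\prymprinc$, so your claim that ``$\cM\subseteq\prymprinc$ of the same rank and codimension zero gives $\cM=\prymprinc$, forcing $\cM'=\prym$'' is wrong. In that branch you would conclude $\cM=\dcoverprinc$, which is not among the alternatives stated in the proposition, so you would additionally have to either rule out $\cM'\in\{\dcoverhyp,\dcoverodd\}$ by a fixed-point count (as the paper does in similar situations) or observe that $\dcoverprinc$ contains six-cylinder surfaces so the first alternative of the dichotomy holds. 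As written, the dimension bookkeeping does not close the argument even in the cases where a pair of parallel simple cylinders exists.
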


We first prove the following lemmas

\begin{lemma}\label{lm:H1111:C5I:C1:free}
 Following the notation and assumption of Proposition~\ref{Case5IH1111}, either $\cM$ contains a horizontally periodic surface with six cylinders or the cylinder $C_1$ is free.
\end{lemma}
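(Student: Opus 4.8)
First, if $M$ is not $\cM$-cylindrically stable, then Lemma~\ref{lm:WrightTwistPresLem} already produces a horizontally periodic surface in $\cM$ with more than five, hence at least six, horizontal cylinders, and there is nothing to prove. So I will assume $M$ is $\cM$-cylindrically stable and argue by contradiction, supposing $C_1$ is not free. Let $\cC$ be the $\cM$-parallel class of $C_1$. Reading the relations \eqref{eq:C5I:homo:rel} in $(T^\R_M\cM)^*$ via Poincar\'e duality, exactly as in the proof of Lemma~\ref{H211Case5IC1FreeLem} one obtains $C_2\in\cC\Leftrightarrow C_3\in\cC$ and $C_4\in\cC\Leftrightarrow C_5\in\cC$; since $\mathrm{rk}(\cM)=2$ the five cylinders are not all $\cM$-parallel, so after relabelling (respecting \eqref{eq:C5I:homo:rel}) we may assume $\cC=\{C_1,C_2,C_3\}$, and then $\gamma_1=\eps_4\gamma_4+\eps_5\gamma_5$ together with $C_4,C_5\notin\cC$ forces both $C_4$ and $C_5$ to be free. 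The boundary components of $C_4$ and $C_5$ lying on the three--pole sphere they join both pass through its unique simple zero, and planarity of the corresponding separatrix diagram shows, as in Lemma~\ref{H211Case5IC1FreeLem}, that at least one of $C_4,C_5$ --- say $C_4$ --- is semi--simple.

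The heart of the argument is to exclude this configuration. Since $C_4,C_5$ are free we have $[\gamma_4],[\gamma_5]\in p(T^\R_M\cM)$, and by Theorem~\ref{thm:Wright:Cyl:Def} the class--shearing vector $v=h_1[\gamma_1]+h_2[\gamma_2]+h_3[\gamma_3]$ of $\cC$ (with $h_i$ the height of $C_i$) lies in $p(T^\R_M\cM)$ as well. Using $[\gamma_1]=\eps_2[\gamma_2]+\eps_3[\gamma_3]$, rewrite $v$ as a combination of $[\gamma_1]$ and $[\gamma_2]$: its coefficient on $[\gamma_2]$ is $h_2-\eps_2\eps_3 h_3$. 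As in the proof of Lemma~\ref{lm:H211:C5I:eq:cl:cyl}, the five core curves span a three--dimensional isotropic subspace of $H_1(M,\R)$ in which $[\gamma_2]\notin\mathrm{span}([\gamma_4],[\gamma_5])$; hence if $h_2-\eps_2\eps_3 h_3\neq 0$ then $v,[\gamma_4],[\gamma_5]$ are independent and span a three--dimensional isotropic subspace of the four--dimensional symplectic space $p(T^\R_M\cM)$, which is impossible. Therefore $\eps_2=\eps_3$ and $h_2=h_3$; after reversing orientations we may write $\gamma_1=\gamma_2+\gamma_3$ with $h_2=h_3$.

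It remains to eliminate this symmetric case, and this is where the assumption on rank two manifolds in $\cH(2,1^2)$ is used. I would collapse the free semi--simple cylinder $C_4$ (after twisting it so as to isolate a single collapsible simple cylinder inside $\ol{C}_4$, as is done elsewhere in the paper); the resulting surface $M'$ is horizontally periodic with four cylinders --- the images of $C_1,C_2,C_3,C_5$ --- and lies in a rank two affine submanifold $\cM'$ of a lower stratum of genus three. Since $\cH(3,1)$ carries no rank two submanifold, and using the hypothesis on $\cH(2,1^2)$ together with the known classification in $\cH(4)$, we must have $\cM'=\tilde{\cQ}(2,1,-1^3)$ or $\cM'=\tilde{\cQ}(3,-1^3)$; in either case $M'$ admits a Prym involution $\tau$ with exactly four fixed points, one of which is a zero of highest order. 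Because $\cM'$ lies in the Prym locus, $\tau$ preserves the partition of the horizontal cylinders of $M'$ into $\cM'$-parallel classes, and the image of the free cylinder $C_5$ remains in a singleton class; hence $\tau$ fixes that cylinder, and, permuting the remaining three cylinders as an involution, fixes at least one more. Thus at least two cylinders of $M'$ are $\tau$-invariant, each carrying two fixed points of $\tau$ in its interior, so together with the fixed zero $\tau$ would have at least five fixed points --- a contradiction. Hence $C_1$ is free.

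The main obstacle is the symmetric case $\gamma_1=\gamma_2+\gamma_3$, $h_2=h_3$: there the symplectic/isotropy argument degenerates, so the case genuinely has to be killed by degeneration. Carrying this out carefully requires (i) making precise the collapse of a \emph{free semi--simple} (rather than simple) cylinder from $\cH(1^4)$ --- identifying the stratum of $M'$ and controlling the drop in dimension; (ii) checking that this collapse does not merge the $\cM$-parallel class $\cC$ with the free class $\{C_5\}$, so that the parallel--class partition argument on $M'$ applies (this reduces to the observation that the class collapsed to a point has non--trivial boundary, so an absolute combination of core curves cannot become proportional to it); and (iii) in any remaining borderline subcase, ruling out the possibility that $\tau$ permutes the four cylinders of $M'$ in two pairs, which forces one core curve to be a non--unit multiple of another and must be excluded using the precise cylinder diagrams or the structure of the Prym loci $\tilde{\cQ}(2,1,-1^3)$ and $\tilde{\cQ}(3,-1^3)$.
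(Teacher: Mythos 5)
Your reduction steps match the paper exactly: handle the non--$\cM$-cylindrically stable case via Lemma~\ref{lm:WrightTwistPresLem}, show as in Lemma~\ref{H211Case5IC1FreeLem} that if $C_1$ is not free then (after relabelling) $\{C_1,C_2,C_3\}$ is an equivalence class, $C_4,C_5$ are free and one of them is semi-simple, and collapse that free semi-simple cylinder to land in $\tilde{\cQ}(2,1,-1^3)$. (Two side remarks: the collapse merges exactly two \emph{distinct} simple zeros, one on each sphere joined by $C_4$, so $M'$ lies in $\cH(2,1^2)$ --- the detour through $\cH(4)$ and $\tilde{\cQ}(3,-1^3)$ never occurs; and your symplectic computation forcing $\eps_2=\eps_3$, $h_2=h_3$ is correct but plays no role in what follows.)

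The genuine gap is in your final contradiction. You count fixed points of the Prym involution $\tau$ via $\tau$-invariant horizontal cylinders, and this only works if $\tau$ fixes at least two of the four cylinders of $M'$. Your route to this requires (a) that the image of $C_5$ remains a singleton $\cM'$-parallel class after the degeneration --- which is not automatic, since $T_{M'}\cM'\simeq T_M\cM\cap\mathrm{Ann}(V)$ is \emph{smaller} than $T_M\cM$, so parallelism classes can coarsen, and your proposed justification is not an argument --- and (b) excluding the possibility that $\tau$ acts on the four cylinders by a double transposition, which yields no invariant cylinder and hence no contradiction from your count. You list both points as remaining obstacles rather than resolving them, so the proof is incomplete as written. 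The paper avoids all of this with a different closing observation: on $M'$ every horizontal saddle connection through the simple zero coming from the three-holed sphere (the zero $x_1$) returns to that zero, whereas the other simple zero lies in the same complementary component as the double zero and is joined to it by a horizontal saddle connection (connectivity of the ribbon graph forces such an edge). Since $\tau$ fixes the double zero, exchanges the two simple zeros, and preserves the horizontal direction, this asymmetry is an immediate contradiction, with no need to know how $\tau$ permutes cylinders or what the $\cM'$-parallel classes on $M'$ are. That is the missing idea you would need, or else you must genuinely carry out the case analysis you defer in (ii) and (iii).
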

\begin{proof}
If $M$ is not $\cM$-cylindrically stable, then we can get a horizontally periodic surface with more cylinders, so in this case we are done.  Assume that $M$ is $\cM$-cylindrically stable.  If $C_1$ is not free, then by the same arguments as Lemma~\ref{H211Case5IC1FreeLem}, we see that there are two free cylinders among $\{C_2,\dots,C_5\}$, and that one of them is semi-simple. Collapsing the free semi-simple cylinder, yields a surface $M'\in \cH(2,1^2)$  which is contained in a rank two affine submanifold $\cM'$. By assumption, $\cM'=\prymthreezero$, hence $M'$ admits a Prym involution which fixes the double zero and permutes the simple ones.  But in this case, one of the simple zeros is joined to the double zero by a horizontal saddle connection whereas the other one is not. Therefore we get a contradiction which proves the lemma.
 \end{proof}

\begin{lemma}\label{lm:H1111:C5I:eq:cl:cyl}
Either the equivalence classes of horizontal cylinders in $M$ are $\{C_1\}, \{C_2,C_3\}, \{C_4,C_5\}$ or $\cM$ contains a horizontally periodic surface with six cylinders.
\end{lemma}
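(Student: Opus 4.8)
The plan is to reduce to a convenient situation and then read off the equivalence classes purely from the homological relations \eqref{eq:C5I:homo:rel} and the symplectic geometry of $p(T_M^\R\cM)$. By Lemma~\ref{lm:H1111:C5I:C1:free} we may assume $\cM$ contains no horizontally periodic surface with six cylinders, so $C_1$ is free; and if $M$ were not $\cM$-cylindrically stable, Lemma~\ref{lm:WrightTwistPresLem} would produce a horizontally periodic surface with more than five — hence six — cylinders, so we may also assume $M$ is $\cM$-cylindrically stable. Thus $\{C_1\}$ is an equivalence class, and it remains to show that $\{C_2,C_3,C_4,C_5\}$ splits into two classes of size two, each meeting both $\{C_2,C_3\}$ and $\{C_4,C_5\}$; a renumbering compatible with \eqref{eq:C5I:homo:rel} then yields the classes $\{C_1\},\{C_2,C_3\},\{C_4,C_5\}$.

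First I would record two structural facts. By Lemma~\ref{5CylDeg} the degeneration of $M$ in Case 5.I is three spheres whose dual graph has $3$ vertices and $5$ edges, so the classes $[\gamma_1],\dots,[\gamma_5]$ span a $3$-dimensional (isotropic, hence Lagrangian) subspace of $H_1(M,\R)$, and the only linear relations among them are the two in \eqref{eq:C5I:homo:rel}; in particular any triple $\{[\gamma_1],[\gamma_i],[\gamma_j]\}$ with $i\in\{2,3\}$, $j\in\{4,5\}$ is linearly independent (a nontrivial relation would have to be a combination of $[\gamma_1]-\eps_2[\gamma_2]-\eps_3[\gamma_3]$ and $[\gamma_1]-\eps_4[\gamma_4]-\eps_5[\gamma_5]$, hence would involve both $\gamma_2,\gamma_3$ or both $\gamma_4,\gamma_5$). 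Second, $C_2\not\sim C_3$ and $C_4\not\sim C_5$: recall that $C_i\sim C_j$ means exactly that $\xi(\gamma_i)=\lambda\,\xi(\gamma_j)$ for all $\xi\in T_M\cM$, with $\lambda=\ell_i/\ell_j>0$; if $C_2\sim C_3$, then the relation $\gamma_1=\eps_2\gamma_2+\eps_3\gamma_3$ gives $\xi(\gamma_1)=(\eps_2\lambda+\eps_3)\,\xi(\gamma_3)$ for all $\xi\in T_M\cM$, and since on $M$ every horizontal core curve has positive holonomy the constant equals $\ell_1/\ell_3>0$, so $C_1\sim C_3$, contradicting the freeness of $C_1$.

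The heart of the argument is to exclude any of $C_2,C_3,C_4,C_5$ being free. Suppose $C_2$ is free. If $C_4$ (or $C_5$) were also free, then $C_1,C_2,C_4$ would be three free cylinders, so $p(\xi_1),p(\xi_2),p(\xi_4)\in p(T_M^\R\cM)$; these are proportional to the pairwise disjoint, linearly independent classes $[\gamma_1],[\gamma_2],[\gamma_4]$, hence span a $3$-dimensional isotropic subspace of the symplectic space $p(T_M^\R\cM)$ of dimension $2\,\mathrm{rk}(\cM)=4$ — impossible, exactly as in the proof of Lemma~\ref{4CylCaseIIIEqCls}. So $C_4$ and $C_5$ are both non-free; being $\cM$-parallel neither to $C_1$ nor to $C_2$ (both free) nor to one another, each must be $\cM$-parallel to $C_3$, forcing $C_4\sim C_5$ by transitivity, a contradiction. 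Hence none of $C_2,\dots,C_5$ is free. With all four non-free, $C_2$ must be $\cM$-parallel to $C_4$ or $C_5$ (not to $C_1$ or $C_3$); say $C_2\sim C_4$. Then $C_3$ can only be $\cM$-parallel to $C_5$, and $C_4\not\sim C_5$ is consistent with this, so the equivalence classes are $\{C_1\},\{C_2,C_4\},\{C_3,C_5\}$; interchanging the labels of $C_3$ and $C_4$ produces the classes $\{C_1\},\{C_2,C_3\},\{C_4,C_5\}$ asserted in the lemma.

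The step I expect to be the main obstacle is the freeness exclusion: one must genuinely control the rank of the span of $[\gamma_1],\dots,[\gamma_5]$ (so that the relevant triples of core curves are independent) and make the case analysis over which cylinders are free exhaustive; everything else is a formal consequence of the rank two hypothesis and \eqref{eq:C5I:homo:rel}. It is also worth verifying explicitly that a six-cylinder surface can enter only through the invocation of Lemma~\ref{lm:H1111:C5I:C1:free} and the cylindrical-stability reduction, so that all remaining sub-cases terminate in outright contradictions.
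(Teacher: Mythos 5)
Your proof is correct, and its overall skeleton matches the paper's: reduce to the $\cM$-cylindrically stable case, quote Lemma~\ref{lm:H1111:C5I:C1:free} to make $\{C_1\}$ a class, rule out $C_2\sim C_3$ (and $C_4\sim C_5$) via \eqref{eq:C5I:homo:rel} and the freeness of $C_1$, and kill the mixed case with the three-free-cylinders/Lagrangian obstruction against rank two. Where you genuinely diverge is the subcase in which both $C_2$ and $C_3$ are free: the paper dispatches it by re-running the degeneration argument of Lemma~\ref{lm:H1111:C5I:C1:free} (planarity of the separatrix graph gives a free semi-simple cylinder, which is collapsed into $\tilde{\cQ}(2,1,-1^3)$, and the Prym involution of the limit contradicts the horizontal saddle-connection structure), whereas you dispose of it purely homologically: if $C_2$ is free then $C_4,C_5$ cannot be free (Lagrangian obstruction), have no admissible partners, and would be forced to be $\cM$-parallel to each other, contradicting $C_4\not\sim C_5$. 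Your route is more self-contained for this subcase -- it avoids the collapsing machinery and the appeal to the classification in $\cH(2,1^2)$ at this particular point (that classification is of course still needed for Lemma~\ref{lm:H1111:C5I:C1:free} itself) -- at the cost of having to verify the linear independence of the triples of core curves, which you do correctly via the two relations coming from the three parts of the degeneration. One small caveat: the classes you (and the paper) actually obtain are $\{C_1\},\{C_2,C_4\},\{C_3,C_5\}$, i.e.\ each class meets both spheres; no renumbering \emph{compatible with} \eqref{eq:C5I:homo:rel} can turn these into $\{C_2,C_3\},\{C_4,C_5\}$, so your sentence asserting such a compatible renumbering should be dropped -- the relabeling is a plain swap of indices that breaks the convention, which is also how the paper's own statement must be read (indeed Lemma~\ref{lm:H1111:C5I:C1:no:ssim} later uses the classes in the form $\{C_2,C_4\},\{C_3,C_5\}$). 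This is a cosmetic point, not a gap in your argument.
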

\begin{proof}
 We only need to consider the case when $M$ is $\cM$-cylindrically stable, which implies that the cylinders of $M$ fall into at least two equivalence classes. By Lemma \ref{lm:H1111:C5I:C1:free}, one of the equivalence classes is $\{C_1\}$.  It follows that $C_2$ and $C_3$ are not $\cM$-parallel. If both $C_2$ and $C_3$ are free, then we can conclude by the arguments of Lemma~\ref{lm:H1111:C5I:C1:free}. Consider the case where $C_2$ is free but $C_3$ is not. We can assume that $C_3$ is $\cM$-parallel to $C_5$. It follows that $C_4$ is free. But  the core curves of $C_1,C_2,C_4$ span a Lagrangian subspace of dimension $3$ in $H_1(M,\Z)$, which contradicts the hypothesis that $\cM$ is of rank two. We can then conclude that $C_2$ is $\cM$-parallel to $C_4$, and $C_3$ is $\cM$-parallel to $C_5$ up to a renumbering of the cylinders.
\end{proof}

\begin{lemma}\label{lm:H1111:C5I:C1:no:ssim}
 Assume that $M$ is $\cM$-cylindrically stable.  Then $C_1$ is not strictly semi-simple.
\end{lemma}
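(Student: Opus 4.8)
The plan is to deduce the statement from an elementary length count on the regular neighbourhoods of $\Gr_1$ and $\Gr_2$, after pinning down the combinatorics with the two preceding lemmas. Since $M$ is assumed $\cM$-cylindrically stable, Lemma~\ref{lm:H1111:C5I:C1:free} gives that $C_1$ is free and Lemma~\ref{lm:H1111:C5I:eq:cl:cyl} gives that the equivalence classes of horizontal cylinders are $\{C_1\},\{C_2,C_3\},\{C_4,C_5\}$; here the numbering is chosen so that the homological relations \eqref{eq:C5I:homo:rel} hold, and then $C_2,C_3$ are exactly the two cylinders other than $C_1$ having a border in $\Gr_1$, while $C_4,C_5$ are the two cylinders other than $C_1$ having a border in $\Gr_2$. (Checking that the labelling forced by \eqref{eq:C5I:homo:rel} is compatible with the equivalence-class labelling of Lemma~\ref{lm:H1111:C5I:eq:cl:cyl} is a piece of bookkeeping I would carry out first; it is forced because the two cylinders besides $C_1$ meeting a given $3$-pole sphere are precisely those appearing with $\gamma_1$ in the pair-of-pants relation on that sphere.)

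The next step is to record that $C_2$ and $C_3$ have equal circumference, and likewise $C_4$ and $C_5$. This is exactly the argument in the proof of Lemma~\ref{lm:H211:C5I:2:sim:cyl}: since $C_2$ and $C_3$ are $\cM$-parallel, there is $\lambda\in\R_{>0}$ with $\gamma_2=\lambda\gamma_3$ as elements of $(T_M\cM)^*$, and substituting into $\gamma_1=\eps_2\gamma_2+\eps_3\gamma_3$ gives $\gamma_1=(\eps_2\lambda+\eps_3)\gamma_3$ on $T_M\cM$; if the scalar were non-zero, $C_1$ and $C_3$ would be $\cM$-parallel (using that $T_M\cM$ contains the tautological $\GL^+(2,\R)$-directions to promote proportionality on the tangent space to parallelism on a neighbourhood of $M$), contradicting Lemma~\ref{lm:H1111:C5I:eq:cl:cyl}. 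Hence $\eps_2\lambda+\eps_3=0$, so $\lambda=1$ and the periods of $\gamma_2$ and $\gamma_3$ coincide on a neighbourhood of $M$ in $\cM$, giving $\ell(C_2)=\ell(C_3)$; symmetrically $\ell(C_4)=\ell(C_5)$.

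Finally I would argue by contradiction. Suppose $C_1$ is strictly semi-simple; then exactly one border of $C_1$ is a simple component of some $\partial U_i$, $i\in\{1,2\}$, and by the symmetry $\Gr_1\leftrightarrow\Gr_2$ (together with $\{C_2,C_3\}\leftrightarrow\{C_4,C_5\}$) we may assume $i=1$. Recall that in $\cH(1^4)$ the graph $\Gr_1$ is a wedge of two loops $a,b$ at the simple zero $x_1$ and $U_1$ is a pair of pants whose three boundary components are the core curves of $C_1,C_2,C_3$; inspecting the (two) planar ribbon structures on such a wedge shows that the three circumferences are $|a|$, $|b|$, and $|a|+|b|$, with the first two realised precisely by the simple components of $\partial U_1$. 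Since the $\Gr_1$-border of $C_1$ is a single saddle connection, $C_1$ realises $|a|$ or $|b|$, so $C_2$ and $C_3$ realise the remaining two of the three values; these are $\{|b|,|a|+|b|\}$ or $\{|a|,|a|+|b|\}$, which are distinct because $|a|,|b|>0$, contradicting $\ell(C_2)=\ell(C_3)$. This proves the lemma. (In fact the same count shows that no border of $C_1$ can be a single saddle connection, i.e. $C_1$ is not even semi-simple, but only the stated conclusion is needed.) The one point requiring genuine care is the compatibility of the two numbering conventions flagged in the first paragraph; granting that, every step here is either routine or a direct transcription of the $\cH(2,1^2)$ argument.
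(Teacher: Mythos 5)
Your proof collapses at its first step: the equivalence classes you take as input are not the ones Lemma~\ref{lm:H1111:C5I:eq:cl:cyl} provides. In the numbering of \eqref{eq:C5I:homo:rel}, where $C_2,C_3$ border $\Gr_1$ and $C_4,C_5$ border $\Gr_2$, the classes pair cylinders on \emph{opposite} spheres, namely $\{C_2,C_4\}$ and $\{C_3,C_5\}$; this is what the proof of Lemma~\ref{lm:H1111:C5I:eq:cl:cyl} actually establishes (it ends by concluding that $C_2$ is $\cM$-parallel to $C_4$ and $C_3$ to $C_5$), exactly as in its $\cH(2,1^2)$ analogue Lemma~\ref{lm:H211:C5I:eq:cl:cyl}, and the displayed $\{C_2,C_3\},\{C_4,C_5\}$ in its statement is a misprint. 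Your ``bookkeeping'' claim that the within-sphere pairing is forced by the pair-of-pants relations is a non sequitur, and that pairing is in fact impossible: if $\gamma_2=c\,\gamma_3$ on $T_M\cM$ with $c>0$, then $\gamma_1=(\eps_2c+\eps_3)\gamma_3$ on $T_M\cM$, and the scalar cannot vanish because $\gamma_1$ pairs to $\ell_1\neq 0$ with the tautological class of $\omega$, which lies in $T_M\cM$; hence $C_1$ would be $\cM$-parallel to $C_3$, contradicting Lemma~\ref{lm:H1111:C5I:C1:free}. Your own step 2 hits exactly this wall: you conclude $\eps_2\lambda+\eps_3=0$ and $\lambda=1$, i.e.\ $\gamma_2=\gamma_3$ on $T_M\cM$, which makes $\gamma_1$ vanish identically on $T_M\cM$ --- an absurdity you do not notice. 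Consequently the equalities $\ell(C_2)=\ell(C_3)$ and $\ell(C_4)=\ell(C_5)$ are unavailable (and false in general: on surfaces of $\prymprinc$ the Prym involution exchanges $C_2\leftrightarrow C_4$ and $C_3\leftrightarrow C_5$, giving $\ell_2=\ell_4$ and $\ell_3=\ell_5$, with no reason for $\ell_2=\ell_3$), so the figure-eight length count in your step 3 has nothing to contradict. A telltale symptom is your parenthetical claim that $C_1$ cannot even be semi-simple: that would exclude the case ``$C_1$ simple'', which genuinely occurs and is treated, non-vacuously, in the proof of Proposition~\ref{Case5IH1111}.

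The intended argument is a short tangent-space computation with the correct pairing. Strict semi-simplicity of $C_1$ gives, after renumbering, $\gamma_1=\gamma_2-\gamma_3=\gamma_4+\gamma_5$. Writing $\gamma_4=\lambda\gamma_2$ and $\gamma_5=\mu\gamma_3$ as functionals on $T_M\cM$ with $\lambda,\mu>0$, one gets $(1-\lambda)\gamma_2=(1+\mu)\gamma_3$ there; since $1+\mu>0$ (and $1-\lambda\neq 0$, else $\gamma_3$ would vanish on $T_M\cM$, which is impossible by the same tautological-class evaluation), $C_2$ and $C_3$ would be $\cM$-parallel, contradicting the equivalence-class structure. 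Your ribbon-graph analysis of $\Gr_1$ is correct as a statement about the three circumferences $|a|,|b|,|a|+|b|$, but it is not needed, and as you deploy it the contradiction rests entirely on the unjustified equality $\ell(C_2)=\ell(C_3)$.
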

\begin{proof}
If $C_1$ is strictly semi-simple, then up to a renumbering of the cylinders, we have
$$
\gamma_1=\gamma_2-\gamma_3=\gamma_4+\gamma_5 \Rightarrow \gamma_2 -\gamma_3-\gamma_4-\gamma_5=0 \in H_1(M,\Z).
$$
From Lemma~\ref{lm:H1111:C5I:eq:cl:cyl}, there exist constants $\lambda, \mu \in \R_{>0}$, such that
$$
\gamma_4=\lambda\gamma_2, \quad \gamma_5=\mu\gamma_3 \in (T_M\cM)^*.
$$
Consequently
$$
(1-\lambda)\gamma_2=(1+\mu)\gamma_3 \in (T_M\cM)^*.
$$
Since $1+\mu>0$, this means that  $C_2$ and $C_3$ are $\cM$-parallel, which contradicts Lemma~\ref{lm:H211:C5I:eq:cl:cyl}. Therefore, $C_1$ cannot be semi-simple.
\end{proof}

\begin{lemma}
\label{H1t4Case5IC1NotSS}
There are two cylinder diagrams in which $C_1$ is not semi-simple which are shown in Figure \ref{fig:Case5IC1NotSS}.
\end{lemma}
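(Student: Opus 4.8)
The proof is a finite combinatorial enumeration of ribbon graphs and their admissible cylinder gluings, in the spirit of the inspections behind Lemmas~\ref{Case4IbH211CD} and~\ref{lm:C4Ib:H1111:CylDiags}. Recall that the separatrix diagram splits as $\Gr=\Gr_1\sqcup\Gr_2\sqcup\Gr_0$ with the configurations listed in Figure~\ref{fig:C5I:sc:graph}: each of $\Gr_1,\Gr_2$ is the figure-eight graph on the unique simple zero $x_i$ it contains (the only planar ribbon structure on a one-vertex $4$-valent graph whose edges are loops), while $\Gr_0$ is one of $RG.11.a$, $RG.11.b$, $RG.11.c$. Write $U_i$ for a planar regular neighborhood of $\Gr_i$, so that $U_1,U_2$ are three-holed spheres and $U_0$ is a four-holed sphere, and each component of $\partial U_0\sqcup\partial U_1\sqcup\partial U_2$ is a core curve of exactly one horizontal cylinder; a cylinder diagram of $M$ is exactly the data of this collection of boundary curves together with the red/blue coloring (tops versus bottoms) and the pairing red $\leftrightarrow$ blue induced by the five cylinders. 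By definition $C_1$ joins the spheres carrying $x_1$ and $x_2$, so it has one boundary component in $\partial U_1$ and one in $\partial U_2$; the cylinders $C_2,C_3$ each contribute one boundary component to $\partial U_1$ and one to $\partial U_0$, and $C_4,C_5$ likewise to $\partial U_2$ and $\partial U_0$.

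First I would exploit the hypothesis that $C_1$ is not semi-simple. Since $\partial U_1$ has exactly three components --- two \emph{simple} ones, each freely homotopic to one of the two loops of $\Gr_1$, and one non-simple component homotopic to the product of the two loops --- a cylinder is semi-simple precisely when one of its boundary components is a simple component of some $\partial U_i$. Hence $C_1$ not semi-simple forces the $\partial U_1$-boundary of $C_1$ to be the unique non-simple component of $\partial U_1$, and likewise on the $\partial U_2$ side; consequently $C_2,C_3$ inherit the two simple components of $\partial U_1$ and $C_4,C_5$ the two simple components of $\partial U_2$, and the four components of $\partial U_0$ are matched, via $C_2,C_3,C_4,C_5$, with those four simple components, while $C_1$ is disjoint from $U_0$. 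The only remaining freedom is: the choice of $\Gr_0$; the matching of $\partial U_0$ with $\{C_2,C_3,C_4,C_5\}$; and a coloring that is \emph{proper} on $\Gr_0$ (two boundary components of $U_0$ sharing an edge of $\Gr_0$ must get opposite colors, a horizontal saddle connection not lying on the top, resp.\ bottom, of two cylinders), subject to the compatibility coming from the same proper-coloring condition on $\Gr_1,\Gr_2$ --- which, as one checks, makes the $\partial U_1$-ends of $C_2$ and $C_3$ share a color (each being opposite to the $\partial U_1$-end of $C_1$), and similarly for the $\partial U_2$-ends of $C_4,C_5$. I would then discard any diagram whose induced homological relation among $\gamma_1,\dots,\gamma_5$ is incompatible with \eqref{eq:C5I:homo:rel} together with the equivalence classes $\{C_1\},\{C_2,C_3\},\{C_4,C_5\}$ provided by Lemma~\ref{lm:H1111:C5I:eq:cl:cyl} (this forces the relation to take the shape $\gamma_1=\pm(\gamma_2-\gamma_3)=\pm(\gamma_4-\gamma_5)$ with $|\gamma_2|=|\gamma_3|$ and $|\gamma_4|=|\gamma_5|$).

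Running through the three possibilities for $\Gr_0$ and, for each, through the (few) proper colorings of the adjacency graph on its four boundary components together with the compatible matchings with $C_2,\dots,C_5$, one finds that after identifying diagrams related by the symmetries of the configuration exactly two survive; these are the two cylinder diagrams displayed in Figure~\ref{fig:Case5IC1NotSS}. The only genuinely delicate point is the bookkeeping: one must read off the cyclic (ribbon) order at the two vertices of $\Gr_0$ correctly in order to determine which pairs of boundary components of $U_0$ share an edge, and then verify that no admissible coloring/matching is overlooked and that the symmetry identifications are applied consistently. This is routine but must be carried out carefully, exactly as in the enumerations of Lemmas~\ref{Case4IbH211CD} and~\ref{lm:C4Ib:H1111:CylDiags}.
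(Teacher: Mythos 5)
Your combinatorial setup in the first paragraph is fine and matches the forced structure (both boundaries of $C_1$ are the non-simple components of $\partial U_1,\partial U_2$, so $C_2,\dots,C_5$ are semi-simple and their remaining boundaries exhaust $\partial U_0$), but the filtering step you then propose is wrong, and as stated it would kill the enumeration rather than complete it. When $C_1$ is not semi-simple, its boundary circle on the $U_1$-side is homotopic to the concatenation of the two loops of $\Gr_1$, so the relation \eqref{eq:C5I:homo:rel} necessarily takes the all-plus form $\gamma_1=\gamma_2+\gamma_3=\gamma_4+\gamma_5$ (this is exactly what the paper computes in the analogous case of Lemma~\ref{lm:H211:C5I:2:sim:cyl}); the mixed-sign shape $\gamma_1=\pm(\gamma_2-\gamma_3)$ is precisely the configuration in which $C_1$ \emph{is} (semi-)simple at that end, and combined with $|\gamma_2|=|\gamma_3|$ it is metrically impossible anyway, since it forces $\ell_1=0$. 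So if you discarded every diagram not fitting your claimed shape, you would discard both surviving diagrams and conclude there are none. Moreover, the $\cM$-dependent inputs have no place here: the lemma is a purely combinatorial count of cylinder diagrams, and with your labeling ($C_2,C_3$ attached to the $x_1$-sphere) the classes in the non-semi-simple case can never be $\{C_2,C_3\},\{C_4,C_5\}$ — if $C_2$ and $C_3$ were $\cM$-parallel, the relation $\gamma_1=\gamma_2+\gamma_3$ would force $C_1$ into their class — so you have read the (loosely labelled) statement of Lemma~\ref{lm:H1111:C5I:eq:cl:cyl} too literally and imported an inconsistent constraint. Finally, the enumeration itself is only asserted ("one finds that ... exactly two survive"); since the lemma's entire content is the outcome of that bookkeeping, this is a gap even apart from the faulty filter.

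For comparison, the paper's proof avoids a fresh ribbon-graph enumeration: once $C_1$ is not semi-simple, the identifications of $C_2,\dots,C_5$ along $\partial C_1$ are completely determined (each is semi-simple with its single-saddle-connection side on $C_1$), and the remaining identifications on the $\Gr_0$-side are read off from the already established list of Case 4.I.b) four-cylinder diagrams in $\cH(1^4)$ (Lemma~\ref{lm:C4Ib:H1111:CylDiags}), which is what produces exactly the two diagrams of Figure~\ref{fig:Case5IC1NotSS}. If you want to salvage your route, drop the homological/$\cM$-parallel filter entirely, keep only the coloring and matching constraints, and actually run through the three configurations of $\Gr_0$ — or, better, reduce to Lemma~\ref{lm:C4Ib:H1111:CylDiags} as the paper does.
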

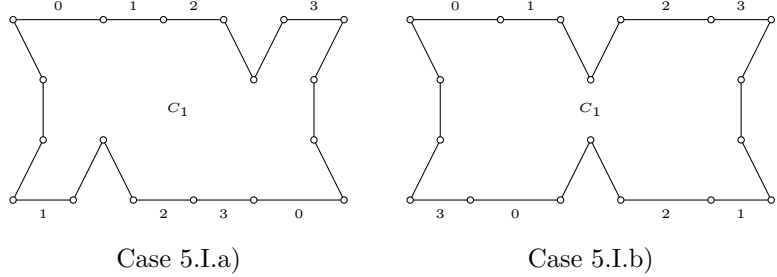
\begin{figure}[htb]
\centering
\begin{minipage}[t]{0.48\linewidth}
\centering
\begin{tikzpicture}[scale=0.4]
\draw[thin] (-1,0) -- (0,2) -- (0,4) -- (-1,6) -- (6,6) -- (7,4) -- (8,6) -- (10,6) -- (9,4) -- (9,2) -- (10,0) -- (3,0) -- (2,2) -- (1,0) -- cycle;
\foreach \x in {(-1,0), (0,2), (0,4), (-1,6),(2,6),(4,6), (6,6), (7,4), (8,6), (10,6), (9,4), (9,2), (10,0), (7,0), (5,0), (3,0), (2,2), (1,0)} \filldraw[fill=white] \x circle (3pt);

\draw (0.5,6) node[above] {\tiny $0$} (3,6) node[above] {\tiny $1$} (5,6) node[above] {\tiny $2$} (9,6) node[above] {\tiny $3$} (0,0) node[below] {\tiny $1$} (4,0) node[below] {\tiny $2$} (6,0) node[below] {\tiny $3$} (8.5,0) node[below] {\tiny $0$};

\draw (4.5,3) node {\tiny $C_1$};

\draw (4.5,-2) node {Case 5.I.a)};
\end{tikzpicture}
\end{minipage}
\begin{minipage}[t]{0.48\linewidth}
\begin{tikzpicture}[scale=0.4]
\draw[thin] (-1,0) -- (0,2) -- (0,4) -- (-1,6) -- (4,6) -- (5,4) -- (6,6) -- (11,6) -- (10,4) -- (10,2) -- (11,0) -- (6,0) -- (5,2) -- (4,0) -- cycle;
\foreach \x in {(-1,0), (0,2), (0,4), (-1,6),(2,6),(4,6), (6,6), (9,6), (11,6), (10,4), (10,2), (5,4),(11,0), (9,0), (6,0), (4,0), (5,2), (1,0)} \filldraw[fill=white] \x circle (3pt);

\draw (0.5,6) node[above] {\tiny $0$} (3,6) node[above] {\tiny $1$} (7.5,6) node[above] {\tiny $2$} (10,6) node[above] {\tiny $3$} (0,0) node[below] {\tiny $3$} (2.5,0) node[below] {\tiny $0$} (7.5,0) node[below] {\tiny $2$} (10,0) node[below] {\tiny $1$};

\draw (5,3) node {\tiny $C_1$};

\draw (5,-2) node {Case 5.I.b)};
\end{tikzpicture}
\end{minipage}
\caption{The two cylinder diagrams in $\cH(1^4)$ satisfying Case 5.I) where $C_1$ is not semi-simple}
\label{fig:Case5IC1NotSS}
\end{figure}

\begin{proof}
If $C_1$ is not semi-simple, then each of $C_2, \ldots, C_5$ are semi-simple because the identifications between each of the cylinders at $C_1$ is completely determined.  The remaining identifications can be deduced from Lemma \ref{lm:C4Ib:H1111:CylDiags}.
\end{proof}

\subsubsection*{Proof of Proposition~\ref{Case5IH1111}}
\begin{proof}
It suffices to assume that $M$ is $\cM$-cylindrically stable, otherwise we are done.  By Lemma~\ref{lm:H1111:C5I:C1:free}, we know that $C_1$ is free, and from Lemma~\ref{lm:H1111:C5I:C1:no:ssim}, we only need to consider two cases:

\medskip
\noindent \ul{\em $C_1$ is simple.}  Collapsing $C_1$ so that the two zeros in its boundary collide  yields a surface $M' \in \cH(2,1^2)$ which is contained in a rank two affine submanifold $\cM'$ by \cite[Prop. 2.16]{AulicinoNguyenGen3TwoZeros}. By assumption, $\cM'=\prymthreezero$, thus $M'$ admits a Prym involution $\tau'$.

 Let $x'_0$ be the double zero of $M'$.  Observe that all the horizontal saddle connections starting from $x'_0$ end at $x'_0$. Let us denote those saddle connections by $\sig_0,\sig_1,\sig_2$, where $\sig_0$ is the degeneration of $C_1$.  Since $C_1$ is not $\cM$-parallel to any other cylinder, we can choose $M$ such  that $|\sig_0|\neq |\sig_1|$ and $|\sig_0|\neq |\sig_2|$.   Since $\tau'$ fixes $x'_0$, it induces a permutation of $\{\sig_0,\sig_1,\sig_2\}$.
 We claim that $\sig_0$ is invariant by $\tau'$, since otherwise we have either $|\sig_0|=|\sig_1|$ or $|\sig_0|=|\sig_2|$ contradicting our assumption. We can now use Proposition \ref{DblCovExtSimpCyl} to conclude that  $\allowbreak M \in \cP\cap\cH(1^4)=\tilde \cQ(2^2,-1^4)$, and hence $\cM\subseteq \prymprinc$.  Since we have
 $$
 \dim \cM=\dim \prymthreezero +1=\dim \prymprinc=6,
 $$
it follows that $\cM=\prymprinc$.

\medskip

\noindent \ul{\em $C_1$ is not semi-simple.} There are two cylinder diagrams to consider by Lemma \ref{H1t4Case5IC1NotSS}.  In this case, each saddle connection in the boundary of $C_1$ is one component of the boundary of a cylinder in the family  $\{C_2,\dots,C_5\}$. Let us denote those saddle connections by $\sig_2,\dots,\sig_5$ such that $\sig_i$ is one boundary component of $C_i$.
Since $C_2$ is not $\cM$-parallel to $C_5$ by Lemma~\ref{lm:H1111:C5I:eq:cl:cyl},  $M$ can be chosen such that $|\sig_2|\neq|\sig_5|$.  It follows that $C_1$ can be twisted such that it contains only one vertical saddle connection joining two distinct zeros in its  boundary.  Collapsing $C_1$ yields a surface $M' \in \cH(2,1^2)$ which is contained in a rank two affine submanifold by \cite[Prop. 2.16]{AulicinoNguyenGen3TwoZeros}.
By assumption, this submanifold must be $\prymthreezero$. Thus $M'$ admits a Prym involution.  By inspecting the cylinder diagram of $M'$, we see that
this Prym involution  extends to a Prym involution of $M$ that fixes $C_1$. In particular, $\allowbreak M \in \cP\cap\cH(1^4)=\prymprinc$.

We now claim that $\cM\subseteq \prymprinc$. To see this choose a small positive real number $\eps$ such that, for any  $v \in T^\R_M\cM \subset H^1(M,\Sigma,\R)$ such that $||v|| < \eps$, we have $M_v:=M+v \in \cM$ and the condition $|\sig_2|\neq |\sig_5|$ still holds. Here, we identify $M$ with an element of $H^1(X,\Sigma,\R+\imath\R)$. Remark that since $v$ is purely real, all the horizontal saddle connections of $M$ remain horizontal in $M_v$, which means that $M_v$ is also horizontally periodic with the same cylinder diagram as $M$. By the same argument as above, we have $M_v \in \prymprinc$. Therefore, we have
$$
T^\R_M\cM \subseteq T^\R_M\prymprinc,
$$
which implies that $\cM \subseteq \prymprinc$.  In particular, we have $\allowbreak \dim \cM \leq \dim \prymprinc=6$. We now notice that by \cite[Prop. 2.16]{AulicinoNguyenGen3TwoZeros}, we must have
$$
\dim \cM > \dim \prymthreezero =5.
$$
Thus, it follows $\dim \cM=\dim \prymprinc=6$ and $\cM=\prymprinc$. The proof of Proposition~\ref{Case5IH1111} is now complete.
\end{proof}

\subsection{Case 5.II)}\label{sec:C5II}

We label the cylinders so that $C_1$ and $C_2$ are the homologous cylinders, and $C_5$ is the unique cylinder which degenerates to a pair of simple poles in the same component of the limit surface (which is obtained as one pinches all of the horizontal cylinders).

Cutting $M$ along the core curves of $C_1$ and $C_2$, then permuting the gluings, we get two translation surfaces of genus two. Let us denote by $M^1$ the surface that contains $C_3$ and $C_4 $, and by $M^2$ the surface that contains $C_5$. In particular, $C_3,C_4,C_5$ can be realized as cylinders in a translation surface of genus two.  Therefore, the cylinder diagrams in this case can be constructed by considering the unique $3$-cylinder diagram in $\cH(1,1)$, and the $2$-cylinder diagrams in $\cH(2)$ and $\cH(1,1)$.

Let  $\gamma_i$ denote the core curve of $C_i$ oriented from left to right. We have either $\gamma_1 + \gamma_3 = \gamma_4$, or $\gamma_3 + \gamma_4 = \gamma_1$.  Our goal in this section is to show

\begin{proposition}\label{prop:C5II}
 Let $\cM$ be rank two affine submanifold of rank two in $\cH(2,1^2)\cup\cH(1^4)$. Assume that $\cM$ contains a horizontally periodic surface $M$ satisfying Case 5.II), then
 \begin{itemize}
  \item[(i)] If $\cM \subset \cH(2,1^2)$, then $\cM=\prymthreezero$.

  \item[(ii)] If $\cM\subset \cH(1^4)$ and $\prymthreezero$ is the unique rank two submanifold in $\cH(2,1^2)$, then either $\cM$ contains a horizontally periodic surface with six cylinders, or $\cM=\prymprinc$.
 \end{itemize}
\end{proposition}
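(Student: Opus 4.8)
The plan is to run the degeneration strategy of the paper one last time: collapse a well-chosen simple cylinder (or an $\cM$-similar pair of such) to land in a lower stratum whose rank two loci are already classified, push the Prym (and, when relevant, hyperelliptic) involution back up via Proposition~\ref{DblCovExtSimpCyl}, and finish with a dimension count.

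First I would reduce to the case that $M$ is $\cM$-cylindrically stable. If it is not, Lemma~\ref{lm:WrightTwistPresLem} produces a horizontally periodic surface in $\cM$ with more than five cylinders; in $\cH(1^4)$ such a surface has exactly six cylinders, which is the first alternative of (ii), and in $\cH(2,1^2)$ this cannot happen since no horizontally periodic surface there has more than five cylinders. So from now on $M$ is $\cM$-cylindrically stable, its horizontal cylinders fall into at least two $\cM$-parallel classes, and $\{C_1,C_2\}$ (the homologous pair) lies in one of them. Next I would enumerate the Case 5.II) cylinder diagrams: cutting $M$ along core curves of $C_1$ and $C_2$ and exchanging the gluings yields a genus two surface $M^1$ with a three-cylinder decomposition carrying $C_3,C_4$ (so $M^1\in\cH(1,1)$) and a genus two surface $M^2$ with a two-cylinder decomposition carrying $C_5$ (so $M^2\in\cH(2)$ or $\cH(1,1)$), and conversely each Case 5.II) diagram is assembled from the unique three-cylinder diagram of $\cH(1,1)$ and the two-cylinder diagrams of $\cH(2)$ and $\cH(1,1)$. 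This gives a short explicit list of diagrams, one family for $\cH(2,1^2)$ and one for $\cH(1^4)$ (in the spirit of Lemmas~\ref{Case4IbH211CD} and \ref{lm:C4Ib:H1111:CylDiags}), and reading off the simple cylinders of the genus two pieces exhibits simple cylinders of $M$.

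Using the rank two hypothesis (so $p(T^\R_M\cM)$ is a four-dimensional symplectic subspace of $H_1(M,\R)$), the homological relations recorded just before the statement, Lemma~\ref{lm:s:cyl:dist:sim:zeros}, the fact that homologous cylinders cannot be free, and the Cylinder Proportion Lemma~\ref{CylinderPropProp}, I would determine the $\cM$-parallel classes in each diagram and locate either a free simple cylinder with two distinct zeros on its boundary, or an equivalence class of two $\cM$-similar simple cylinders whose boundaries carry different pairs of zeros. Collapsing by Proposition~\ref{prop:collapse:free:sim:cyl} or Proposition~\ref{prop:collapse:similar:cyl} gives $M'$ in a rank two affine submanifold $\cM'$ with $\dim\cM'=\dim\cM-1$ and $\mathrm{rk}(\cM')=2$, lying in $\cH(4)$, $\cH(2,2)$ or $\cH(2,1^2)$ (never $\cH(3,1)$, which has no rank two locus). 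By the established classifications $\cM'$ is one of $\tilde{\cQ}(3,-1^3)$, $\dcoverodd$, $\dcoverhyp$, $\prym$, or --- invoking the standing hypothesis when $\cM'\subset\cH(2,1^2)$ --- $\prymthreezero$; in each surviving case the Prym involution $\tau'$ of $M'$ preserves or permutes the distinguished saddle connections produced by the collapse, preservation being forced either by choosing $M$ so that the distinguished saddle connection is the unique horizontal saddle connection of its length through the relevant zero (possible because the collapsed cylinder is free, as in the proof of Proposition~\ref{Case5IH1111}), or by the fact that a flat isometry takes the two equal-length saddle connections of a collapsed $\cM$-similar pair to one another. Thus Proposition~\ref{DblCovExtSimpCyl} extends $\tau'$ to a Prym involution of $M$; then $M$, and a neighborhood of $M$ in $\cM$ by the ``moreover'' clauses, lies in the relevant Prym locus, and $\dim\tilde{\cQ}(2,1,-1^3)=\dim\tilde{\cQ}(3,-1^3)+1=5$, $\dim\prymprinc=\dim\prymthreezero+1=6$ force $\cM=\prymthreezero$ in case (i) and $\cM=\prymprinc$ in case (ii). The remaining $\cH(2,2)$ candidates are disposed of by the same dimension bookkeeping together with the observation that the hyperelliptic involution of a $\dcoverodd$- or $\dcoverhyp$-surface, were it also to extend, would place $M$ in $\cP\cap\cL$, which is empty in $\cH(2,1^2)$ and, in $\cH(1^4)$, points to $\dcoverprinc$ --- in which case one instead exhibits a six-cylinder surface in $\cM$ (for example because $\dcoverprinc$, as a double cover of the three-cylinder surface in $\cH(1,1)$, contains one), the other permitted alternative of (ii).

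The main obstacle is precisely this middle step: checking, diagram by diagram in $\cH(2,1^2)$ and in $\cH(1^4)$, that there is always a collapsible simple cylinder with two distinct zeros --- genuinely free, or in an $\cM$-similar pair with unequal pairs of zeros --- and that the collapse can be steered into $\cH(4)$ or $\cH(2,1^2)$ rather than $\cH(3,1)$, so that in case (ii) the $\dcoverprinc$ possibility is absorbed into the six-cylinder alternative and does not interfere with the conclusion $\cM=\prymprinc$. Establishing the freeness or the $\cM$-similarity of the relevant cylinders is where one must redeploy the Cylinder Proportion Lemma together with arguments of the type used in Lemmas~\ref{lm:H211:C5I:eq:cl:cyl} and \ref{lm:H211:C5I:2:sim:cyl}; once the right cylinder is singled out, the collapse, the extension of the involution, and the dimension count are routine.
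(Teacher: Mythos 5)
Your plan for part (ii) is essentially the paper's: there $C_5$ is free (and when it is not simple, its closure contains a free simple cylinder $D$), all zeros are simple so Lemma~\ref{lm:s:cyl:dist:sim:zeros} gives distinct boundary zeros, and collapsing via Proposition~\ref{prop:collapse:free:sim:cyl} lands in $\prymthreezero\subset\cH(2,1^2)$, after which the extension of the involution and the dimension count go through as you describe (your detour through $\cH(2,2)$ and $\dcoverprinc$ never actually arises, since the only collapsible object is the single free cylinder, so the target stratum is $\cH(2,1^2)$).

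The genuine gap is part (i). In $\cH(2,1^2)$ satisfying Case 5.II) the piece $M^2$ containing $C_5$ lies in $\cH(2)$, so both boundary components of $C_5$ --- and of any simple cylinder inside $\ol{C}_5$ --- contain only the double zero $x_0$; hence the unique free cylinder furnished by Lemmas~\ref{lm:C5II:H211:C5:free} and \ref{lm:C5II:H211:eq:cl:cyl} never has two distinct zeros on its boundary, and Proposition~\ref{prop:collapse:free:sim:cyl} is inapplicable. The simple cylinders that do carry two distinct zeros, $C_3$ and $C_4$, sit inside the four-element equivalence class $\{C_1,C_2,C_3,C_4\}$ together with the non-simple homologous pair, so Proposition~\ref{prop:collapse:similar:cyl} (which requires the collapsed pair to be the entire equivalence class) is inapplicable as well. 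So the configuration your middle step is searching for simply does not exist in case (i), and the difficulty you flag as ``to be checked diagram by diagram'' cannot be resolved within your framework. The paper gets around this with a different mechanism: it twists so that a vertical saddle connection $\delta$ from $x_0$ to $x_1$ crosses $C_5$ and $C_1$, then performs an \emph{extended cylinder deformation} along the stretch vector of the free cylinder $C_5$, continuing past the moment $C_5$ degenerates; one first shows $h_1=h_2$ is forced (otherwise the limit lies in $\cH(3,1)$, which carries no rank two submanifold), and then in the limit all three zeros collide, producing a surface in $\cH(4)$, hence in $\prymmin$. The Prym involution of the limit is pulled back to the deformed surfaces $M_t$, and a tangent-space inclusion $T^\R_{M_t}\cM\subseteq T^\R_{M_t}\prymthreezero$ together with $\dim\cM\geq\dim\prymmin+1$ yields $\cM=\prymthreezero$. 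This degeneration collapses a transverse saddle connection crossing two cylinders and is not covered by the two collapse propositions you invoke, so part (i) of your argument needs to be replaced, not merely completed.
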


Let us start by proving some conditions that the cylinders in $M$ must satisfy.

\begin{lemma}\label{lm:C5II:H211:C5:free}
 Let $M\in \cM$ be a horizontally periodic surface satisfying Case 5.II). If $M$ is $\cM$-cylindrically stable, then $C_5$ is free.
\end{lemma}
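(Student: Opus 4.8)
The plan is to argue by contradiction, working entirely in the cohomology of the single surface $M$; no degeneration, twisting, or passage to a square-tiled surface will be needed. Suppose $C_5$ is not free. Recall that in Case 5.II) the core curves $\gamma_1,\dots,\gamma_5$ span a $3$-dimensional subspace of $H_1(M,\R)$, the only relations being the one saying that $C_1$ and $C_2$ are homologous (so $\gamma_1=\pm\gamma_2$) and the one recorded just above, expressing $\gamma_1$ as $\pm\gamma_3\pm\gamma_4$; in particular $\gamma_5$ occurs in no homological relation with the others. I will use two standard facts: (i) for every cylinder $C_i$ on $M$ the restriction of $\gamma_i$ to $T^\R_M\cM$ is nonzero, because $\cM$ is $\GL^+(2,\R)$-invariant and the Teichm\"uller flow, being tangent to $\cM$, changes the circumference of $C_i$; and (ii) since ${\rm rk}(\cM)=2$, the horizontal cylinders of $M$ split into at most two equivalence classes of $\cM$-parallel cylinders (cf. \cite{WrightCylDef}). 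I also note that $C_1$ and $C_2$ are always $\cM$-parallel: from $\gamma_1=\pm\gamma_2$ in $H_1(M,\Z)$ the periods $\int_{\gamma_1}\omega$ and $\int_{\gamma_2}\omega$ agree up to sign throughout $\cM$, so $\gamma_1=\pm\gamma_2$ as linear forms on $T^\R_M\cM$.

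Since $C_5$ is not free it is $\cM$-parallel to some $C_j$ with $j\in\{1,2,3,4\}$, and it lies in an equivalence class of size $\ge 2$. I will split into cases according to this neighbour and propagate the resulting proportionalities $\gamma_i\parallel\gamma_j$ (as forms on $T^\R_M\cM$) through the relation $\gamma_1=\pm\gamma_3\pm\gamma_4$, using fact (i) to discard accidental vanishings and fact (ii) to discard configurations forcing three classes. If $C_5\sim_\cM C_1$ then, since $\gamma_1=\pm\gamma_2$, also $C_5\sim_\cM C_2$, so $C_1,C_2,C_5$ lie in one class $\cC$. If one of $C_3,C_4$ lies in $\cC$, then the relation together with fact (i) forces the other into $\cC$ as well, hence a single class --- impossible; otherwise $\{C_3,C_4\}$ is the second class, $\gamma_3$ and $\gamma_4$ are proportional on $T^\R_M\cM$, and then $\gamma_1=\pm\gamma_3\pm\gamma_4$ forces $\gamma_1$ proportional to $\gamma_3$ (the only alternative being $\gamma_1=0$ on $T^\R_M\cM$), so $C_1\sim_\cM C_3$ although they are in different classes --- again impossible. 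If $C_5\sim_\cM C_3$ but $C_5\not\sim_\cM C_1,C_2$ (hence also $C_3\not\sim_\cM C_1,C_2$), then $\{C_1,C_2\}$ and the class of $\{C_3,C_5\}$ are distinct classes, and whichever of these $C_4$ joins (or if $C_4$ is free) one gets, via the relation and fact (i), either $C_1\sim_\cM C_3$ or a third class --- both impossible. The case $C_5\sim_\cM C_4$ is identical after exchanging $C_3$ and $C_4$ in the relation. Thus every possibility leads to a contradiction, and $C_5$ must be free.

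I expect the only real work to be the finite bookkeeping in the second paragraph --- listing every partition of $\{C_1,\dots,C_5\}$ into at most two equivalence classes compatible with ``$C_5$ not free'' and ``$C_1\sim_\cM C_2$'', and eliminating each --- which is routine. One must also check that the argument is insensitive to the sign ambiguities in the homological relations; it is, since only the proportionality structure of the forms $\gamma_i|_{T^\R_M\cM}$ is ever used, never the precise coefficients.
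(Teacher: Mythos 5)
Your setup is fine as far as it goes: fact (i) is correct (and is exactly the reason the constants $\lambda_i$ in the paper's proof are nonzero), the observation that $C_1$ and $C_2$ are automatically $\cM$-parallel is correct, and "at least two equivalence classes'' follows from $\cM$-cylindrical stability and rank two. The problem is your fact (ii): rank two does \emph{not} bound the number of equivalence classes of $\cM$-parallel horizontal cylinders by two. What is true is only that the projections to $H^1(M,\R)$ of the twist vectors of the distinct classes span an isotropic subspace of the $4$-dimensional symplectic space $p(T^\R_M\cM)$, hence a subspace of dimension at most two; since these projections may be linearly dependent, three or more classes can and do occur in rank two. This happens in this very paper: in Case 5.I) in $\cH(2,1^2)$ the classes are $\{C_1\},\{C_2,C_4\},\{C_3,C_5\}$ (Lemma~\ref{lm:H211:C5I:eq:cl:cyl}), and in Case 4.I.b) one allowed configuration is $\{C_1,C_2\},\{C_3\},\{C_4\}$ (Lemma~\ref{lm:Case4IbEqClasses}).

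Because of this, the configurations your bookkeeping dismisses as "a third class'' are precisely the ones carrying the content of the lemma, and your remaining tools (fact (i) plus the relation $\gamma_1=\gamma_2=\gamma_3+\gamma_4$ read as linear forms on $T^\R_M\cM$) yield no contradiction there. Concretely, nothing in your argument rules out the class structure $\{C_1,C_2,C_5\},\{C_3\},\{C_4\}$, nor $\{C_1,C_2\},\{C_4,C_5\},\{C_3\}$. To kill these you must use the symplectic structure: since $\gamma_3,\gamma_4,\gamma_5$ span a Lagrangian of $H_1(M,\R)$, the projected twist vectors of the three classes, namely the duals of $\lambda_1\gamma_1+\lambda_2\gamma_2+\lambda_5\gamma_5,\ \gamma_3,\ \gamma_4$ in the first configuration and of $\gamma_1,\ \gamma_3,\ \lambda_4\gamma_4+\lambda_5\gamma_5$ in the second (all $\lambda_i\neq 0$, and $\gamma_4=\gamma_1-\gamma_3$), are three linearly independent isotropic vectors lying in $p(T^\R_M\cM)$, which is impossible in a $4$-dimensional symplectic space. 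That isotropic-dimension argument is the heart of the paper's proof of this lemma and cannot be replaced by the claimed bound on the number of equivalence classes; as written, your proof has a genuine gap exactly at that step.
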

\begin{proof}
For $i=1,\dots,5$, let us denote by $\xi_i$ the vector in $\allowbreak H^1(M,\Sig,\R)\simeq T^\R_M\cM$ tangent to the path defined by the twisting of $C_i$. By Poincar\'e duality, up to a non-zero constant, $\xi_i$ can be identified with $\gamma_i$, where $\gamma_i$ is an element of  $H_1(M\setminus\Sig,\R)$ (see \cite[Sect. 4.1]{MirzakhaniWrightBoundary}). Since the projection $p: H^1(M,\Sig,\R) \rightarrow H^1(M,\R)$ is dual to the map $p': H_1(M\setminus\Sigma,\R) \rightarrow H_1(M,\R)$, we see that $p(\xi_i)$ is dual to $\lambda_i\gamma_i \in  H_1(M,\R)$, with $\lambda_i \in \R\setminus\{0\}$.

The assumption of Case 5.II) means that we have the following homological relations
$$
\gamma_1=\gamma_2=\gamma_3+\gamma_4,
$$
and $\{\gamma_3,\gamma_4,\gamma_5\}$ span a Lagrangian in $H_1(M,\R)$.

By assumption we have at least two equivalence classes of horizontal cylinders. Assume that $C_5$ is not free. If $C_5$ is $\cM$-parallel to $\{C_1,C_2\}$, then $C_3$ and $C_4$ must be free because otherwise we only have one equivalence class. It follows that $T^\R_M\cM$ contains the vectors $\{\xi_1+\xi_2+\xi_5, \xi_3,\xi_4\}$. Thus $p(T^\R_M\cM)$ contains the duals of $\{\lambda_1\gamma_1+\lambda_2\gamma_2+\lambda_5\gamma_5, \gamma_3,\gamma_4\}$. But as this family spans a Lagrangian (of dimension $3$) in $H_1(M,\R) \simeq H^1(M,\R)$, we get a contradiction to the hypothesis that $\cM$ is of rank two. Thus this case cannot occur.

Assume now that $C_5$ is $\cM$-parallel to either $C_3$ or $C_4$.  By the homological relations, it follows that one of $C_3$ and $C_4$ is free, and $\{C_1,C_2\}$ is an equivalence class. We can suppose that  $\{C_4,C_5\}$ is an equivalence class and $C_3$ is free. By the same argument as above, $p(T^\R_M\cM)$ contains the duals of the vectors $\gamma_1,\gamma_3, \lambda_4\gamma_4+\lambda_5\gamma_5 \in H_1(M,\R)$.
Since $\{\gamma_1,\gamma_3,\gamma_5\}$ spans a Lagrangian in $H_1(M,\R)$, and $\gamma_4=\gamma_1-\gamma_3$, we see that the family $\{\gamma_1,\gamma_3, \lambda_4\gamma_4+\lambda_5\gamma_5\}$ also spans a Lagrangian in $H_1(M,\R)$. Hence we also have contradiction in this case, which shows that $C_5$ must be free.
\end{proof}

\begin{lemma}\label{lm:C5II:H211:eq:cl:cyl}
The equivalence classes of cylinders on $M$ are $\{C_1,C_2,C_3,C_4\}$ and $\{C_5\}$.
\end{lemma}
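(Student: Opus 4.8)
The plan is to combine the homological relations with the two structural facts already available. After reorienting the cylinders exactly as in the proof of Lemma~\ref{lm:C5II:H211:C5:free}, we have $\gamma_1=\gamma_2=\gamma_3+\gamma_4$ in $H_1(M,\Z)$, and $\{\gamma_3,\gamma_4,\gamma_5\}$ spans a Lagrangian ($3$-dimensional isotropic) subspace of $H_1(M,\R)$; moreover $C_5$ is free by Lemma~\ref{lm:C5II:H211:C5:free}. Since $C_1$ and $C_2$ are homologous, their holonomies coincide on every surface in $\cM$ near $M$, so $C_1$ and $C_2$ are $\cM$-parallel; hence the equivalence class $\cC$ of $C_1$ contains $C_2$, while $\{C_5\}$ is an equivalence class on its own. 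It therefore remains only to show that $C_3,C_4\in\cC$.

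I would argue in the dual space $(T_M\cM)^*$, writing $\bar\gamma_i$ for the image of $\gamma_i$, so that two non-free cylinders are $\cM$-parallel precisely when the corresponding $\bar\gamma$'s are proportional, recording first that every $\bar\gamma_i$ is nonzero: differentiating the path $s\mapsto\mathrm{diag}(e^{s},e^{-s})\cdot M$, which stays in $\cM$, gives a tangent vector whose value on $\gamma_i$ equals the circumference of $C_i$, which is nonzero since $C_i$ is horizontal, so $\gamma_i\notin\mathrm{Ann}(T_M\cM)$. Now suppose $C_3$ is not $\cM$-parallel to $C_1$. Using the relation $\bar\gamma_1=\bar\gamma_3+\bar\gamma_4$ and the nonvanishing of the $\bar\gamma_i$, a short case check shows that $C_4$ can then be $\cM$-parallel neither to $C_1$ (otherwise $\bar\gamma_4=c\,\bar\gamma_1$ forces $\bar\gamma_3=(1-c)\bar\gamma_1$, proportional to $\bar\gamma_1$ unless $c=1$, and $c=1$ gives $\bar\gamma_3=0$) nor to $C_3$ (otherwise $\bar\gamma_4=\mu\,\bar\gamma_3$ forces $\bar\gamma_1=(1+\mu)\bar\gamma_3$, proportional to $\bar\gamma_3$ unless $\mu=-1$, and $\mu=-1$ gives $\bar\gamma_1=0$), nor of course to the free cylinder $C_5$; hence both $C_3$ and $C_4$ are free. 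Invoking Theorem~\ref{thm:Wright:Cyl:Def} for the singleton classes $\{C_3\},\{C_4\},\{C_5\}$, the space $V:=p(T^\R_M\cM)$ then contains the Poincar\'e duals of $\gamma_3,\gamma_4,\gamma_5$, which (since $\gamma_1=\gamma_3+\gamma_4$ and $\gamma_2=\gamma_1$) span a $3$-dimensional isotropic subspace of $H^1(M,\R)$. This contradicts the fact that $V$ is symplectic of dimension $2\,{\rm rk}(\cM)=4$, in which every isotropic subspace has dimension at most $2$. Hence $C_3$ is $\cM$-parallel to $C_1$, say $\bar\gamma_3=c\,\bar\gamma_1$; then $\bar\gamma_4=\bar\gamma_1-\bar\gamma_3=(1-c)\bar\gamma_1$ is a nonzero multiple of $\bar\gamma_1$ (nonzero since $\bar\gamma_4\neq 0$), so $C_4$ is $\cM$-parallel to $C_1$ as well. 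Thus $\{C_1,C_2,C_3,C_4\}$ and $\{C_5\}$ are the equivalence classes.

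The argument carries the standing assumption (inherited from Lemma~\ref{lm:C5II:H211:C5:free}) that $M$ is $\cM$-cylindrically stable; otherwise Lemma~\ref{lm:WrightTwistPresLem} already produces a horizontally periodic surface with more cylinders. I do not expect a serious obstacle: the only points that need care are the degenerate subcases in which a coefficient such as $1-c$ or $1+\mu$ vanishes, and these are precisely what the remark ``$\bar\gamma_i\neq 0$'' is there to dispose of; the rest uses only the homological relations, the Cylinder Deformation Theorem, and the standard fact that $p(T^\R_M\cM)$ is a symplectic subspace of dimension $2\,{\rm rk}(\cM)$.
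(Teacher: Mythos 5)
Your proof is correct and takes essentially the same route as the paper: $C_5$ is free by the preceding lemma, the relation $\gamma_1=\gamma_2=\gamma_3+\gamma_4$ shows that if one of $C_3,C_4$ is $\cM$-parallel to $\{C_1,C_2\}$ then so is the other, and the remaining case of both being free is killed because the duals of $\gamma_3,\gamma_4,\gamma_5$ would give a three-dimensional isotropic subspace inside the four-dimensional symplectic space $p(T^\R_M\cM)$. Your explicit check that each $\bar\gamma_i\neq 0$ in $(T_M\cM)^*$, which rules out the degenerate coefficients $c=1$ and $\mu=-1$ (and hence a putative separate class $\{C_3,C_4\}$), is a point the paper leaves implicit, but it is a refinement of the same argument rather than a different one.
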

\begin{proof}
Lemma~\ref{lm:C5II:H211:C5:free} proves that one of the equivalence classes is $\{C_5\}$. It remains to show that $C_3$ and $C_4$ are $\cM$-parallel to  $C_1$ and $C_2$. If one of them is $\cM$-parallel to $\{C_1,C_2\}$, then so is the other. Thus we only need to rule out the case when they are both free. But this follows already from the arguments of Lemma~\ref{lm:C5II:H211:C5:free}.
\end{proof}

\begin{lemma}\label{lm:C5II:C3C4:sim}
The cylinders $C_3$ and $C_4$ are simple.
\end{lemma}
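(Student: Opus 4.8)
The plan is to read off the simplicity of $C_3$ and $C_4$ from the genus‑two surface $M^1$ introduced above, which is obtained by cutting $M$ along the core curves $\gamma_1,\gamma_2$ of the homologous cylinders $C_1,C_2$ and permuting the two gluings. Since $\gamma_1$ and $\gamma_2$ are homologous, $\gamma_1\cup\gamma_2$ separates $M$ into two pieces of genus one, each with two boundary circles; $M^1$ is obtained from the piece $P_1$ containing $C_3$ and $C_4$ by gluing its two boundary circles (one a copy of $\gamma_1$, the other a copy of $\gamma_2$) to one another. Tracing through this surgery, $M^1$ is horizontally periodic with exactly three horizontal cylinders: $C_3$, $C_4$, and a cylinder $C'$ formed by gluing the part of $C_1$ lying in $P_1$ to the part of $C_2$ lying in $P_1$ along those two circles. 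Its core curve $\gamma'$ is freely homotopic to $\gamma_1$ in $M^1$, and since $\gamma_1=\gamma_3+\gamma_4$ on $M$, a short computation gives $[\gamma']=[\gamma_3]+[\gamma_4]$ in $H_1(M^1)$. By the discussion preceding the statement, $M^1$ lies in $\cH(1,1)$ and realizes the unique three‑cylinder diagram of that stratum.

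The second step is to invoke the structure of this unique diagram. A surface in $\cH(1,1)$ has exactly four horizontal saddle connections (two simple zeros of valency four, hence four edges), and each saddle connection occurs exactly once among the tops of the three cylinders and once among the bottoms; a trivial count then forces exactly two of the three cylinders to be simple and the third, ``large'' cylinder to carry two saddle connections on each side, with its core curve equal (with suitable orientations) to the sum of the core curves of the other two. Since $[\gamma']=[\gamma_3]+[\gamma_4]$, the large cylinder of $M^1$ must be $C'$, so $C_3$ and $C_4$ are precisely the two simple cylinders of $M^1$.

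It remains to transfer this conclusion back to $M$. The surgery producing $M^1$ changes $M$ only inside an arbitrarily thin neighbourhood of $\gamma_1\cup\gamma_2$. As $C_1,C_2,C_3,C_4$ are pairwise disjoint horizontal cylinders and $\gamma_1,\gamma_2$ are disjoint from every saddle connection in $\partial C_3\cup\partial C_4$, the cylinders $C_3,C_4$ together with the combinatorics of their boundaries are literally the same on $M$ and on $M^1$. Hence $C_3$ and $C_4$ are simple on $M$.

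The only delicate point is the second step: identifying the three‑cylinder diagram of $M^1$ as the unique one in $\cH(1,1)$ and, inside it, recognising that the large cylinder is $C'$ rather than $C_3$ or $C_4$. This is where the homological identity $[\gamma']=[\gamma_3]+[\gamma_4]$ in $H_1(M^1)$ and the (elementary but genuinely combinatorial) analysis of three‑cylinder diagrams of $\cH(1,1)$ are used; once these are in hand the rest is immediate.
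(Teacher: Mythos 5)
There is a genuine gap, and it sits exactly where you flagged the argument as ``delicate.'' Your deduction rests on the homological identity $\gamma_1=\gamma_3+\gamma_4$ on $M$ (equivalently $[\gamma']=[\gamma_3]+[\gamma_4]$ in $H_1(M^1)$ with left-to-right orientations). But this is only one of two a priori possible relations: as stated just before the lemma, either $\gamma_3+\gamma_4=\gamma_1$ or $\gamma_1+\gamma_3=\gamma_4$. In the second case the same reasoning identifies the ``large'' cylinder of the unique three-cylinder diagram of $\cH(1,1)$ as $C_4$, not $C'$, and the conclusion fails. Deciding which relation holds is essentially equivalent to the statement of the lemma, so as written the argument begs the question. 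Moreover, the bad configuration cannot be excluded by topology or homology alone: reversing the cut-and-reglue surgery, one can start from a surface in $\cH(1,1)$ with its three-cylinder decomposition, cut along the core curve of one of the \emph{simple} cylinders, and glue in a two-cylinder surface from $\cH(2)$ cut along one of its core curves; the result is a genus three surface of topological type 5.II) with $C_1$ homologous to $C_2$ in which $C_4$ is the non-simple cylinder. Hence the lemma genuinely needs the affine-manifold input, which your proof never uses: by Lemma~\ref{lm:C5II:H211:eq:cl:cyl} the cylinders $C_3$ and $C_4$ are $\cM$-parallel, and if $C_4$ were the large cylinder then the boundary of $C_3$ would be contained in the boundary of $C_4$, contradicting \cite[Lem. 2.11]{AulicinoNguyenGen3TwoZeros}. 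That is precisely the route the paper takes: from the $\cH(1,1)$ diagram it only extracts that at least one of $C_3,C_4$ is simple, and then uses $\cM$-parallelism to rule out the other being non-simple.

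Two smaller remarks. First, your ``trivial count'' does not by itself force the cylinder with two saddle connections on its top to be the one with two on its bottom; one needs either the circumference relation or the (standard) uniqueness of the three-cylinder diagram of $\cH(1,1)$ to pin this down, so state that more carefully. Second, the parts of your argument that are fine --- the identification of $M^1$ as a genus two surface in $\cH(1,1)$ with exactly the three horizontal cylinders $C_3,C_4,C'$, and the observation that simplicity of $C_3,C_4$ is unaffected by the surgery --- coincide with the setup the paper already establishes before the lemma, so the fix is to replace the homological step by the $\cM$-parallelism argument above.
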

\begin{proof}
Recall that $C_3$ and $C_4$ are two cylinders in a 3-cylinder decomposition of a surface in $\cH(1,1)$. Thus at least one of them, say $C_3$, must be simple. If $C_4$ is not simple, then the boundary of $C_3$ is contained in the boundary of $C_4$. But since $C_3$ and $C_4$ are $\cM$-parallel, this contradicts \cite[Lem. 2.11]{AulicinoNguyenGen3TwoZeros}. Therefore, both $C_3,C_4$ must be simple.
\end{proof}

There are two admissible cylinder diagrams for Case 5.II) in $\cH(2,1^2)$, which are shown in Figure~\ref{fig:C5II:H211:cyl:diagram}.

\begin{figure}[htb]
\begin{minipage}[t]{0.4\linewidth}
\centering
\begin{tikzpicture}[scale=0.3]
  \draw (0,10) -- (0,8)  -- (5,8) -- (5,5) -- (0,5) -- (0,0) -- (4,0) -- (4,3) -- (8,3) -- (8,13) -- (4,13) -- (4,10) -- cycle;
  \draw (4,10) -- (8,10) (5,8) -- (8,8) (5,5) -- (8,5) (0,3) -- (4,3) ;
   \foreach \x in {(0,3), (4,13), (4,3), (8,13), (8,3)} \filldraw[fill=white] \x circle (3pt);
   \foreach \x in {(0,10), (0,0), (4,10), (4,0), (8,10)} \filldraw[fill=black] \x circle (3pt);
   \foreach \x in {(0,8), (0,5), (5,8), (5,5), (8,8), (8,5)} \filldraw[fill=white] \x +(-3pt,-3pt) rectangle +(3pt,3pt);
    \draw (3,9) node {\tiny $C_1$};
    \draw (3,4) node {\tiny $C_2$};
    \draw (6,11.5) node {\tiny $C_3$};
    \draw (2,1.5) node {\tiny $C_4$};
    \draw (6.5,6.5) node {\tiny $C_5$};
     \draw (2,5.5) node  {\tiny $1$} (2,7.5) node {\tiny $1$};
     \draw (2,10.5) node {\tiny $2$} (2,-0.5) node {\tiny $2$};
     \draw (6,13.5) node {\tiny $3$} (6,2.5) node {\tiny $3$};
     \draw (4,-2) node {\tiny $C_5$ is simple};
    \end{tikzpicture}
\end{minipage}
\begin{minipage}[t]{0.4\linewidth}
\centering
\begin{tikzpicture}[scale=0.3]
  \draw (0,10) -- (0,0) -- (3,0) -- (3,3) -- (6,3) -- (6,5) -- (10,5) -- (10,8) -- (6,8) -- (6,13) -- (3,13) -- (3,10)  -- cycle;
   \draw (3,10) -- (6,10) (0,8) -- (6,8) (0,5) -- (6,5) (0,3) -- (3,3) ;
   \foreach \x in {(0,3), (3,13), (3,3), (6,13), (6,3)} \filldraw[fill=white] \x circle (3pt);
   \foreach \x in {(0,10), (0,0), (3,10), (3,0), (6,10)} \filldraw[fill=black] \x circle (3pt);
    \foreach \x in {(0,8), (0,5), (6,8), (6,5), (10,8), (10,5)} \filldraw[fill=white] \x +(-3pt,-3pt) rectangle +(3pt,3pt);
     \draw (3,9) node {\tiny $C_1$};
     \draw (3,4) node {\tiny $C_2$};
     \draw (4.5,11.5) node {\tiny $C_3$};
     \draw (1.5,1.5) node {\tiny $C_4$};
     \draw (3,6.5) node {\tiny $C_5$};
      \draw (8,8.5) node  {\tiny $1$} (8,4.5) node {\tiny $1$};
      \draw (1.5,10.5) node {\tiny $2$} (1.5,-0.5) node {\tiny $2$};
      \draw (4.5,13.5) node {\tiny $3$} (4.5,2.5) node {\tiny $3$};
      \draw (4,-2) node {\tiny $C_5$ is not simple};
    \end{tikzpicture}
\end{minipage}

\caption{Case 5.II) in $\cH(2,1,1)$: Admissible cylinder diagrams}
 \label{fig:C5II:H211:cyl:diagram}
\end{figure}
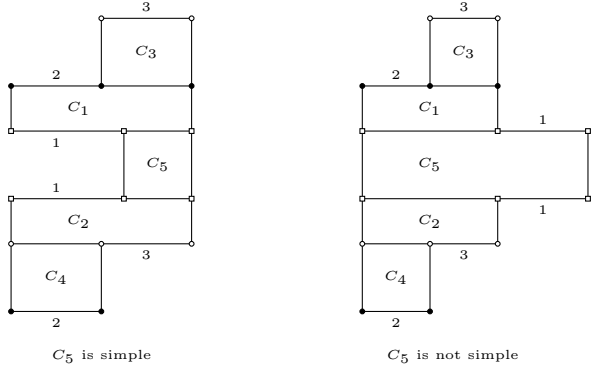

\subsubsection{Proof of Proposition~\ref{prop:C5II}: Case $\cM \subset \cH(2,1^2)$}
\begin{proof}
Note that in this case $M^1\in \cH(1,1)$ and $M^2\in \cH(2)$. In particular,  either $C_5$ is  a simple cylinder, or  there exists a saddle connection  which is contained in both its top and bottom.
Let $x_0$ denote the double zero of $M$, and $x_1,x_2$ denote the simple ones.  For $i\in \{1,\dots,5\}$, let $h_i$ and $\ell_i$  denote respectively the height, and the  circumference of $C_i$. Without loss of generality, let $h_1 \leq h_2$.

\medskip

\noindent \ul{\em Case $C_5$ is simple.} We will perform an extended cylinder deformation to show that in fact, $h_1 = h_2$ and that $C_1$ and $C_2$ must simultaneously admit vertical saddle connections.  Then we will collapse to $\cH(4)$ to conclude.

Let $\sig_5$ and $\sig'_5$ denote respectively  the top and bottom borders of $C_5$. We can assume that $\sig_5$ is contained in the bottom of $C_1$. Remark that the top of $C_1$ contains a unique zero of $M$, which can be supposed to be $x_1$.  Twist the cylinders in the equivalence class $\{C_1,\dots,C_4\}$ such that none of the descending vertical rays from the copies of $x_1$ in the top of $C_1$ hits a copy of $x_0$ in the bottom of $C_1$ before exiting $C_1$, and one of those rays intersects the interior of $\sig_5$.
We can then twist $C_5$ such that this ray hits $x_0$ after crossing $C_5$. We then have a vertical saddle connection $\del$ from $x_0$ to $x_1$ crossing $C_5$ and $C_1$. Note that we have $|\del|=h_5+h_1$.

Consider now the deformations of $M$ by stretching $C_5$. These deformations define a path in $\cM$ whose tangent vector $\xi \in H^1(M,\Sigma;\imath\R)$ satisfies $ \xi(c)= \imath \langle \gamma_5,c\rangle$ for any $c\in H_1(M,\Sigma;\Z)$, where $\langle , \rangle$ is the intersection form (see \cite[Lem. 2.4]{WrightCylDef}).

The path in $\cM$ corresponding to this family of deformations is $M+I\xi$, where $I$ is an interval of $\R$, and $M$ is identified with an element of $H^1(M,\Sigma;\R+\imath\R)$. Recall that $\cM$ is locally identified with an open subset of a linear subspace $V$ of  $H^1(M,\Sigma;\R+\imath\R)$. Since $M\in V$ and $\xi \in V$, we have $M+t\xi \in V$. Hence as long as $M_t:=M+t\xi$ corresponds to a surface in $\cH(2,1,1)$, this surface must belong to $\cM$.

Observe now that when $t=-h_5$, the cylinder $C_5$ degenerates to the union of two horizontal saddle connections. Consider now $M_t$ for $t \in (-(h_1+h_5),-h_5)$. We first observe that for those values of $t$,  $C_3$ and $C_4$ are not affected by the deformations, $M_t$ remains horizontally periodic and always satisfies Case 5.II). The cylinders $C_1$ and $C_2$ give rise to two homologous cylinders on $M_t$, which will be denoted by $C'_1$ and $C'_2$ respectively, and there is an additional horizontal cylinder that we denote by $C'_5$ (see Figure~\ref{fig:C5II:H211:C5:sim:deform}). The bottom of $C'_1$ (resp. the top of $C'_2$) consists of a single saddle connection, and the new $C'_5$ is not simple. The heights of $C'_1,C'_2$, and $C'_5$ are given by  $h_1+h_5+t, h_2+h_5+t$,  and $h_5-t$ respectively.  Such $M_t$ are called  {\em extended cylinder deformations} of $M$ and are described in \cite[Sec. 4.2]{AulicinoNguyenGen3TwoZeros}.

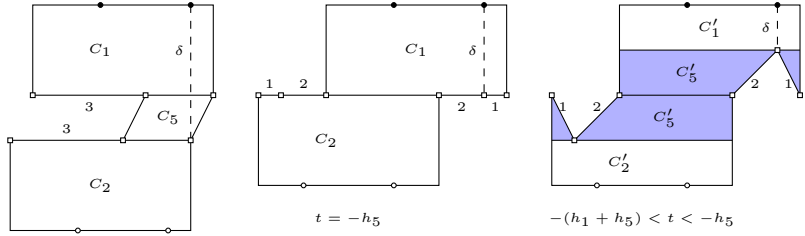
\begin{figure}[htb]
\centering
\begin{tikzpicture}[scale=0.3]
   \draw (0,4) -- (0,0) -- (8,0) -- (8,4) -- (9,6) --(9,10) -- (1,10) -- (1,6) -- (6,6) -- (5,4) -- cycle;
   \draw (6,6) -- (9,6) (5,4) -- (8,4);
   \draw[dashed] (8,10) -- (8,4);
   \foreach \x in {(3,0), (7,0)} \filldraw[fill=white] \x circle (3pt);
   \foreach \x in {(4,10), (8,10)} \filldraw[fill=black] \x circle (3pt);
    \foreach \x in {(0,4), (1,6), (5,4), (6,6), (8,4), (9,6)} \filldraw[fill=white] \x +(-3pt,-3pt) rectangle +(3pt,3pt);
      \draw (4,8) node {\tiny $C_1$};
      \draw (4,2) node {\tiny $C_2$};
      \draw (7,5) node {\tiny $C_5$};
       \draw (3.5,5.5) node  {\tiny $3$} (2.5,4.5) node {\tiny $3$};
       \draw (7.5,8) node {\tiny $\del$};

  \draw (11,6) -- (11,2) -- (19,2) -- (19,6) -- (22,6) -- (22,10) -- (14,10) -- (14,6) -- cycle;
   \draw (14,6) -- (19,6);
   \draw[dashed] (21,10) -- (21,6);
   \foreach \x in {(13,2), (17,2)} \filldraw[fill=white] \x circle (3pt);
    \foreach \x in {(17,10), (21,10)} \filldraw[fill=black] \x circle (3pt);
     \foreach \x in {11,12,14, 19, 21,22} \filldraw[fill=white] (\x,6) +(-3pt,-3pt) rectangle +(3pt,3pt);
      \draw (18,8) node {\tiny $C_1$};
       \draw (14,4) node {\tiny $C_2$};
       \draw (20.5,8) node {\tiny $\del$};
       \draw (11.5,6.5) node  {\tiny $1$} (21.5,5.5) node {\tiny $1$};
       \draw (13,6.5) node  {\tiny $2$} (20,5.5) node {\tiny $2$};
       \draw (15,0.5) node {\tiny $t=-h_5$};

\fill[blue!30] (27,8) -- (34,8) -- (32,6) -- (32,4) -- (25,4) -- (27,6) --cycle;
\fill[blue!30] (24,6) -- (24,4) -- (25,4) --cycle;
\fill[blue!30] (34,8) -- (35,8) -- (35,6) -- cycle;

\draw (24,6) -- (24, 2) -- (32,2) -- (32,6) -- (34,8) -- (35,6) -- (35,10) -- (27,10) -- (27,6) -- (25,4)  -- cycle;
   \draw (27,8) -- (35,8) (27,6) -- (32,6) (24,4) -- (32,4);
   \draw[dashed] (34,10) -- (34,8);
    \foreach \x in {(26,2), (30,2)} \filldraw[fill=white] \x circle (3pt);
    \foreach \x in {(30,10), (34,10)} \filldraw[fill=black] \x circle (3pt);
      \foreach \x in {(24,6), (25,4), (27,6), (32,6), (34,8), (35,6)} \filldraw[fill=white] \x +(-3pt,-3pt) rectangle +(3pt,3pt);
       \draw (31,9) node {\tiny $C'_1$};
        \draw (27,3) node {\tiny $C'_2$};
       \draw (29,5) node {\tiny $C'_5$} (30,7) node {\tiny $C'_5$};
       \draw  (33.5,9) node {\tiny $\del$};

      \draw (24.5,5.5) node  {\tiny $1$} (34.5,6.5) node {\tiny $1$};
      \draw (26,5.5) node {\tiny $2$} (33,6.5) node {\tiny $2$};
      \draw (28,0.5) node {\tiny $-(h_1+h_5) < t < -h_5$};
\end{tikzpicture}

 \caption{Extended Cylinder Deformation for Case 5.II) in $\cH(2,1,1)$, $C_5$ is simple}
 \label{fig:C5II:H211:C5:sim:deform}
\end{figure}

By construction the saddle connection $\del$ remains vertical in $M_t$, and its length is given by $h_1+h_5+t$. As $t$ tends to $-(h_1+h_5)$,  $\del$ shrinks to a point, which means that $x_0$ and $x_1$ collide. If $h_2>h_1$ then no other collision of zeros occurs, and the resulting surface, denoted by $M'$, belongs to $\cH(3,1)$ (which can easily be checked by hand). One can now use the arguments of \cite[Prop. 2.16]{AulicinoNguyenGen3TwoZeros} to conclude that $M'$ must be contained in a rank two affine submanifold of $\cH(3,1)$. But since such a submanifold does not exist by the results of \cite{AulicinoNguyenGen3TwoZeros}, we get a contradiction.

Assume from now on that $h_1=h_2$. Consider again the limit surface $M'$ as $t$ tends to $-(h_1+h_5)$.  One can easily check that  $M'$ is a translation surface of genus three. If there is no vertical saddle connection in $C'_2$, then $M' \in \cH(3,1)$ and we get again a contradiction. If $C'_2$ contains a vertical saddle connection, then this one is unique, and in the limit the three zeros of $M$ collide, and the resulting surface  $M'$ belongs to $\cH(4)$.

By \cite[Cor. 1.2]{MirzakhaniWrightBoundary}, $M'$ is contained in an affine manifold $\cM'$ of rank at most two. Observe that since the cylinder $C'_5$ on $M_t$ is free, it must also be free on $M'$ with respect to $\cM'$. Thus  $\cM'$ must be an affine manifold of rank  two.    By the results of \cite{AulicinoNguyenWright, NguyenWright}, $M'$ must belong to the Prym locus $\prymmin$. In particular, $M'$ admits a Prym involution. Observe that this involution exchanges $C_3$ and $C_4$, which means that $C_3$ and $C_4$ are isometric. It is now easy to check that the Prym involution of $M'$ extends to a Prym involution on $M_t$, for any $t\in (-(h_1+h_5),-h_5)$. Therefore we have $M_t\in \prymthreezero$.

Choose $t \in (-(h_1+h_5),-h_5)$ and consider the surface $M_t+v$, where $v \in T_{M_t}^\R\cM$ is a vector in a neighborhood of $0$. By the same arguments as above, we see that $M_t+v \in \prymthreezero$ (remark that if $M_t \in \prymthreezero$, then twisting simultaneously the equivalence class of $C'_1$ also gives a surface in $\prymthreezero$). It follows that $T^\R_{M_t}\cM \subseteq  T^\R_{M_t}\prymthreezero$, hence  $\cM \subseteq \prymthreezero$. Since we also have
$$
\dim \cM > \dim \prymmin \Rightarrow \dim\cM \geq \dim \prymmin+1=\dim \prymthreezero,
$$
we conclude that $\cM=\prymthreezero$.

\begin{figure}[htb]
\centering
\begin{tikzpicture}[scale=0.3]
\fill[blue!30] (4,4) -- (6,4) -- (9,6) -- (7,6);
   \draw (-4,4) --(-4,0) -- (4,0) -- (4,4) -- (6,4) -- (9,6) -- (7,6) -- (7,10) -- (-1,10) -- (-1,6)  -- cycle;
    \draw (-1,6) -- (7,6) -- (4,4) -- (-4,4);
    \draw[dashed] (6,10) -- (6,4);
    \foreach \x in {(-2,0), (2,0)} \filldraw[fill=white] \x circle (3pt);
    \foreach \x in {(2,10), (6,10)} \filldraw[fill=black] \x circle (3pt);
     \foreach \x in {(-4,4), (-1,6), (4,4), (7,6), (6,4), (9,6)} \filldraw[fill=white] \x +(-3pt,-3pt) rectangle +(3pt,3pt);
      \draw (2,8) node {\tiny $C_1$};
       \draw (-1,2) node {\tiny $C_2$};
       \draw (1,5) node {\tiny $C_5$};
       \draw (6.5,5) node {\tiny $D$};
        \draw (5,3.7) node  {\tiny $\sig$} (8,6.3) node {\tiny $\sig$};
        \draw (5.5,8) node {\tiny $\del$};


 \draw (11,6) -- (11,2) -- (19,2) -- (19,6) -- (22,6) -- (22,10) -- (14,10) -- (14,6) -- cycle;
   \draw (14,6) -- (19,6);
   \draw[dashed] (21,10) -- (21,6);
   \foreach \x in {(13,2), (17,2)} \filldraw[fill=white] \x circle (3pt);
    \foreach \x in {(17,10), (21,10)} \filldraw[fill=black] \x circle (3pt);
     \foreach \x in {11,12,14, 19, 21,22} \filldraw[fill=white] (\x,6) +(-3pt,-3pt) rectangle +(3pt,3pt);
      \draw (18,8) node {\tiny $C_1$};
       \draw (14,4) node {\tiny $C_2$};
       \draw (20.5,8) node {\tiny $\del$};
       \draw (11.5,6.5) node  {\tiny $\eta$} (21.5,5.5) node {\tiny $\eta$};
       \draw (13,6.5) node  {\tiny $\sig$} (20,5.5) node {\tiny $\sig$};
       \draw (15,0.5) node {\tiny $t=-h_5$};

\fill[blue!30] (27,7) -- (32,7) -- (32,5) -- (27,5) -- cycle;
\fill[blue!30] (24,7) -- (24,5) -- (25,5) --cycle;
\fill[blue!30] (34,7) -- (35,7) -- (35,5) -- cycle;

\draw (24,7) -- (24, 2) -- (32,2) -- (32,7) -- (34,7) -- (35,5) -- (35,10) -- (27,10) -- (27,5) -- (25,5)  -- cycle;
 \draw (27,7) -- (32,7) (27,5) -- (32,5) (24,5) -- (25,5) (34,7) -- (35,7);
 \draw[dashed] (34,10) -- (34,7);
 \foreach \x in {(26,2), (30,2)} \filldraw[fill=white] \x circle (3pt);
  \foreach \x in {(30,10), (34,10)} \filldraw[fill=black] \x circle (3pt);
      \foreach \x in {(24,7), (25,5), (27,5), (32,7), (34,7), (35,5)} \filldraw[fill=white] \x +(-3pt,-3pt) rectangle +(3pt,3pt);
       \draw (31,9) node {\tiny $C'_1$};
        \draw (28,3) node {\tiny $C'_2$};
       \draw (29.5,6) node {\tiny $C'_5$};
       \draw  (33.5,9) node {\tiny $\del$};

       \draw (25,6) node  {\tiny $\eta$} (34.2,5.7) node {\tiny $\eta$};
      \draw (26,5.3) node {\tiny $\sig$} (33,6.5) node {\tiny $\sig$};
      \draw (28,0.5) node {\tiny $-(h_1+h_5) < t < -h_5$};
\end{tikzpicture}
\caption{Extended Cylinder Deformations for Case 5.II) in $\cH(2,1,1)$, $C_5$ is not simple}
\label{fig:C5II:H211:C5:no:sim:deform}
\end{figure}
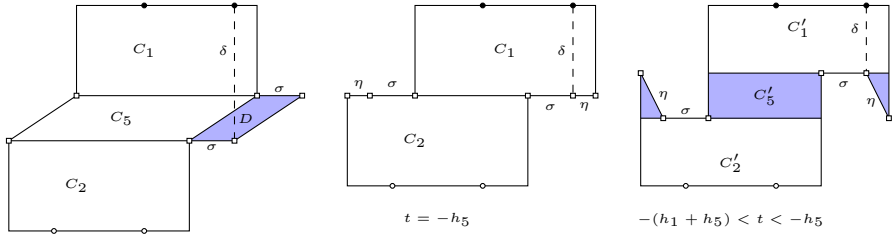

\medskip

\noindent \ul{\em Case $C_5$ is not simple.} In this case $C_1$ and $C_2$ are semi-simple, and there is a saddle connection $\sig$ which is contained in both the top and bottom of $C_5$. Let $D$ be a simple cylinder in the closure of $C_5$ that contains $\sig$.  Since $C_5$ is free, so is $D$. By stretching $D$, we can assume that the length of $\sig$ is smaller than the length of any other horizontal saddle connection. Twist $C_1$ such that all of the descending vertical rays from the singularities in its top (there are two such rays) do not hit the singularity in its bottom.  Twist $C_5$ so that one of those  rays hits $x_0$ after crossing $C_5$, but the other one does not  (see Figure~\ref{fig:C5II:H211:C5:no:sim:deform}).

Consider the extended cylinder deformations along the vector corresponding to the stretching of $C_5$. It is straightforward to verify that the same arguments as the previous case allow us to conclude that $\cM=\prymthreezero$.
\end{proof}

\subsubsection{Proof of Proposition~\ref{prop:C5II}: Case $\cM \subset \cH(1,1,1,1)$}
\begin{proof}\hfill\\
\noindent \ul{\em Case $C_5$ is simple.} By Lemma~\ref{lm:s:cyl:dist:sim:zeros}, the zeros of $M$ in the top and bottom of $C_5$ are distinct. Since $C_5$ is free by Lemma \ref{lm:C5II:H211:C5:free}, collapse it so that these two zeros collide to yield a surface $M' \in \cH(2,1,1)$. By Proposition~\ref{prop:collapse:free:sim:cyl}, $M'$ is contained in a rank two affine submanifold $\cM'$  of $\cH(2,1,1)$ such that $\allowbreak \dim \cM=\dim \cM'+1$.
By assumption, we have $\cM'=\prymthreezero$, and in particular  $M'$ admits a Prym involution $\tau'$.
Since the degeneration of $C_5$ on $M'$ is a saddle connection which is fixed by  $\tau'$, by Proposition~\ref{DblCovExtSimpCyl}, this Prym involution extends to a Prym involution of $M$.
Thus we have $M \in \prymprinc$. Since the same holds for any surface in $\cM$ close enough to $M$, we conclude that $\cM \subseteq \prymprinc$. Finally, by a dimension count
$$
\dim \cM=\dim \prymthreezero+1=\dim \prymprinc,
$$
we conclude that $\cM=\prymprinc$.

\medskip

\noindent \ul{\em Case $C_5$ is not simple.} In this case, the closure of $C_5$ contains a simple cylinder whose core curve crosses $C_5$ once.
Let $D$ be such a simple cylinder. By twisting $C_5$, we can assume that $D$ is vertical.
It is not difficult to check that $D$ is free because any other vertical cylinder $D'$ which crosses $C_5$ must cross either $C_1$ or $C_2$, which would contradict Lemma \ref{lm:C5II:H211:C5:free}. The remainder of the proof follows from the same lines as the previous case.
\end{proof}


\subsection{Proof of Theorem \ref{thm:rk2:H211:H1111}: Part I}\label{sec:prf:MainThm:H211}
\begin{proof}
Let $\cM$ be a rank two affine manifold in $\cH(2,1,1)$.  By Proposition~\ref{Min4CylProp} Part (1), there exists a horizontally periodic surface with at least four cylinders.
By Proposition~\ref{prop:4cyl}(a) we reduce to the case of horizontally periodic surfaces with five cylinders. Let $M \in \cM$ be a horizontally periodic surface with five cylinders.
By Lemma~\ref{5CylDeg}, $M$  satisfies  Case 5.I) or Case 5.II). In either case, Proposition~\ref{Case5IH211} or Proposition~\ref{prop:C5II} allows us to conclude that $\cM=\prymthreezero$.
The first part of Theorem~\ref{thm:rk2:H211:H1111} is then proved.
\end{proof}

\section{Six Cylinders}\label{sec:6cyl}

This section obviously only concerns the principal stratum in genus three. We remark that there is no longer a need to assume that $\tilde \cQ(2,1,-1^3)$ is the only rank two affine manifold in $\cH(2,1,1)$ because this fact has been established in Section~\ref{sec:prf:MainThm:H211}.
Furthermore, since no horizontally periodic surface in genus three can have more than six cylinders, $M$ is $\cM$-cylindrically stable throughout this section.

\begin{proposition}
\label{prop:6CylDiags}
There are four $6$-cylinder diagrams in genus three, they are shown in Figure~\ref{6CylDiagsFig}.
\end{proposition}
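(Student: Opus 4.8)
The plan is to pin down, purely combinatorially, all six-cylinder decompositions in genus three by analysing the separatrix diagram, in the same spirit as the arguments of Section~\ref{sec:get4cyl}.

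\emph{Step 1: the topological type.} As in the proof of Lemma~\ref{5CylDeg}, pinching the core curves of the six horizontal cylinders of $M$ yields a stable curve with $p$ parts. The $p-1$ homological relations this imposes on $\gamma_1,\dots,\gamma_6$, together with the fact that $\{\gamma_1,\dots,\gamma_6\}$ spans an isotropic subspace of $H_1(M;\R)$ (hence of dimension at most $g=3$), force $6-(p-1)\le 3$, so $p\ge 4$; combined with the general bound $p\le 2(g-1)=4$ this gives $p=4$. The genus count then makes all four parts spheres, and since each part must be stable it must carry a differential with exactly three simple poles (twelve poles split into four parts with at least three each), hence exactly one simple zero. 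In particular $M\in\cH(1^4)$, all four zeros are simple, and the horizontal saddle connections lying on a given part form the separatrix diagram of that part: a single four-prong vertex (the simple zero) carrying two loops, i.e.\ a planar ``figure eight,'' whose regular neighbourhood is a sphere with three boundary circles (recall the ribbon structure of separatrix diagrams, \cite[Sec.~4]{KontsevichZorichConnComps}). Thus $\Gr$ is a disjoint union of four planar figure eights with twelve boundary circles in total, which the cylinders glue in six pairs; and any such combinatorial datum produces a connected surface of Euler characteristic $-4$ in $\cH(1^4)$, so the only extra constraint to impose is connectedness.

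\emph{Step 2: the colouring.} Colour each of the twelve boundary circles red or blue according to whether it is the top or the bottom of its cylinder, as in the discussion preceding Proposition~\ref{NoExceptCaseProp}. In a planar figure eight the outer boundary is adjacent to each of the two inner boundaries, so it carries the opposite colour; hence each of the four components is of one of exactly two types, ``red outer / two blue inner'' or ``blue outer / two red inner.'' Since the six cylinders pair each red circle with a blue one, one needs equally many red and blue circles, namely six of each, which forces exactly two components of each type. A six-cylinder diagram is therefore precisely a perfect matching between the six red and the six blue boundary circles for which the resulting surface is connected, taken up to the evident symmetries: permuting the two components of each type, swapping the two inner circles within a component, and the orientation-reversing involution $z\mapsto -z$, which interchanges the two types and exchanges all tops with all bottoms.

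\emph{Step 3: the enumeration.} Record the matching as a cubic multigraph on the four components (edges $=$ cylinders), equivalently record which of the three homological relations among $\gamma_1,\dots,\gamma_6$ hold; split into cases according to how many cylinders join circles on two components of the same type; in each case list the admissible matchings, discard the disconnected ones, and identify those that coincide under the symmetries above. Carrying this out leaves exactly the four cylinder diagrams displayed in Figure~\ref{6CylDiagsFig}, and these are all realised. The one genuinely delicate point is the completeness and non-redundancy of this bookkeeping — making certain no admissible matching is overlooked and that connected ones are not double-counted — together with the routine verification that each surviving configuration matches one of the four pictures.
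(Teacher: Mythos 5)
Your Steps 1 and 2 reconstruct, essentially correctly, the same skeleton as the paper's argument (Appendix C): the complement of the six core curves is four pairs of pants each containing one simple zero (this is Lemma~\ref{lm:max:num:cyl:ppants}), each component's separatrix diagram is a planar figure eight, and the gluing pattern is encoded by a bipartite matching of the twelve boundary circles, equivalently by a loopless cubic multigraph on four vertices (the paper's dual graph $\Gamma$ of Figure~\ref{fig:6cyl:dualgraph}). But the actual content of the proposition is the enumeration, and that is exactly what you do not do: Step 3 asserts ``carrying this out leaves exactly the four cylinder diagrams'' and then flags the completeness and non-redundancy of the bookkeeping as ``the one genuinely delicate point.'' That delicate point is the theorem. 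The paper completes it with two concrete arguments you would need substitutes for: in the $K_4$ case, a maximal-circumference comparison pins down a unique diagram (Case 6.a); in the doubled-edge case, the two homologous core curves are cut and reglued to produce two surfaces in $\cH(1,1)$, so the diagram is recovered from the unique $3$-cylinder diagram of $\cH(1,1)$ plus a choice of regluing, giving Cases 6.b--6.d.

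There is also a genuine error in your admissibility criterion. You claim that ``the only extra constraint to impose is connectedness,'' but a matching must in addition be metrically realizable: for each cylinder the total lengths of its two boundary circles must agree, with all eight saddle-connection lengths positive. This is not automatic. For instance, it is what rules out loops in the dual graph (a cylinder joining the long circle of a component to a short circle of the same component would force a loop of length zero), and it rules out connected matchings as well: if the long circle of component $A$ (length $a_1+a_2$) is matched to a short circle of $B$ (length $b_1$) and the long circle of $B$ (length $b_1+b_2$) to a short circle of $A$ (length $a_1$), then $a_1+a_2=b_1$ and $b_1+b_2=a_1$ give $a_2+b_2=0$, impossible, yet this pairing extends to a connected matching of all twelve circles. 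So an enumeration of connected matchings up to your symmetries would produce classes that are not cylinder diagrams at all, and the count would not come out to four. The paper's length comparisons and cut-and-paste reduction to $\cH(1,1)$ are precisely where this positivity constraint gets used; your outline needs it to be imposed explicitly and then needs the finite case analysis actually carried out.
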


\begin{proof}
See Appendix~\ref{sec:6cyl:diag:proof}.
\end{proof}

\begin{figure}[htb]

\begin{minipage}[t]{0.49\linewidth}
\centering
\begin{tikzpicture}[scale=0.4]
\draw (4,0)--(4,2)--(2,2)--(2,4)--(0,4)--(0,8)--(-1,10)--(1,10)--(2,8)--(3,10)--(5,10)--(4,8)--(4,6)--(6,6)--(6,0) -- cycle;
\foreach \x in {(4,0),(4,2),(2,2),(2,4),(0,4),(0,6),(0,8),(-1,10),(1,10),(2,8),(3,10),(5,10),(4,8),(4,6),(6,6),(6,4),(6,2),(6,0)} \filldraw[fill=white] \x circle (3pt);

\draw (0,10) node[above] {\tiny $1$} (4,10) node[above] {\tiny $2$} (5,6) node[above] {\tiny $3$} (1,4) node[below] {\tiny $1$} (3,2) node[below] {\tiny $2$} (5,0) node[below] {\tiny $3$};

\draw (5,1) node {\tiny $C_6$};
\draw (4,3) node {\tiny $C_5$};
\draw (3,5) node {\tiny $C_4$};
\draw (2,7) node {\tiny $C_3$};
\draw (0.5,9) node {\tiny $C_1$};
\draw (3.5,9) node {\tiny $C_2$};

\draw (2.5,-2) node {Case 6.a)};

\end{tikzpicture}
\end{minipage}
\begin{minipage}[t]{0.49\linewidth}
\begin{tikzpicture}[scale=0.4]
\draw (2,0)--(2,4)--(0,4)--(0,8)--(2,8)--(2,10)--(4,10)--(4,6)--(5,8)--(7,8)--(6,6)--(6,2)--(4,2)--(4,0) -- cycle;
\foreach \x in {(2,0),(2,2),(2,4),(0,4),(0,6),(0,8),(2,8),(2,10),(4,10),(4,8),(4,6),(5,8),(7,8),(6,6),(6,4),(6,2),(4,2),(4,0)} \filldraw[fill=white] \x circle (3pt);

\draw (1,8) node[above] {\tiny $1$} (3,10) node[above] {\tiny $2$} (6,8) node[above] {\tiny $3$} (1,4) node[below] {\tiny $3$} (5,2) node[below] {\tiny $2$} (3,0) node[below] {\tiny $1$};

\draw (3,1) node {\tiny $C_6$};
\draw (4,3) node {\tiny $C_5$};
\draw (3,5) node {\tiny $C_4$};
\draw (2,7) node {\tiny $C_3$};
\draw (5.5,7) node {\tiny $C_2$};
\draw (3,9) node {\tiny $C_1$};

\draw (3.5,-2) node {Case 6.b)};
\end{tikzpicture}
\end{minipage}

\vspace{0.5cm}

\begin{minipage}[t]{0.49\linewidth}
\centering
\begin{tikzpicture}[scale=0.4]
\draw (0,0)--(0,2)--(-1,4)--(1,4)--(2,2)--(3,4)--(3,6)--(2,8)--(4,8)--(5,6)--(6,8)--(8,8)--(7,6)--(7,4)--(5,4)--(4,2)--(4,0)-- cycle;
\foreach \x in {(0,0),(0,2),(-1,4),(1,4),(2,2),(3,4),(3,6),(2,8),(4,8),(5,6),(6,8),(8,8),(7,6),(7,4),(5,4),(4,2),(4,0),(2,0)} \filldraw[fill=white] \x circle (3pt);

\draw (0,4) node[above] {\tiny $1$} (3,8) node[above] {\tiny $2$} (7,8) node[above] {\tiny $3$} (1,0) node[below] {\tiny $1$} (3,0) node[below] {\tiny $2$} (6,4) node[below] {\tiny $3$};

\draw (2,1) node {\tiny $C_6$};
\draw (0.5,3) node {\tiny $C_4$};
\draw (3.5,3) node {\tiny $C_5$};
\draw (5,5) node {\tiny $C_3$};
\draw (3.5,7) node {\tiny $C_1$};
\draw (6.5,7) node {\tiny $C_2$};

\draw (3.5,-2) node {Case 6.c)};

\end{tikzpicture}
\end{minipage}
\begin{minipage}[t]{0.49\linewidth}
\begin{tikzpicture}[scale=0.4]
\draw (0,0)--(0,2)--(-1,4)--(1,4)--(2,2)--(3,4)--(3,6)--(2,8)--(4,8)--(5,6)--(6,8)--(8,8)--(7,6)--(7,4)--(5,4)--(4,2)--(4,0)-- cycle;
\foreach \x in {(0,0),(0,2),(-1,4),(1,4),(2,2),(3,4),(3,6),(2,8),(4,8),(5,6),(6,8),(8,8),(7,6),(7,4),(5,4),(4,2),(4,0),(2,0)} \filldraw[fill=white] \x circle (3pt);

\draw (0,4) node[above] {\tiny $1$} (3,8) node[above] {\tiny $2$} (7,8) node[above] {\tiny $3$} (1,0) node[below] {\tiny $2$} (3,0) node[below] {\tiny $3$} (6,4) node[below] {\tiny $1$};

\draw (2,1) node {\tiny $C_6$};
\draw (0.5,3) node {\tiny $C_4$};
\draw (3.5,3) node {\tiny $C_5$};
\draw (5,5) node {\tiny $C_3$};
\draw (3.5,7) node {\tiny $C_1$};
\draw (6.5,7) node {\tiny $C_2$};

\draw (3.5,-2) node {Case 6.d)};
\end{tikzpicture}
\end{minipage}

\caption{The Four $6$-Cylinder Diagrams}
\label{6CylDiagsFig}
\end{figure}
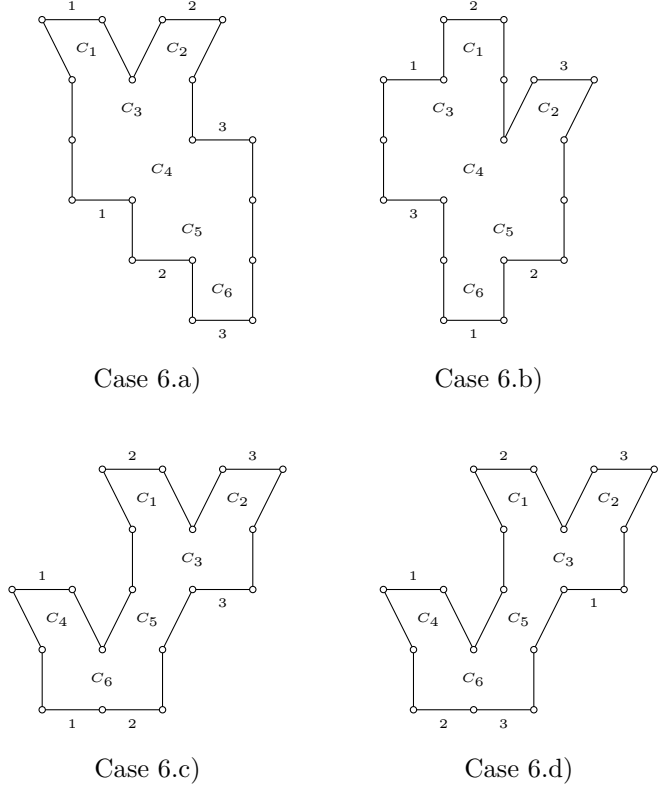

Throughout this section $\gamma_i$ denotes the homology class of the core curve of the cylinder $C_i$ oriented from left to right, for $i=1,\dots,6$.

\begin{lemma}[Case 6.a]
\label{lm:Case6a}
Let $\cM$ be a rank two affine manifold in $\cH(1^4)$.  If $\cM$ contains a horizontally periodic surface $M$ satisfying Case 6.a) (see Figure~\ref{6CylDiagsFig}), then $\cM = \prymprinc$.
\end{lemma}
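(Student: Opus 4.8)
The plan is to reduce to the already completed classification in $\cH(2,1^2)$ by collapsing a simple cylinder and then to recover a Prym involution on $M$ by extension. First I would record the structure of the equivalence classes. Since $M$ is $\cM$-cylindrically stable and $\cM$ has rank two, the twist space $\Twist(M,\cM)=\Pres(M,\cM)$ has dimension two, so the six horizontal cylinders $C_1,\dots,C_6$ fall into exactly two $\cM$-parallel equivalence classes, and the Poincar\'e duals of the core curves $\gamma_1,\dots,\gamma_6$ span a Lagrangian subspace of $p(T^\R_M\cM)$. Reading off the homological relations satisfied by $\gamma_1,\dots,\gamma_6$ from the cylinder diagram in Case~6.a), together with the Cylinder Proportion Lemma~\ref{CylinderPropProp}, I would pin down which cylinders lie in which class and the corresponding proportionality constants; in particular this identifies the simple cylinders of the diagram.

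Next I would produce a free simple cylinder (or, should it be necessary, an equivalence class consisting of two $\cM$-similar simple cylinders whose boundaries contain distinct pairs of zeros). If a simple cylinder $C$ of the diagram is not free, it is $\cM$-parallel to another cylinder; if these two are not $\cM$-similar, then after twisting one of them acquires a vertical saddle connection that the other lacks, and collapsing the class contracts a single saddle connection, landing in a stratum carrying a zero of order $\geq 2$, hence in $\cH(3,1)$ (ruled out, no rank two affine manifold) or in $\cH(2,2)$ or $\cH(2,1^2)$. Thus I may assume either that there is a free simple cylinder or that some class consists of $\cM$-similar simple cylinders with distinct pairs of zeros in their boundaries.

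Collapsing (by Proposition~\ref{prop:collapse:free:sim:cyl} or Proposition~\ref{prop:collapse:similar:cyl}) produces a surface $M'$ lying in a rank two affine submanifold $\cM'$ of a lower stratum of genus three with $\dim\cM'=\dim\cM-1$. Ruling out $\cH(3,1)$, when the collapse lands in $\cH(2,1^2)$ we get $\cM'=\prymthreezero$ by the first part of Theorem~\ref{thm:rk2:H211:H1111} (proved in Section~\ref{sec:prf:MainThm:H211}), and when it lands in $\cH(2,2)$ we get $\cM'\in\{\prym,\dcoverodd,\dcoverhyp\}$. In either situation $M'$ carries a Prym involution $\inv'$. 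The degeneration of the collapsed cylinder (resp.\ cylinders) gives a distinguished saddle connection $\sigma$ based at the double zero $x_0$ of $M'$; since that cylinder is not $\cM$-parallel to the others, $M$ can be chosen so that $|\sigma|$ differs from the lengths of all other horizontal saddle connections incident to $x_0$, and as $\inv'$ fixes $x_0$ and permutes those saddle connections it must fix $\sigma$. By Proposition~\ref{DblCovExtSimpCyl}, $\inv'$ extends to an involution of $M$ with four fixed points, i.e.\ a Prym involution, so $M\in\cP\cap\cH(1^4)=\prymprinc$; since the same holds on a neighborhood of $M$ in $\cM$, we get $\cM\subseteq\prymprinc$, and the dimension count $\dim\cM=\dim\cM'+1=5+1=6=\dim\prymprinc$ forces $\cM=\prymprinc$.

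The step I expect to be the main obstacle is the combinatorial bookkeeping of the first two paragraphs: correctly determining the two equivalence classes from the Case~6.a) diagram and guaranteeing that one obtains either a free simple cylinder or a suitable $\cM$-similar pair of simple cylinders. A secondary subtlety occurs in the event of an $\cH(2,2)$-collapse, where one must rule out $\cM'\in\{\dcoverodd,\dcoverhyp\}$ (which would a priori point to $\dcoverprinc$ rather than $\prymprinc$); this should follow, as in the proof of Proposition~\ref{prop:C3I:2sim:cyl:classify}, from counting the fixed points of the hyperelliptic involution of $M'$ that lie on the horizontal saddle connections, showing that such an involution cannot exist here and hence that $\cM'$ is $\prym$ (whence $\dim\cM=6=\dim\prymprinc$).
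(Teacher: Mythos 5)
Your overall template (collapse simple cylinders, invoke the lower-stratum classification, extend the involution via Proposition~\ref{DblCovExtSimpCyl}, finish with a dimension count) is indeed the engine of the paper's argument, but the proposal leaves unproved exactly the diagram-specific steps on which the conclusion depends. First, the opening structural claim is wrong as stated: rank two plus $\cM$-cylindrical stability does not force exactly two equivalence classes (compare Case 5.I) in $\cH(2,1^2)$, where the classes are $\{C_1\},\{C_2,C_4\},\{C_3,C_5\}$), so "pin down the classes from the homological relations and cylinder proportions" is not a routine bookkeeping step but the heart of the matter, and you never carry it out. In the paper this is done by elimination: if $C_1$ (or $C_6$) were free, collapsing it would produce a surface in $\tilde{\cQ}(2,1,-1^3)$ whose Prym involution would have to exchange $C_2$ and $C_6$ and fix $C_4$, which is impossible since $C_6$ is adjacent to $C_4$ while $C_2$ is not; similarly $C_1$ cannot be $\cM$-parallel to $C_2,C_3,C_4$ or $C_5$. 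So the free-simple-cylinder branch you keep open as a route to the conclusion is in fact vacuous here, and the only surviving configuration is that $\{C_1,C_6\}$ is an equivalence class of $\cM$-similar (indeed isometric) simple cylinders; establishing that similarity again uses a collapse-to-$\cH(2,1^2)$ contradiction that your sketch does not supply.

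Second, in the $\cH(2,2)$ degeneration you must rule out $\cM'\in\{\dcoverodd,\dcoverhyp\}$, since $\cM'=\dcoverodd$ would force $\cM=\dcoverprinc$ (dimension $5$), contradicting the asserted conclusion $\cM=\prymprinc$. You only gesture at a fixed-point count "as in Proposition~\ref{prop:C3I:2sim:cyl:classify}", but note that in that proposition the $\dcoverodd$ alternative is \emph{not} excluded -- it legitimately yields $\dcoverprinc$ -- so the analogy does not by itself do the job. The paper instead identifies the cylinder diagram of $M'$ as Case 4.I.OB) and cites \cite[Lem.~6.16]{AulicinoNguyenGen3TwoZeros} to conclude $\cM'=\prym$; some argument of this specificity (tied to the actual four-cylinder diagram obtained by collapsing $C_1$ and $C_6$) is needed and is missing from your proposal. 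A smaller point: in the two-cylinder collapse the relevant statement is that the Prym involution \emph{exchanges} the two distinguished saddle connections (forced by the isometry of $C_1$ and $C_6$), not that it fixes a single $\sigma$ by a length-genericity argument; your write-up applies the one-cylinder argument to the two-cylinder situation. As it stands, the proposal is a plausible outline with the two decisive verifications deferred, so it does not yet constitute a proof of the lemma.
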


\begin{proof}
Observe that the following  homological equations hold:
\begin{eqnarray*}
\gamma_1 + \gamma_2 & = &  \gamma_3\\
\gamma_2 + \gamma_6 & = & \gamma_5 \\
\gamma_1 + \gamma_5 & = & \gamma_4\\
\gamma_3+ \gamma_6  & = &  \gamma_4.
\end{eqnarray*}
If $C_1$  is free, then we can collapse it to get a surface $M' \in \cH(2,1,1)$ which must be contained in $\prymthreezero$. In particular, $M'$ has a Prym involution. Observe that this involution must exchange $C_2$ and $C_6$ and fix $C_4$. Since $C_6$ is adjacent to $C_4$ while $C_2$ is not, such an involution cannot exist, and we have a contradiction. The same argument also yields a contradiction if $C_6$ is free.

If $C_1$ is $\cM$-parallel to $C_2$ or $C_3$, then $\{C_1, C_2, C_3\}$ is an equivalence class. From the homological relations and fact that $M$ is $\cM$-cylindrically stable, all of the other cylinders are free.  But the possibility that $C_6$ is free has been excluded by the argument above. If $C_1$ is $\cM$-parallel to $C_4$ or $C_5$, then $\{C_1,C_4,C_5\}$ is an equivalence class and all of the remaining cylinders are free. Thus we also get a contradiction.

Finally, consider the case where $C_1$ is $\cM$-parallel to $C_6$.   We can twist $\{C_1,C_6\}$ so that there is a vertical saddle connection in $C_1$, then collapse this equivalence class simultaneously. If there is no vertical saddle connection in $C_6$, the collapsing yields a surface $M' \in \cH(2,1,1)$ which must be contained in $\prymthreezero$. But it is easy to see that $M'$ cannot admit a Prym involution since the cylinders corresponding to $C_3$ and $C_5$ on $M'$ are strictly semi-simple cylinders that contain different numbers of saddle connections in their boundaries. We thus get a contradiction, which means that $C_1$ contains a vertical saddle connection if and only if $C_6$ does, which means that $C_1$ and $C_6$ are similar.

By Proposition~\ref{prop:collapse:similar:cyl}, collapsing $\{C_1,C_6\}$ simultaneously yields  a surface  which is contained in a rank two affine submanifold $\cM'$ in $\cH(2,2)$ such that $\allowbreak \dim \cM=\dim \cM'+1$. We now remark that the cylinder diagram of $M'$ satisfies Case 4.I.OB) (see \cite[Sec. 6.4]{AulicinoNguyenGen3TwoZeros}), thus $M'\in \cH^{\rm odd}(2,2)$ and $\cM' = \prym$ by \cite[Lem. 6.16]{AulicinoNguyenGen3TwoZeros}.
Since $C_1$ and $C_6$ degenerate to two saddle connections that are exchanged by the Prym involution of $M'$, it follows from Proposition~\ref{DblCovExtSimpCyl} that $M$ also admits a Prym involution and so does any surface in $\cM$ close enough to $M$. We thus have $\cM \subseteq \prymprinc$. From the dimension count
$$
\dim \cM=\dim \prym +1=\dim\prymprinc=6,
$$
we conclude that $\cM=\prymprinc$.
\end{proof}

\begin{lemma}[Case 6.b]
\label{lm:Case6b}
Let $\cM$ be a rank two affine manifold in $\cH(1^4)$.  If $\cM$ contains a horizontally periodic surface $M$ satisfying Case 6.b) in Figure~\ref{6CylDiagsFig}, then $\cM = \tilde \cQ(2^2,-1^4)$.
\end{lemma}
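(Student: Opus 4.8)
The plan is to follow the same template used in Lemma~\ref{lm:Case6a}, analyzing which cylinders can be free and which can be $\cM$-parallel, then using a cylinder collapse to reduce to a known rank two affine manifold in a lower stratum. First I would write down the homological relations among $\gamma_1,\dots,\gamma_6$ forced by Case 6.b). Inspecting Figure~\ref{6CylDiagsFig}, the core curves satisfy relations of the form $\gamma_i+\gamma_j=\gamma_k$ (three independent ones, since pinching six curves in genus three produces a four-part stable curve, hence three homological relations). Since $\cM$ has rank two, the six horizontal cylinders fall into exactly two equivalence classes, and each class projects to an isotropic subspace of $H_1(M,\mathbb R)$; combined with the relations this severely restricts how the $C_i$ can be grouped.

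Next I would identify a distinguished cylinder — by analogy with Case 6.a, presumably one of the simple cylinders at an ``end'' of the diagram, say $C_1$ (simple) — and rule out that it is free: collapsing a free simple $C_1$ with distinct zeros on its boundary would, by Proposition~\ref{prop:collapse:free:sim:cyl} and the already-established fact that $\prymthreezero$ is the unique rank two affine manifold in $\cH(2,1^2)$, land in $\prymthreezero$; then the Prym involution $\tau$ of $M'$ would be forced to act on the five remaining horizontal cylinders in a way incompatible with the adjacency structure (some cylinder would have to be fixed and contain too many fixed points, or two non-isometric semi-simple cylinders would have to be exchanged), giving a contradiction exactly as in Lemma~\ref{lm:Case6a}. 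Similarly I would eliminate the other problematic free cylinders and the ``wrong'' $\cM$-parallel pairings, using the rank two / isotropy constraint to conclude that $\cM$-cylindrical stability forces a specific pairing, say $\{C_1,C_6\}$ into one equivalence class (with the remaining cylinders paired appropriately into the second class). Then I would show $C_1$ and $C_6$ are $\cM$-similar (if not, twist so that exactly one contains a vertical saddle connection, collapse, and land in a nonexistent rank two manifold in $\cH(3,1)$, or in $\prymthreezero$ with an impossible Prym action), invoking Lemma~\ref{lm:s:cyl:dist:sim:zeros} to guarantee distinct zeros on their boundaries.

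Finally, having $\{C_1,C_6\}$ an equivalence class of $\cM$-similar simple cylinders, I would collapse them simultaneously via Proposition~\ref{prop:collapse:similar:cyl} to obtain $M'$ in a rank two affine submanifold $\cM'\subset\cH(4)$ (the two collapses drop $|\kappa|$ by $2$ from $4\cdot 1=4$ to $4$) with $\dim\cM=\dim\cM'+1$. By the classification of \cite{AulicinoNguyenWright, NguyenWright}, the only rank two affine manifold in $\cH(4)$ is $\prymmin=\tilde{\cQ}(3,-1^3)$, so $M'$ admits a Prym involution $\tau'$. The two degenerated saddle connections $\sigma_1,\sigma_6$ must be exchanged by $\tau'$ (otherwise one could perturb in $\prymmin$ to make the holonomies of $\sigma_1,\sigma_6$ unequal, contradicting that $C_1,C_6$ stay $\cM$-parallel), so by Proposition~\ref{DblCovExtSimpCyl} $\tau'$ extends to a Prym involution of $M$ with four fixed points; hence $M\in\cP\cap\cH(1^4)=\prymprinc$, and the same holds for all nearby surfaces, giving $\cM\subseteq\prymprinc$. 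The dimension count $\dim\cM=\dim\prymmin+1=5+1=6=\dim\prymprinc$ then forces $\cM=\prymprinc$.

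Here I note a discrepancy with the stated conclusion: the statement asserts $\cM=\tilde{\cQ}(2^2,-1^4)=\prymprinc$, which is exactly what this argument yields (the locus $\tilde{\cQ}(2^2,-1^4)$ is $\prymprinc$ in the paper's notation), so the template does produce the claimed locus. The main obstacle I expect is the bookkeeping in the second step: carefully enumerating all the ways the six cylinders could be distributed into equivalence classes consistent with the homological relations and rank two, and checking in each case that the resulting Prym or hyperelliptic involution on the collapsed surface either fails to exist or leads to a contradiction via fixed-point counts — this is combinatorially the most delicate part, though each individual check is of the routine type already exhibited in Lemma~\ref{lm:Case6a} and throughout Section~\ref{sec:4cyl}.
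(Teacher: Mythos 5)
There is a genuine gap: your plan transplants the template of Lemma~\ref{lm:Case6a} onto diagram 6.b), but that template does not fit this diagram, and the decisive combinatorial step is left as ``presumably/say''. In Case 6.b) the homological relations are $\gamma_1+\gamma_6=\gamma_3=\gamma_5$ and $\gamma_2+\gamma_3=\gamma_4$, and the paper's case analysis is organized around the simple cylinder $C_2$: the case ``$C_2$ is free'' \emph{cannot} be ruled out -- it is precisely the configuration that produces the answer (collapse the free simple $C_2$ into $\cH(2,1^2)$, use Part I of Theorem~\ref{thm:rk2:H211:H1111} to land in $\tilde\cQ(2,1,-1^3)$, extend the Prym involution by Proposition~\ref{DblCovExtSimpCyl}, and conclude $\cM=\prymprinc$ by the dimension count $\dim\cM=5+1=6$). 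The remaining cases ($C_2$ $\cM$-parallel to $C_1$, to $C_6$, or to the class of $C_3,C_4,C_5$) are each killed by collapsing a forced free cylinder ($C_1$ or $C_6$) onto a Case 5.II) surface in $\tilde\cQ(2,1,-1^3)$ and counting fixed points of the Prym involution. So your opening move -- ``rule out that the distinguished simple cylinder is free'' -- fails at the first step, and the claim that cylindrical stability forces a pairing such as $\{C_1,C_6\}$ of $\cM$-similar simple cylinders is exactly the part that would need proof and is not true as the generic outcome for this diagram.

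The endgame of your proposal is also incorrect as written. Simultaneously collapsing two $\cM$-similar simple cylinders on a surface in $\cH(1^4)$ reduces the number of zeros by two, landing in $\cH(2,2)$ (or $\cH(3,1)$ if the pairs of boundary zeros share a zero), not in $\cH(4)$; moreover $\dim\tilde\cQ(3,-1^3)=4$, not $5$, so the dimension count $\dim\cM=\dim\prymmin+1=6$ does not hold. If one corrects the target to $\cH(2,2)$, the classification there allows $\cM'\in\{\prym,\dcoverodd\}$, and the second possibility would yield $\cM=\dcoverprinc$ -- contradicting the conclusion of the lemma, which asserts only $\tilde\cQ(2^2,-1^4)$. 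Thus even a repaired version of your route would require extra arguments (ruling out the $\dcoverodd$ degeneration, or showing the similar-pair configuration never occurs) that the proposal does not supply, whereas the paper's proof avoids all of this by collapsing a single free simple cylinder into $\cH(2,1^2)$.
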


\begin{proof}
Observe that the homological equations hold:
\begin{eqnarray*}
\gamma_1 + \gamma_6 & = & \gamma_3 = \gamma_5,\\
\gamma_2 + \gamma_3 & = & \gamma_4.
\end{eqnarray*}
If $C_2$ is free, we can collapse it and conclude by Theorem~\ref{thm:rk2:H211:H1111} Part I, and Proposition~\ref{DblCovExtSimpCyl}.

If $C_2$ is $\cM$-parallel to either $C_3$, $C_4$, or $C_5$, $\{C_2,C_3,C_4,C_5\}$ is an equivalence class, and  $C_1,C_6$ are free.  Hence, $C_1$ can be collapsed to a saddle connection $\sigma$ on a translation surface $M'$ in $\tilde \cQ(2,1,-1^3) \subset \cH(2,1,1)$ satisfying Case 5.II).  The Prym involution $\tau'$ on $M'$ necessarily fixes $C_2$ and $C_4$, which implies that $\tau'$ has at least $4$ regular fixed points. But the double zero of $M'$ must also be a fixed points. Therefore $\tau'$ has at least $5$ fixed points and we have a contradiction.

If $C_2$ is $\cM$-parallel to $C_1$, then the homological relations imply that $\{C_1, C_2\}$ is an equivalence class of cylinders because otherwise all cylinders would be in the same equivalence class.  For the same reason $\{C_3, C_5\}$ is an equivalence class, and $C_4$ and $C_6$ are each free.  Collapsing $C_6$ results in a surface $M'$ satisfying Case 5.II) in $\tilde \cQ(2,1,-1^3)$.  As above, we also get a contradiction.  Finally, if $C_2$ is $\cM$-parallel to $C_6$, then the same argument holds with $C_1$ playing the role of $C_6$ in the preceding argument.  Thus the lemma follows.
\end{proof}

\begin{lemma}[Case 6.c]
\label{lm:Case6c}
Let $\cM$ be a rank two affine manifold in $\cH(1^4)$.  If $\cM$ contains a horizontally periodic surface $M$ satisfying Case 6.c) in Figure~\ref{6CylDiagsFig}, then $\cM \in \{\dcoverprinc, \tilde \cQ(2^2,-1^4)\}$.
\end{lemma}

\begin{proof}
Observe that the homological equations hold:
\begin{eqnarray*}
\gamma_1 + \gamma_2 & = & \gamma_3,\\
\gamma_4 + \gamma_5 & = & \gamma_6,\\
\gamma_1 + \gamma_2 & = & \gamma_5 + \gamma_2 \Leftrightarrow \gamma_1 = \gamma_5.
\end{eqnarray*}
We claim that $C_2$ is not free.  If $C_2$ is free, then collapse it to obtain a surface $M'$ satisfying Case 5.II) in $\prymthreezero \subset \cH(2,1,1)$.  Observe that the Prym involution $\tau'$ of $M'$ must fix $C_3$ and a simple cylinder. Thus $\tau'$ has at least $4$ regular fixed points. Since the double zero of $M'$ must also be a fixed point of $\tau'$, we get a contradiction.

If $C_2$ were $\cM$-parallel to $C_1$, $C_3$, or $C_5$, then the homological equations would imply that $C_4$ is free, and we could collapse it to get the same contradiction as above.

If $C_2$ were $\cM$-parallel to $C_6$, then $\{C_2, C_6\}$ would be an equivalence class of cylinders as would $\{C_1, C_5\}$ by the homological equations.  However, if $C_3$ were $\cM$-parallel to $C_4$, then all cylinders would be in the same equivalence class because
$$\gamma_2 +\gamma_6= \gamma_3 + \gamma_4.$$
Hence, $C_4$ is free and we achieve the same contradiction as above.

Finally, consider the case where $C_2$ is $\cM$-parallel to $C_4$.  Then $\{C_2, C_4\}$ is an equivalence class. We can twist and collapse this equivalence class so that the two zeros in the boundary of $C_2$ collide. If the two zeros in the boundary of $C_4$ do not collide, we obtain a surface in $\prymthreezero \subset \cH(2,1,1)$. But it is easy to check that this surface does not admit a Prym involution and we get a contradiction. Therefore $C_2$ contains a vertical saddle connection if and only if $C_4$ does, which means that $C_2$ and $C_4$ are similar.
Collapsing $C_2$ and $C_4$ simultaneously yields a surface $M'$ in $\cH(2,2)$ with a cylinder diagram satisfying Case 4.II.OB). It follows in particular that $M' \in \cH^{\rm odd}(2,2)$ (cf. \cite[Sec. 6.4]{AulicinoNguyenGen3TwoZeros}).

By Proposition~\ref{prop:collapse:similar:cyl}, $M'$ is contained in a rank two affine submanifold $\cM' \subset \cH^{\rm odd}(2,2)$ such that $\dim \cM =\dim \cM'+1$. By the main results of \cite{AulicinoNguyenGen3TwoZeros}, we have $\cM' \in \{\prym, \dcoverodd\}$. In both cases, $M'$ admits a Prym involution $\tau'$. Observe that the degenerations of $C_2$ and $C_4$ on $M'$ are two saddle connections that are exchanged by $\tau'$. Thus by Proposition~\ref{DblCovExtSimpCyl}, $\tau'$ extends to a Prym involution of $M$, and the same is true for any surface in $\cM$ close enough to $M$. Thus we have $\cM \subseteq \prymprinc$.

If $\cM'=\prym$, then by the dimension count, we have
$$
\dim \cM =\dim \prym+1=\dim\prymprinc=6,
$$
which implies that $\cM=\prymprinc$.

If $\cM'=\dcoverodd$, then $M'$ also admits a hyperelliptic involution $\iota'$. We now observe that the saddle connections which are the degenerations of $C_2$ and $C_4$ are both invariant by $\iota'$. Again, by Proposition~\ref{DblCovExtSimpCyl} we see that $\iota'$ extends to a hyperelliptic involution of $M$. Thus $\allowbreak M \in \cH(1^4)\cap\cP\cap\cL=
\dcoverprinc$. Since the same is true for any surface in $\cM$ close enough to $M$, we have $\cM  \subseteq \dcoverprinc$. Finally, since we have
$$
\dim \cM=\dim\dcoverodd+1=\dim\dcoverprinc=5,
$$
$\cM$ must be $\dcoverprinc$. The proof of the lemma is now complete.
\end{proof}

\begin{lemma}[Case 6.d]
\label{lm:Case6d}
Let $\cM$ be a rank two affine submanifold in $\cH(1^4)$. If $\cM$ contains  a horizontally periodic surface $M$ satisfying Case 6.d) in Figure~\ref{6CylDiagsFig}, then $\cM \in \{\dcoverprinc, \tilde \cQ(2^2,-1^4)\}$.
\end{lemma}

\begin{proof}
Observe that the homological equations hold:
\begin{eqnarray*}
\gamma_1 + \gamma_2 & = &  \gamma_3 = \gamma_6,\\
\gamma_4 + \gamma_5 & = & \gamma_3.
\end{eqnarray*}
If $C_1$ is free, then it can be collapsed and we conclude by Theorem~\ref{thm:rk2:H211:H1111}, Part I, and Proposition~\ref{DblCovExtSimpCyl} that $\cM=\prymprinc$.

By contradiction, if $C_1$ is $\cM$-parallel to $C_2$, $C_3$, or $C_6$, then the homological equations imply that $C_4$ is free (otherwise, all six cylinders would lie in the same equivalence class). Collapsing $C_4$ to get a surface in $\cH(2,1,1)$ allows us to conclude that $\cM=\prymprinc$ by the same argument above.

If $C_1$ is $\cM$-parallel to $C_4$, then the homological equations imply that $\{C_1, C_4\}$ is an equivalence class.  By twisting and collapsing $C_1$ and $C_4$, we reach a surface $M'$ in a lower stratum: $\cH(2,1,1)$ or $\cH(2,2)$.  If $M' \in \cH(2,1,1)$ then from  Theorem~\ref{thm:rk2:H211:H1111}, Part I, we have $M' \in \prymthreezero$. Observe that if $C_1$ and $C_4$ are not similar, then one of $C_1$ and $C_4$ degenerates to a single saddle connection, while the other one degenerates to the union of two saddle connections. Hence, we can suppose that in $M'$, the top of $C_3$ contains two saddle connections, and the the top of $C_6$ contains three saddle connections.

If $M'$ admits an involution whose derivative is $-\id$, then this involution must exchange $C_3$ and $C_6$, and fix $C_2$ and $C_5$. However, such an involution has at least $5$ fixed points ($4$ regular ones in the interiors of $C_2$ and $C_5$, and the double zero of $M'$). Thus it cannot be a Prym involution. This contradiction shows that $C_1$ contains a vertical saddle connection if and only if $C_4$ does, which implies that $C_1$ and $C_4$ are similar. The remainder of the proof then follows from the same lines as  Lemma~\ref{lm:Case6c}.
\end{proof}

\subsection{Proof of Theorem~\ref{thm:rk2:H211:H1111}: Part II}\label{sec:prf:MainThm:II}
We now have all the necessary materials to complete the proof of Theorem~\ref{thm:rk2:H211:H1111}.

\begin{proof}
From Theorem~\ref{thm:rk2:H211:H1111}: Part I, we know that $\tilde \cQ(2,1,-1^3)$ is the only rank two affine manifold contained in $\cH(2,1,1)$. By Proposition~\ref{H11CoverConnProp}, $\dcoverprinc$ is connected, and $\prymprinc$ is connected by the results of \cite{LanneauComponents}.

Let $\cM$ be a rank two affine manifold in $\cH(1^4)$.  By Proposition \ref{Min4CylProp} Part (2), there exists a horizontally periodic surface $M \in \cM$ with at least four cylinders. By Proposition~\ref{prop:4cyl}, we can reduce the case of surface with at least five cylinders.

Assume now that $M$ has five horizontal cylinders. Then   $M$ must satisfy Case 5.I) or Case 5.II) by Lemma~\ref{5CylDeg}. In both cases, either $\cM$ contains a horizontally periodic surface with six cylinders, or  $\cM = \tilde \cQ(2^2,-1^4)$ by Propositions~\ref{Case5IH1111} and \ref{prop:C5II}: Part 2.

Finally, consider the case where $M$ has six horizontal cylinders. Note that in this case the hypothesis that $M$ is $\cM$-cylindrically stable is automatically satisfied.  The cylinder diagram of $M$ must satisfy one of four cylinder diagrams by Proposition~\ref{prop:6CylDiags}, and we conclude by Lemmas~\ref{lm:Case6a}, \ref{lm:Case6b}, \ref{lm:Case6c}, and \ref{lm:Case6d}.  Having addressed all possible cases, the proof of the theorem is complete.
\end{proof}

\appendix

\section{Proof of Lemma~\ref{lm:4cyl:C4III:C4}}\label{sec:aux:lms}
We first need the following two lemmas. In what follows we will use the same notation and conventions as in Section~\ref{sec:4cyl:C4III}.

\begin{lemma}\label{lm:C4III:C4notS}
If $M$ is $\cM$-cylindrically stable and $M$ satisfies Case 4.III), then there are at least two saddle connections in the top of $C_4$.
\end{lemma}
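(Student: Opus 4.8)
\textbf{Plan for the proof of Lemma~\ref{lm:C4III:C4notS}.} The goal is to rule out the possibility that $C_4$ is a simple cylinder (i.e.\ that its top consists of a single saddle connection), working under the assumptions that $M$ is $\cM$-cylindrically stable and satisfies Case 4.III). Recall the setup from Section~\ref{sec:4cyl:C4III}: the core curves $c_1,c_2,c_3,c_4$ cut $M$ into a three-holed sphere carrying the simple zero $x_0$ and a five-holed sphere carrying the remaining zeros, $\ell(C_3)=\ell(C_1)+\ell(C_2)$, and all horizontal saddle connections issuing from $x_0$ return to $x_0$. By Lemma~\ref{4CylCaseIIIEqCls}, the equivalence classes of $\cM$-parallel cylinders are exactly $\{C_1,C_2,C_3\}$ and $\{C_4\}$, so $C_4$ is free.

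The first step is to argue by contradiction: suppose $C_4$ is simple. Then by Lemma~\ref{lm:s:cyl:dist:sim:zeros}, since $C_4$ is free and simple, the zeros in its boundary are either two distinct simple zeros, or one zero of higher order in each boundary component --- but the latter is excluded when both boundary zeros are simple. More precisely, I would first determine which zeros can lie in $\partial C_4$. The zero $x_0$ lies in the three-holed sphere and is incident only to $C_1,C_2,C_3$, hence $x_0\notin\partial C_4$. So the boundary of $C_4$ consists of zeros other than $x_0$. If $M\in\cH(2,1^2)$ these are the double zero and/or the two simple zeros; if $M\in\cH(1^4)$ they are three of the four simple zeros. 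The second step is to perform the collapse: since $C_4$ is free and simple, Proposition~\ref{prop:collapse:free:sim:cyl} (if the two boundary zeros are distinct) lets us collapse $C_4$ to obtain a surface $M'$ in a rank two affine submanifold $\cM'$ of a stratum $\cH(\kappa')$ with $|\kappa'|=|\kappa|-1$, and $\dim\cM'=\dim\cM-1$. One must handle separately the degenerate possibility that both boundary components of $C_4$ carry the \emph{same} zero --- but for a simple cylinder that is impossible, since a simple cylinder whose two boundary saddle connections share both endpoints with a single zero would force a genus-lowering identification incompatible with the planar configuration of $\Gr$ described in Section~\ref{sec:4cyl:C4III} (the five-holed-sphere structure). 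So the collapse always lands in a genuinely lower stratum.

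The third step is to reach a contradiction in the collapsed surface $M'$. After collapsing $C_4$, the three cylinders $C_1,C_2,C_3$ persist on $M'$ (their core curves are unchanged), and $M'$ is horizontally periodic with exactly three horizontal cylinders $C_1,C_2,C_3$ satisfying $\ell(C_3)=\ell(C_1)+\ell(C_2)$; in particular $M'$ still has a zero $x_0$ whose incident horizontal saddle connections all return to it, and $M'$ lies in $\cH(2,2)$, $\cH(2,1^2)$, or $\cH(3,1)$ depending on which zeros were identified. The stratum $\cH(3,1)$ contains no rank two affine submanifold, which kills that case outright. In the remaining cases, $\cM'$ is one of the rank two loci classified in \cite{AulicinoNguyenGen3TwoZeros} (together with the hypothesis of Proposition~\ref{Case4IIIProp} when we are in $\cH(1^4)$, so that $\tilde\cQ(2,1,-1^3)$ is the only option in $\cH(2,1^2)$), and every surface in each such locus admits an involution $\inv$ with derivative $-\id$ and exactly four fixed points (a Prym involution). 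The key observation is that on $M'$, the three horizontal cylinders $C_1,C_2,C_3$ cannot be permuted among themselves by $\inv$: because $\ell(C_3)=\ell(C_1)+\ell(C_2)>\max(\ell(C_1),\ell(C_2))$, the cylinder $C_3$ is distinguished by circumference, and $C_1,C_2$ each bound $x_0$ which must be fixed or moved consistently. A short case analysis (using that an isometry preserves circumferences, and that $x_0$ is a simple zero which, if fixed, forces the cylinders meeting it to be fixed; if moved, forces a matching zero with the same local picture, which the 4.III) configuration does not provide) shows $\inv$ must fix each of $C_1,C_2,C_3$. But a cylinder fixed by an involution with derivative $-\id$ contains two fixed points in its interior, giving at least six fixed points --- contradicting that $\inv$ has exactly four.

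\textbf{Main obstacle.} The delicate point is the third step: verifying carefully that $\inv$ cannot permute $C_1,C_2,C_3$ nontrivially, and in particular handling the sub-case where $\inv$ exchanges $C_1\leftrightarrow C_2$ (which is \emph{not} immediately excluded by circumferences if $\ell(C_1)=\ell(C_2)$). Here I would use the structure of $x_0$: if $\inv$ swaps $C_1$ and $C_2$ then $x_0$ (incident to both) could be fixed, but then the saddle connections at $x_0$ inside $C_3$ would have to be arranged symmetrically, and one would need to check against the explicit cylinder diagram (Figure~\ref{fig:Case4IIICylLbls} and its $\cH(1^4)$ analogues) that this forces extra fixed points or an inconsistency with $\ell(C_3)=\ell(C_1)+\ell(C_2)$ on the collapsed surface. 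I expect this to amount to a finite check over the admissible separatrix diagrams of $\Gr$ listed in Section~\ref{sec:4cyl:C4III}, and in each case the count ``four fixed points versus at least six forced by fixed cylinders'' closes the argument. Once that is done, the contradiction is complete and $C_4$ must have at least two saddle connections in its top.
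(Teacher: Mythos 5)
There is a genuine gap — in fact several. First, you have proved a weaker statement than the lemma asserts. The negation of the conclusion is that the \emph{top} of $C_4$ consists of a single saddle connection; this includes the case where $C_4$ is strictly semi-simple (one saddle connection on top, two or more on the bottom), not just the case where $C_4$ is simple. Your argument, which runs through Lemma~\ref{lm:s:cyl:dist:sim:zeros} and Proposition~\ref{prop:collapse:free:sim:cyl}, only addresses the simple case; the paper treats the semi-simple case as well, by showing the bottom then contains a zero $x_2\neq x_1$ and twisting/collapsing the free cylinder $C_4$ so that only $x_1$ and $x_2$ collide. Second, in $\cH(2,1^2)$ the possibility that both boundary saddle connections of a simple $C_4$ are loops at the double zero is not ``impossible by planarity'' as you assert: Lemma~\ref{lm:s:cyl:dist:sim:zeros} does not apply to a double zero, and the paper has to rule this out by a genuine argument (counting the seven horizontal saddle connections, connectivity of the graph $\Gr$, and \cite[Lem.~2.11]{AulicinoNguyenGen3TwoZeros}); your one-line dismissal is not a proof. (Also, $\cH(2,2)$ can never arise from this collapse, since $x_0\notin\partial C_4$, so that branch of your case analysis is vacuous.)

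Third, and most seriously, your contradiction in the collapsed surface $M'\in\tilde{\cQ}(2,1,-1^3)$ is not complete. Fixed-point counting only forces a contradiction if the Prym involution fixes all three horizontal cylinders; when $\ell_1=\ell_2$ the involution could a priori exchange $C_1$ and $C_2$ while fixing $C_3$, giving only $2+1$ forced fixed points, which is compatible with having exactly four. You identify this as the main obstacle, but your sketch for closing it starts from a false premise: on $\tilde{\cQ}(2,1,-1^3)$ the Prym involution must fix the double zero and \emph{exchange} the two simple zeros, so $x_0$ can never be fixed. The paper's actual contradiction uses exactly this fact and needs no cylinder-permutation analysis at all: the involution would have to carry the horizontal saddle connections joining the new double zero $x$ to the other simple zero $x_0'$ (which exist, by connectivity of $\Gr$) to horizontal saddle connections joining $x$ to $x_0$, and there are none, because every horizontal saddle connection emanating from $x_0$ returns to $x_0$. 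Without this (or an equivalent) argument, your proposed finite check over separatrix diagrams is a plan rather than a proof, so the key step remains open.
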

\begin{proof}
Suppose that the top $C_4$ contains only one saddle connection denoted by $\sig_0$.  Let $x_1$ denote the unique zero contained in the top of $C_4$.

We first claim that if $C_4$ is simple, then the zero in the bottom of $C_4$ is not $x_1$. If $M \in \cM(1^4)$, then this follows from Lemma~\ref{lm:s:cyl:dist:sim:zeros}. Assume that  $M \in \cH(2,1^2)$, then $x_1$ must be the double zero. Let $\sig_1$ be the unique saddle connection in the bottom of $C_4$ and assume that $\sig_1$ also joins $x_1$ to itself. Note that $\sig_1$ must be contained in the top of $C_1$ or $C_2$. Without loss of generality, let $\sig_1$ be contained in the top of $C_2$. Clearly, the top $C_2$ must contain other saddle connections.

If the top of $C_2$ contains exactly two saddle connections, then we have another horizontal saddle connection $\sig_2$ joining $x_1$ to itself.  Since we have found three horizontal saddle connections joining $x_1$ to itself, there is no saddle connection from $x_1$ to the remaining zero of $M$, which contradicts the condition that the graph $\Gr$ is connected. Thus the top of $C_2$ contains at least three saddle connections. Since the total number of horizontal saddle connections is $7$, we derive that the top of $C_1$ contains only one saddle connection, which must be contained in the bottom of $C_3$. But this contradicts \cite[Lem. 2.11]{AulicinoNguyenGen3TwoZeros}, thus we can conclude that the zero in the bottom of $C_4$ is not $x_1$.

If the bottom of $C_4$ contains more than one saddle connection, by similar arguments, one can easily show that it must contain a zero $x_2$ different from $x_1$.

Now, since $C_4$ is free, we can collapse it so that $x_1$ and $x_2$ collide, the resulting surface $M'$ is  contained in some rank two affine  submanifold $\cM'$ of a stratum with 2 or 3 zeros in genus three. Note that $x_0$ remains in $M'$, hence $M'$ has at least a simple zero. Since there are no rank two affine submanifolds in $\cH(3,1)$, we only have to consider the case $M' \in \cH(2,1^2)$ which means that $M\in \cH(1^4)$, and the collision of $x_1$ and $x_2$ gives rise to the double zero $x$ of $M'$.

Let $x'_0$ be the other simple zero of $M'$. By assumption, $\cM=\tilde{\cQ}(2,1,-1^3)$, thus $M$ admits a Prym involution $\inv$. Note that $\inv$ must fix $x$ and exchange $x_0$ and $x'_0$. By the hypothesis, there are no horizontal saddle connections joining $x_0$ to $x$, but there are some saddle connections (in the boundary of $C_4$) that connect $x$ to $x'_0$. Therefore, we have a contradiction and the lemma follows.
\end{proof}

\begin{lemma}\label{lm:sc:topC1C2:botC3}
With the same assumption as Lemma~\ref{lm:C4III:C4notS}, there is a saddle connection contained in the top of $C_1$ and the bottom of $C_3$ if and only if there is a saddle connection contained in the top of $C_2$ and the bottom of $C_3$.
\end{lemma}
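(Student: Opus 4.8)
The plan is to convert the statement into a purely combinatorial fact about the planar separatrix graph living in the ``far'' piece of the cylinder decomposition, and then to verify it by running through the short list of admissible configurations of that graph, in the same spirit as the enumerations behind Figures~\ref{fig:config:graph:s:c:3I} and \ref{fig:RibonGraph:C4I:ab}.

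First I would fix the local picture at $x_0$. Since $x_0$ is a simple zero all of whose horizontal separatrices are loops, its separatrix graph is a figure eight with loops $a,b$, and its regular neighborhood is the three-holed sphere $P_0$ whose three boundary circles are the sides of $C_1,C_2,C_3$ meeting $x_0$. Using $\ell(C_3)=\ell(C_1)+\ell(C_2)$ together with planarity of this figure eight, the side of $C_1$ meeting $x_0$ must be the single loop $a$, the side of $C_2$ must be the single loop $b$, and the side of $C_3$ must be $a\cup b$; in particular $C_1$ and $C_2$ are semi-simple. Depending on the top/bottom assignment of these three boundary circles one of the following happens: either ``top of $C_1$'' and ``bottom of $C_3$'' both lie in $P_0$ (and likewise for $C_2$), in which case both conditions in the lemma reduce to $a\subset a\cup b$, resp. $b\subset a\cup b$, and both hold; or neither of the sides occurring in the statement lies in $P_0$, so that both conditions are vacuously false; or, after applying $\bigl(\begin{smallmatrix}1&0\\0&-1\end{smallmatrix}\bigr)$ if necessary, we are in the main case $a=\mathrm{bottom}(C_1)$, $b=\mathrm{bottom}(C_2)$, $a\cup b=\mathrm{top}(C_3)$, in which the sides $\mathrm{top}(C_1),\mathrm{top}(C_2),\mathrm{bottom}(C_3)$ are precisely the three boundary components, coming from $C_1,C_2,C_3$, of the complementary five-holed sphere $P_1$.

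Now I would work entirely inside $P_1$, a regular neighborhood of the planar separatrix graph $\Gamma$ of the remaining horizontal saddle connections; $\Gamma$ has exactly five complementary faces: $F_1=\mathrm{top}(C_1)$ of perimeter $\ell(C_1)$, $F_2=\mathrm{top}(C_2)$ of perimeter $\ell(C_2)$, $F_3=\mathrm{bottom}(C_3)$ of perimeter $\ell(C_1)+\ell(C_2)$, and $F_4,F_4'$, the two sides of $C_4$, of perimeter $\ell(C_4)$ each; by Lemma~\ref{lm:C4III:C4notS} the face equal to the top of $C_4$ is bounded by at least two edges. In this language the lemma says that $F_1$ is adjacent to $F_3$ in $\Gamma$ if and only if $F_2$ is. I would argue by contradiction, assuming exactly one of $F_1,F_2$, say $F_1$, is adjacent to $F_3$; the other case is obtained by the relabelling $1\leftrightarrow 2$, which respects $\gamma_1+\gamma_2=\gamma_3$ and the loop-swap $a\leftrightarrow b$ of $P_0$. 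Then I would play three constraints against one another: the perimeter bookkeeping (the perimeter $\ell(C_1)+\ell(C_2)$ of $F_3$ can only be accounted for by edges it shares with $F_1$, with $F_4$, and with $F_4'$, and by bridges internal to $F_3$, these shares being bounded by $\ell(C_1),\ell(C_4),\ell(C_4)$, while the total edge length of $\Gamma$ is $\ell(C_1)+\ell(C_2)+\ell(C_4)$); planarity, which forbids most adjacency patterns among five faces; and the ``at least two edges'' constraint on $\mathrm{top}(C_4)$. Running through the finitely many configurations surviving these constraints, each one either forces a nonpositive length, or forces some horizontal saddle connection to lie in both the top and the bottom of one cylinder inside the sphere $P_1$ (impossible), or produces a free simple cylinder that can be twisted and collapsed into a rank two affine manifold in $\cH(3,1)$, contradicting its nonexistence exactly as in the proof of Lemma~\ref{lm:C4III:C4notS}; and whenever none of these occurs, $F_2$ turns out to be adjacent to $F_3$ after all.

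The main obstacle is the combinatorial case analysis in the last step. As at the analogous places in the paper, the real work is to organize the admissible planar configurations of $\Gamma$ so that the three constraints above cut them down to a handful that can be dispatched uniformly; the length relation $\ell(C_3)=\ell(C_1)+\ell(C_2)$ and the $a\leftrightarrow b$ symmetry of $P_0$ are what make this feasible.
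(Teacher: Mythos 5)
The paper's proof is not combinatorial at all: it uses the dynamical structure of the affine manifold. Given a saddle connection $\sigma$ in the top of $C_1$ and the bottom of $C_3$, the paper twists $C_3$ so that there is a transverse simple cylinder $D$ crossing only $C_1$ and $C_3$ through $\sigma$; since $C_1$ and $C_2$ are $\cM$-parallel (this comes from Lemma~\ref{4CylCaseIIIEqCls}, which says the equivalence classes are $\{C_1,C_2,C_3\}$ and $\{C_4\}$), $D$ cannot be free, so there is an $\cM$-parallel cylinder $D'$ whose core curve must cross $C_2$, and when it exits $C_2$ through the top it must enter $C_3$, which forces a saddle connection shared by the top of $C_2$ and the bottom of $C_3$. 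Your proposal instead tries to prove the statement as a purely combinatorial fact about the planar separatrix graph, and this is where the gap lies: the statement is \emph{not} a consequence of the cylinder diagram alone without invoking the $\cM$-parallelism of $C_1$ and $C_2$ (or some substitute dynamical input). Indeed, your own case analysis is never actually carried out --- the crucial last step is described only as ``running through the finitely many configurations'' and asserting that each bad one ``either forces a nonpositive length, or \dots, or produces a free simple cylinder that can be collapsed into $\cH(3,1)$''; no such configuration is exhibited or excluded concretely, and there are genuine cylinder diagrams of type 4.III) (for instance the one in Figure~\ref{fig:4III:C1sim:C2semsim}, where the top of $C_1$ lies in the bottom of $C_4$ while the top of $C_2$ meets $C_4$ as well, versus diagrams where one of $C_1,C_2$ is glued to $C_3$ and the other to $C_4$) that your perimeter/planarity constraints alone do not rule out; they are only eliminated later in the paper using cylinder proportions and collapsing arguments that depend on which cylinders are $\cM$-parallel.

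There is also a secondary inaccuracy in your setup: you assert that planarity of the figure-eight at $x_0$ together with $\ell(C_3)=\ell(C_1)+\ell(C_2)$ forces the side of $C_1$ (resp.\ $C_2$) at $x_0$ to be a single loop, so that $C_1$ and $C_2$ are semi-simple. The paper does not claim this and it is not needed; what is true is that the three boundary circles of the neighborhood of the figure eight are, in some order, one side of each of $C_1,C_2,C_3$, but which cylinder gets one loop and which gets both is a matter of labeling conventions, and in any event nothing in the lemma's proof should hinge on $C_1,C_2$ being semi-simple on the $x_0$ side only. The essential missing idea, to repeat, is the use of Theorem~\ref{thm:Wright:Cyl:Def} and the Cylinder Proportion Lemma through the $\cM$-parallelism of $C_1$ and $C_2$: once you know $C_1\sim_{\cM} C_2$, the existence of the transverse cylinder $D$ through $\sigma$ immediately propagates to a parallel cylinder meeting $C_2$, and the conclusion follows without any enumeration of planar graphs.
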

\begin{proof}
 Assume that there is a saddle connection $\sig$ in the top of $C_1$ and the bottom of $C_3$, then one can twist $C_3$ (and simultaneously $C_1$ and $C_2$) such that $C_3$ is represented by a rectangle in the plane, and $\sig$ is the first saddle connection from the left in its bottom. Note that $\sig$ also occurs in the top of $C_1$. It is not difficult to see that there always exists a simple closed geodesic crossing only $C_1$ and $C_3$ that intersects $\sig$ at one point. Let $D$ denote the cylinder associated to this geodesic. Since $C_2$ is $\cM$-parallel to $C_1$, there must exist another cylinder $D'$ which is $\cM$-parallel to $D$. Since $D$ is contained in the closure of $C_1\cup C_3$,  $D'$ can only cross $C_1,C_2,C_3$. In particular, as a core curve of $D'$ exits $C_2$ through the top, it must enter $C_3$, which implies that there is a saddle connection in the top of $C_2$ and the bottom of $C_3$.

 Since the arguments are completely symmetric, conversely, if there exists a saddle connection in the top of $C_2$ and the bottom of $C_3$, then there must exist a saddle connection in the top of $C_1$ and the bottom of $C_3$.
\end{proof}

\bigskip

\begin{proof}[Proof of Lemma~\ref{lm:4cyl:C4III:C4}]

Assume  that there exists a surface $M \in \cM$ horizontally periodic satisfying Case 4.III) such that $M$ is $\cM$-cylindrically stable.  Let $k$ be the total number of horizontal saddle connections of $M$. Note that if $M\in \cH(2,1^2)$, then $k=7$, and if $M\in \cH(1^4)$, then $k=8$.
Let $k_i$ be the number of saddle connections contained in the top of $C_i$. By assumption, we have $k_3=2$, and by Lemma~\ref{lm:C4III:C4notS}, we have $k_4 \geq 2$, therefore $2\leq k_1+k_2\leq 4$.

Recall that we need to show that either the closure of $C_4$ contains a free simple cylinder with two distinct zeros in its boundary, or $C_4$ is a semi-simple cylinder.

\medskip

\noindent \ul{$\bullet$ Case $k_1 + k_2 = 2$.} Note that we must have $k_1=k_2=1$. If the saddle connection in the top of $C_1$ is contained in the bottom of $C_3$, then so is the saddle connection in the top of $C_2$ by Lemma \ref{lm:C4III:C4notS}.
But this would imply that the bottom of $C_3$ only contains those two saddle connections (by comparing the lengths of the two boundary components of $C_3$), which is impossible since we have four cylinders. By Lemma~\ref{lm:sc:topC1C2:botC3}, we deduce that the tops of both $C_1$ and $C_2$ are contained in the bottom of $C_4$.
It follows that the bottom of $C_3$ is contained in the top of $C_4$.

Assume that  $M\in \cH(2,1^2)$ then $k_4 = 3$. If the bottom of $C_3$ contains three saddle connections, then it equals the top of $C_4$, which means that $C_3$ and $C_4$ are homologous, but this is excluded by the hypothesis of Case 4.III).   By inspection, we also see that the bottom of $C_3$ cannot contain exactly two saddle connections.
 Therefore, we are left with the case where the bottom of $C_3$ contains only one saddle connection. It follows that there are two saddle connections which are contained in both the top and bottom of $C_4$.  Since $M \in \cH(2,1^2)$, there must exist a saddle connection in the top of $C_4$ connecting the double zero to a simple one.
 Let $D$ be a simple cylinder in $\ol{C}_4$ consisting of simple closed geodesics crossing this saddle connection (see Figure~\ref{fig:4III:C1C2:simple} left). It is not difficult to see that $D$ is free and the lemma is proved for this case.

  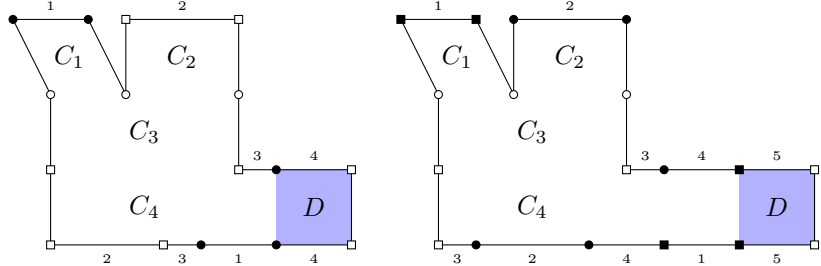
\begin{figure}[htb]
  \begin{minipage}[t]{0.45\linewidth}
  \centering
  \begin{tikzpicture}[scale=0.5]
  \fill[blue!30] (6,0) rectangle (8,2);
  \draw (0,0) -- ( 8,0) -- (8,2) -- (5,2) -- (5,6) -- (2,6) -- (2,4) -- (1,6)  -- (-1,6) -- (0,4) -- cycle;
    \foreach \x in {(0,4), (2,4), (5,4)} \filldraw[fill=white] \x circle (3pt);
    \foreach \x in {(-1,6), (1,6), (4,0),(6,0),(6,2)} \filldraw[fill=black] \x circle (3pt);
    \foreach \x in {(0,2),(0,0), (2,6),(3,0),(5,2), (5,6),(8,2),(8,0)} \filldraw[fill=white] \x +(-3pt,-3pt) rectangle +(3pt,3pt); 
    \draw (7,1) node {$D$};
    \draw (0,6) node[above] {\tiny $1$} (5,0) node[below] {\tiny $1$} (3.5,6) node[above] {\tiny $2$} (1.5,0) node[below] {\tiny $2$} (3.5,0) node[below] {\tiny $3$} (5.5,2) node[above] {\tiny $3$} (7,0) node[below] {\tiny $4$} (7,2) node[above] {\tiny $4$};
    \draw (0.5,5) node {$C_1$} (3.5,5) node {$C_2$} (2.5,3) node {$C_3$} (2.5,1) node {$C_4$};
     \end{tikzpicture}
  \end{minipage}
 \begin{minipage}[t]{0.45\linewidth}
  \centering
  \begin{tikzpicture}[scale=0.5]
  \fill[blue!30] (8,0) rectangle (10,2);
  \draw (0,0) -- (10,0) -- (10,2) -- (5,2) -- (5,6) -- (2,6) -- (2,4) -- (1,6) -- (-1,6) -- (0,4) -- cycle;
  \foreach \x in {(0,4), (2,4), (5,4)} \filldraw[fill=white] \x circle (3pt);
  \foreach \x in {(2,6), (5,6), (1,0), (4,0),(6,2)} \filldraw[fill=black] \x circle (3pt);
  \foreach  \x in {(0,2),(0,0),(5,2),(10,2),(10,0)} \filldraw[fill=white] \x +(-3pt,-3pt) rectangle +(3pt,3pt);
  \foreach  \x in {(-1,6), (1,6), (6,0),(8,0),(8,2)} \filldraw[fill=black] \x +(-3pt,-3pt) rectangle +(3pt,3pt);
  \draw (0.5,5) node {$C_1$} (3.5,5) node {$C_2$} (2.5,3) node {$C_3$} (2.5,1) node {$C_4$} (9,1)  node {$D$};
   \draw (0,6) node[above] {\tiny $1$} (7,0) node[below] {\tiny $1$} (3.5,6) node[above] {\tiny $2$} (2.5,0) node[below] {\tiny $2$} (0.5,0) node[below] {\tiny $3$} (5.5,2) node[above] {\tiny $3$} (5,0) node[below] {\tiny $4$} (7,2) node[above] {\tiny $4$} (9,0) node[below] {\tiny $5$} (9,2) node[above] {\tiny $5$};
  \end{tikzpicture}
  \end{minipage}
  \caption{Case 4.III): $C_1$ and $C_2$ are simple in $\cH(2,1^2)$ (left) and $\cH(1^4)$ (right)}
  \label{fig:4III:C1C2:simple}
  \end{figure}

Let us now consider the case $k_4=4$, which means that $M\in \cH(1^4)$. Again, by a careful inspection, one  can show that the bottom of $C_3$ contains only one saddle connection.  The unique cylinder diagram corresponding to this case is shown in Figure~\ref{fig:4III:C1C2:simple} right. We can also easily show that there is a free simple cylinder $D$ contained in $\ol{C}_4$, whose boundary contains two distinct zeros.

\medskip

\noindent \ul{$\bullet$ Case $k_1+k_2=3$}. Up to a renumbering, we can assume that $k_1=1$ and $k_2=2$. We first notice that the top of $C_1$ cannot be contained in the bottom of $C_3$ otherwise we have a contradiction to \cite[Lem. 2.11]{AulicinoNguyenGen3TwoZeros}. Thus the top of $C_1$ must be contained in the bottom of $C_4$. It follows from Lemma~\ref{lm:sc:topC1C2:botC3} that  both saddle connections in the top of $C_2$ are contained in the bottom of $C_4$. Consequently, the bottom of $C_3$ must be contained in the top of $C_4$.

If $k_4=2$, that is $M\in \cH(2,1^2)$, then the bottom of $C_3$ contains a single saddle connection. The unique corresponding cylinder diagram corresponding to this is shown in Figure~\ref{fig:4III:C1sim:C2semsim} (left). Observe that there is a free simple cylinder $D$ contained in $\ol{C}_4$. Remark that the boundary of $D$ contains only the double zero of $M$.
Twisting $C_4$ and simultaneously $\{C_1,C_2,C_3\}$ and using the fact that square-tiled surfaces are dense in $\cM$, we can assume that $M$ is square-tiled and there is a vertical cylinder $D_1$ crossing only $C_1,C_3,C_4$. Note that $D_1$ fills $C_1$ and is disjoint from $D$ and $C_2$. There must exist vertical cylinders $D_2,\dots,D_s$,  $\cM$-parallel to $D_1$ that fill  $C_2$. But it is easy to see that one of the cylinders in the family $\{D_2,\dots,D_s\}$ must intersect $D$. Since $D_1$ does not intersect $D$, this is a contradiction which means that this case cannot occur.

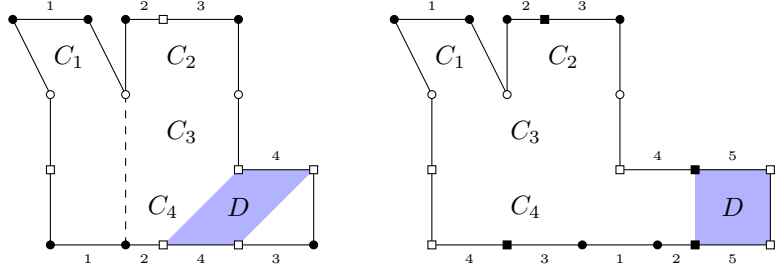
\begin{figure}[htb]
 \begin{minipage}[t]{0.45\linewidth}
\centering
  \begin{tikzpicture}[scale=0.5]
   \fill[blue!30] (3,0) -- (5,0) --( 7,2) -- (5,2) -- cycle;
  \draw (0,0) -- (7,0) -- (7,2) -- (5,2) -- (5,6) -- (2,6) -- (2,4) -- (1,6) -- (-1,6) -- (0,4) -- cycle;
  \draw[thin, dashed] (2,4) -- (2,0);
  \foreach \x in {(0,4), (2,4), (5,4)} \filldraw[fill=white] \x circle (3pt);
  \foreach \x in {(-1,6), (1,6),(2,6), (5,6), (0,0), (2,0),(7,0)} \filldraw[fill=black] \x circle (3pt);
  \foreach  \x in {(0,2),(3,6),(3,0), (5,0),(5,2),(7,2)} \filldraw[fill=white] \x +(-3pt,-3pt) rectangle +(3pt,3pt);
  \draw (0.5,5) node {$C_1$} (3.5,5) node {$C_2$} (3.5,3) node {$C_3$} (3,1) node {$C_4$} (5,1)  node {$D$};
   \draw (0,6) node[above] {\tiny $1$} (1,0) node[below] {\tiny $1$} (2.5,6) node[above] {\tiny $2$} (2.5,0) node[below] {\tiny $2$} (4,6) node[above] {\tiny $3$} (6,0) node[below] {\tiny $3$} (4,0) node[below] {\tiny $4$} (6,2) node[above] {\tiny $4$};
  \end{tikzpicture}
  \end{minipage}
  \begin{minipage}[t]{0.45\linewidth}
\centering
  \begin{tikzpicture}[scale=0.5]
  \fill[blue!30] (7,0) rectangle (9,2);
  \draw (0,0) -- (9,0) -- (9,2) -- (5,2) -- (5,6) -- (2,6) -- (2,4) -- (1,6) -- (-1,6) -- (0,4) -- cycle;
  \foreach \x in {(0,4), (2,4), (5,4)} \filldraw[fill=white] \x circle (3pt);
  \foreach \x in {(-1,6), (1,6),(2,6), (5,6), (4,0), (6,0)} \filldraw[fill=black] \x circle (3pt);
  \foreach  \x in {(0,2),(0,0), (5,2),(9,0),(9,2)} \filldraw[fill=white] \x +(-3pt,-3pt) rectangle +(3pt,3pt);
  \foreach  \x in {(2,0),(3,6),(7,2),(7,0)} \filldraw[fill=black] \x +(-3pt,-3pt) rectangle +(3pt,3pt);
  \draw (0.5,5) node {$C_1$} (3.5,5) node {$C_2$} (2.5,3) node {$C_3$} (2.5,1) node {$C_4$} (8,1)  node {$D$};
   \draw (0,6) node[above] {\tiny $1$} (5,0) node[below] {\tiny $1$} (2.5,6) node[above] {\tiny $2$} (6.5,0) node[below] {\tiny $2$} (4,6) node[above] {\tiny $3$} (3,0) node[below] {\tiny $3$} (1,0) node[below] {\tiny $4$} (6,2) node[above] {\tiny $4$} (8,2) node[above] {\tiny $5$} (8,0) node[below] {\tiny $5$};
  \end{tikzpicture}
  \end{minipage}
  \caption{Case 4.III): $C_1$ is simple and $C_2$ is semi-simple}
  \label{fig:4III:C1sim:C2semsim}
\end{figure}

If $k_4=3$, that is $M\in \cH(1^4)$, then by a careful inspection, we also have that the bottom of $C_3$ only contains one saddle connection. Hence, there are two saddle connections contained in both the top and bottom of $C_4$. Let  $D$ be the simple cylinder in $\ol{C}_4$ as shown  Figure~\ref{fig:4III:C1sim:C2semsim} (right), then one can easily show that $D$ is free and we are done.

\medskip

\noindent \ul{$\bullet$ Case $k_1+k_2=4$.} We have $k_4 \in\{1,2\}$.  By Lemma~\ref{lm:C4III:C4notS}, we know that $k_4=2$ and $M \in \cH(1^4)$.  If there is a saddle connection that is contained in both top and bottom of $C_4$, then we have a free simple cylinder $D$ in $\ol{C}_4$ whose boundary contains two distinct zeros and the lemma follows.  Assume from now on that there is no saddle connection that is contained in both top and bottom of $C_4$.

\medskip

 \ul{\bf Claim 1:} {\em The top of either $C_1$ or $C_2$ cannot be entirely contained in the bottom of $C_4$.}
\begin{proof}
 If the top of either $C_1$ or $C_2$ is contained in the bottom of $C_4$, then by the proof of Lemma~\ref{lm:sc:topC1C2:botC3} the tops of both $C_1$ and $C_2$ are contained in the bottom of $C_4$.  In this case, the bottom of $C_4$ must contain a saddle connection not in the tops of $C_1$ and $C_2$ since otherwise we would have $C_3$ and $C_4$ homologous.  Such a saddle connection must be also  contained in the top of $C_4$  which contradicts our hypothesis.
\end{proof}

\ul{\bf Claim 2:} {\em We have $k_1=k_2=2$.}
 \begin{proof}
 Assume that $k_1=1$ which means that $C_1$ is a simple cylinder. By \cite[Lem. 2.11]{AulicinoNguyenGen3TwoZeros}, the top of $C_1$ is not contained in the bottom of $C_3$, thus it is contained in the bottom of $C_4$. But this is already excluded by the previous claim.
\end{proof}

\ul{\bf Claim 3:} {\em The bottom of $C_4$ contains at most two saddle connections.}
\begin{proof}
The hypothesis that no saddle connection in the top of $C_4$ is also contained in its bottom implies that the top of $C_4$ is contained in the bottom of $C_3$. Claim 1 implies that at least one saddle connection in the top of $C_1$ (resp. $C_2$) is contained in the bottom of $C_3$. Thus the bottom of $C_3$ contains at least four saddle connections (the top of $C_4$ and at least two other saddle connections). Recall that we have 8 saddle connections,  and the bottom of $C_1$ (resp. $C_2$) contains one saddle connection. Hence the bottom of $C_4$ contains at most two saddle connections.
\end{proof}

\ul{\bf Claim 4:} {\em The top of $C_i, \, i \in \{1,2,4\}$, consists of two saddle connections between two distinct simple zeros.}
\begin{proof}
If the top of either $C_1,C_2$, or $C_4$ contains a single zero, then we have two horizontal saddle connections joining this zero to itself. Since all the zeros are simple, there are no saddle connections from this zero to another one. But this contradicts the condition that the graph $\Gr$ consisting of the horizontal saddle connections that do not contain $x_0$ is connected.
\end{proof}
Assume  that the bottom of $C_4$ consists of two saddle connections. Since the top of either $C_1$ or $C_2$ cannot be contained in the bottom of $C_4$, one saddle connection in the bottom of $C_4$ is contained in the top of $C_1$ and the other one is contained in the top of $C_2$. From the same argument as above, we see that there are two distinct zeros in the bottom of $C_4$.  Therefore, there must be a zero that is contained in both top and bottom of $C_4$. Note that this zero is  contained in the tops of $C_1, C_2$, and in the bottom of $C_3$. By an angle count, one can easily see that the total angle at this zero is at least $5\pi$,  which is a contradiction since all of the zeros are simple. We can then conclude that the bottom of $C_4$ consists of a single saddle connection, which means that $C_4$ is semi-simple. The proof of the lemma is now complete.
\end{proof}

\section{Proof  of  Lemma~\ref{lm:C4II:preliminary}}\label{sec:proof:lm:C4II:preliminary}

\begin{lemma}
\label{Case4IIEqCls}
Let $M$ be a horizontally periodic translation surface satisfying Case 4.II) in a rank two affine manifold $\cM \subset \cH(2,1^2) \cup \cH(1^4)$.  Assume that $M$ is $\cM$-cylindrically stable. Then $C_3$ and $C_4$ are $\cM$-parallel.
\end{lemma}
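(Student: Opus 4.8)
The plan is to run a symplectic dimension count, essentially identical to the proof of Lemma~\ref{4CylCaseIIIEqCls}, to cut the number of equivalence classes down to two, and then eliminate the single remaining undesirable configuration by a geometric argument with the free cylinder.

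First I would record the homological picture. Since $C_1$ and $C_2$ are the homologous cylinders we have $[\gamma_1]=\pm[\gamma_2]$, so $C_1$ and $C_2$ are $\cM$-parallel for any affine manifold containing $M$ (see \cite{WrightCylDef}). The topological type in Case~4.II) has two parts, so there is exactly one homological relation among $\gamma_1,\dots,\gamma_4$, namely $[\gamma_1]=\pm[\gamma_2]$; hence $\{[\gamma_1],[\gamma_3],[\gamma_4]\}$ is linearly independent and spans a Lagrangian subspace $L\subset H_1(M,\R)$. As in Lemma~\ref{4CylCaseIIIEqCls}, identify $H^1(M,\Sigma,\R)$ with $H_1(M\smin\Sigma,\R)$ and set $W:=p(T^\R_M\cM)$, a symplectic subspace of $H_1(M,\R)$ of dimension $4$ because $\mathrm{rk}(\cM)=2$. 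For each equivalence class $\cD$ of horizontal cylinders, shearing $\cD$ produces, by Theorem~\ref{thm:Wright:Cyl:Def}, a tangent vector whose image in $W$ is Poincar\'e dual to a positive combination of the $[\gamma_i]$ with $C_i\in\cD$; these vectors lie in $W\cap L$, and since distinct classes involve disjoint cylinders and the only relation among the $[\gamma_i]$ is $[\gamma_1]=\pm[\gamma_2]$, they are linearly independent. As $W\cap L$ is isotropic in the symplectic space $W$ we get $\#\{\text{equivalence classes}\}\le\dim(W\cap L)\le 2$. Since $C_1,C_2$ lie in a common class, the only possibilities are: one class (then $C_3,C_4$ are $\cM$-parallel, done); $\{C_1,C_2\},\{C_3,C_4\}$ (done); or, after relabelling $C_3\leftrightarrow C_4$ if needed, $\{C_1,C_2,C_3\}$ and $\{C_4\}$ with $C_4$ free.

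It then remains to rule out the last configuration, and this is where the real work lies: the linear-algebra count does not exclude it, so one must argue geometrically with $C_4$. If $C_4$ is semi-simple, then after a (possibly extended) twist it carries a single vertical saddle connection joining two distinct zeros, and collapsing it produces a rank two affine manifold in a stratum with fewer zeros; by Lemma~\ref{lm:s:cyl:dist:sim:zeros} and the list of strata this manifold lies in $\cH(2,2)$ or $\cH(2,1^2)$ (the case $\cH(3,1)$ being excluded), hence by the classification of rank two loci in those strata (from \cite{AulicinoNguyenGen3TwoZeros} for $\cH(2,2)$, and from Theorem~\ref{thm:rk2:H211:H1111}, Part~I, for $\cH(2,1^2)$, which is available when the lemma is applied in $\cH(1^4)$) the collapsed surface $M'$ carries an involution with derivative $-\id$ and exactly four fixed points. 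On $M'$ the three cylinders $C_1,C_2,C_3$ persist with $[\gamma_1]=\pm[\gamma_2]$, so such an involution must fix $C_3$ and either swap $C_1\leftrightarrow C_2$ or fix them both; combining the two fixed points forced in the interior of $C_3$ with the fixed zero(s) imposed by the relevant Prym stratum contradicts having only four fixed points. If instead $C_4$ is not semi-simple, then $\ol{C_4}$ contains a transverse simple cylinder $D$; any cylinder $\cM$-parallel to $D$ would have to meet $C_1$ or $C_2$ since $C_4$ is disjoint from them, so $D$ is free, and twisting $D$ to be vertical and perturbing to a square-tiled surface reduces us, after exchanging the roles of the two directions, to a horizontally periodic surface carrying a free simple cylinder, i.e. to the previous case. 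The main obstacle is thus the elimination of the configuration $\{C_1,C_2,C_3\},\{C_4\}$: the fixed-point bookkeeping has to be done case by case according to which lower stratum we land in (distinguishing $\dcoverhyp$, $\dcoverodd$, $\tilde\cQ(4,-1^4)$ in $\cH(2,2)$, and tracking whether $C_4$ carried the double zero or two simple zeros when collapsed), and the non-semi-simple sub-case requires a short combinatorial inspection of the Case~4.II) cylinder diagrams to produce the transverse free simple cylinder.
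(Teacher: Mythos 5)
Your reduction is sound and matches the paper's first step: the symplectic/Lagrangian count shows $C_3$ and $C_4$ cannot both be free, so after relabelling the only configuration to exclude is one of them free and the other $\cM$-parallel to $\{C_1,C_2\}$. The gap is in how you exclude it. In your semi-simple branch you collapse the free horizontal cylinder ($C_4$ in your labelling) itself, so only $C_1,C_2,C_3$ survive on $M'$, and the involution of $M'$ is forced to fix only \emph{one} cylinder: it cannot exchange $C_3$ with $C_1$ or $C_2$ (homologous pairs map to homologous pairs), but it is perfectly free to exchange $C_1$ and $C_2$. In that case your bookkeeping yields only the two interior fixed points of $C_3$, and this is compatible with exactly four fixed points in every candidate locus: in $\prymthreezero$ you get $2$ plus the fixed double zero $=3\leq 4$; in $\prym$ or $\dcoverodd$ you get $2\leq 4$; in $\dcoverhyp$ you get $2$ plus the two fixed zeros $=4$. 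So no contradiction follows from counting fixed points, and nothing in your argument contradicts the assumed parallelism of the non-free cylinder with $\{C_1,C_2\}$. This is precisely why the paper does something different: it collapses a transverse free simple cylinder $D$ contained in the closure of the free \emph{horizontal} cylinder, so that all four horizontal cylinders persist on $M'$. Then in the $\cH(2,1^2)$ case two cylinders are forced to be fixed (five fixed points, contradiction), while in the $\cH(2,2)$ case the contradiction is not a fixed-point count at all: one reads off that $C_4$ is not $\cM'$-parallel to $C_1,C_2$ on $M'$ and transports this back to $\cM$ via the identification of $T_{M'}\cM'$ with a subspace of $T_M\cM$ from \cite{MirzakhaniWrightBoundary}, contradicting the assumed $\cM$-parallelism; one sub-case (a double zero on $\partial D$) additionally needs a cylinder-proportion computation, and the case where the free cylinder is simple needs an extended cylinder collapse.

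Your non-semi-simple branch has a second defect: after making $D$ vertical, passing to a square-tiled surface and ``exchanging the two directions,'' the new horizontal decomposition need not satisfy Case 4.II), let alone reproduce the configuration $\{C_1,C_2,C_3\},\{C_4\}$ with a free semi-simple cylinder, so ``reduce to the previous case'' is not available; moreover any contradiction must ultimately concern the equivalence classes of the \emph{original} horizontal cylinders, which the rotated picture no longer sees (the useful move is to collapse $D$ while keeping the horizontal direction, as the paper does). A smaller point: the freeness of $D$ should be argued via Proposition~\ref{CylinderPropProp} --- $P(D,\{C_4\})=1$ forces any cylinder $\cM$-parallel to $D$ to lie entirely in $\ol{C}_4$, and the genus-two picture of $\ol{C}_4$ rules out a second such cylinder --- rather than the assertion that a parallel cylinder ``would have to meet $C_1$ or $C_2$.''
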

\begin{proof}
Since $\cM$ has rank two, both $C_3$ and $C_4$ cannot be free otherwise we would have a Lagrangian subspace of dimension three in $p(T^\R_M\cM)$.  Hence, it suffices to consider the possibility that one of $C_3$ or $C_4$ is free.  Without loss of generality, assume by contradiction that $C_3$ is free and $C_4$ is $\cM$-parallel to $\{C_1,C_2\}$.

Let us first consider the case $\ol{C}_3$ contains a simple cylinder $D$.  Recall that  $C_3$ can be viewed as  one cylinder in a 2-cylinder decomposition of a genus two translation surface.  Hence, there are at most two saddle connections that are contained in both top and bottom borders of $C_3$. From this observation, it is not difficult to see that $D$ is free.

Assume that the boundary of $D$ contains two simple zeros, then we can collapse $D$ to get a surface $M'$ which is contained in a rank two affine submanifold $\cM'$ of either $\cH(2,2)$ or $\cH(2,1^2)$. Note that $M'$ also has a cylinder decomposition satisfying Case 4.II) in the horizontal direction.

 By assumption, we know that $M'$ has an involution $\inv$ with four fixed points. By inspection, we see that if $M' \in \cH(2,1^2)$, then $\inv$ must fix $C_3$ and $C_4$ and exchange $C_1$ and $C_2$. But this would imply that $\inv$ has at least five fixed points since the double zero must be fixed by $\inv$ and we have $4$ regular fixed points in the interiors of $C_3$ and $C_4$. So we have a contradiction in this case.
 If $M'\in \cH(2,2)$, then we have two possibilities, either $\inv$ fixes $C_1,C_2$ and exchanges $C_3,C_4$, or $\inv$ fixes $C_3, C_4$ and exchanges  $C_1$ and $C_2$. In either case, we see that $C_4$ is not $\cM'$-parallel to $C_1,C_2$. Thus in any neighborhood of $M'$ in $\cM'$, we can find a surface $M'_1$ on which $C_4$ is not parallel to $C_1,C_2$.  By the isomorphism from \cite{MirzakhaniWrightBoundary} of the tangent space of $\cM'$ with a subspace of the tangent space of $\cM$, we see that there exists in any neighborhood of $M$ in $\cM$ a surface $M_1$ on which $C_4$ is not parallel to $\{C_1,C_2\}$, which means that $C_4$ is not $\cM$-parallel to
$\{C_1,C_2\}$.

 \medskip

Suppose now that the boundary of $D$ contains a double zero, which means that $M\in \cH(2,1^2)$. The assumption means that $C_3$ is contained in a translation surface in the stratum $\cH(2)$. Since there is only one cylinder diagram for 2-cylinder decompositions of surfaces in $\cH(2)$, we see that one side of $C_1$ (resp. $C_2$) contains only one saddle connection, which means that $C_1$ and $C_2$ are semi-simple. Note also that in this case any vertical ray that crosses $C_1$ or $C_2$ must intersect $C_3$.

Consider now $C_4$. If $\ol{C}_4$ contains a simple cylinder, then we have a contradiction by \cite[Lem. 2.12]{AulicinoNguyenGen3TwoZeros}. Thus in this case $C_4$ must be simple. Note that there is only one 4-cylinder diagram satisfying all of these conditions which is shown in Figure~\ref{fig:C4II:H211:C4sim}.
By a similar argument as in \cite[Lem. 6.17]{AulicinoNguyenGen3TwoZeros}, we can twist $\{C_1,C_2,C_4\}$ and $C_3$ independently so that  $D$ is vertical, and there exists a vertical cylinder $E$ crossing each of $C_1,C_2,C_3$ once. Since $C_4$ is $\cM$-parallel to $C_1$ and $C_2$, there must exists a vertical cylinder $E'$ in the equivalence class of $E$ that crosses $C_4$. Let $h_i$ be the height of $C_i$ and $n_i$ be the number of times that a core curve of $E'$ crosses $C_i$. Note that we have $n_1=n_2=n_3=n$, and $0< n_4 \leq n$. By the Cylinder Proportion Lemma we have
$P(E,C_3)=P(E',C_3)$, which implies
$$
\frac{h_3}{h_1+h_2+h_3}=\frac{nh_3}{n(h_1+h_2+h_3) +n_4h_4} \Leftrightarrow n_4h_4=0.
$$
But this is clearly impossible. Thus we also have a contradiction.

\begin{figure}[htb]
  \centering
  \begin{tikzpicture}[scale=0.3]
  \fill[blue!30] (9,6) rectangle (12,9);
  \fill[green!30] (0,4) rectangle (2,11);
  \draw (0,11) -- (0,4) -- (2,4) -- (2,1) -- (5,1) -- (5,4) -- (9,4) -- (9,6) --  (12,6) -- (12,9) -- (9,9) -- (9,11) -- cycle; \draw (0,9) -- (9,9) (0,6) -- (9,6);
    \foreach \x in {(2,11), (2,4), (5,4), (9,4)} \filldraw[fill=white] \x circle (3pt);
    \foreach \x in {(0,11), (0,4), (2,1), (5,1), (6,11), (9,11), (9,4)} \filldraw[fill=black] \x circle (3pt);
    \foreach \x in {(0,9), (0,6), (9,9), (9,6), (12,9), (12,6)} \filldraw[fill=white] \x +(-3pt,-3pt) rectangle +(3pt,3pt);
    \draw (10.5,7.5) node {$D$};
    \draw (1,8) node {$E$};
    \draw (5,10) node {$C_1$};
    \draw (5,7.5) node {$C_3$};
    \draw (5,5) node {$C_2$};
    \draw (3.5,2.5) node {$C_4$};
 \end{tikzpicture}
 \caption{Cylinder decomposition in Case 4.II), $M\in \cH(2,1,1)$, $C_4$ is simple}
 \label{fig:C4II:H211:C4sim}
\end{figure}
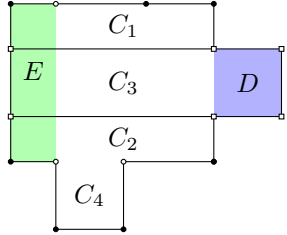

It remains to consider the case $C_3$ is simple. One can twist $C_3$ so that it contains no vertical saddle connections and perform an extended cylinder collapse from
\cite[Proof. of Lem. 4.7]{AulicinoNguyenGen3TwoZeros} to get a new cylinder, which we call $C_3$ by abuse of notation.
This new cylinder contains a simple cylinder, so we are back to the previous case. The proof of the lemma is then complete.
\end{proof}

\begin{lemma}
\label{C3CylImpC4Cyl}
The cylinder $C_3$ contains a simple cylinder if and only if $C_4$ contains a simple cylinder.\footnote{The simple cylinders in this lemma need not be parallel, but a posteriori we will see that they are.}
\end{lemma}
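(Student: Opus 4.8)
The plan is to prove the implication ``if $\overline{C_3}$ contains a simple cylinder (necessarily in a direction transverse to the horizontal) then so does $\overline{C_4}$'' and deduce the equivalence by the symmetry $C_3\leftrightarrow C_4$. By Lemma~\ref{lm:C4II:C3C4:likeg2} a cylinder $C_i$ ($i=3,4$) is simple exactly when $k_i=0$, and in that case $\overline{C_i}$ contains no cylinder transverse to the horizontal direction: a non-horizontal trajectory through $C_i$ leaves $\overline{C_i}$ through one of its two boundary saddle connections, and these are distinct precisely because $k_i=0$. Hence it suffices to show that a transverse simple cylinder inside $\overline{C_3}$ forces $C_4$ to be non-simple; Lemma~\ref{lm:C4II:C3C4:likeg2} then furnishes a saddle connection contained in both the top and the bottom of $C_4$, and a closed geodesic crossing it exactly once (after a suitable twist of $C_4$) sweeps out a simple cylinder in $\overline{C_4}$. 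Throughout I will use that the equivalence classes of horizontal cylinders are exactly $\{C_1,C_2\}$ and $\cC:=\{C_3,C_4\}$: this follows from Lemma~\ref{Case4IIEqCls} (so $C_3$ and $C_4$ are $\cM$-parallel), the fact that $C_1$ and $C_2$ are $\cM$-parallel, and the rank two hypothesis together with the homological structure of Case 4.II) (the core curves $\gamma_1,\dots,\gamma_4$ satisfy the single relation $\gamma_1=\gamma_2$, so $\gamma_1,\gamma_3,\gamma_4$ are independent; if all four cylinders were $\cM$-parallel the symplectic complement of $p(T^\R_M\cM)$ would be forced inside the isotropic span of $\gamma_1,\gamma_3,\gamma_4$, a contradiction).

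Now let $D\subset\overline{C_3}$ be a simple cylinder transverse to the horizontal, and let $\cD$ be its $\cM$-equivalence class (so $\cD\neq\cC$ since the directions differ). Since $D\subset\overline{C_3}\subset\overline{C_3\cup C_4}$ we have $P(D,\cC)=1$, whence by the Cylinder Proportion Lemma (Proposition~\ref{CylinderPropProp}) every $D'\in\cD$ satisfies $P(D',\cC)=1$, i.e.\ $D'$ is contained, up to measure zero, in $\overline{C_3}\cup\overline{C_4}$. Assume for contradiction that $C_4$ is simple. Because $C_3$ and $C_4$ lie in different components of the surface obtained by cutting $M$ along core curves of $C_1$ and $C_2$, every horizontal saddle connection lies in one of those components, so $C_3$ and $C_4$ share no boundary saddle connection and $\overline{C_3}\cap\overline{C_4}=\emptyset$. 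Thus a connected cylinder $D'$ contained in $\overline{C_3}\cup\overline{C_4}$ lies entirely in $\overline{C_3}$ or entirely in $\overline{C_4}$; and $\overline{C_4}$, being the closure of a simple horizontal cylinder, contains no cylinder transverse to the horizontal direction. Hence every $D'\in\cD$ lies in $\overline{C_3}$, so no cylinder of $\cD$ meets the interior of $C_4$, giving $P(C_4,\cD)=0$. On the other hand $P(C_3,\cD)\ge \mathrm{Area}(D)/\mathrm{Area}(C_3)>0$ because $D\in\cD$ and $D\subset\overline{C_3}$. This contradicts $P(C_3,\cD)=P(C_4,\cD)$, which holds by Proposition~\ref{CylinderPropProp} since $C_3$ and $C_4$ are $\cM$-parallel.

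Therefore $C_4$ cannot be simple, which by the first paragraph yields a simple cylinder inside $\overline{C_4}$; the converse follows by exchanging the roles of $C_3$ and $C_4$, completing the proof. The main obstacle is the geometric input invoked in the middle paragraph — that the closure of a simple horizontal cylinder contains no transverse cylinder, and that $\overline{C_3}$ and $\overline{C_4}$ are disjoint in Case 4.II) — together with pinning down that the horizontal cylinders split into exactly the two equivalence classes $\{C_1,C_2\}$ and $\{C_3,C_4\}$; once these facts are in place the Cylinder Proportion Lemma closes the argument immediately.
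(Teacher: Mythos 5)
Your proof is correct and takes essentially the same route as the paper, whose two-line argument for this lemma is precisely the combination you spell out: $C_3$ and $C_4$ are adjacent only to $C_1$ and $C_2$ (so their closures are disjoint and a transverse cylinder trapped in $\ol{C}_3\cup\ol{C}_4$ sits in one of them), and the Cylinder Proportion Lemma applied to the $\cM$-parallel pair $C_3,C_4$ and to the class of $D$ gives the contradiction; your explicit verification that $\{C_3,C_4\}$ is its own equivalence class just fills in a detail the paper leaves implicit. One small remark: the parenthetical ``after a suitable twist of $C_4$'' is unnecessary, since once $k_4>0$ a simple cylinder crossing the repeated saddle connection already exists on $M$ itself in an appropriate transverse direction, so no deformation of the surface is needed.
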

\begin{proof}
Recall that $C_3$ (resp. $C_4$) either is simple, or contains a simple cylinder. Since $C_3$ (resp. $C_4$) is only adjacent to $C_1$ and $C_2$, this lemma is an easy consequence of the Cylinder Proportion Lemma.
\end{proof}

\begin{lemma}\label{lm:C4II:C3C4:similar}
Let $k_3$ (resp. $k_4$) be the number of saddle connections contained in both top and bottom of $C_3$ (resp. $C_4$). Then $k_3=k_4$.
\end{lemma}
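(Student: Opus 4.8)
The plan is to reduce to $1\le k_3,k_4\le 2$ and then rule out the asymmetric possibility $\{k_3,k_4\}=\{1,2\}$ by tracking how $\cM$-parallel classes of transverse (vertical) cylinders distribute among $C_1,C_2,C_3,C_4$, using the Cylinder Proportion Lemma (Proposition~\ref{CylinderPropProp}).

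\textbf{Reduction.} First I would observe that by Lemma~\ref{lm:C4II:C3C4:likeg2} one has $k_i=0$ exactly when $C_i$ is simple, and by Lemma~\ref{C3CylImpC4Cyl} the cylinder $C_3$ is simple precisely when $C_4$ is; hence $k_3=0\iff k_4=0$, and the lemma is immediate when some $k_i$ vanishes. Next, cutting $M$ along core curves of $C_1$ and $C_2$ and interchanging the gluings realizes $\ol{C_3}$ (resp.\ $\ol{C_4}$) as the closure of one cylinder in a two-cylinder decomposition of a genus two translation surface, so $k_3,k_4\le 2$. Thus it remains to derive a contradiction from $k_3=1$ and $k_4=2$ (after possibly exchanging $C_3$ and $C_4$).

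\textbf{Main step.} Recall that $\cC=\{C_3,C_4\}$ is an equivalence class (Lemma~\ref{Case4IIEqCls}), that $\{C_1,C_2\}$ is an equivalence class, and that $\ol{C_3}$ and $\ol{C_4}$ are disjoint, since they lie in the two different spheres of the topological degeneration of Case 4.II). Suppose $k_3=1$ and $k_4=2$. Then $\ol{C_4}$ carries two families of transverse cylinders wrapping its two inner handles, while $\ol{C_3}$ carries essentially one such family (wrapping its single inner handle). Any cylinder $D$ belonging to one of these $\ol{C_4}$-families satisfies $P(D,\cC)=1$ because $D\subset\ol{C_4}$; by the Cylinder Proportion Lemma every cylinder $\cM$-parallel to $D$ again has $\cC$-proportion $1$, so it lies in $\ol{C_3}\cup\ol{C_4}$, and since these closures are disjoint, entirely in one of them. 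As $\ol{C_3}$ supports only one family of transverse cylinders, at least one of the two $\ol{C_4}$-families --- say $\cD$ --- can never meet $C_3$; for such $\cD$ one gets $P(C_3,\cD)=0<P(C_4,\cD)$, contradicting $P(C_3,\cD)=P(C_4,\cD)$ (valid since $C_3$ and $C_4$ are $\cM$-parallel). In the borderline situation where the two $\ol{C_4}$-families collapse to a single $\cM$-parallel class, I would instead twist $\{C_1,C_2\}$ and $\cC$ to produce a transverse cylinder $E$ crossing $C_1,C_2$ and exactly one of $C_3,C_4$ once each, and run the identity $P(E,\cC)=P(E',\cC)$ (together with $P(C_1,[E])=P(C_2,[E])$) exactly as in the proof of Proposition~\ref{prop:H211NoCase4II}, again reaching a contradiction. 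Interchanging $C_3$ and $C_4$ disposes of the reversed labelling, so $k_3=k_4$.

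\textbf{Main obstacle.} The delicate part is making precise the statement that $k_3<k_4$ forces an $\cM$-parallel class of transverse cylinders of $\ol{C_4}$ that cannot reach $C_3$, and handling the second sub-case above; both require running through the admissible four-cylinder diagrams of Case 4.II) and the topological types of the genus two two-cylinder realizations of $\ol{C_3}$ and $\ol{C_4}$, along the lines of the case analysis in \cite[\S6]{AulicinoNguyenGen3TwoZeros}.
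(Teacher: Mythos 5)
Your reduction (the case $k_3=k_4=0$ via Lemma~\ref{C3CylImpC4Cyl}, the bound $k_i\le 2$ from the genus two realization, and the reduction to $k_3=1$, $k_4=2$) is exactly the paper's, and the general tool---the Cylinder Proportion Lemma applied to transverse cylinders contained in $\ol{C}_3\cup\ol{C}_4$---is the right one. But the central step of your main argument is not established. The assertion that ``at least one of the two $\ol{C}_4$-families can never meet $C_3$'' because ``$\ol{C}_3$ supports only one family'' is precisely what needs proof, and the handle-counting heuristic behind it fails: $\ol{C}_3$ carries infinitely many transverse cylinders (their core curves may cross $\sig_3$ with arbitrary multiplicity, in many directions), and an $\cM$-parallel class has no reason to respect your ``families''---a single class may contain cylinders crossing $\sig_4$, cylinders crossing $\sig'_4$, cylinders crossing both, together with a cylinder of $\ol{C}_3$, in which case $P(C_3,\cD)=P(C_4,\cD)$ is a constraint, not a contradiction. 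Indeed, in the paper's own proof the first vertical cylinder considered, $D$ with $\ol{D}=\ol{R}$ (crossing both $\sig_4$ and $\sig'_4$), turns out to be $\cM$-parallel to a simple vertical cylinder $D'\subset\ol{C}_3$ through $\sig_3$, and no contradiction arises at that stage. Your fallback branch is also not an exhaustive complement of the first case, and quoting the computation of Proposition~\ref{prop:H211NoCase4II} is problematic: that proof invokes Lemma~\ref{lm:C4II:preliminary} (hence the present lemma) to set up the unique $\sig_4$ and the rectangle $R_4$, and with $k_4=2$ the proportion $P(E',\cC)$ there would have to be recomputed, so the step cannot simply be cited.

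What is missing is a mechanism that actually forces a direction in which $\ol{C}_4$ contains a cylinder while $\ol{C}_3$ contains no cylinder $\cM$-parallel to it. The paper manufactures such a direction with a cylinder deformation: it first identifies the vertical equivalence class as exactly $\{D,D'\}$ (any other vertical cylinder must cross $C_1$ or $C_2$), then stretches this class so that the heights of $D,D'$---hence the lengths of $\sig_3,\sig_4,\sig'_4$---become very small, and only then considers the simple cylinder $E\subset\ol{R}$ crossing $\sig_4$ once. Its direction is now so close to vertical that no cylinder contained in $\ol{C}_3$ can be parallel to it; since $P(E,\cC)=1$, the Proportion Lemma forces a partner $E'$ meeting $C_3$, which must therefore cross $C_1$ or $C_2$, giving $P(E',\cC)<1$ and the contradiction. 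Without this deformation step (or some substitute producing the required asymmetric direction), your counting argument does not close the case $k_3=1$, $k_4=2$.
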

\begin{proof}
 Note that we have $0 \leq k_i \leq 2, \, i=3,4$, since $C_3$ (resp. $C_4$) can be viewed as a cylinder in a $2$-cylinder decomposition of a translation surface of genus two. If $k_3=0$, then $C_3$ is simple, and by Lemma~\ref{C3CylImpC4Cyl}, we know that $C_4$ is simple, hence $k_4=0$. Thus we can assume that $k_3$ and $k_4$ are both non-zero.  Since the roles of $C_3$ and $C_4$ can be exchanged, we only need to consider the case $k_3=1$ and $k_4=2$.

 Let us denote by $\sig_3$ the unique saddle connection which is contained in both the top and bottom of $C_3$.  The two saddle connections contained in both the top and bottom of $C_4$ are denoted by $\sig_4$ and $\sig'_4$. We can twist $C_4$ (and simultaneously $C_3$) such that there is a subdomain $R$ of $C_4$ isometric to a rectangle whose top and bottom sides are the union of $\sig_4$ and $\sig'_4$. Since $\cM$ is defined over $\Q$, we can assume that $M$ is a square-tiled surface, which means that the vertical direction is periodic.
There is a vertical cylinder $D$ whose closure  equals the closure of $R$. In particular, $D$ is contained in $\ol{C}_4$.

Since $C_3$ and $C_4$ are $\cM$-parallel, it follows that the closure of $C_3$ must contain a vertical cylinder $D'$ which is $\cM$-parallel to $D$. Note that $D'$ must be a simple cylinder and $\sig_3$ is entirely contained  in $D'$.

We now remark that the equivalence class of $D$ must be $\{D,D'\}$, since any other vertical cylinder must cross $C_1$ or $C_2$. We can ``stretch'' simultaneously $D$ and $D'$ so that their heights are very small with respect to the lengths of the horizontal saddle connections outside of $D\cup D'$. Note that as the heights of $D$ and $D'$ tend to zero, the lengths of $\sig_3,\sig_4,\sig'_4$ also decrease to zero.

Observe that we have a simple cylinder $E$ that is contained in the closure of $R$ consisting of simple closed geodesics crossing $\sig_4$ once. As the height of $D$ decreases to  zero, the direction of $E$ converges to the vertical direction.  There must exist a cylinder $E'$ that is $\cM$-parallel to $E$  which crosses $C_3$. But as the direction of $E$ is close to vertical, such a cylinder cannot be contained in the closure of $C_3$.  Hence, it must cross $C_1$ or $C_2$ from which we get a contradiction.
\end{proof}

\subsection*{Proof of Lemma~\ref{lm:C4II:preliminary}}
\begin{proof}
Lemma~\ref{lm:C4II:preliminary} follows from Lemmas \ref{Case4IIEqCls} and \ref{lm:C4II:C3C4:similar}.
\end{proof}

\section{6-Cylinder Diagrams in Genus Three}\label{sec:6cyl:diag:proof}

In this section, we give the proof of Proposition~\ref{prop:6CylDiags}.

The following lemma is well known to most of people in the field, we provide here a proof for the sake of completeness.

\begin{lemma}\label{lm:max:num:cyl:ppants}
Let $M$ be a horizontally periodic translation surface in a stratum $\cH(\kappa)$ of genus $g$, where $|\kappa|=n$. Denote by  $C_1,\dots,C_k$ the horizontal cylinders of $M$, and let  $\gamma_i$ be a core curve of $C_i$ for $i=1,\dots,k$. Then we have
\begin{itemize}
\item[(a)] $k \leq g+n-1$,

\item[(b)] If $k=g+n-1$ then complement of the curves  $\{\gamma_1,\dots,\gamma_k\}$ is the disjoint union of $n$ punctured spheres, each of which contains a unique singularity of $M$.  In particular, if $\kappa=(1,\dots,1)$, and $k=3g-3$, then each of those components is the interior of a pair of pants (or a thrice-punctured sphere).
\end{itemize}
\end{lemma}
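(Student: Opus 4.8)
\subsection*{Proof proposal for Lemma~\ref{lm:max:num:cyl:ppants}}

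The plan is to work with the \emph{separatrix diagram} $\Gamma$ of $M$: the embedded graph whose vertices are the zeros $z_1,\dots,z_n$ of $\omega$, of orders $m_1,\dots,m_n$ with $\sum_i m_i=2g-2$, and whose edges are the horizontal saddle connections, and to compare the topology of $M$ with that of $\Gamma$ via Euler characteristic. First I would record that at a zero of order $m$ the horizontal foliation has $2(m+1)$ prongs, so the valency of $z_i$ in $\Gamma$ is $2(m_i+1)$; by the handshake lemma the number $E$ of edges of $\Gamma$ satisfies $2E=\sum_{i=1}^n 2(m_i+1)$, i.e. $E=\sum_{i=1}^n(m_i+1)=(2g-2)+n$, and hence $\chi(\Gamma)=n-E=2-2g$.

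Next, fix a regular neighborhood $N$ of $\Gamma$ in $M$. Because $M$ is horizontally periodic, $M$ is the union of the open horizontal cylinders together with $\Gamma$ and its prongs, so $N$ is a compact surface with boundary, each cylinder $C_i$ meets $N$ in two collar annuli, and every boundary circle of $N$ arises as the top or bottom curve of exactly one cylinder: thus $\partial N$ has exactly $2k$ components, and $M\smin\bigcup_i\gamma_i$ deformation retracts onto $N$ (it is in fact homeomorphic to the interior of $N$, up to pushing the punctured collars back to $\partial N$). Let $c$ be the number of connected components of $N$ (equivalently of $\Gamma$), with genera $h_1,\dots,h_c$ and numbers of boundary circles $b_1,\dots,b_c$, so $\sum_j b_j=2k$. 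Equating the two computations $\chi(N)=\chi(\Gamma)=2-2g$ and $\chi(N)=\sum_{j=1}^c(2-2h_j-b_j)=2c-2H-2k$ with $H=\sum_j h_j$, we get $k=g-1+c-H$. Since $\Gamma$ has $n$ vertices we have $c\leq n$, and $H\geq 0$; hence $k\leq g+n-1$, which is (a).

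For (b), equality $k=g+n-1$ forces $c=n$ and $H=0$. Then every component of $\Gamma$ contains exactly one zero, so it is a bouquet of loops based at that zero, and the corresponding component of $N$ is a genus-zero surface with boundary, i.e. a holed sphere containing that single zero; since $M\smin\bigcup_i\gamma_i$ deformation retracts onto $N$, it is the disjoint union of $n$ punctured spheres, one for each zero. Finally, when $\kappa=(1,\dots,1)$ we have $n=2g-2$ and $k=3g-3=g+n-1$, so we are in the equality case; each $z_i$ has order $1$, hence the bouquet at $z_i$ has $m_i+1=2$ loops and its neighborhood has $\chi=1-2=-1$ and genus $0$, i.e. it is a $3$-holed sphere, the interior of a pair of pants. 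The only delicate point is the bookkeeping for $N$ — namely that $\partial N$ has exactly $2k$ circles, each bounding a unique cylinder — which is exactly where horizontal periodicity (no minimal components) is used; the rest is the routine Euler-characteristic count above.
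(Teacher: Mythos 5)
Your proof is correct and is essentially the paper's argument in slightly different clothing: the paper cuts $M$ along the core curves and applies additivity of the Euler characteristic to the resulting pieces (each containing at least one zero, each of nonnegative genus), which is exactly your count on the regular neighborhood $N$ of the separatrix diagram, since those pieces are precisely thickenings of the components of $\Gamma$ with $\sum_j b_j = 2k$ boundary circles. The only cosmetic differences are that you compute $\chi(\Gamma)=2-2g$ via the prong/handshake count (one can instead just note $\chi(N)=\chi(M)$ because the complement of $N$ is a union of annuli), and in the principal-stratum case you count loops in the bouquet where the paper invokes Gauss--Bonnet to get $\chi(\tilde M_j)=-1$.
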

\begin{proof}
Cut $M$ along the curves $\gamma_1,\dots,\gamma_k$, we get $m$ compact surfaces with boundary denoted by $\tilde{M}_1,\dots,\tilde{M}_m$. Since each $\tilde{M}_j$ must contain a singularity of $M$, we have $m \leq n$. Let $g_i$ and $r_i$ be respectively the genus and the number of boundary components  of $\tilde{M}_j$. Since we have
$$
\chi(M)=\chi(\tilde{M}_1)+\dots+\chi(\tilde{M}_m),
$$
it follows
$$
2-2g=\sum_{j=1}^m (2-2g_j-r_j) \leq \sum_{j=1}^m (2-r_j)=2m -\sum_{j=1}^m r_j=2m-2k \leq 2n-2k.
$$
Thus we have
$$
k \leq g+n-1.
$$
The equality occurs if and only if  $m=n$, and $g_j=0$, for $j=1,\dots,m$, which means that each $\tilde{M}_j$ is a sphere with some discs removed and contains a unique singularity of $M$.
If $\kappa=(1,\dots,1)$, each $\tilde{M}_j$ contains a cone point of angle $4\pi$. The Gauss-Bonnet Theorem then implies that we must have $\chi(\tilde{M}_j)=-1$.
\end{proof}

\medskip

\begin{proof}[Proof of Proposition~\ref{prop:6CylDiags}]
Let $C_i, \, i=1,\dots,6,$ denote the horizontal cylinders of $M$, and let $\gamma_i$ be a core curve of $C_i$. By Lemma~\ref{lm:max:num:cyl:ppants}, the family $\{\gamma_1,\dots,\gamma_6\}$ cuts $M$ into $4$ pairs of pants denoted by $\tilde{M}_1,\dots,\tilde{M}_4$. Let $x_j$ be the unique singularity of $M$ that is contained in $\tilde{M}_j$.

\begin{figure}[htb]
\centering
\begin{minipage}[t]{0.4\linewidth}
\centering
\begin{tikzpicture}[scale=0.3]
\draw (-5,8) -- (-4,4) -- (-4,0);
\draw (5,8) -- (4,4) -- (4,0);
\draw (-1,8) -- (0,4) -- (1,8);

\draw (-4,0) arc (180:360:4 and 1.5);
\draw[dashed] (4,0) arc (0:180: 4 and 1.5);

\draw (-1,8) arc (0:180: 2 and 1);
\draw (-5,8) arc (180:360:2 and 1);

\draw (5,8) arc (0:180:2 and 1);
\draw (1,8) arc (180:360:2 and 1);

\draw (-4,4) arc (180:360: 2 and 1);
\draw[dashed] (0,4) arc (0:180: 2 and 1);

\draw (0,4) arc (180:360: 2 and 1);
\draw[dashed] (4,4) arc (0:180: 2 and 1);


\filldraw[fill=white] (0,4) circle (4pt);
\end{tikzpicture}
\end{minipage}

\caption{A component $\tilde{M}_j$}
\label{fig:pairofpants}
\end{figure}
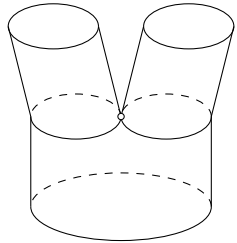
Here below, we record some properties of the cylinders $C_1,\dots,C_6$.
\begin{itemize}
\item[(a)] Each boundary component of $C_i$ has at most $2$ saddle connections, and contains a unique zero of $M$.

\item[(b)] Each zero is contained in the boundary of $3$ cylinders.

\item[(c)] Each cylinder contains two distinct zeros in its boundary.
\end{itemize}

For $i=1,\dots,6$, let $\delta_i$ be a saddle connection  in $C_i$ connecting the pair of zeros in its boundary. The union $\cup_{i=1}^6\delta_i$ is  an embedded graph   $\Gamma$ in $M$. This graph is also the dual graph of the nodal curve obtained from $M$ by pinching  $\gamma_1,\dots,\gamma_6$. By definition, $\Gamma$ has $4$ vertices and $6$ edges. There are $2$ admissible configurations for $\Gamma$ that are shown in Figure~\ref{fig:6cyl:dualgraph}. In Case 1, any pair of vertices are connected by only one edge, in Case 2 there are two pairs of vertices such that there are two edges between the vertices in each pair. We will derive the possible cylinder diagrams from the configurations of $\Gamma$.

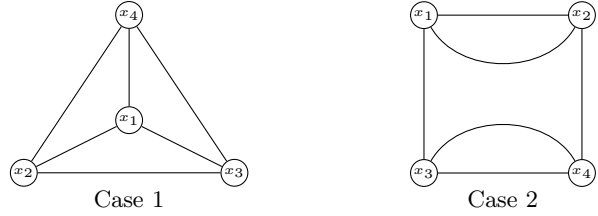
\begin{figure}[htb]
\centering
\begin{minipage}[t]{0.4\linewidth}
\centering
\begin{tikzpicture}[scale=0.35, inner sep=0.2mm, vertex/.style={circle, draw=black, fill=white, minimum size=1mm},>= stealth]
\node (center) at (0,0) [vertex] {\tiny $x_1$};
\node (above) at (0,4) [vertex] {\tiny $x_4$};
\node (left) at (-4,-2) [vertex] {\tiny $x_2$};
\node (right) at (4,-2) [vertex] {\tiny $x_3$};

\draw (above) to (right);
\draw (above) to (left);
\draw (left) to (right) ;
\draw (center) to (above);
\draw (center) to (left);
\draw (center) to  (right);

\draw (0,-3) node {\small Case 1};

\end{tikzpicture}
\end{minipage}
\begin{minipage}[t]{0.4\linewidth}
\centering
\begin{tikzpicture}[scale=0.35, inner sep=0.2mm, vertex/.style={circle, draw=black, fill=white, minimum size=1mm},>= stealth]
\node (above-left) at (-3,6) [vertex] {\tiny $x_1$};
\node (above-right) at (3,6) [vertex] {\tiny $x_2$};
\node (below-left) at (-3,0) [vertex] {\tiny $x_3$};
\node (below-right) at (3,0) [vertex] {\tiny $x_4$};

\draw (above-left) to (below-left);
\draw (above-right) to (below-right);
\draw (above-left) to (above-right) ;
\draw (below-left) to (below-right);
\draw (above-left) to[out=-60, in=240] (above-right);
\draw (below-left) to[out=60, in=120]  (below-right);

\draw (0,-1) node {\small Case 2};

\end{tikzpicture}
\end{minipage}

\caption{Configurations of the graph $\Gamma$}
\label{fig:6cyl:dualgraph}
\end{figure}

\medskip

\noindent \underline{\em Case 1:} Let $\ell_i$ be the circumference of $C_i$, and assume that $\ell_1=\max\{\ell_1,\dots,\ell_6\}$. We claim that each boundary component of $C_1$ contains two saddle connections. This is because otherwise $C_1$ is a semi-simple cylinder, and there would be another cylinder $C_i$ such that $\ell_i > \ell_1$.

We can assume that the zeros of $M$ in the top and bottom borders of $C_1$ are respectively $x_1$ and $x_2$.
We now remark that each saddle connection in the top border of $C_1$ is the bottom border of another cylinder. We can assume that the cylinders whose bottom border  is contained in the top of $C_1$ are $C_2$ and $C_3$. Similarly, there are two cylinders $C_i,C_j$ whose top border is contained in the bottom border of $C_1$. We claim that $\{i,j\}\cap\{2,3\}=\varnothing$ because otherwise there would be two edges in $\Gamma$ between $x_1$ and $x_2$. Thus we can assume that $\{i,j\}=\{4,5\}$.

Note that by the same argument we  see that the top borders of $C_2$ and $C_3$ contain two distinct zeros, which are neither $x_1$ nor $x_2$. The same is true for the bottom borders of  $C_4$ and $C_5$. Thus we can assume that the top of $C_2$ and the bottom of $C_4$ contain the same zero $x_3$. Consequently, the top of $C_3$ and the bottom of $C_5$ contain $x_4$. Without loss of generality, we can suppose that $\ell_2 < \ell_4$, which means that $C_2$ is a simple cylinder, while $C_4$ is strictly semi-simple, and the bottom border of $C_4$ contains two saddle connections.
Since we have $\ell_2+\ell_3=\ell_4+\ell_5=\ell_1$, it follows that $\ell_3 > \ell_5$. Hence $C_5$ is a simple cylinder, and the top of $C_3$ contains two saddle connections. From this we deduce that the cylinder $C_6$ must be simple, with top border contained in the bottom border of $C_4$, and bottom border contained in the top border of $C_3$. In conclusion, there is a unique cylinder diagram corresponding to this configuration of $\Gamma$. This cylinder diagram  is depicted  in Case 6.a of Figure~\ref{6CylDiagsFig} with a different labeling of the cylinders.

\medskip

\noindent \underline{\em Case 2:} We can assume that there are two edges between $x_1$ and $x_2$ and between $x_3$ and $x_4$. Observe that in this case, there are two cylinder core curves  that separate $\{x_1,x_2\}$ from $\{x_3,x_4\}$. In particular, they are homologous. Cutting $M$ along those curves and permuting the gluings, we obtain two translation surfaces in $\cH(1,1)$, each of which admits a $3$-cylinder decomposition in the horizontal direction. Therefore, one can recover the cylinder diagram of $M$ from the unique $3$-cylinder diagram for $\cH(1,1)$ and a choice of regluing. The possible diagrams are depicted by Cases 6.b, 6.c, and 6.d of Figure~\ref{6CylDiagsFig}.
\end{proof}

\bibliography{fullbibliotex_final}{}

\providecommand{\bysame}{\leavevmode\hbox to3em{\hrulefill}\thinspace}
\providecommand{\MR}{\relax\ifhmode\unskip\space\fi MR }
\providecommand{\MRhref}[2]{%
  \href{http://www.ams.org/mathscinet-getitem?mr=#1}{#2}
}
\providecommand{\href}[2]{#2}
\begin{thebibliography}{MMW17}

\bibitem[AN16]{AulicinoNguyenGen3TwoZeros}
David Aulicino and Duc-Manh Nguyen, \emph{Rank two affine submanifolds in
  {$\mathcal{H}(2,2)$} and {$\mathcal{H}(3,1)$}}, Geom. Topol. \textbf{20}
  (2016), no.~5, 2837--2904. \MR{3556350}

\bibitem[ANW16]{AulicinoNguyenWright}
David Aulicino, Duc-Manh Nguyen, and Alex Wright, \emph{Classification of
  higher rank orbit closures in {$\mathcal{H}^{\text{odd}}(4)$}}, J. Eur. Math.
  Soc. (JEMS) \textbf{18} (2016), no.~8, 1855--1872. \MR{3518480}

\bibitem[Api18]{ApisaHypAISClass}
Paul Apisa, \emph{{$\rm{GL}_2\Bbb{R}$} orbit closures in hyperelliptic
  components of strata}, Duke Math. J. \textbf{167} (2018), no.~4, 679--742.
  \MR{3769676}

\bibitem[Aul15]{AulicinoZeroExpGen3}
David Aulicino, \emph{Affine {M}anifolds and {Z}ero {L}yapunov {E}xponents in
  {G}enus 3}, Geom. Funct. Anal. \textbf{25} (2015), no.~5, 1333--1370.
  \MR{3426056}

\bibitem[EM18]{EskinMirzakhaniInvariantMeas}
Alex Eskin and Maryam Mirzakhani, \emph{Invariant and stationary measures for
  the {${\rm SL}(2,\mathbb{R})$} action on moduli space}, Publ. Math. Inst.
  Hautes \'{E}tudes Sci. \textbf{127} (2018), 95--324. \MR{3814652}

\bibitem[EMM15]{EskinMirzakhaniMohammadiOrbitClosures}
Alex Eskin, Maryam Mirzakhani, and Amir Mohammadi, \emph{Isolation,
  equidistribution, and orbit closures for the {${\rm SL}(2,\mathbb{R})$}
  action on moduli space}, Ann. of Math. (2) \textbf{182} (2015), no.~2,
  673--721. \MR{3418528}

\bibitem[Fil16]{Filip2}
Simion Filip, \emph{Splitting mixed {H}odge structures over affine invariant
  manifolds}, Ann. of Math. (2) \textbf{183} (2016), no.~2, 681--713.
  \MR{3450485}

\bibitem[FM12]{FarbMargalitMCG}
Benson Farb and Dan Margalit, \emph{A primer on mapping class groups},
  Princeton Mathematical Series, vol.~49, Princeton University Press, 2012.

\bibitem[For11]{ForniCriterion}
Giovanni Forni, \emph{A geometric criterion for the nonuniform hyperbolicity of
  the {K}ontsevich-{Z}orich cocycle}, J. Mod. Dyn. \textbf{5} (2011), no.~2,
  355--395, With an appendix by Carlos Matheus. \MR{2820565 (2012f:37061)}

\bibitem[KZ03]{KontsevichZorichConnComps}
Maxim Kontsevich and Anton Zorich, \emph{Connected components of the moduli
  spaces of {A}belian differentials with prescribed singularities}, Invent.
  Math. \textbf{153} (2003), no.~3, 631--678. \MR{MR2000471 (2005b:32030)}

\bibitem[Lan08]{LanneauComponents}
Erwan Lanneau, \emph{Connected components of the strata of the moduli spaces of
  quadratic differentials}, Ann. Sci. \'Ec. Norm. Sup\'er. (4) \textbf{41}
  (2008), no.~1, 1--56. \MR{2423309 (2009e:30094)}

\bibitem[LNW17]{LanneauNguyenWrightFinInNonArithRkOne}
Erwan Lanneau, Duc-Manh Nguyen, and Alex Wright, \emph{Finiteness of
  {T}eichm\"uller curves in non-arithmetic rank 1 orbit closures}, Amer. J.
  Math. \textbf{139} (2017), no.~6, 1449--1463. \MR{3730926}

\bibitem[McM06]{McMullenPrym}
Curtis~T. McMullen, \emph{Prym varieties and teichm\"uller curves}, Duke Math.
  J. \textbf{133} (2006), no.~3, 569--590.

\bibitem[McM07]{McMullenGenus2}
\bysame, \emph{Dynamics of {${\rm SL}\sb 2(\mathbb{R})$} over moduli space in
  genus two}, Ann. of Math. (2) \textbf{165} (2007), no.~2, 397--456.
  \MR{MR2299738 (2008k:32035)}

\bibitem[MMW17]{McMullenMukamelWrightGothicLocus}
Curtis~T. McMullen, Ronen~E. Mukamel, and Alex Wright, \emph{Cubic curves and
  totally geodesic subvarieties of moduli space}, Ann. of Math. (2)
  \textbf{185} (2017), no.~3, 957--990. \MR{3664815}

\bibitem[MW17]{MirzakhaniWrightBoundary}
Maryam Mirzakhani and Alex Wright, \emph{The boundary of an affine invariant
  submanifold}, Invent. Math. \textbf{209} (2017), no.~3, 927--984.
  \MR{3681397}

\bibitem[MW18]{MirzakhaniWrightFullRank}
\bysame, \emph{Full-rank affine invariant submanifolds}, Duke Math. J.
  \textbf{167} (2018), no.~1, 1--40. \MR{3743698}

\bibitem[NW14]{NguyenWright}
Duc-Manh Nguyen and Alex Wright, \emph{Non-{V}eech surfaces in
  {$\mathcal{H}^{\rm hyp}(4)$} are generic}, Geom. Funct. Anal. \textbf{24}
  (2014), no.~4, 1316--1335. \MR{3248487}

\bibitem[SW04]{SmillieWeissMinSets}
John Smillie and Barak Weiss, \emph{Minimal sets for flows on moduli space},
  Israel J. Math. \textbf{142} (2004), 249--260. \MR{2085718 (2005g:37067)}

\bibitem[Wri14]{WrightFieldofDef}
Alex Wright, \emph{The field of definition of affine invariant submanifolds of
  the moduli space of abelian differentials}, Geom. Topol. \textbf{18} (2014),
  no.~3, 1323--1341. \MR{3254934}

\bibitem[Wri15]{WrightCylDef}
\bysame, \emph{Cylinder deformations in orbit closures of translation
  surfaces}, Geom. Topol. \textbf{19} (2015), no.~1, 413--438. \MR{3318755}

\end{thebibliography}


\end{document}